\numberwithin{equation}{section}
\theoremstyle{plain}
\newtheorem{theorem}{Theorem}[section]
\newtheorem{proposition}[theorem]{Proposition}
\newtheorem{lemma}[theorem]{Lemma}
\newtheorem{corollary}[theorem]{Corollary}
\newtheorem{problem}[theorem]{Problem}
\newtheorem{definition}[theorem]{Definition}
\newtheorem{example}[theorem]{Example}
\newtheorem{remark}[theorem]{Remark}
\newenvironment{proof}{{\noindent \textbf{Proof}\,\,}}{\hspace*{\fill}$\Box$\medskip}
\def\sym{\operatorname{Sym}}
\def\cc{\mathbb C}
\def\mcf{\mathcal F}
\def\oc{\overline{\cc}}
\def\cp{\mathbb{CP}}
\def\wt#1{\widetilde#1}
\def\rr{\mathbb R}
\def\var{\varepsilon}
\def\Ga{\Gamma}
\def\hh{\mathbb H}
\def\nn{\mathbb N}
 \def\zz{\mathbb Z}
 \def\rp{\mathbb{RP}}
\def\La{\Lambda}
\def\ii{\mathbb I}
\def\mca{\mathcal A}
  \def\diag{\operatorname{diag}}
 \def\la{\lambda}
\def\mcc{\mathcal C}
\def\mcs{\mathcal S}
\def\mcm{\mathcal M}
\def\mcq{\mathcal Q}
\def\mcp{\mathcal P}
\def\mcn{\mathcal N}
\def\Xi{\mathcal Z}
\def\wh#1{\widehat#1}
\def\ii{\mathbb I}
\def\rrp{\mathbb P}
\title{On rationally integrable planar dual multibilliards and piecewise smooth projective billiards}
\author{Alexey Glutsyuk\thanks{CNRS, UMR 5669 (UMPA, ENS de Lyon), France. E-mail: 
aglutsyu@ens-lyon.fr} \thanks{HSE University, Moscow, Russia} \thanks{Kharkevich Institute for Information Transmission Problems (IITP, RAS), Moscow} \thanks{Supported by part by RFBR grant 20-01-00420}}
\begin{document}
\maketitle
\begin{abstract}  
 The billiard flow in a planar domain $\Omega$ 
acts on the  tangent bundle $T\rr^2|_\Omega$  
 as geodesic flow with reflections from the boundary. It 
 has the trivial first integral: squared modulus of the velocity. Bolotin's Conjecture, now a joint theorem of Bialy, Mironov and the author, deals with those billiards whose flow admits an additional integral that is polynomial in the velocity and whose restriction to the unit tangent bundle is non-constant. It states that 1) if the boundary of such a billiard is $C^2$-smooth, nonlinear and connected, then it is a conic; 2) if it is piecewise $C^2$-smooth and contains a nonlinear arc, then it consists of arcs of conics from a confocal pencil and segments of  "admissible lines" for the pencil; 3) the minimal degree of the additional integral is   either 2, or 4. In 1997 Sergei Tabachnikov introduced  {\it projective billiards:}  planar curves equipped with a transversal line field, which defines a  reflection acting on oriented lines and the projective billiard flow. They are common generalization of billiards on surfaces of constant curvature, but in general may have no canonical conserved quantity. 
In a previous paper the author  classified those $C^4$-smooth connected nonlinear planar projective billiards whose flow admits  a non-constant integral that is a rational $0$-homogeneous function of the velocity (with coefficients depending on the position): these billiards are called 
 {\it rationally $0$-homogeneously integrable.} It was shown  that: 1) the underlying curve is  a conic; 2) the  minimal degree of integral is equal to two, if the billiard is defined by a dual pencil of conics; 3) otherwise it can be arbitrary even number. In the present paper we classify {\it piecewise $C^4$-smooth} 
 rationally $0$-homogeneously integrable projective billiards.  Unexpectedly, we show that  such a billiard associated to a dual pencil of conics may have integral of minimal degree 2,  4, or {\bf 12.} For the proof of main results we prove dual results for  the  so-called dual multibilliards introduced in the present paper.
\end{abstract}
 \tableofcontents
\section{Introduction}
\subsection{Introduction, brief description of main results and plan of the paper}
Consider  a  planar billiard $\Omega\subset\rr^2$ bounded by a $C^2$-smooth strictly convex closed curve. 
Recall that its {\it caustic} is a curve $S\subset\rr^2$ 
 such that each tangent line to $S$ is reflected from the boundary $\partial\Omega$ 
  to a line tangent to $S$. A billiard is {\it Birkhoff integrable,} if an inner neighborhood of its boundary is foliated by 
  closed caustics, with boundary being a leaf of the foliation. This is the case in an elliptic billiard:  
   confocal ellipses form a foliation by closed caustics of a domain adjacent to the boundary ellipse. 
  The famous  open Birkhoff Conjecture states that {\it the only integrable billiards are ellipses.} See its brief survey in Subsection 1.7. 
  
  The billiard flow in a domain $\Omega$ acts on the tangent bundle $T\rr^2|_\Omega$ as follows. 
  A point $(Q,v)$, $Q\in\Omega$, $v\in T_Q\rr^2$, moves along a trajectory of geodesic flow 
  ($v$ remains constant as a vector in $\rr^2$, and $Q$ moves with constant speed $v$), until 
  $Q$ hits the boundary $\partial\Omega$. Then $v$ is reflected from the boundary to 
  the new vector $v^*\in T_Q\rr^2$ according to the standard reflection law: 
  the angle of incidence is equal to the angle 
  of reflection, and $||v^*||=||v||$. Afterwards the new point $(Q,v^*)$ moves 
  by the geodesic flow etc.  The billiard flow has trivial first integral $||v||^2$. 
  It is a well-known folklore fact that  Birkhoff integrability of a strictly convex bounded planar billiard is equivalent to the existence of a non-trivial first integral of the billiard flow independent with $||v||^2$ on a neighborhood of the unit tangent bundle to 
$\partial\Omega$ in $T\rr^2|_{\overline\Omega}$. 

A planar billiard is called {\it polynomially integrable,} if  its flow admits a first integral that is polynomial in the velocity (with coefficients depending on  $Q$) whose restriction to the unit tangent bundle is non-constant.
Sergei Bolotin suggested the following polynomial version of the Birkhoff Conjecture, which is now a theorem: a joint result of Mikhail Bialy, Andrei Mironov and the author, see \cite{bm, bm2, gl, gl2}. 
{\it A planar billiard with piecewise $C^2$-smooth boundary is polynomially integrable, if and only 
if  one of the following statements holds:

 1) if the boundary $\partial\Omega$ is $C^2$-smooth, connected and nonlinear, then it is a 
conic (or a connected component of a conic);

2) if $\partial\Omega$ is piecewise $C^2$-smooth, then it is a union of arcs of  conics from a 
confocal pencil
and maybe segments of so-called admissible lines for the pencil of conics in question. 

\noindent If a billiard has one of the above types 1) or 2), then the billiard flow has a nontrivial quadratic integral, unless the pencil in question consists of confocal parabolas and $\partial\Omega$ contains a segment of the line through the focus 
that is orthogonal to the common axis of the parabolas; in the latter case the minimal degree of 
integral is equal to four.}

The above parabolic example with degree four integral was discovered by A. Ramani,  A. Kalliterakis, B. Grammaticos, B. Dorizzi \cite{gram}.

  Sergei Tabachnikov suggested the generalization  of the Birkhoff Conjecture to projective billiards introduced by himself in 1997 in \cite{tabpr}. See the following definition and conjecture.

 \begin{definition} \cite{tabpr} A {\it projective billiard} is a smooth planar curve $C\subset\rr^2$ equipped with a transversal line field $\mcn$. 
 For every $Q\in C$ the {\it projective billiard reflection involution} at $Q$ acts on the space of lines through $Q$ as the affine involution 
 $\rr^2\to\rr^2$ that fixes the points of the tangent line to $C$ at $Q$, preserves the line $\mcn(Q)$ and acts on $\mcn(Q)$ as the central symmetry 
 with respect to the point\footnote{In other words, two lines $a$, $b$ through $Q$ are permuted by reflection at $Q$, if  and only if 
 the quadruple of lines $T_QC$, $\mcn(Q)$, $a$, $b$ is harmonic: there exists a projective involution of the space $\rp^1$ of lines through 
 $Q$ that fixes $T_QC$, $\mcn(Q)$ and permutes $a$, $b$.} $Q$. 
 In the case, when $C$ is a strictly convex closed curve,  the {\it projective billiard map} acts on the {\it phase cylinder:} 
 the space of oriented lines intersecting $C$. It sends an oriented line to its image under the above reflection involution at its last point 
 of intersection with $C$ in the sense of orientation. See Fig. 1.
 \end{definition} 
 \begin{figure}[ht]
\begin{center}
   \epsfig{file=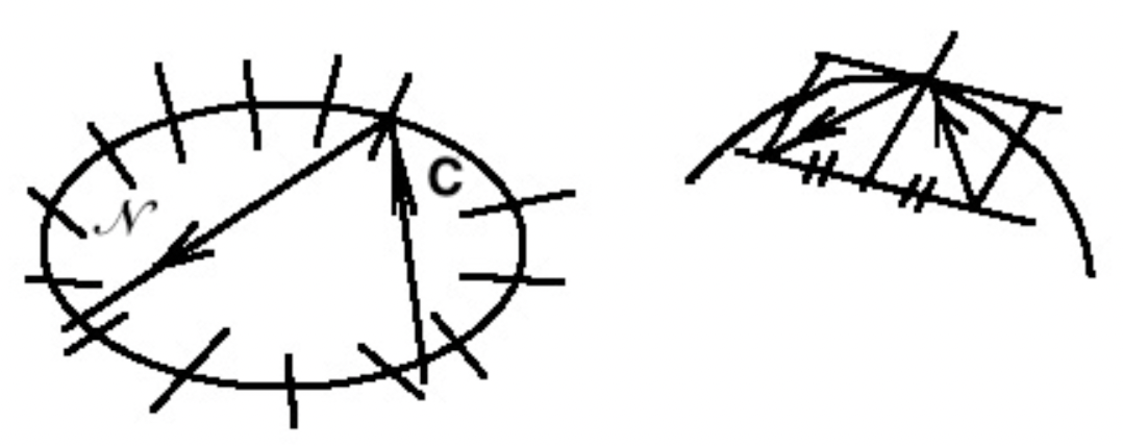, width=28em}
   \caption{The projective billiard reflection.}\label{fig1}
    \end{center} 
    \end{figure}
\begin{example} \label{exconst} 
A usual Euclidean planar billiard is a projective billiard with transversal line field being 
normal line field. Each billiard in a complete Riemannian surface $\Sigma$ of non-zero constant curvature 
(i.e., in sphere  $S^2$ and in  hyperbolic plane $\mathbb H^2$) also can be seen as a projective billiard, see \cite{tabpr}. 
Namely, consider $\Sigma=S^2$ as the unit sphere in the Euclidean space $\rr^3$, and  
$\Sigma=\mathbb H^2$  as the semi-pseudo-sphere $\{x_1^2+x_2^2-x_3^2=-1, \ x_3>0\}$ in the Minkovski space $\rr^3$ equipped with the form $dx_1^2+dx_2^2-dx_3^2$. The billiard in a
 domain $\Omega\subset\Sigma_+:=\Sigma\cap\{ x_3>0\}$  
is defined by reflection of geodesics from its boundary. The tautological projection $\pi:\rr^3\setminus\{0\}\to\rp^2$ sends $\Omega$ diffeomorphically 
onto a domain in the affine chart $\{ x_3=1\}$. It sends billiard orbits in $\Omega$ to orbits of the projective billiard on $C=\pi(\partial\Omega)$ with the transversal line field $\mcn$ on $C$ being  
the image of the normal line field to $\partial\Omega$ under the differential $d\pi$. 
\end{example}

The notion of caustic (integrability) of a projective billiard repeats the above similar notions for the usual billiards. 
{\bf Tabachnikov Conjecture}  states that {\it if a projective billiard is integrable, then the billiard boundary 
and the caustics are conics, whose dual conics form a pencil.}   It was stated in print in equivalent dual form (for dual billiards) in \cite{tab08}. 

The billiard flow of a projective billiard is defined analogously to the usual billiard flow, see \cite[pp. 958, 960]{tabpr}. But now as $Q$ hits the boundary (the underlying curve of the projective 
billiard), the initial velocity $v\in T_Q\rr^2$ is reflected to the new velocity $v^*$ by the 
projective billiard reflection. See Fig. \ref{figproj}. A projective billiard on a strictly convex 
closed planar curve is Birkhoff integrable, if and only if its flow admits a smooth first integral without critical points on a neighborhood of the unit tangent bundle to $\partial\Omega$ in $T\rr^2|_{\overline\Omega}$  that is a $0$-homogeneous function of the velocity. The proof of this 
equivalence statement repeats the proof of the similar above-mentioned folkolre fact for the usual billiards. 
\begin{figure}[ht]
\begin{center}
\epsfig{file=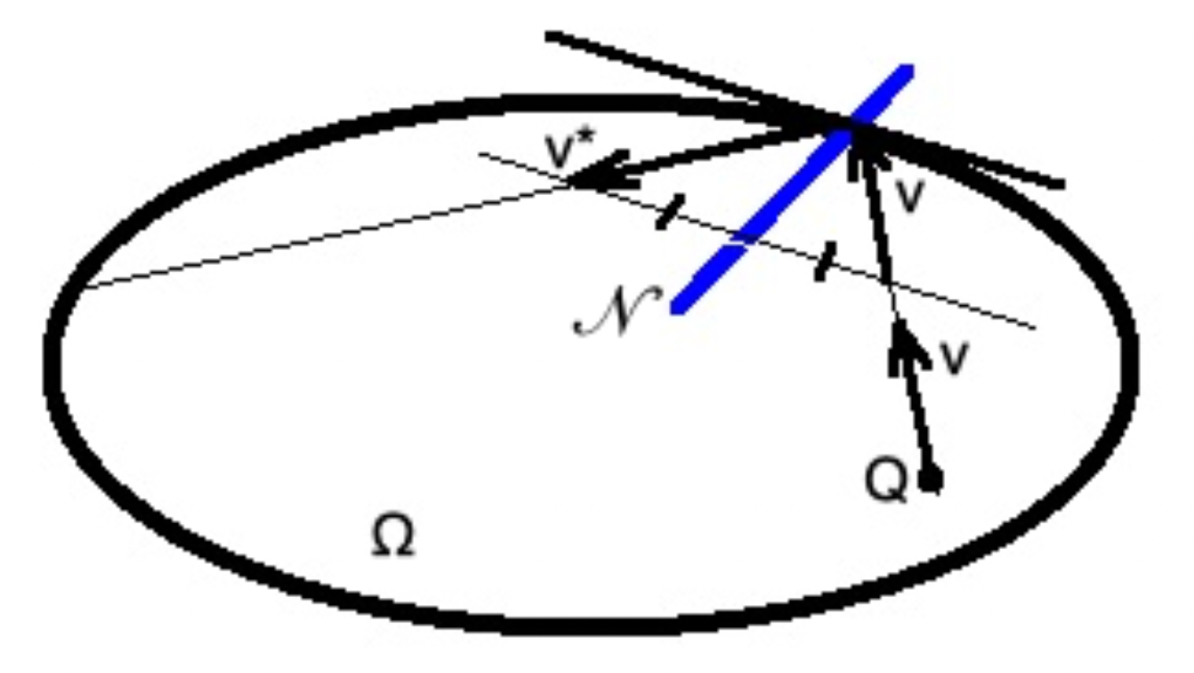, width=20em}
\caption{Projective billiard flow}
\label{figproj}
\end{center}
\end{figure}

\begin{remark}
 Each Euclidean planar billiard flow always has the trivial first integral $||v||^2$. 
But this is not true for   a generic projective billiard. 
\end{remark}
\begin{problem} \label{propol} Classify those projective billiards whose flow admits at least one nonconstant first integral that is polynomial in the velocity.
\end{problem}

In the present paper we classify rationally $0$-homogeneously integrable\footnote{The notion of polynomially integrable projective billiard, see Problem  \ref{propol}, is invariant under affine transformations. But a priori, it is not invariant under projective transformations, since in general, 
a projective 
transformation sending one projective billiard to another one does not conjugate the 
corresponding projective billiard flows. On the other hand, the notion of rationally $0$-homogeneously integrable billiard is invariant under projective transformations.  
The pullback of a rational $0$-homogeneous integral of a projective billiard under a projective transformation is an integral 
of its preimage.} 
 non-polygonal 
piecewise $C^4$-smooth projective billiards, see the following definition.

\begin{definition} A projective billiard is {\it rationally $0$-homogeneously integrable,} if its flow admits a non-constant first integral that 
depends on the velocity as a  rational $0$-homogeneous function, of degree uniformly bounded by 
some number $n$: 
a function $\Psi(Q,v)=\frac{P(v)}{T(v)}$, where $P$ and $T$ are homogeneous polynomials in $v$ of degree no greater than $n$ 
with coefficients depending on the position of the point $Q$. The maximal degree of the latter rational function 
through all $Q$ is called the {\it degree} of the rational integral.
\end{definition}

\begin{example} \label{exdual} Let $C, \Gamma\subset\rr^2_{x_1,x_2}$ be distinct  regular conics. 
The punctured conic $C\setminus\Gamma$ can be equipped with a unique projective billiard structure (transversal line field $\mcn$) so that the complexified 
 conic $\Gamma$ is its caustic. This means that for every $x\in C\setminus\Gamma$ 
 the two complex tangent lines to $\Gamma$ through $x$ and the lines $T_xC$, $\mcn(x)$ form 
 a harmonic quadruple. Thus constructed projective billiard on $C$ will be called a {\it dual pencil type projective billiard on a punctured conic $C$.} It is well-known (Tabachnikov's observation) that then every conic from the dual pencil containing $C$ and $\Gamma$ is a caustic in the above sense. The corresponding projective billiard flow  has a quadratic rational $0$-homogeneous integral. Namely, let $B,A\in GL_3(\rr)$ be such that in  homogeneous coordinates 
 $[x_1:x_2:x_3]$ in the ambient projective plane $\rp^2\supset\rr^2_{x_1,x_2}=\{ x_3=1\}$ 
 one has 
 $$C=\{<B^{-1}x,x>=0\}, \ \Gamma=\{<A^{-1}x,x>=0\}, \ <x,y>:=\sum_{j=1}^3x_jy_j.$$
 The  conics forming the dual pencil containing $C$ and $\Gamma$ are the conics 
  $$\mcc_\la:=\{<(B-\la A)^{-1}x,x>=0\}, \ \la\in\cc.$$
They are caustics  in the above sense. For every $(x_1,x_2)\in\rr^2$ and $(v_1,v_2)\in T_{(x_1,x_2)}\rr^2$ set  
 $$r:=(x_1,x_2,1), \ v:=(v_1,v_2,0),$$
 \begin{equation}\mcm=\mcm(r,v):=[r,v]=(-v_2,v_1,\Delta), \ \Delta:=x_1v_2-x_2v_1.\label{movect}
 \end{equation}
 Then the ratio 
 \begin{equation}\Psi(x,v):=\frac{<B\mcm,\mcm>}{<A\mcm,\mcm>}\label{dual-int}\end{equation}
 is a first integral of the projective billiard flow. This can be proved by 
 using Bolotin's argument \cite[p.22]{bolotin2} and will be exlained below, 
 in Remark \ref{removect}, via  duality.  
  \end{example} 
  \begin{remark} \label{remmom} A degree $n$ rational $0$-homogeneous integral  of a projective billiard is always a degree $n$ rational $0$-homogeneous function  in the moment 
 $\mcm=(-v_2, v_1,\Delta)$ with constant coefficients. See analogous statement for 
polynomial integrals of  planar billiards earlier proved by Bolotin in \cite{bolotin2}, and the general statement for projective billiards in \cite[proposition 1.23, statement 1)]{grat}. 
\end{remark}
 
In the previous paper  \cite[theorem 1.26]{grat}  the author classified rationally $0$-homogeneously 
integrable projective billiard structures on (germs of) nonlinear 
$C^4$-smooth connected planar curves. It was shown there that 

-  the underlying curve is always a conic;

-  besides Example \ref{exdual} of dual pencil type projective billiard, 
which has a quadratic integral, there are infinitely many other rationally $0$-homogeneously integrable projective billiard structures on a conic, called {\it exotic structures,} with minimal degree of integral being arbitrary even number greater than 3. 
These results of \cite{grat} are recalled  in Subsection 1.3. 

The main results of the present paper (Theorems \ref{thmd1pr}, \ref{thmd12pr}, \ref{thmd2pr} stated in Subsection 1.5) yield the classification of  rationally $0$-homogeneously integrable 
projective billiards {\it with piecewise $C^4$-smooth boundary} that  contains a nonlinear arc and maybe also straightline segments. They show that a piecewise $C^4$-smooth projective billiard is rationally $0$-homogeneously integrable, if and only if the following statements hold: 

1) The nonlinear part of the boundary lies either in one conic (equipped  with either a dual pencil, or exotic projective billiard structure), or   in a 
union of several conics lying the same dual pencil, with projective billiard structures defined by the 
same pencil.

2) If the projective billiard structure on the conical part of the boundary is defined by a dual pencil, then the projective billiard is of dual pencil type. This means that on each conic it is defined by the same dual pencil, and the collection of 
lines containing straightline segments of the boundary satisfies certain conditions. In particular, they belong to the list of so-called {\it admissible lines for the dual pencil} equipped with appropriate projective billiard structures defined by the pencil. 

3) If the conical part of the boundary lies in one conic equipped with an exotic projective billiard 
structure, then the ambient lines of boundary straightline segments (if any) belong to an explicit finite list of lines called {\it admissible for the exotic structure} in question. 

In the dual pencil case the projective billiard structure on each  admissible line is given by a line field that either is central (i.e., its lines pass through a given point), or consists of lines tangent to a given conic of the dual pencil. In the exotic case each admissible line is equipped with appropriate  central line field. 

Theorem \ref{thdegpr} gives a formula for the minimal degree of integral  of a dual pencil type billiard. It yields the following 
unexpected result: for a dual pencil type projective billiard defined by a generic pencil and containing straightline segments of appropriate collection of admissible lines, the minimal degree of integral is equal to {\bf 12.} 

Theorem \ref{tintpr} gives a formula for  degree 12 integral  in terms of just the dual pencil: it is an integral of every dual pencil type projective billiard defined by the given pencil. 

We also prove the following new result on  usual billiards on constant curvature surfaces.
     
     \begin{proposition} \label{proeucl} 
     A piecewise $C^2$-smooth non-polygonal billiard on a surface of constant curvature is rationally $0$-homogeneously integrable, if and only if it is polynomially integrable. In this case the
      minimal degree of its  non-trivial rational $0$-homogeneous integral is equal to either 
      2, or 4: to the minimal degree of a non-trivial nonlinear polynomial integral. Rational integral of minimal degree can be chosen to be $\frac{I(x,v)}{||v||^{2n}}$, where $I(x,v)$ is a nontrivial 
      homogeneous polynomial in $v$ integral of  degree $2n$, $n\in\{1,2\}$.
      \end{proposition}

    Thus, {\it rationally $0$-homogeneously integrable  projective 
     billiards of dual pencil type  with integrals of degree 12} presented and classified in this  
     paper {\it form an essentially  new class} of rationally 
     integrable projective billiards of dual pencil type, {\it not covered by the known list of polynomially 
     integrable billiards} on surfaces of constant curvature. 

For the proof of main results, we state (in Subsection 1.4) and prove their projective dual versions giving classification of the so-called rationally integrable  dual multibilliards defined below. 
Namely, consider the plane $\rr^2_{x_1,x_2}$ as the affine chart $\{ x_3=1\}$ in $\rp^2_{[x_1:x_2:x_3]}$. Consider the correspondence sending a two-dimensional vector subspace in the Euclidean space $\rr^3_{x_1,x_2,x_3}$ to its orthogonal complement, which is a one-dimensional subspace. The induced correspondence 
$\rp^{2*}\to\rp^2$ sending a projective line in $\rp^2$ (projectivized two-dimensional subspace) 
to a point in $\rp^2$ (the projectivized orthogonal) is a projective duality called the {\it orthogonal polarity.} For every smooth planar curve $C$ its dual $\gamma=C^*$ is the family of points  
dual to the projective  lines  tangent to $C$. For every $Q\in C$ let $L_Q$ denote the projective line tangent to $C$ at $Q$. The dual to the space $\rp^1$ of lines through $Q$ 
is the projective line $L_P$ tangent to $\gamma$ at $P=L_Q^*$. The projective billiard reflection acting on  $\rp^1$ is conjugated to a projective involution $\sigma_P:L_P\to L_P$.  This implies that  the dual to a projective billiard on a curve $C$ containing no straightline segments is  the dual billiard introduced by Tabachnikov, see the following definition.

\begin{definition} \label{defdual} \cite[definitions 1.6, 1.17]{grat}  A {\it real (complex) dual billiard} is a smooth (holomorphic) curve 
$\gamma\subset\rp^2 (\cp^2)$ where for each point $P\in\gamma$ 
the real (complex) projective line $L_P$ tangent to $\gamma$ at $P$ is equipped with a projective involution 
$\sigma_P:L_P\to L_P$ fixing $P$; 
the involution family (called  {\it dual billiard structure}) is parametrized by tangency points $P$. 
\end{definition}

It appears that the  projective dual to a  rationally $0$-homogeneously integrable projective billiard 
on a nonlinear curve is a rationally integrable dual billiard, see the next definition. 

\begin{definition} \cite[definition 1.12]{grat} A dual billiard is called {\it rationally integrable,} if there exists a non-constant rational function $R$ on the plane (called {\it integral}) whose restriction to each line tangent to the underlying curve is invariant under the corresponding involution:
 \begin{equation}(R\circ\sigma_P)|_{L_P}=R|_{L_P}.\label{ratint}\end{equation}
\end{definition}

The dual to a dual pencil type projective structure on a conic is a pencil type dual billiard, see the 
following example. 
\begin{example} \label{expencil} \cite[example 1.14]{grat} 
A dual billiard is said to be of {\it pencil type,} if the underlying curve $\gamma$ is a (punctured) conic and there 
exists a pencil of conics containing $\gamma$ such that for every $P\in\gamma$ the involution $\sigma_P$ 
permutes the intersection points of the line $L_P$ with each conic of the pencil (or fixes the intersection point, 
if it is unique). As was observed by S.Tabachnikov, conversely, 
for every conic $\gamma$ and every  pencil containing $\gamma$, for every $P$ in the conic $\gamma$  
punctured in at most 4 complex base points of the pencil there exists a projective involution $\sigma_P:L_P\to L_P$ 
satisfying the above  condition, and thus, a well-defined pencil type dual billiard  on the punctured conic $\gamma$. It is rationally integrable with a quadratic integral, which is the ratio of two quadratic polynomials vanishing on two arbitrary given conics of the pencil (Tabachnikov's observation, see \cite[example 1.14]{grat}). 
\end{example} 

\begin{remark}\label{removect}
For every nonzero vector $v\in T_{(x_1,x_2)}\rr^2$ the duality sends the projective line tangent to $v$ to the point $[\mcm_1:\mcm_2:\mcm_3]\in\rp^2$, where $\mcm$ is the moment 
vector, see (\ref{movect}). Thus, $[\mcm_1:\mcm_2:\mcm_3]$ can be treated as  natural homogeneous 
coordinates on the projective plane containing the dual billiard  to a given projective billiard on a curve $C$. This implies that {\it each rational integral of the dual billiard written as a rational $0$-homogeneous function of $\mcm=[r,v]$ is a rational $0$-homogeneous integral of the  projective billiard flow.} A more general statement for multibilliards with its converse will be proved later, in Subsection 3.1. 
 It is well-known that the duality given by the orthogonal 
polarity transforms each conic $\gamma=\{<A\mcm,\mcm>=0\}$, $A\in GL_3(\cc)$, to the conic given by 
the inverse matrix, $C=\{<A^{-1}x,x>=0\}$. This together with the above statement implies that the function (\ref{dual-int}) is indeed an integral of the dual pencil type 
projective billiard in Example \ref{exdual}. \end{remark}

\begin{example} \label{exdualint}  
If a dual billiard on a nonlinear curve has a polynomial integral, then it is an {\it outer billiard:} the  
projective involution of  each tangent line is its central symmetry with respect to the tangency point  
(see \cite[example 1.13]{grat}).  The dual billiard to a usual planar billiard is Bialy--Mironov angular billiard \cite{bm}.
\end{example} 

As shown in \cite[theorem 1.16]{grat}, a dual billiard on a nonlinear $C^4$-smooth germ $\gamma$ of planar curve billiard is rationally integrable, if and only if  $\gamma$ is  a conic, and the dual billiard structure either is of pencil type, or belongs to an explicit infinite list of 
rationally integrable dual billiards on conic given in loc. cit.   These results are recalled below, in Subsection 1.2. 
 
Let the underlying curve of a projective billiard contain a segment $I$ of a line $q$. 
The duality sends $q$ to a point $Q\in\rp^2$, and the family of  points in $I$ is sent to 
an open subset in the space $\rp^1$ of lines through $Q$. For every point $r\in I$ the duality conjugates the  projective billiard reflection at $r$ acting on the space $\rp^1$ of lines through $r$ 
to a projective involution of the dual line $\ell=r^*$. Thus, the projective billiard structure on $I$  
is transformed by duality to a  dual billiard structure at the point $Q$, and {\it the dual to 
a piecewise smooth projective billiard is a dual multibilliard,} see the following two definitions.

\begin{definition} \label{defdp} A {\it dual billiard structure} at a point
$Q\in\rp^2(\cp^2)$ is a family of  projective involutions 
$\sigma_{Q,\ell}:\ell\to\ell$ acting on real (complex) projective lines $\ell$ through $Q$;  $\sigma_{Q,\ell}$ 
are defined on an open subset  $U\subset\rp^1(\cp^1)$ of the space of lines through $Q$. No regularity of the family 
$\sigma_{Q,\ell}$ is assumed. 
\end{definition}
\begin{definition}  
A {\it real (complex) dual multibilliard} is a (may be infinite) collection of smooth (holomorphic) nonlinear connected 
curves $\gamma_j$ 
and points $Q_s$ in $\rp^2 (\cp^2)$ (called {\it vertices}), where 
each curve $\gamma_j$ and each point $Q_s$ 
are equipped with a dual billiard structure. (A  priori  a curve (vertex)  may appear in a multibilliard several times as the same curve (point) equipped with different dual billiard structures.)  
\end{definition}

We show that the dual to a rationally $0$-homogeneously integrable piecewise smooth 
projective billiard is a rationally integrable dual multibilliard, see the following definition.

 \begin{definition} A dual multibilliard is {\it rationally integrable,} if there exists a non-constant rational function 
 on $\rp^2(\cp^2)$ whose restriction to each tangent line to every curve $\gamma_j$ 
 is invariant under the corresponding involution, and the same statement holds for its restriction to each line 
 $\ell$ through any vertex $Q$, where the corresponding involution $\sigma_{Q,\ell}$ is defined.
 \end{definition}

 The main results on classification of rationally integrable real and complex  dual multibilliards 
 (not reduced to  just one curve without vertices) are 
 given by Theorems \ref{thmd1}, \ref{thmd12} and \ref{thmd2} in Subsection 1.4. In particular we show that in every rationally integrable multibilliard 
 the  dual billiard structure at each vertex  is either a global projective involution of the ambient 
 projective plane, or a birational involution of de Jonqui\`eres type whose fixed point locus is a conic. 
 The main results on classification of rationally $0$-homogeneously integrable piecewise $C^4$-smooth 
 projective billiards presented in Subsection 1.5 will be deduced from the above-mentioned results 
 on multibilliards via projective duality arguments. 

 Plan of proof of main results is presented in Subsection 1.6. 
A historical survey is given in Subsection 1.7. 
The main results  are proved in Sections 2 (for multibilliards) and  3 (for projective billiards). 
In Section 4 we prove formulas for degree 12 integrals (Theorems \ref{proformint}, \ref{tintpr} and Lemma \ref{lintpr}) and  present examples of  projective billiards with integrals of degrees 4 and 12. 
  
  \subsection{Previous results 1: classification  of real and complex 
  rationally integral dual billiards on one curve}
  
    \begin{theorem} \label{tgerm} \cite[theorem 1.16]{grat} Let $\gamma\subset\rr^2\subset\rp^2$ be a $C^4$-smooth connected 
    nonlinear  (germ of) curve  equipped with a rationally integrable dual   billiard structure. 
 Then $\gamma$ is a conic, and the dual   billiard structure has one of the 
 three following types (up to real-projective equivalence):
 
  1) The  dual billiard  is of conical pencil type, defined by a real pencil, and has a quadratic integral.
    
  2) There exists an affine chart $\rr^2_{z,w}\subset\rp^2$ 
  in which $\gamma=\{ w=z^2\}$ and such that for every $P=(z_0,w_0)\in\gamma$ the 
  involution $\sigma_P:L_P\to L_P$ is given by one of the following formulas: 
  
  a) In the coordinate 
  $$\zeta:=\frac z{z_0}$$
  $$\sigma_P:\zeta\mapsto\eta_\rho(\zeta):=\frac{(\rho-1)\zeta-(\rho-2)}{\rho\zeta-(\rho-1)},$$
\begin{equation} \rho=2-\frac 2{2N+1}, \ \text{ or } \ \rho=2-\frac1{N+1} \  \text{ for some 
  } N\in\nn.\label{rhoval}\end{equation} 
  
  b) In the coordinate 
  $$u:=z-z_0$$
  \begin{equation} \sigma_P: u\mapsto-\frac u{1+f(z_0)u},\label{sigmaef}\end{equation}
   \begin{equation} f=f_{b1}(z):=\frac{5z-3}{2z(z-1)} \text{ (type 2b1))}, \  \text{ or } \ 
   f=f_{b2}(z):=\frac{3z}{z^2+1} \text{ (type 2b2))}.\label{sigma2}\end{equation}

  c) In the above coordinate $u$ the involution $\sigma_P$ takes the form (\ref{sigmaef}) with 
   \begin{equation} f=f_{c1}(z):=\frac{4z^2}{z^3-1} \text{ (type 2c1))}, \  \text{ or } \ 
   f=f_{c2}(z):=\frac{8z-4}{3z(z-1)} \text{ (type 2c2))}.\label{sigma3}\end{equation}
   
   d) In the above coordinate $u$ the involution $\sigma_P$ takes the form (\ref{sigmaef}) with 
   \begin{equation} f=f_d(z)=\frac4{3z}+\frac1{z-1}=\frac{7z-4}{3z(z-1)} \ \ \text{ (type 2d)}.\end{equation}
  \end{theorem}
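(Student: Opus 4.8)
The plan is to extract rigidity from rational integrability at the infinitesimal level and then to globalise. First I would complexify: replace $\gamma$ by a holomorphic germ $\gamma\subset\cp^2$ carrying the analytically continued dual billiard structure, and regard the integral $R$ as a rational function on $\cp^2$. The decisive elementary remark is that a non-identity projective involution of $\cp^1$ has derivative exactly $-1$ at each of its two fixed points. Applying this to $\sigma_P$ at its fixed point $P\in L_P$, together with $(R\circ\sigma_P)|_{L_P}=R|_{L_P}$, one finds that $R|_{L_P}$ has a critical point at $P$; equivalently $dR_P$ annihilates the direction of $L_P$, which is the tangent direction of $\gamma$ at $P$. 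Hence $dR$ vanishes along $\gamma$ in the tangent direction, so $R$ is constant on $\gamma$ and $\gamma$ is a germ of a component of an algebraic level curve $\{R=c_0\}$; in particular $\gamma$ extends to an irreducible plane algebraic curve $\bar\gamma$. The same computation at the second fixed point $P^{*}$ of $\sigma_P$ shows that $L_P$ is tangent at $P^{*}$ to the level curve $\{R=R(P^{*})\}$. Thus the one-parameter family of tangent lines of $\bar\gamma$ consists of common tangents to members of the pencil of level curves of $R$: these level curves play the role of caustics, and the correspondence $P\mapsto P^{*}$ is algebraic.

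Next I would show that $\bar\gamma$ is a conic. Let $\mcp$ be the pencil whose generic member is a level curve of $R$, so $\bar\gamma\in\mcp$ and every other member is a caustic of $\bar\gamma$ in the above sense. The common-tangent configuration is very restrictive: each tangent line of $\bar\gamma$ is tangent to a second member of $\mcp$. Using the complex-geometric rigidity techniques that underlie the known results on algebraically and rationally integrable billiards — analysis of the base locus of $\mcp$, Bezout- and class-type counts for the dual curves $\bar\gamma^{*}$ and the dual curves of the caustics, and control of the finitely many lines where a tangent line of $\bar\gamma$ degenerates (inflectional or multiply tangent lines, lines through base points) — one rules out $\deg\bar\gamma\ge 3$ and concludes that $\bar\gamma$ is a smooth conic. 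I expect this to be the more delicate of the two global steps.

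With $\gamma$ now a conic, I would normalise it by a real projective transformation, say to the parabola $\{w=z^{2}\}$ in a suitable affine chart (the remaining real normal forms of a conic are treated in the same way). A dual billiard structure on $\gamma$ is then determined by the single fixed point $P^{*}\in L_P$ other than $P$, i.e. by one scalar function $f(z_0)$ as in (\ref{sigmaef}) — equivalently by the parameter $\rho$ in the coordinate $\zeta=z/z_0$ in which $\sigma_P=\eta_\rho$. Rational integrability says that for every $z_0$ the function $R|_{L_P}$, viewed as a rational function of $\zeta$, is $\sigma_P$-invariant, hence factors as $G_{z_0}\circ\psi_{z_0}$ through the degree-two quotient map $\psi_{z_0}$ of $\sigma_P$. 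The family $\{\psi_{z_0}\}$ must assemble, up to post-composition with M\"obius maps, into a single rational map $\cp^{2}\to\cp^{1}$; writing out this compatibility yields a functional–differential constraint on $f(z_0)$ (equivalently on $\rho(z_0)$), and in particular forces $\rho$ to be constant along $\gamma$.

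The heart of the matter — and the step I expect to be the main obstacle — is solving this constraint under the global requirement that $R$ be a single-valued rational function on $\cp^{2}$. Continuing $\sigma_P$ around the complexified conic, the monodromy of the branched cover cut out by $R$ along the tangent lines must close up, which forces the rotation number of $\sigma_P$ relative to the conical-pencil involution to be commensurable in a precise way. This is what produces the arithmetic families $\rho=2-\frac{2}{2N+1}$ and $\rho=2-\frac1{N+1}$, $N\in\nn$ (type 2a)), together with the finite list of sporadic solutions $f_{b1},f_{b2}$ (type 2b)), $f_{c1},f_{c2}$ (type 2c)) and $f_{d}$ (type 2d)); the complementary case is the conical pencil type with its quadratic integral (type 1)). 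Concretely this should reduce to the classification of finite subgroups of $\mathrm{PGL}_2$ and of their invariant rational functions, attached to the orbit data of the involutions $\sigma_P$. Finally I would establish sufficiency by exhibiting for each listed $f$ (respectively $\rho$) an explicit rational integral satisfying (\ref{ratint}), thereby completing both directions of the classification.
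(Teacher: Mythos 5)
First, a point of reference: the present paper does not prove Theorem \ref{tgerm} at all --- it is imported verbatim from \cite[theorem 1.16]{grat} as a ``previous result'', so there is no in-paper proof to compare against; your proposal has to be judged as a reconstruction of the argument of \cite{grat}. Your opening step is correct and is indeed the right way to start: a non-identity projective involution of a line has derivative $-1$ at each of its fixed points, so $(R|_{L_P})'(P)=0$, hence $R$ is constant on $\gamma$ and $\gamma$ lies in an algebraic level curve of $R$; the same computation at the second fixed point exhibits the level curves as caustics.

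Beyond that the proposal contains one outright error and two genuine gaps. The error: you assert that the compatibility of the quotient maps $\psi_{z_0}$ ``forces $\rho$ to be constant along $\gamma$''. This contradicts the very statement being proved: in types 2b), 2c), 2d) the second fixed point of $\sigma_P$ is $z=z_0-2/f(z_0)$ with $f$ one of the listed non-constant rational functions, which in the coordinate $\zeta=z/z_0$ corresponds to $\zeta^*=(\rho-2)/\rho=1-2/(z_0f(z_0))$, a non-constant function of $z_0$ (e.g.\ for $f_{b1}$ one gets $z_0f_{b1}(z_0)=(5z_0-3)/(2(z_0-1))$). Only type 2a) has constant $\rho$, so whatever constraint you derive cannot force constancy without erasing half of the classification. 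The gaps: (i) the proof that $\bar\gamma$ is a conic is replaced by an appeal to unspecified ``rigidity techniques''; nothing in the proposal actually rules out $\deg\bar\gamma\ge3$, and this is a substantial theorem in its own right. (ii) The derivation of the explicit list --- the arithmetic families $\rho=2-\frac2{2N+1}$, $\rho=2-\frac1{N+1}$ and the five sporadic functions $f$ --- is asserted to ``reduce to the classification of finite subgroups of $\mathrm{PGL}_2$'', but no reduction is exhibited, and this is not the operative mechanism: the involutions $\sigma_P$ act on the varying lines $L_P$ and do not assemble into a subgroup of $\mathrm{PGL}_2(\cc)$. In \cite{grat} the quantization of $\rho$ and the sporadic solutions come from a local analysis of multiplicities and branching of the level curves of $R$ along $\gamma$ and at the finitely many base points, combined with degree counts; none of that is present here. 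As written, the proposal is a plausible road map whose two decisive steps are missing and whose one concrete intermediate claim is false.
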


  \smallskip

  {\bf Addendum to Theorem \ref{tgerm}.} {\it Every dual   billiard structure 
  on $\gamma$ of type 2a) has a rational first integral $R(z,w)$ of the form} 
  \begin{equation}R(z,w)=\frac{(w-z^2)^{2N+1}}{\prod_{j=1}^N(w-c_jz^2)^2}, \ \ 
  c_j=-\frac{4j(2N+1-j)}{(2N+1-2j)^2}, \ \text{ for } \rho=2-\frac2{2N+1};\label{exot1}
  \end{equation}
  \begin{equation}R(z,w)=\frac{(w-z^2)^{N+1}}{z\prod_{j=1}^N(w-c_jz^2)}, \ \ 
  c_j=-\frac{j(2N+2-j)}{(N+1-j)^2}, \ \text{ for } \rho=2-\frac1{N+1}.\label{exot2}\end{equation}
   {\it The dual billiards of types 2b1) and 2b2) have respectively the integrals}
  \begin{equation}R_{b1}(z,w)=\frac{(w-z^2)^2}{(w+3z^2)(z-1)(z-w)}, 
  \label{exo2bnew} \end{equation}
   \begin{equation}R_{b2}(z,w)=\frac{(w-z^2)^2}{(z^2+w^2+w+1)(z^2+1)}.
  \label{exo2bnew2} \end{equation}
  {\it The dual billiards of types 2c1), 2c2) have respectively the integrals}
  \begin{equation}R_{c1}(z,w)=\frac{(w-z^2)^3}{(1+w^3-2zw)^2},\label{exo2b}\end{equation} 
  \begin{equation}R_{c2}(z,w)=\frac{(w-z^2)^3}{(8z^3-8z^2w-8z^2-w^2-w+10zw)^2}.\label{exoc2}\end{equation}
{\it The dual billiard  of type 2d) has the integral}
 \begin{equation}R_{d}(z,w)=\frac{(w-z^2)^3}{(w+8z^2)(z-1)(w+8z^2+4w^2+5wz^2-14zw-4z^3)}.\label{exodd}\end{equation}

 \begin{theorem} \label{tcompl} \cite[theorem 1.18 and its addendum]{grat}. 
 Every regular  (germ of) connected holomorphic curve in $\cp^2$ (different from a straight line) 
 equipped with a 
 rationally integrable  complex dual   billiard structure is a conic. 
 Up to complex-projective equivalence, the corresponding billiard structure has one of the types 1) (now defined by a complex pencil of conics), 2a), 2b1), 2c1), 2d) listed in Theorem \ref{tgerm}, with 
 a rational integral as in its addendum.  The billiards of  types 2b1), 2b2), see (\ref{sigma2}), are 
 complex-projectively equivalent, and so are  billiards  2c1), 2c2). 
  \end{theorem}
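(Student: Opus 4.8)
\medskip
\noindent\textbf{Proof proposal.} The plan is to deduce Theorem \ref{tcompl} by complexifying the proof of its real counterpart, Theorem \ref{tgerm}, while keeping track of the two effects of passing from $\rr$ to $\cc$. The first effect is harmless: every step that produces the normal forms --- the reduction to $\gamma$ being a conic, and the derivation and solution of the functional equation governing the involution family --- uses only the existence of a rational integral and the algebra of projective involutions, and never invokes reality, so the same list of normal forms appears verbatim over $\cc$. The second effect is the substantive one: the ambient symmetry group is now the larger complex-projective group, so some normal forms inequivalent over $\rr$ become equivalent over $\cc$, and one must exhibit these identifications and check that no others occur.

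First I would reprove that a germ of connected holomorphic curve $\gamma\subset\cp^2$, not a line, carrying a rationally integrable complex dual billiard structure is an arc of a conic. Let $R=A/B$ be a rational integral ($A,B$ homogeneous of equal degree), viewed as a dominant map $R:\cp^2\dashrightarrow\cp^1$. For $P$ in a dense open subset of $\gamma$ the restriction $R|_{L_P}$ is non-constant, and, being invariant under the non-trivial projective involution $\sigma_P$, it factors through the quotient map $L_P\to L_P/\sigma_P\cong\cp^1$; hence it has even degree, and a critical point at the fixed point $P$ of $\sigma_P$. Consequently $L_P$ is tangent at $P$ to the fibre of $R$ through $P$, so $\gamma$ is an integral curve of the singular holomorphic foliation defined by the fibres of $R$ and therefore lies in an irreducible algebraic curve $\Gamma$, a component of a single fibre; after the M\"obius substitution $R\mapsto(A-cB)/B$, still an integral, one may take $\gamma\subset\Gamma\subset\{R=0\}$. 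That $\deg\Gamma=2$ is then obtained from the same analysis, in a neighbourhood of $\Gamma$, of the foliation together with the involution family as in \cite{grat}: $\sigma_P$ permutes the residual intersection points of $L_P$ with every fibre of $R$ while fixing its ramification point $P$, and this, combined with the local structure of the fibres along $\Gamma$, forces $\Gamma$ to be a conic. This part of the argument is word for word its real analogue.

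Next I would classify the structures on the conic. Normalising by a complex-projective transformation, take $\gamma=\{w=z^2\}$ in a suitable affine chart, with the tangent line at infinity distinguished, and write the involution at $P=(z_0,z_0^2)$ in the coordinate $u=z-z_0$ as $\sigma_P:u\mapsto-u/(1+f(z_0)u)$; every non-trivial projective involution of $L_P$ fixing $P$ has this form, with multiplier $-1$ at $P$, so the structure is encoded by a single scalar function $f(z)$ on $\gamma$. Eliminating $R$ from the identities $R\circ\sigma_P=R|_{L_P}$, $P\in\gamma$, produces the same closed system of algebraic and differential relations for $f$ (resp.\ for the parameter $\rho$ in the $\eta_\rho$ case) as in the proof of Theorem \ref{tgerm}, whose full solution set over $\cc$ is: the continuous family of functions $f$ arising from pencils of conics containing $\gamma$ (type 1), and the rigid solutions 2a) with both parameter families (\ref{rhoval}), 2b1), 2b2), 2c1), 2c2), 2d). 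Each carries the explicit rational integral of the Addendum to Theorem \ref{tgerm}, which has rational coefficients and hence is also a complex integral; this gives the converse direction. It then remains to pass to complex-projective equivalence. Here one checks that a complex-projective automorphism of $(\cp^2,\gamma)$ identifies 2b1) with 2b2) and 2c1) with 2c2): the two structures in each pair have associated pencils of the same combinatorial type --- for 2b1), 2b2) a pencil of quartics with a double-conic member $2\gamma$ and a ``conic plus two lines'' member (the two lines being $\cc$-rational only after complexification, since $z^2+1$, resp.\ $X^2+Z^2$, splits over $\cc$), and for 2c1), 2c2) a pencil of sextics with a triple-conic member $3\gamma$ and a double-cubic member --- and one verifies directly that the complex-projective automorphism matching the finite sets of degenerate points of the two structures conjugates one involution family to the other. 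Finally, the surviving types 1), 2a), 2b1), 2c1), 2d) are pairwise complex-projectively inequivalent: type 1) alone admits a quadratic integral; the invariant multiplier $\rho$ together with the two discrete families separates the members of 2a); and 2b1), 2c1), 2d) are separated from each other and from 2a) by the degree and the factorisation type of the singular members of the associated pencil (for instance 2d), unlike 2c1), has no ``power'' member other than $3\gamma$).

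The step I expect to be the main obstacle is ensuring that the case analysis solving the functional equation for $f$ over $\cc$ is genuinely exhaustive --- that it branches exactly as over $\rr$ and yields no complex family lacking a real form --- so that the complex classification is precisely the real one quotiented by the coarser projective equivalence, together with the two explicit mergers above; equivalently, that every irreducible component of the moduli of rationally integrable complex dual billiard structures already contains a real point. Everything else is bookkeeping: the derivation of the normal forms (reality-blind) and the construction of the complex-projective transformations between 2b1)--2b2) and between 2c1)--2c2).
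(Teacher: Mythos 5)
The first thing to say is that the paper contains no proof of Theorem \ref{tcompl} for you to be measured against: it is imported verbatim from \cite[theorem 1.18 and its addendum]{grat} as a ``previous result'' (Subsection 1.2), and the present paper only uses it (e.g.\ in the proof of Lemma \ref{alt2}, where Cases 2b1) and 2b2) are treated as having the same complexification, and in Proposition \ref{pthmd2}, which rests on the last sentence of Theorem \ref{tcompl}). So the honest comparison is between your outline and the proof in \cite{grat}, which is not reproduced here.

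Taken on its own terms, your text is a plan rather than a proof, and the gap is the one you name yourself. The decisive content of the theorem is that the classification over $\cc$ is exactly the real list of Theorem \ref{tgerm} modulo the two mergers; you obtain this by asserting that the derivation and solution of the functional equation for $f$ is ``reality-blind'' and hence produces the same solution set over $\cc$ with no additional branches. That exhaustiveness claim is precisely the substance of \cite[theorem 1.18]{grat} and cannot be discharged by pointing at an unexecuted complexification of a proof you have not displayed; in particular, nothing in your argument rules out a complex solution family with no real form. The portions you do argue are sound in outline and consistent with the paper's usage: the reduction to a conic via the observation that $P$ is a critical point of $R|_{L_P}$ (so $\gamma$ is an integral curve of the foliation $R=const$), the identification of 2b1) with 2b2) and of 2c1) with 2c2) through the splitting of $z^2+1$ over $\cc$, and the separation of the surviving types by the degree and factorization type of the singular members of the associated pencil. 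But as written, the theorem is being re-cited rather than re-proved, and a referee would send you back to \cite{grat} for the case analysis.
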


\subsection{Previous results 2: classification of rationally $0$-homogeneously integrable projective billiards on one curve}

The notion of rationally $0$-homogeneously integrable projective billiard also makes sense  
for a projective billiard structure on an arc of planar curve $C$ (or a germ of curve), with projective billiard flow 
defined in a (germ of) domain adjacent to $C$.  

\begin{remark} \label{remoside}
The property of  rational $0$-homogeneous 
integrability of a projective billiard on a 
 curve $C$  is independent on  the side from $C$  on which the billiard domain is chosen: 
an integral for one side is automatically an integral for the other side. See \cite[proposition 1.23, statement 2)]{grat}. 
\end{remark}

 \begin{theorem} \label{tgermpr} \cite[theorem 1.26]{grat} Let $C\subset\rr^2_{x_1,x_2}$ be a nonlinear $C^4$-smooth germ of 
 curve equipped with a transversal line field $\mcn$. 
  Let the corresponding germ of projective billiard be $0$-homogeneously rationally integrable. 
 Then $C$ is a conic;  the line field $\mcn$ extends to a global analytic transversal line field   on all of $C$  punctured in at most four points;  
the corresponding projective billiard has one of the following types up to projective equivalence.
 
 1) A dual pencil type projective billiard.

2)  $C=\{x_2=x_1^2\}\subset\rr^2_{x_1,x_2}\subset\rp^2$, and 
the  line field $\mcn$  is directed by one of the following vector fields at points of the conic $C$: 
 \smallskip
 
2a) \ \ \ \ \ $(\dot x_1,\dot x_2)=(\rho,2(\rho-2)x_1)$, 
$$\rho=2-\frac2{2N+1} \text{ (case 2a1), \ or }  \ \ \rho=2-\frac1{N+1} \text{ (case 2a2), }  \ \ N\in\mathbb N,$$
 the  vector field 2a) has quadratic first integral $\mcq_{\rho}(x_1,x_2):=\rho x_2-(\rho-2)x_1^2.$

2b1) \ $(\dot x_1, \dot x_2)=(5x_1+3,
2(x_2-x_1))$, \ \ \ 2b2)  \ $(\dot x_1,\dot x_2)=(3x_1, 2x_2-4)$, 
\smallskip

2c1) \ $(\dot x_1, \dot x_2)=(x_2, 
x_1x_2-1)$, \ \ \ \ \ \ 2c2) \ $(\dot x_1, \dot x_2)=(2x_1+1, 
x_2-x_1)$.

2d) \ $(\dot x_1, \dot x_2)=(7x_1+4, 2x_2-4x_1)$.
\end{theorem}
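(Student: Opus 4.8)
The plan is to deduce Theorem \ref{tgermpr} from Theorem \ref{tgerm} (its addendum included) by projective duality, translating the classification of rationally integrable dual billiard structures on a conic into the classification of rationally $0$-homogeneously integrable projective billiard structures on a conic. First I would recall the precise correspondence: to the germ of projective billiard $(C,\mcn)$ one associates the projective dual curve $\gamma=C^*\subset(\rp^2)^*$, and the transversal line field $\mcn$ along $C$ dualizes to a dual billiard structure $\{\sigma_P\}$ along $\gamma$, where $\sigma_P$ is the projective involution of the tangent line $L_P$ fixing $P$ and the point dual to $\mcn(Q)$ (here $P$ is dual to $T_QC$). The key analytic fact, which I would quote from \cite{grat} (proposition 1.23), is that a rational $0$-homogeneous integral of the projective billiard flow, viewed as a rational function of $(v_1,v_2,\Delta)$ with $\Delta=x_1v_2-x_2v_1$, is literally the same object (in homogeneous coordinates on $(\rp^2)^*$) as a rational integral of the dual billiard in the sense of \eqref{ratint}. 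Thus $(C,\mcn)$ is rationally $0$-homogeneously integrable if and only if the dual billiard $(\gamma,\{\sigma_P\})$ is rationally integrable; and by Remark \ref{remoside} this is independent of the choice of side.

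Granting this dictionary, the structural conclusions follow immediately: by Theorem \ref{tgerm}, $\gamma=C^*$ is a conic, hence $C$ is a conic; the extension of the dual billiard structure to the whole conic punctured in at most four points dualizes to the asserted extension of $\mcn$; and the trichotomy of dual billiard types (conical pencil type; the family 2a); the cases 2b), 2c), 2d)) dualizes term by term. It then remains to carry out the explicit computation showing that each normal form for $\sigma_P$ in Theorem \ref{tgerm} corresponds, under duality, to the stated vector field directing $\mcn$. Concretely: in the chart where $\gamma=\{w=z^2\}$ the dual conic is $C=\{x_2=x_1^2\}$ in a dual affine chart $\rr^2_{x_1,x_2}$, and the involution $\sigma_P$ fixing $P$ and a second point on $L_P$ determines, via harmonicity, the line $\mcn(Q)$; differentiating the fixed-point-plus-harmonic-conjugate data in the parameter $z_0$ produces the direction $(\dot x_1,\dot x_2)$. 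For type 1) (conical pencil type) the dual is the dual pencil type projective billiard of Example \ref{projdp}, giving case 1). For type 2a), the Möbius involution $\eta_\rho$ in the coordinate $\zeta=z/z_0$ dualizes to the affine line field $(\dot x_1,\dot x_2)=(\rho,2(\rho-2)x_1)$ with integral $\mcq_\rho$; for types 2b1), 2b2), 2c1), 2c2), 2d), the involutions \eqref{sigmaef} with $f=f_{b1},f_{b2},f_{c1},f_{c2},f_d$ dualize to the five explicit vector fields listed in 2b1)–2d). The admissible values of $\rho$ in \eqref{rhoval} transfer verbatim to cases 2a1), 2a2).

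The main obstacle is purely computational bookkeeping rather than conceptual: setting up the duality chart correctly (which affine chart on $(\rp^2)^*$ corresponds to $\rr^2_{x_1,x_2}$, and how the parametrization $z\mapsto(z,z^2)$ of $\gamma$ relates to the parametrization $x_1\mapsto(x_1,x_1^2)$ of $C$), and then checking that the harmonic-conjugate condition defining $\mcn(Q)$ from $\sigma_P$, when differentiated, yields exactly the claimed vector fields — including matching the quadratic integral $\mcq_\rho(x_1,x_2)=\rho x_2-(\rho-2)x_1^2$ against the integrals \eqref{exot1}, \eqref{exot2}. One also has to verify the "extends to at most four punctures" claim at the dual level, i.e. that the base points of the relevant pencils (at most four) are exactly where the dual structure fails to extend; this is the place where one must be careful that the duality is a diffeomorphism of the conics away from these finitely many points. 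Once the chart is fixed, each of the six sub-cases reduces to a short rational-function identity, and these I would not write out in full here.
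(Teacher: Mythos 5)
Your proposal matches the paper's route: Theorem \ref{tgermpr} is recalled from \cite{grat} precisely as the projective dual of Theorem \ref{tgerm}, obtained via the correspondence between rational $0$-homogeneous integrals in $(v_1,v_2,\Delta)$ and rational integrals of the dual billiard (\cite[propositions 1.23, 1.24]{grat}, restated here as Proposition \ref{proint}), followed by the explicit chart computation sending $\{w=z^2\}$ to $\{x_2=x_1^2\}$ (cf.\ Proposition \ref{produf} and \cite[claim 14, subsection 9.4]{grat}). The argument and the dictionary you describe are exactly those used there.
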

{\bf Addendum to Theorem \ref{tgermpr}.} {\it The projective billiards from Theorem \ref{tgermpr} have the 
following $0$-homogeneous rational integrals:

Case 1): A ratio of two homogeneous quadratic polynomials in $(v_1,v_2,\Delta)$, 
$$\Delta:=x_1v_2-x_2v_1.$$
Case 2a1), $\rho=2-\frac2{2N+1}$:
\begin{equation}\Psi_{2a1}(x_1,x_2,v_1,v_2):=\frac{(4v_1\Delta-v_2^2)^{2N+1}}{v_1^2\prod_{j=1}^N(4v_1\Delta-c_jv_2^2)^2}. \label{r2a1v}\end{equation} 

Case 2a2), $\rho=2-\frac1{N+1}$:
\begin{equation}\Psi_{2a2}(x_1,x_2,v_1,v_2)=\frac{(4v_1\Delta-v_2^2)^{N+1}}{v_1v_2\prod_{j=1}^N(4v_1\Delta-c_jv_2^2)}. \label{r2a2v}\end{equation}
The $c_j$ in (\ref{r2a1v}), (\ref{r2a2v}) are the same, as in (\ref{exot1}) and (\ref{exot2}) respectively.

Case 2b1): 
 \begin{equation}\Psi_{2b1}(x_1,x_2,v_1,v_2)=\frac{(4v_1\Delta-v_2^2)^2}{(4v_1\Delta+3v_2^2)(2v_1+v_2)(2\Delta+v_2)}.\label{r2b1v}\end{equation}
 
 Case 2b2): 
 \begin{equation}\Psi_{2b2}(x_1,x_2,v_1,v_2)=\frac{(4v_1\Delta-v_2^2)^2}{(v_2^2+4\Delta^2+
4v_1\Delta+4v_1^2)(v_2^2+4v_1^2)}.
 \label{r2b2v}\end{equation}
 
 \begin{equation}\text{Case 2c1): } \ \ \ \ \ \ \  \ \ \ \ \Psi_{2c1}(x_1,x_2,v_1,v_2)=\frac{(4v_1\Delta-v_2^2)^3}{(v_1^3+\Delta^3+
 v_1v_2\Delta)^2}.\label{r2c1v}\end{equation}
 
 Case 2c2): 
  \begin{equation}\Psi_{2c2}(x_1,x_2,v_1,v_2)=\frac{(4v_1\Delta-v_2^2)^3}{(v_2^3+2v_2^2v_1+(v_1^2+2v_2^2+5v_1v_2)\Delta+v_1\Delta^2)^2}.
 \label{r2c2v}\end{equation}
 
 Case 2d):}  $\Psi_{2d}(x_1,x_2,v_1,v_2)$
 \begin{equation}=\frac{(4v_1\Delta-v_2^2)^3}{(v_1\Delta+2v_2^2)(2v_1+v_2)(8v_1v_2^2+2v_2^3+(4v_1^2+5v_2^2+28v_1v_2)\Delta+16v_1\Delta^2)}.
 \label{r2dv}\end{equation}

 \subsection{Main results: classification of rationally integrable planar dual multibilliards with $C^4$-smooth curves} 
 Each curve of a rationally integrable dual multibilliard is a conic, being itself an integrable dual billiard, see 
 Theorem \ref{tgerm}. 
 The first results on classification of rationally integrable dual multibilliards presented below deal  with those multibilliards whose curves are conics lying in one pencil, 
 equipped with dual billiard structure defined by the same pencil. They state that its vertices should be  
  admissible for the pencil. To define admissible vertices, let us first introduce the following definition.

 \begin{definition} \label{extypes}  A {\it projective angular symmetry} centered at a point $A\in\cp^2$ 
 is a non-trivial projective involution  
 $\sigma_A:\cp^2\to\cp^2$ fixing $A$ and each line through $A$. It is known to have a fixed point line $\La\subset\cp^2$ disjoint from $A$. Its restrictions to lines throughs $A$  define  
 a dual billiard structure at $A$. 
 \end{definition}
 
\begin{example} \label{extypes2}
 Let now $A\in\cp^2$ and let $\mcs\subset\cp^2$ be a (may be singular) conic disjoint from $A$. 
 There exists a projective angular symmetry centered at $A$ and permuting the intersection points 
 with $\mcs$ of each line through $A$, called {\it $\mcs$-angular symmetry,} 
see \cite[definition 2.4]{gl2}.  
\end{example}

\begin{definition} \label{extypes3} 
 Let now $A\in\cp^2$, $\mcs\subset\cp^2$ be a regular conic through $A$, and  $L_A$ the projective 
tangent line to $\mcs$ at $A$. The {\it degenerate $\mcs$-angular symmetry} centered at $A$ is 
 the involution $\sigma_A=\sigma_A^{\mcs}$ acting on the complement $\cp^2\setminus(L_A\setminus\{ A\})$ 
 that fixes $A$, fixes each line $\ell\neq L_A$ through $A$ and whose restriction to $\ell$ is the projective involution 
fixing $A$ and the other point of the intersection $\ell\cap\mcs$. It is known to be a birational 
map\footnote{The projective angular symmetry and the degenerate $\mcs$-angular symmetry belong to the well-known class of 
birational involutions in the Cremona group, called {\it de Jonqui\`eres involutions.}  
Each of them is an involution fixing all but a finite number of lines   through a given point  
$A\in\cp^2$.  See, e.g.,  \cite[p. 422, example 3.1]{blanc}.} $\cp^2\to\cp^2$.  
\end{definition}
\begin{definition} A dual billiard structure at a point $A\in\cp^2$ is called {\it global (quasi-global)} if it is given by a projective 
angular symmetry (respectively, degenerate $\mcs$-angular symmetry) centered at $A$. 
\end{definition}
    
 \begin{definition} \label{multip} Consider a  complex pencil of conics in $\cp^2$. 
 A vertex, i.e., a point of the ambient plane equipped with a complex dual billiard structure, is called 
 {\it admissible} for the pencil, if it belongs 
  to the following list of vertices split in two types:   {\it standard}, or {\it skew}.
 
 Case a):  a pencil of conics  through 4 distinct  points $A$, $B$, $C$, $D$,  see 
 Fig. \ref{fig2},  which will be referred to as a {\it non-degenerate} pencil. 
 
 a1) The {\it standand vertices:} $M_1=AB\cap CD$, $M_2=AD\cap BC$, $M_3=AC\cap BD$ 
 equipped with the global dual billiard  structure given by the projective angular symmetry  
 $\sigma_{M_j}=\sigma_{M_j}^{M_iM_k}$,   $i,k\neq j$, $i\neq k$, centered at $M_j$ with fixed point line $M_iM_k$.

 a2) The {\it skew vertices} $K_{EL}$, numerated by unordered pairs of points
  $E,L\in\{ A,B,C,D\}$, $E\neq L$: $K_{EL}$ is the intersection point 
 of the line $EL$ with the line $M_iM_j$ such that $M_i,M_j\notin EL$.
   The involution $\sigma_{K_{EL}}$ is the projective angular symmetry $\rrp^2\to\rrp^2$ centered at $K_{EL}$ 
   with fixed point line     $ST$, $\{ S,T\}=\{ A,B,C,D\}\setminus\{ E,L\}$.
    \begin{SCfigure}
    \centering
    \caption{}\label{fig2}
   \epsfig{file=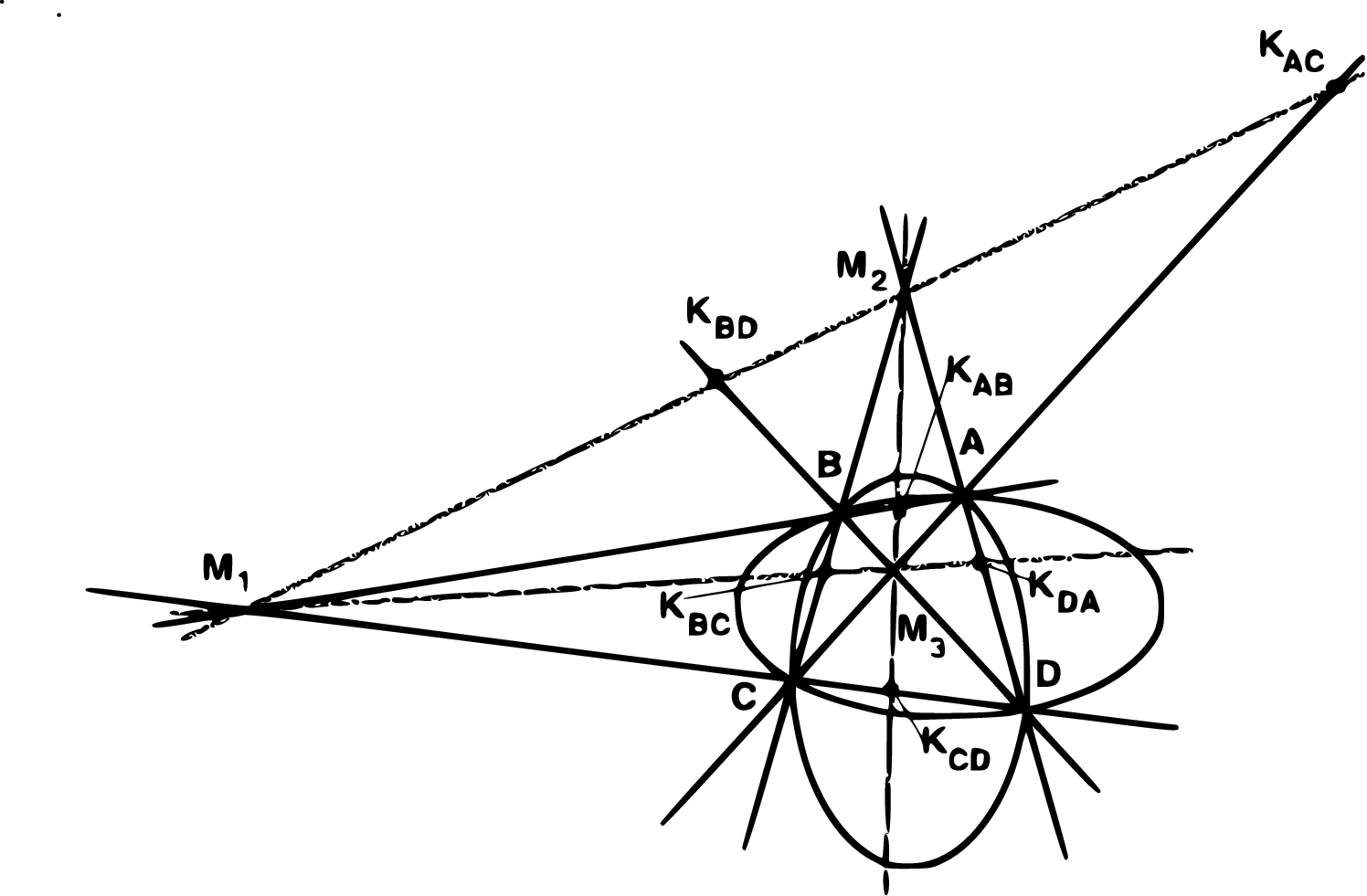, width=25em}\hspace{0.5cm}
\end{SCfigure}

Case b): a pencil of conics through 3 points $A$, $B$, $C$  tangent at the point $C$ to the same line $L$. See Fig. \ref{fig3}.

b1) One {\it standard vertex} $M=AB\cap L$, equipped with the projective angular symmetry 
$\sigma_M:\rrp^2\to\rrp^2$ centered at $M$ with fixed point line $CK_{AB}$. The point $K_{AB}$ is defined as follows. 

b2) The {\it skew vertex} $K_{AB}\in AB$ such that the projective involution 
$AB\to AB$ fixing $M$ and $K_{AB}$ permutes $A$ and $B$. That is, the points $M$, $K_{AB}$, $A$, $B$ 
form a harmonic quadruple. The dual billiard structure at  $K_{AB}$ is  
 given by the projective angular symmetry $\sigma_{K_{AB}}:\rrp^2\to\rrp^2$ centered at $K_{AB}$ with fixed point 
  line $L$.

b3) The {\it skew vertex} $C$ equipped with the projective angular symmetry $\sigma_C=\sigma_C^{AB}:\rrp^2\to\rrp^2$ 
centered at  $C$ with fixed point line $AB$. 
 
b4) The {\it skew vertex} $C$ equipped with a degenerate $\mcs$-angular symmetry  $\sigma_C=\sigma_C^\mcs$ 
centered at $C$, defined by arbitrary given regular conic $\mcs$ of the pencil; see Definition \ref{extypes3}. 
This yields a one-parametric family of  
quasi-global dual billiard structures at $C$. 
\begin{SCfigure}
\centering
\caption{}\label{fig3}
   \epsfig{file=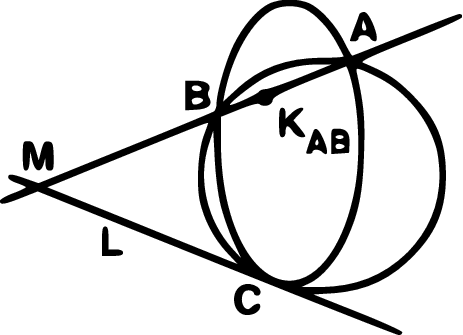, width=10em}
\end{SCfigure}

Case c): a pencil of conics through two given points $A$ and $C$ that are tangent at them to two given lines 
$L_A$ and $L_C$ respectively. See Fig. \ref{fig4}.

c1) {\it Standard vertices:} $M=L_A\cap L_C$ and any point $M'\in AC$, $M'\neq A,C$.
The vertex $M$ is equipped with the projective angular symmetry $\sigma_M$ centered at $M$ 
with fixed point line $AC$. The vertex  $M'$ is equipped with the  $(L_A\cup L_C)$-angular symmetry 
centered at $M'$,  which permutes the  intersection points of each line 
through $M'$  with the lines $L_A$ and $L_C$. 

c2) {\it Skew vertices equipped with global dual billiard structures:} the points $A$ and $C$. 
The dual billiard structure at $A$ ($C$) is the projective angular symmetry 
centered at $A$ ($C$) with fixed point line $L_C$ (respectively, $L_A$). 

c3) {\it Skew vertices} $A$ and $C$; $A$ ($C$) being equipped with a degenerate $\mcs_A$ ($\mcs_C$)-angular 
symmetry centered at $A$ ($C$), 
defined by any regular conic $\mcs_A$ ($\mcs_C$) of the pencil. This yields a one-parametric family of 
quasi-global dual billiard 
structures at each one of the vertices $A$, $C$, as in b4). 
\begin{SCfigure}
\centering
\caption{}\label{fig4}
   \epsfig{file=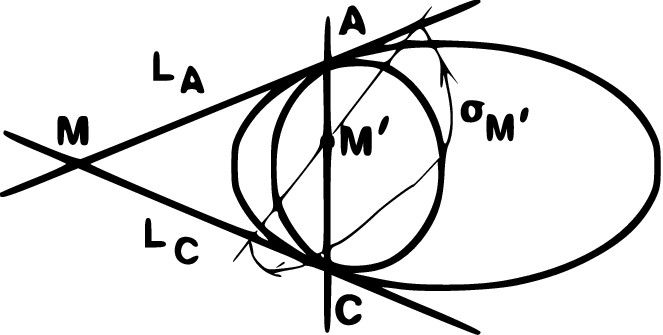, width=15em}
\end{SCfigure}

Case d): pencil of conics through two distinct points $A$ and $B$, 
tangent to each other at $A$ with contact of order three; let $L$ denote their common 
tangent line at $A$. See Fig. \ref{fig5}. 

d1) The {\it skew vertex} $A$,  equipped with a quasi-global dual billiard structure: 
a degenerate $\mcs$-angular symmetry $\sigma_A^\mcs$ centered at $A$  
defined by any   regular  conic $\mcs$ from the pencil.

d2) Any point $C\in L\setminus\{ A\}$, called a {\it skew vertex}, equipped with a 
projective angular symmetry $\sigma_C$ centered at $C$ with fixed point line $AB$. 
\begin{SCfigure}
\centering
\caption{}\label{fig5}
   \epsfig{file=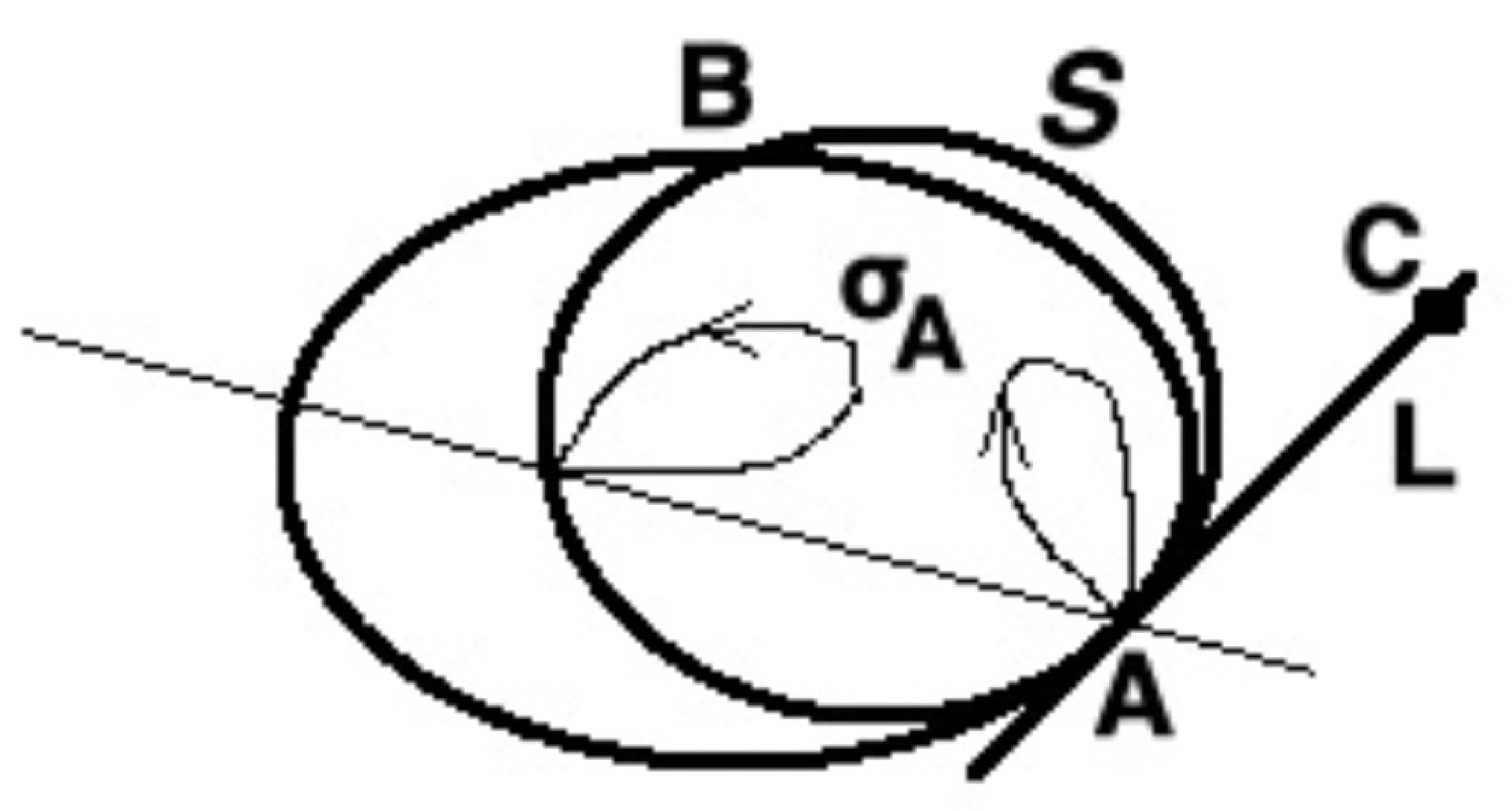, width=15em}
  \end{SCfigure}
\indent Case e): pencil of conics through one given point $A$, tangent to each other with 
contact of order four. See Fig. \ref{fig6}. Let $L$ denote their common tangent line at $A$. 

e1) The {\it skew vertex} $A$ equipped with a degenerate $\mcs$-angular symmetry $\sigma_A^\mcs$ centered at $A$ 
defined by any given regular conic $\mcs$ of the pencil. 

e2) Any point $C\in L\setminus\{ A\}$, called a {\it standard vertex,} equipped with a projective angular symmetry 
$\sigma_C$ centered at $C$. Its fixed point line is the set of those points $D\in\rrp^2$ for which the line $CD$ 
is tangent to the conic of the pencil through $D$ at $D$ (including $D=A$). 
   \begin{SCfigure}
   \centering
   \caption{} \label{fig6}
   \epsfig{file=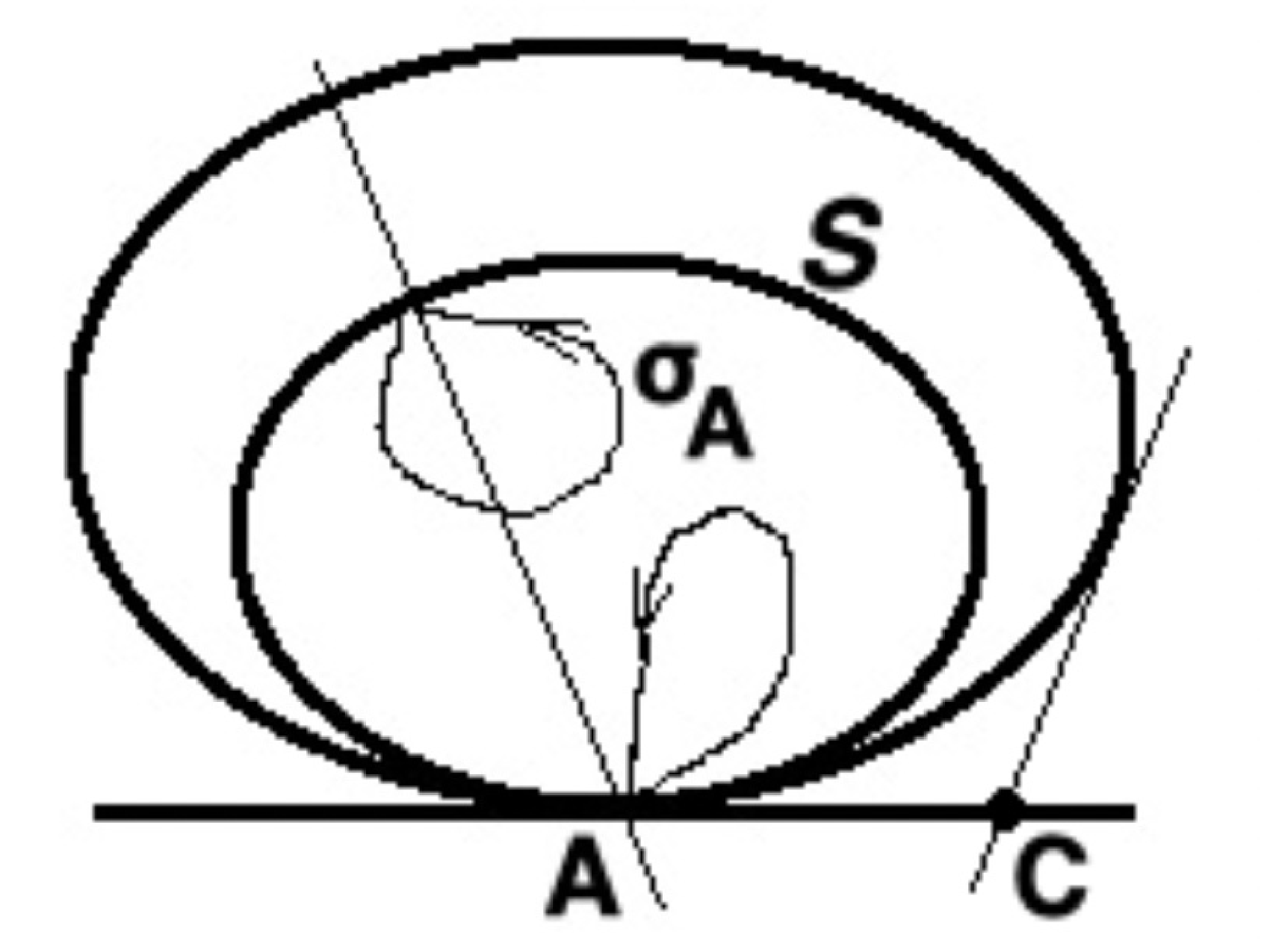, width=12em}  
\end{SCfigure}

\noindent The definition of real (standard or skew) admissible vertex for a real pencil of conics in $\rp^2$ is analogous. 
\end{definition}

\begin{definition} \label{ddist} Consider a dual multibilliard formed by some conics  and maybe by some vertices. 
Let the dual billiard structure at each vertex (if any) be either global, 
or quasi-global, and that on each conic be either of pencil type, or as in Theorem \ref{tgerm}, Case 2). 
We say that its two conics 
(vertices) are {\it distinct,} if  they either are geometrically distinct, or  coincide 
as conics (vertices) but have different dual billiard structures. 
\end{definition}

\begin{definition} \label{multipb} 
A (real or complex) dual multibilliard is said to be {\it of pencil type,} if the following conditions hold. 
 
 1) All its curves are conics lying in one  pencil, and their dual billiard structures 
 are defined by the same pencil. (Case of one conic 
  equipped with a dual billiard structure of pencil type is possible.)  
 
 2) All its vertices are admissible for the pencil.

3) If the multibilliard contains a skew vertex equipped with a quasi-global dual billiard structure 
(which may happen only for a degenerate pencil), then 
it contains no other skew vertex, with the following exceptions: 

- in Case c) the skew vertex collection is allowed to be the pair of vertices $A$ and $C$ 
equipped with quasi-global structures defined by one and the same (but arbitrary) 
regular conic $\mcs=\mcs_A=\mcs_C$ of the pencil; 

- in Case d) the skew vertex collection is allowed to be a pair of vertices $A$ and $C$ defined by 
 any  given regular conic $\mcs$ of the pencil: 
the vertex $A$ is equipped with the quasi-global $\mcs$-dual billiard structure; 
 the vertex $C$ is  the intersection point 
of the line $L$  with the line tangent to $\mcs$ at $B$, equipped with the projective angular symmetry with fixed 
point line $AB$ (it coincides with the $\mcs$-angular symmetry centered at $C$).

4) In Case d) the multibilliard may contain at most one vertex $C\in L\setminus\{ A\}$.
 
5) Each skew admissible vertex that a priori admits several possible dual billiard structures listed in Definition \ref{multip}  
 is allowed to be included in the multibilliard with no more than one dual billiard structure. 
\end{definition}

Well-definedness of the above notion of admissible vertex and pencil type dual multibilliard  in the real case 
is implied by the following proposition. 
\begin{proposition} \label{preal} Consider a real pencil of conics in $\rp^2$ with {\bf complex} base points. 

1) An admissible vertex equipped with a global projective involution is real, if and only if  its   involution is real. A quasi-global involution associated to a skew vertex is real, if and only if the vertex and the fixed point conic $\mcs$  are both real (i.e., invariant under complex conjugation). 

2) Case ot type a). At least one standard vertex $M_j$  is real. (Hovewer in general  $M_j$  
  ($K_{EL}$) are  not necessarily all real.)

3) Case of  type b). 
All  admissible vertices $C$, $M$, $K_{AB}$ are always real. 

4) Case of type c). The standard vertex $M$ is real. But  
the vertices $A$, $C$, $M'$ are not necessarily real.  

5) Case of type d) or e). The  admissible 
vertex $A$ is always real.  
\end{proposition}

 \begin{theorem} \label{thmd1} Let a (real or complex) dual multibilliard on a collection of 
  real $C^4$-smooth (or holomorphic) nonlinear connected  curves $\gamma_j$ and some vertices 
 be rationally integrable. Then the following statements hold. 
 
 1) Each curve $\gamma_j$ is a conic equipped with a dual billiard structure either of pencil type, or as in Theorem 
 \ref{tgerm}, Case 2). 
 
 2) If the multibilliard contains at least two distinct conics (in the sense of  
 Definition \ref{ddist}), then all the conics $\gamma_j$ lie in the same pencil, and the dual billiard structures on them are defined by the same pencil.
 \end{theorem}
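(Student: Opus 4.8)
The plan is to reduce everything to the single-curve case via Theorem \ref{tgerm} and then upgrade from "each conic individually carries a pencil-type structure" to "all conics share one common pencil." First I would observe that statement 1) is essentially immediate: each curve $\gamma_j$, together with the restriction of the global rational integral $R$ of the multibilliard, is itself a rationally integrable dual billiard on a $C^4$-smooth nonlinear connected curve, so Theorem \ref{tgerm} applies verbatim and forces $\gamma_j$ to be a conic with a dual billiard structure of type 1) (conical pencil type) or type 2). So the content is in statement 2).

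For statement 2), the key is that the \emph{same} rational function $R$ is an integral for two distinct conics $\gamma_1,\gamma_2$ simultaneously. By statement 1) applied to $\gamma_1$, there is a pencil $\mcp_1$ (if $\gamma_1$ has pencil-type structure) or an "exotic" structure; the analysis splits on which cases from Theorem \ref{tgerm} can occur. I would argue first that if $\gamma_1$ has an exotic (type 2) structure, the integral $R$ is, up to composition with a Möbius map, forced to have the very rigid form given in the Addendum to Theorem \ref{tgerm} — a single dominant factor $(w-z^2)$ raised to a high power over a product of conics tangent to $\gamma_1$ to high order at the two points at infinity. Such an $R$ has a level-set geometry (a pencil-like web whose leaves are mutually tangent to high order along $L_\infty$) that is incompatible with being simultaneously the integral of a \emph{second} distinct conic $\gamma_2$ carrying any admissible structure: the tangency/contact data at $\gamma_2$ would have to match the same high-order contact pattern at a different location, which a degree count and an examination of the discriminant locus of the web $\{R=\mathrm{const}\}$ rules out. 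Hence both $\gamma_1$ and $\gamma_2$ must have pencil-type structures.

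So suppose $\gamma_1$ and $\gamma_2$ are conics with pencil-type structures associated to pencils $\mcp_1$ and $\mcp_2$ respectively, with $\gamma_i\in\mcp_i$. The defining property of a pencil-type structure (Tabachnikov's observation recalled in the Definition of pencil type) says precisely that for $P\in\gamma_i$ the involution $\sigma_P$ permutes the intersection points of $L_P$ with every conic of $\mcp_i$; equivalently, every conic of $\mcp_i$ gives a quadratic integral, and more invariantly, the integral $R$ must be constant along the $\mcp_i$-leaves, i.e. $R$ is a function of (any) two generators of $\mcp_i$ — in fact $R$ factors through the rational map $\cp^2 \dashrightarrow \cp^1$ defined by the pencil $\mcp_1$, and likewise through the map defined by $\mcp_2$. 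A non-constant rational function cannot factor through two genuinely different pencils of conics unless those pencils coincide: two distinct pencils of conics have as common members at most a bounded number of conics (a pencil is a line in $\mathbb{P}^5 = \mathbb{P}(\mathrm{Sym}^2)$, two distinct lines meet in at most one point), so the generic level set of $R$ — a \emph{curve} when viewed via $\mcp_1$, a union of $\mcp_1$-conics — would also have to be a union of $\mcp_2$-conics, which is impossible for infinitely many levels. Therefore $\mcp_1=\mcp_2=:\mcp$, and since this holds for every pair of distinct conics in the multibilliard, all $\gamma_j$ lie in the common pencil $\mcp$ with dual billiard structures defined by $\mcp$, proving 2).

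The main obstacle I expect is the dichotomy step: rigorously excluding a mixed configuration in which one conic carries an exotic type-2 structure while another carries a pencil-type (or another exotic) structure, both sharing the integral $R$. This requires a careful reading of the explicit integrals in the Addendum to Theorem \ref{tgerm} — in particular identifying the factor $w-z^2$ as the unique (up to the pencil ambiguity) "high-multiplicity" component of the divisor of $R$ on which $\gamma_1$ must sit, and then showing that the second conic $\gamma_2$ would be forced into the same or an incompatible position. Handling the finitely many exotic types and the case where $\gamma_1=\gamma_2$ geometrically but with different structures (Definition \ref{ddist}) is where most of the real case-work lies; everything else is the clean "factor-through-a-pencil" argument above.
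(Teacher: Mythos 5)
Your overall strategy coincides with the paper's: statement 1) is indeed immediate from Theorem \ref{tgerm}, and for statement 2) the paper also argues that the level foliation of the integral is forced to be a single pencil of conics containing all the $\gamma_j$. Your "two distinct pencils cannot share infinitely many members" argument at the end is correct and is essentially how the paper concludes. However, the proposal leaves genuine gaps at exactly the places where the real work is done, and in each case the missing step is nontrivial.

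First, you treat the statement "the integral $R$ must be constant along the $\mcp_i$-leaves" as a direct consequence of the definition of pencil-type structure. It is not: the definition only says each involution $\sigma_P$ permutes the two intersection points of $L_P$ with each conic of the pencil, and invariance of $R$ under $\sigma_P$ on tangent lines does not formally imply constancy of $R$ on the conics. The paper proves this (Proposition \ref{proconst}) by iterating the dual billiard dynamics near $\gamma$ to produce, on a single leaf of the canonical foliation, more points with a common $\Psi$-value than B\'ezout allows unless $\Psi$ is constant on the leaf. Second, and more seriously, your exclusion of the mixed/exotic configurations is only a gesture ("a degree count and an examination of the discriminant locus ... rules out"). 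The paper's actual mechanism is Lemma \ref{lirr}: for each exotic canonical integral one proves (via Newton diagrams and quasihomogeneity, case by case) that the generic level curve is \emph{irreducible} of degree $d=\deg R$, that $\gamma$ is a leaf of multiplicity $\tfrac d2$, and that it is the \emph{unique} nonlinear leaf of that multiplicity. Only then can one say: both foliations $R_1=\mathrm{const}$ and $R_2=\mathrm{const}$ coincide with $\Psi=\mathrm{const}$, so two geometrically distinct conics would be two multiplicity-$\tfrac d2$ leaves of one exotic foliation, which is impossible; hence the foliation is a pencil and $d=2$. Without the irreducibility and uniqueness statements your argument has no teeth. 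Third, Definition \ref{ddist} allows two "distinct conics" that coincide geometrically but carry different structures; you flag this case but do not treat it. The paper's idea there is that the composition $\sigma_{P,1}\circ\sigma_{P,2}$ of the two involutions on a common tangent line is a parabolic transformation with infinite orbits, forcing $\Psi$ to be constant on every tangent line and hence globally constant — a contradiction. That specific idea (parabolicity of the composition of two distinct involutions fixing the same point) is absent from your proposal and would need to be supplied.
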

 
 \begin{theorem} \label{thmd12}
 Let in a dual multibilliard all the curves be conics lying in the same pencil. Let they be 
 equipped with the dual billiard structure defined by the same pencil. (Case of 
 one conic equipped with a pencil type dual billiard structure is possible.) 
 Then the multibilliard is rationally integrable, if and only if it is of pencil type, see 
  Definition \ref{multipb}.
 \end{theorem}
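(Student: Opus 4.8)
We argue the two implications separately. Fix conics $\{F=0\}$ and $\{G=0\}$ spanning the pencil $\mcp$, and write $\lambda=F/G$ for the associated pencil coordinate, so that the members of $\mcp$ are the level curves of the rational function $\lambda$ on $\rp^2$ (or $\cp^2$). The first, reductive step is to show that \emph{every} rational integral $R$ of such a multibilliard has the form $R=h(\lambda)$ for a rational function $h$ of one variable. Indeed, along a tangent line $L_P$ to any curve $\gamma_j$ the involution $\sigma_P$ permutes the pairs $L_P\cap\mcc$, $\mcc\in\mcp$, by definition of the pencil-type structure (Example \ref{exdualint}), so $R|_{L_P}$ is constant on these pairs and therefore factors through $\lambda|_{L_P}$; letting $P$ run over $\gamma_j$ and using that the chord dynamics cut out on a generic member $\mcc\in\mcp$ by the tangent lines of $\gamma_j$ has dense orbits, $R$ is constant on $\mcc$ for generic $\mcc$, whence $R=h(\lambda)$. (One may also invoke here the structural description of the rational integrals of a pencil-type dual billiard on a single conic.) Since conversely any $h(\lambda)$ is automatically an integral along the $\gamma_j$, the whole problem now concerns the vertices: the multibilliard is rationally integrable if and only if there is a non-constant rational $h$ with $h(\lambda)|_\ell$ invariant under $\sigma_{Q,\ell}$ for every vertex $Q$ and every admissible line $\ell\ni Q$.

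Next I would show that each vertex imposes exactly one involution of the parameter line. If $Q$ is not a base point of $\mcp$, then $\lambda|_\ell\colon\ell\to\cp^1$ has degree two with deck involution the pencil-pair swap $\tau_\ell$; if $Q$ is a base point, then on a generic $\ell\ni Q$ one zero is absorbed at $Q$, so $\lambda|_\ell\colon\ell\xrightarrow{\sim}\cp^1$ is an isomorphism, and $\lambda(Q)=\mu_Q\in\cp^1$ is a value \emph{independent of $\ell$} (the parameter of the member of $\mcp$ singular at $Q$, respectively the value forced by the tangent pencil). A projective angular symmetry, or degenerate $\mcs$-angular symmetry, whose fixed structure is adapted to the pencil in the way prescribed by the list of admissible vertices in Definition \ref{multip} — the self-polar triangle of a four-point pencil and its degenerations, and the base points carrying degenerate $\mcs$-symmetries — then descends, via $\lambda|_\ell$, to a single well-defined involution $\iota_Q$ of the parameter line $\cp^1_\mu$, the same for all admissible $\ell$. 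A standard vertex gives $\iota_Q=\mathrm{id}$ (its fixed line is the common polar of $Q$, so it swaps every pencil pair) and imposes no constraint on $h$; a skew or quasi-global vertex gives a genuine involution $\iota_Q\neq\mathrm{id}$.

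For the implication ``pencil type $\Rightarrow$ rationally integrable'': conditions 3)--5) of Definition \ref{multipb} are designed precisely so that the finite collection of involutions $\{\iota_Q\}$, over all vertices of the multibilliard, generates a \emph{finite} subgroup $\Gamma\subset\mathrm{PGL}_2$ of the M\"obius group of $\cp^1_\mu$ — cyclic of order two, dihedral, or of order $12$, matching the three degrees $2,4,12$ of Theorem \ref{thdeg}. A non-constant $\Gamma$-invariant rational function $h$ then exists, and $R:=h(\lambda)=h(F/G)$ is the required integral (when there are no skew or quasi-global vertices one simply takes $h=\mathrm{id}$, $R=F/G$). Conversely, for ``rationally integrable $\Rightarrow$ pencil type'': by the first paragraph an integral is $h(\lambda)$ with $h$ non-constant, and then for each vertex $Q$ the rigidity of the requirement ``$\sigma_{Q,\ell}$ preserves the $h$-fibers of $h\circ\lambda|_\ell$ for all $\ell$, with no regularity in $\ell$'' forces $Q$ to be one of the admissible vertices (so that the descended $\iota_Q$ is well defined and $\ell$-independent) and $\sigma_Q$ to be the corresponding (quasi-)global map; moreover $h$ must be invariant under every $\iota_Q$, so $\langle\iota_Q\rangle$ is finite, which translates exactly into the combinatorial restrictions 3)--5). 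The real case follows from the complex one together with Proposition \ref{preal}.

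I expect the main obstacle to be the vertex rigidity in the converse direction: because the family $\ell\mapsto\sigma_{Q,\ell}$ is assumed to carry no regularity, one must extract, purely from fiber preservation on each individual line, both the precise position of $Q$ relative to the base points of $\mcp$ and the shape of the involution $\sigma_Q$, and then show that the involutions $\iota_Q$ produced by several vertices cannot generate an infinite M\"obius group unless the configuration already appears among the exceptions in conditions 3)--5). Equivalently, on the constructive side the delicate point is to pin down exactly which admissible-vertex collections make $\langle\iota_Q\rangle$ finite, i.e.\ to prove the chain of equivalences ``of pencil type $\Longleftrightarrow$ $\{\iota_Q\}$ generates a finite M\"obius group $\Longleftrightarrow$ a non-constant rational $h$ with $R=h(F/G)$ exists''.
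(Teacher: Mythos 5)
Your outline follows the paper's route almost exactly: reduce any rational integral to a function of the pencil parameter, let each vertex act on the parameter line $\oc$ by a M\"obius involution, and translate conditions 3)--5) of Definition \ref{multipb} into finiteness of the group generated by these involutions (this is precisely the chain Propositions \ref{proconst}, \ref{glopro}, \ref{progroup}, \ref{pratint}, \ref{provert} in the paper, including the construction $\prod_{g\in G}g\circ\lambda$ in the forward direction). But two load-bearing steps are only named, not proved, and you yourself flag the first as the main obstacle. (i) In the converse direction one must pass from the raw datum --- a family $\sigma_{Q,\ell}$ of involutions carrying no regularity in $\ell$ --- to a single birational involution $\sigma_Q$ of $\cp^2$ preserving the pencil, then classify $\sigma_Q$ as a projective angular symmetry or a degenerate $\mcs$-angular symmetry with $\mcs$ in the pencil, and finally run the case analysis (pencil types a)--e); $Q$ a base point or not; the pencil member through $Q$ regular, a line pair through $Q$, a line pair not through $Q$, or a double line) showing that $Q$ is admissible with the prescribed involution. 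In the paper this occupies Proposition \ref{altpr} (analytic continuation of the involution family using a fixed nonlinear level curve, with monodromy ruled out because two distinct involutions fixing $A$ would compose to a parabolic map forcing $\Psi$ constant on lines through $A$), Lemma \ref{alt2} and Proposition \ref{provert}; none of this is automatic, and your proposal offers no substitute. (ii) The equivalence ``conditions 3)--5) $\Longleftrightarrow$ finiteness of $\langle\iota_Q\rangle$'' is asserted as ``designed precisely so that'', but proving it requires identifying exactly which members of the pencil each skew involution fixes or permutes --- e.g.\ that in case a) each $\iota_{K_{EL}}$ fixes one singular conic and permutes the other two, so the group embeds in $S_3$, while two quasi-global vertices attached to distinct regular members share only one fixed parameter value and compose to a parabolic transformation of infinite order.

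Two smaller points. Your justification of $R=h(\lambda)$ via ``dense orbits'' of the chord dynamics is not valid as stated: orbits need not be dense (rational rotation number), and in the complex setting there is no such dynamics; the paper's Proposition \ref{proconst} instead iterates the reflection near a tangency point to produce more common points of a level curve of $R$ and a level curve of $\Psi$ than B\'ezout permits. Also, the finite groups that actually occur are the trivial group, $\zz_2$ and $S_3$ (order $6$), the minimal integral having degree $2|G|\in\{2,4,12\}$; your list ``cyclic of order two, dihedral, or of order $12$'' misstates this.
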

 \begin{theorem} \label{thdeg} 
 The minimal degree of a rational integral of a pencil type 
 multibilliard is  
 
 (i) degree two, if it contains no skew vertices;
 
 {\bf (ii)  degree 12,} if the pencil has type a) and the multibilliard contains a pair of  
{\bf neighbor} skew vertices: namely, a pair of vertices of type $K_{EL}$ and $K_{ES}$ 
 for some three distinct $E, L, S\in\{ A, B, C, D\}$.
 
 (iii) degree four in any other case.
  \end{theorem}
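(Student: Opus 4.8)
The plan is to compute, in each of the three cases, the minimal degree of a rational function on $\cp^2$ that restricts to an invariant on every tangent line to every conic of the multibilliard and on every line through every admissible vertex. By Theorem \ref{thmd12} such integrals exist; the content here is the sharp lower bound on their degree. First I would normalize: a pencil-type dual billiard on conics alone has the quadratic integral from Example \ref{exdualint} (a ratio of two members of the pencil, viewed as quadratic forms), which settles the lower bound in case (i), since a nonconstant rational integral cannot have degree one (its level curves would be lines, incompatible with the involutions permuting two points on a generic tangent line to a conic). So the real work is cases (ii) and (iii), where skew vertices are present.

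The key structural observation is that an integral $R$ of the multibilliard is in particular an integral of the dual billiard on each conic and is invariant under each vertex involution $\sigma_{Q,\ell}$. For a standard vertex the involution $\sigma_Q$ is a global projective angular symmetry, so invariance forces $R\circ\sigma_Q=R$ as rational functions on $\cp^2$; similarly for a skew vertex with a global structure. Thus $R$ must be invariant under the (finite) group $G$ generated by all the vertex involutions together with the ``pencil holonomy'' constraint coming from the conics. I would identify this group explicitly from the combinatorics of the pencil: in case a) the three standard involutions $\sigma_{M_1},\sigma_{M_2},\sigma_{M_3}$ generate a Klein four-group, and adjoining a single skew $\sigma_{K_{EL}}$ keeps the group small, but adjoining two \emph{non-opposite} skew involutions $\sigma_{K_{EL}},\sigma_{K_{ES}}$ generates a substantially larger finite group — one whose ring of invariant rational functions of lowest degree is generated in degree $12$. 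Concretely I would bring the pencil to the Fermat-type normal form $\{x^2+y^2+z^2=0\}\cup\{xy=0\}$ (base points the standard coordinate configuration), write the involutions as signed coordinate-permutation maps, and recognize the resulting group as (a central extension of) the symmetric or the $2^2\rtimes S_3$ / octahedral-type group acting on $\cp^2$; its algebra of invariants is classical (Klein), with fundamental invariants of degrees forcing the minimal ratio to have degree $12$. This is the main obstacle: pinning down the exact finite group and quoting or re-deriving its invariant theory precisely enough to exclude any invariant ratio of degree $2,4,6,8,10$ while exhibiting one of degree $12$ — the latter being already written down in Theorems \ref{proformint}, \ref{tintpr} and Lemma \ref{lintpr}, which I would invoke for the existence half.

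For case (iii) — a single skew vertex (or, in Cases b)--e), the skew vertices that occur there, including the quasi-global ones at $C$, $A$) — the generated group is of order $4$ (or $8$), and its lowest-degree rational invariants are quartic: starting from the quadratic $R_0$ that is the pencil integral, the extra involution $\sigma$ sends $R_0$ to another quadratic ratio $R_0'$, and while $R_0$ itself need not be $\sigma$-invariant, the symmetric combination $R_0+R_0'$ (equivalently, a quadratic expression in $R_0$ invariant under the induced Möbius action of $\sigma$ on the $R_0$-line) is a rational function of degree $4$ and is a genuine integral; one then checks no degree-$2$ invariant survives because $\sigma$ acts on the pencil of level conics of $R_0$ as a nontrivial involution with generically two fixed members, so any $\sigma$-invariant rational function constant on those level conics would be constant. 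For the quasi-global cases I would use that a degenerate $\mcs$-angular symmetry is birational of bidegree $(2,2)$, so pulling back the pencil integral already doubles the degree, and the invariant combination lands in degree $4$; the $\mcs$-parameter does not lower this. Finally I would verify that the three cases are exhaustive and mutually exclusive: ``no skew vertices'' is (i); ``pencil of type a) with two non-opposite skew vertices'' is (ii); every remaining configuration — one skew vertex, or type-a) pencil with only opposite skew vertices $K_{EL},K_{SL}$ whose product again lies in the Klein four-group extended by one generator, or any skew vertex in a pencil of type b)--e) — falls into (iii), and I would double-check the ``opposite'' sub-case by showing $\sigma_{K_{EL}}\sigma_{K_{ES}}$ with $\{E,L\}\cup\{E,S\}$ missing only because $L=S$ is not the relevant one, i.e. opposite means the two skew vertices share the \emph{complementary} edge, so their product lies in the small group and the quartic bound holds. $\Box$
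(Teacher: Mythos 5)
Your proposal gets the existence half right (degree 12 via Theorem \ref{proformint}, degree 4 via the symmetrized quartic), but the lower bound in case (ii) --- that no integral of degree $2,4,\dots,10$ exists --- is left as an acknowledged ``main obstacle,'' and the route you sketch for it would not close the gap. You propose to identify the group generated by the vertex involutions inside $PGL_3(\cc)$ and invoke Klein-type invariant theory; but that group is not the object controlling the degree, and its invariant theory alone cannot give the bound, because the decisive constraint is not invariance under a finite subgroup of $PGL_3$ but constancy of the integral on every conic of the pencil (Proposition \ref{proconst}). The paper's mechanism is: any integral $\Psi$ is constant on each conic of the pencil, hence constant on the union of the conics whose parameters $\la\in\oc$ lie in a single orbit of the group $G\subset PSL_2(\cc)$ induced on the \emph{parameter line} of the pencil by the vertex involutions (Propositions \ref{glopro}, \ref{progroup}). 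Standard vertices act trivially on $\oc$; each skew vertex of a type a) pencil acts by the transposition of the three parameters $\la_1,\la_2,\la_3$ of the singular conics fixing the one attached to it, and opposite skew vertices induce the \emph{same} transposition (since $\sigma_{K_{CD}}\circ\sigma_{K_{AB}}$ fixes all three singular conics). Hence $G$ is trivial, $\zz_2$, or $S_3$, with $G=S_3$ exactly when two non-opposite skew vertices are present; a generic $G$-orbit has $|G|$ points, so a level curve of $\Psi$ contains $|G|$ conics and $\deg\Psi\ge 2|G|\in\{2,4,12\}$, with equality realized by $\prod_{g\in G}g\circ F$, $F$ the quadratic pencil integral. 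Your ``Klein four-group of standard involutions'' and ``octahedral-type group'' live in $PGL_3$ and are red herrings for the degree count.

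Two secondary points. First, your ``order $4$ (or $8$)'' group in case (iii) is again the wrong group: what matters is the induced action on $\oc$, which is $\zz_2$, giving $2\cdot2=4$ directly; your quartic $R_0+R_0\circ\sigma$ is exactly $h\circ F$ for $h(t)=t+\tau(t)$, $\tau$ the induced M\"obius involution, and nonexistence of a quadratic integral is just that no M\"obius $h$ satisfies $h\circ\tau=h$ with $\tau\neq\mathrm{id}$ (your statement that \emph{any} invariant function constant on the level conics is constant is false as written --- the quartic is one). Second, for the quasi-global (degenerate $\mcs$-angular) vertices the bidegree of the birational involution is irrelevant; what is used is that it still induces a nontrivial conformal involution $\la\mapsto-\la$ of the parameter line (Proposition \ref{invpar}), so the same $2|G|$ count applies.
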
 
  The next theorem yields a formula for integral of degree 12 of 
  pencil type multibilliards for pencils of type a). To state it, let 
  us introduce the following notations. Let $\rp^2_{[y_1:y_2:y_3]}$ 
  denote the ambient projective plane of the multibilliard, 
  considered as the projectivization of the space $\rr^3_{y_1,y_2,y_3}$. 
  For every projective line $X$ 
  let $\pi^{-1}(X)\subset\rr^3$ denote the 
  corresponding two-dimensional subspace. Let $\xi_{X}(Y)$ denote a non-zero linear functional
  vanishing on $\pi^{-1}(X)$. It is well-defined up to constant factor.
  
  \begin{theorem} \label{proformint} Consider a pencil of conics through four distinct base points 
  $A$, $B$,  $C$, $D$. Set $M_1=AB\cap CD$, $M_2=BC\cap AD$, $M_3=AC\cap BD$. 
  
  1) The functionals 
  $\xi_{EL}$ corresponding to the lines $EL$ through  distinct  points $E, L\in\{ A, B, C, D\}$ can be normalized 
  by constant factors so that 
  \begin{equation}\xi_{AB}\xi_{CD}+\xi_{BC}\xi_{AD}+\xi_{AD}\xi_{BC}=0.
  \label{normfunct}\end{equation}
  
  2) If (\ref{normfunct}) holds, then for every $\mu\in\cc\setminus0$ the degree 12 rational function 
  \begin{equation}\prod_{\{ EL; FN\}\neq\{ E'L'; F'N'\}}\left(\frac{\xi_{EL}\xi_{FN}}{\xi_{E'L'}\xi_{F'N'}}(Y)+\mu)\right)
  \label{integr12}\end{equation}
  is a first integral of every pencil type multibilliard defined by the given pencil. Here the product is taken over ordered pairs $(\{ EL; FN\}, \{ E'L'; F'N'\})$  
  of two-line sets  with $\{ E,L,F,N\}=\{ E',L',F',N'\}=\{ A, B, C, D\}$. In  Theorem \ref{thdeg}, Case (ii)  this is a minimal degree integral. 
  \end{theorem}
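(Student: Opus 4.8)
\medskip\noindent\textbf{Proof proposal.} My plan is to identify (\ref{integr12}) as a symmetric expression in the three degenerate conics of the pencil $\mcp$, to check invariance on the conic pieces through a pencil coordinate, and to check invariance at the vertices by a short linear‑algebra argument exploiting the normalization (\ref{normfunct}). For Part 1, set $P_1:=\xi_{AB}\xi_{CD}$, $P_2:=\xi_{AD}\xi_{BC}$, $P_3:=\xi_{AC}\xi_{BD}$; these are the three split degenerate conics of $\mcp$, each vanishing at $A,B,C,D$. Conics through four points in general position form a $2$‑dimensional space of quadratic forms, so $P_1,P_2,P_3$ satisfy a linear relation; no two of them are proportional (their zero loci are three distinct line pairs), so every coefficient of such a relation is nonzero, and rescaling $\xi_{CD},\xi_{BC},\xi_{BD}$ by suitable constants turns it into $P_1+P_2+P_3=0$, which is (\ref{normfunct}). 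This is, up to a scalar, the unique linear relation among $P_1,P_2,P_3$; its reality in the real case is handled as in Proposition \ref{preal}(1).

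For Part 2, the partitions of $\{A,B,C,D\}$ into two pairs being three in number, (\ref{integr12}) has exactly $6$ factors and equals $R:=\prod_{i\ne j}\bigl(\tfrac{P_i}{P_j}+\mu\bigr)$ over ordered pairs $(i,j)$ of $\{1,2,3\}$ with $i\ne j$; clearing denominators, $R=\bigl(\prod_{i\ne j}(P_i+\mu P_j)\bigr)/(P_1P_2P_3)^2$ is a ratio of homogeneous degree‑$12$ polynomials. Using $P_3=-(P_1+P_2)$, every ratio $P_i/P_j$ is a rational function of the single quantity $t:=P_1/P_2=\xi_{AB}\xi_{CD}/(\xi_{AD}\xi_{BC})$, so $R=g(t)$ for an explicit one‑variable rational function $g$, which is non‑constant as soon as $\mu\notin\{0,1\}$ (as one sees by examining $t\to\infty$). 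Since the level sets of $t$ away from $\{A,B,C,D\}$ are exactly the conics of $\mcp$, $t$ is the quadratic integral of the pencil‑type dual billiard on every conic of $\mcp$ (Tabachnikov's observation, Example \ref{exdualint}): for $P$ on such a conic $\gamma_j$ of the multibilliard, $\sigma_P$ permutes the two intersection points of $L_P$ with each conic of $\mcp$, hence preserves $t|_{L_P}$ and therefore $R|_{L_P}=g(t)|_{L_P}$; the finitely many special positions of $P$ (where $L_P$ is tangent to a conic of $\mcp$, or $P$ is a base point) are absorbed by continuity. Thus $R$ is a non‑constant integral of degree $\le 12$, hence by Theorem \ref{thdeg}(ii) of degree exactly $12$ and, in case (ii), of minimal degree.

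It remains to check invariance at the vertices, the crux of the matter. Each admissible vertex $Q$ of a type a) pencil carries a projective angular symmetry $\sigma_Q$, and I claim its underlying involution of $\rp^2$ (or $\cp^2$) permutes $\{A,B,C,D\}$. For a standard vertex, say $M_1=AB\cap CD$ with fixed‑point line $M_2M_3$, this is the classical harmonic property of the complete quadrilateral: on the invariant line $AB$ the involution fixes $M_1$ and $AB\cap M_2M_3$, which are harmonic conjugates with respect to $A$ and $B$, so it swaps $A$ and $B$, and likewise on $CD$, giving the permutation $(AB)(CD)$ (and similarly for $M_2,M_3$). For a skew vertex $K_{EL}$ with fixed‑point line $ST$, $\{S,T\}=\{A,B,C,D\}\setminus\{E,L\}$: the points $S,T$ lie on the fixed‑point line and so are fixed, while on the invariant line $EL$ the involution fixes $K_{EL}$ and $EL\cap ST$, again harmonic conjugates with respect to $E$ and $L$, so $\sigma_Q$ swaps $E,L$ and acts as the transposition $(EL)$. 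In either case $\sigma_Q$ permutes the six lines joining pairs of base points, hence permutes $\{P_1,P_2,P_3\}$ by some permutation $\pi$, and any linear lift $\tilde\sigma_Q$ satisfies $P_i\circ\tilde\sigma_Q=c_i\,P_{\pi(i)}$ with scalars $c_i$ (explicitly $\xi_X\circ\tilde\sigma_Q\propto\xi_{\sigma_Q(X)}$). Applying $f\mapsto f\circ\tilde\sigma_Q$ to $P_1+P_2+P_3=0$ yields $\sum_j c_{\pi^{-1}(j)}P_j=0$, a linear relation among $P_1,P_2,P_3$; by its uniqueness up to scalar, $c_1=c_2=c_3$. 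Hence $(P_i/P_j)\circ\sigma_Q=P_{\pi(i)}/P_{\pi(j)}$ as functions on $\rp^2$, and since the product defining $R$ runs over the full set of ordered pairs $(i,j)$ with $i\ne j$, which is unchanged under relabeling along $\pi$, one gets $R\circ\sigma_Q=R$; in particular $(R\circ\sigma_{Q,\ell})|_\ell=R|_\ell$ for every line $\ell$ through $Q$. Combined with the conic case, this shows $R$ is a first integral of every pencil type multibilliard defined by the given pencil.

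The step I expect to be the main obstacle is this vertex analysis: one must verify uniformly over the whole list of admissible vertices of a type a) pencil (Definition \ref{multip}, cases a1), a2)) that the associated harmonic homology permutes the four base points, which rests on classical harmonic properties of the complete quadrilateral; by contrast the normalization (\ref{normfunct}), the reduction of $R$ to a function of a single pencil coordinate, and the pencil‑coordinate argument on the conic pieces are routine.
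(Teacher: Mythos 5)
Your proof is correct, but it reaches the conclusion by a genuinely different route than the paper. For Part 1 the paper does not invoke the two\-/dimensionality of the space of conics through four points directly, as you do; instead it normalizes linear lifts $\wh V$ of the vertex involutions acting on $\sym^2(\rr^{3*})$, proves the explicit sign relations $\wh K_{AB}^*(\xi_{AB}\xi_{CD})=-\xi_{AB}\xi_{CD}$, $\wh K_{AB}^*(\xi_{BC}\xi_{AD})=-\xi_{AC}\xi_{BD}$ (Proposition \ref{pminusrel}), establishes the order-three relation $(\wh K_{EL}\wh K_{LF})^3=Id$, and obtains (\ref{normfunct}) because the three products form an orbit of an order-three operator on a two-dimensional space, so their sum is an invariant vector and must vanish. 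For Part 2 the paper then reads invariance of each ratio directly off those sign relations, whereas you bypass all sign bookkeeping: you show that each admissible vertex involution permutes $\{A,B,C,D\}$ (the harmonic property of the diagonal triangle of the complete quadrangle --- the same fact the paper proves as Claim 1 inside the proof of Proposition \ref{glopro}), deduce that it permutes $P_1,P_2,P_3$ up to scalars $c_i$, and force $c_1=c_2=c_3$ by the uniqueness of the linear relation $P_1+P_2+P_3=0$; invariance of the symmetric product over ordered pairs is then automatic. Your version is shorter and avoids the composition relation entirely; the paper's version produces the explicit relations (\ref{minusrel}) and the uniqueness-of-normalization statement as reusable by-products. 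One detail you handle more carefully than the paper: since $P_i+P_j=-P_k$, the product (\ref{integr12}) collapses to the constant $1$ at $\mu=1$, so the minimal-degree assertion requires $\mu\neq 1$, a restriction the paper's statement omits.
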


 \begin{definition} \label{defex} A rationally integrable real (complex) 
 dual billiard structure on conic that is not 
 of pencil type, see Theorem \ref{tgerm}, Case 2), 
 will be called {\it exotic.} The singular points of the dual billiard structure 
 (which are exactly the indeterminacy points of the corresponding integral $R$ from the Addendum to 
 Theorem \ref{tgerm}) will be called the {\it base points.}
 \end{definition}
 
 \begin{corollary} \label{corex} Let a rationally integrable real (complex) 
 multibilliard be not of pencil type. 
 Then it contains only one curve, namely, a conic equipped with an exotic 
 dual billiard structure,  
 and maybe some vertices. 
 \end{corollary}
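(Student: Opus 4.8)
The plan is to deduce Corollary \ref{corex} directly from Theorems \ref{thmd1} and \ref{thmd12} together with the classification Theorem \ref{tgerm}. Suppose the rationally integrable multibilliard is not of pencil type. By Theorem \ref{thmd1}, statement 1), every curve $\gamma_j$ of the multibilliard is a conic whose dual billiard structure is either of pencil type, or exotic (as in Theorem \ref{tgerm}, Case 2)). I would first treat the case in which the multibilliard contains at least two distinct conics in the sense of Definition \ref{ddist}. Then Theorem \ref{thmd1}, statement 2), forces all these conics to lie in one common pencil with dual billiard structures defined by that pencil; in particular every curve is then of pencil type, and Theorem \ref{thmd12} applies: a multibilliard all of whose curves are conics in one pencil with the pencil-defined structures is rationally integrable if and only if it is of pencil type. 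Since our multibilliard is assumed rationally integrable, it would have to be of pencil type, contradicting the hypothesis. Hence the multibilliard cannot contain two distinct conics.

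It remains to handle the possibility that the multibilliard contains at most one conic, plus possibly some vertices. If it contains exactly one conic $\gamma$, then by the first statement of Theorem \ref{thmd1} the structure on $\gamma$ is either of pencil type or exotic. If it were of pencil type, then all the curves (just $\gamma$) lie in a pencil with the pencil-defined structure, so Theorem \ref{thmd12} again gives: rationally integrable $\iff$ of pencil type; our rationally integrable multibilliard would then be of pencil type, a contradiction. So the structure on $\gamma$ must be exotic, which is exactly the conclusion: one conic carrying an exotic rationally integrable dual billiard structure from Theorem \ref{tgerm}, Case 2), together with possibly some vertices.

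Finally I would rule out the degenerate configurations: the multibilliard cannot consist of vertices alone with no curves. Indeed, a rationally integrable dual multibilliard containing only vertices — which is the situation not addressed by Theorems \ref{thmd1}--\ref{thmd12} as those assume a nonlinear curve is present — is dual, under projective duality, to a configuration built only from straightline segments; but the statements of the present subsection concern multibilliards ``containing a nonlinear arc,'' so this possibility is outside the scope, or alternatively such a vertex-only multibilliard can trivially be viewed as a degenerate pencil-type multibilliard and hence is excluded by the not-of-pencil-type hypothesis. Either way, the only remaining case is a single conic with an exotic structure plus vertices, which is the assertion of the corollary.

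The argument is essentially a case analysis that feeds the hypothesis ``not of pencil type'' against the equivalence supplied by Theorem \ref{thmd12}; the only real content is making sure the trichotomy (two conics / one conic / no conic) is exhaustive and that in each branch either pencil type is forced (contradiction) or the exotic conclusion holds. The main obstacle, such as it is, is bookkeeping: verifying that ``one conic of pencil type plus admissible vertices'' always satisfies the hypotheses of Theorem \ref{thmd12} — i.e.\ that Definition \ref{multipb} genuinely covers the one-conic case, which it does by the parenthetical remarks in that definition — so that the contradiction goes through cleanly and no intermediate case slips past. Since all the heavy lifting is in Theorems \ref{thmd1}, \ref{thmd12}, \ref{tgerm}, the corollary itself requires no new computation.
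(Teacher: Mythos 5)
Your proposal is correct and follows exactly the route the paper intends: the corollary is stated without a separate proof because it is an immediate case analysis feeding the hypothesis ``not of pencil type'' into Theorem \ref{thmd1} (statements 1 and 2) and the equivalence of Theorem \ref{thmd12}, with the one-conic pencil case covered by the parenthetical remarks in Theorem \ref{thmd12} and Definition \ref{multipb}. Your handling of the vertex-only degenerate configuration is a reasonable way to close an edge case the paper leaves implicit (its standing convention is that a multibilliard contains a nonlinear arc), and nothing further is needed.
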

 
 \begin{theorem} \label{thmd2} A (real or complex) 
 multibilliard consisting of one  conic $\gamma$ equipped with an exotic dual billiard structure from Theorem \ref{tgerm}, Case 2), and maybe some vertices is rationally integrable, if and only if the collection of vertices 
  either is empty, or consists of the so-called {\bf admissible vertices} $Q$ defined below,   being 
equipped with the $\gamma$-angular symmetry $\sigma_Q$:
 
(i) Case of type 2a) dual billiard on $\gamma$. The   {\bf unique  admissible vertex} is 
the intersection point $Q=[1:0:0]$  of the 
$z$-axis and the infinity line; one has $\sigma_Q(z,w)=(-z,w)$ in the chart $(z,w)$. See Fig. \ref{figa}. 
 In Subcase 2a1), 
 when $\rho=2-\frac2{2N+1}$, the  
 function $R(z,w)$ from (\ref{exot1}) is a rational integral of the multibilliard $(\gamma, (Q,\sigma_Q))$ 
 of minimal degree: $\deg R=4N+2$. In Subcase 2a2), when $\rho=2-\frac1{N+1}$, 
  the function $R^2(z,w)$ with $R$ the same, as in (\ref{exot2}), is a  
 rational integral of  $(\gamma, (Q,\sigma_Q))$ of minimal 
 degree: $\deg R^2=4N+4$.

 (ii)  Case of  type 2b1) or 2b2). There are three base points. 
 One of them, denoted $X$, is the intersection point of two lines contained in the polar locus $R=\infty$. The {\bf unique 
 admissible vertex} $Q$ is the intersection point of two lines: 
  the tangent line to $\gamma$ at $X$ and the line through the two other base points. 
  In Case 2b1) one has $Q=(0,-1)$. In Case 2b2)  one has $Q=[1:0:0]$, $\sigma_Q(z,w)=(-z,w)$. 
 The  corresponding 
  rational function $R$, see (\ref{exo2bnew}), (\ref{exo2bnew2}) is a rational integral of the 
 multibilliard $(\gamma,(Q,\sigma_Q))$ of minimal degree: $\deg R=4$. See Fig. \ref{figbc}.

 (iii) Case of  type 2c1) or 2c2).  There are  three complex base 
 points.  There are {\bf three admissible vertices.} Each of them 
 is the intersection point of a line through two base points and the tangent line to $\gamma$ 
 at the other one.  In Case 2c1) the point $(0,-1)$ is the unique real admissible vertex. In Case 2c2) all the admissible   vertices are real: they are $(0,-1)$, $(1,0)$, $[1:1:0]$, see  Fig. \ref{figbc}.
 The function $R$ is a degree 6 rational integral of the multibilliard formed by the conic $\gamma$ and arbitrary 
  admissible vertex collection. 
  
  (iv) Case of type 2d). No admissible vertices.
 \end{theorem}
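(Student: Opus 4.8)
The plan starts from Corollary~\ref{corex}: a rationally integrable multibilliard that is not of pencil type is a single conic $\gamma$ equipped with one of the exotic dual billiard structures of Theorem~\ref{tgerm}, Case~2), together with a (possibly empty) collection of vertices, so it only remains to decide which vertex collections preserve rational integrability. Write $R$ for the integral of the exotic structure on $\gamma$ supplied by the Addendum to Theorem~\ref{tgerm} (one of (\ref{exot1})--(\ref{exodd})); in each case $\gamma=\{R=0\}$, the polar locus $\{R=\infty\}$ is an explicit union of conics and lines, and the base points of Definition~\ref{defex} are exactly the base points of the pencil $\mathcal F$ of level curves of $R$. The first step I would carry out is the completeness of $R$: every rational integral of the exotic dual billiard on $\gamma$ alone is of the form $\phi\circ R$ for a rational function $\phi$ of one variable. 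I would obtain this from irreducibility of the generic member of $\mathcal F$ (checked directly for each type, since none of the equations in (\ref{exot1})--(\ref{exodd}) admits a factorization), or invoke it from \cite{grat}. Consequently any rational integral $\widetilde R$ of a multibilliard $(\gamma,(Q_s,\Sigma_s))$ equals $\phi\circ R$ and is in addition invariant under every vertex involution $\sigma_{Q_s,\ell}$.

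The core assertion is that a vertex $(Q,\Sigma)$ is compatible with rational integrability precisely when $Q\notin\gamma$, $\Sigma$ is the $\gamma$-angular symmetry $\sigma_Q$ of Example~\ref{extypes2}, and $\sigma_Q$ maps the polar divisor of $R$ onto itself. For necessity I would fix a generic line $\ell\ni Q$ in the domain of $\Sigma$ and exploit that $\gamma$ and the polar locus are (contained in) level sets of $\widetilde R=\phi\circ R$, hence that the points of $\ell$ lying on these reducible fibers of $\mathcal F$ are organized by $\sigma_{Q,\ell}$. A case analysis of how $\sigma_{Q,\ell}$ acts on the two points of $\ell\cap\gamma$ and on the points of $\ell$ on the polar locus shows that $\sigma_{Q,\ell}$ must interchange the two points of $\ell\cap\gamma$ — I would exclude the degenerate alternative in which both are fixed, the situation where a degenerate angular symmetry of Definition~\ref{extypes3} could a priori intrude, by showing it is incompatible with the remaining level sets — and must interchange some pair of points of the polar locus. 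Two swapped pairs pin down a projective involution of $\ell$, so $\sigma_{Q,\ell}$ is unique; matching this over all lines through $Q$ forces $Q\notin\gamma$ and $\Sigma=\sigma_Q$, and then, since $\phi\circ(R\circ\sigma_Q)=\phi\circ R$ with $\phi$ generically finite forces $R$ to be constant along each irreducible $\sigma_Q$-image of a leaf, $\sigma_Q$ preserves $\mathcal F$, equivalently the polar divisor of $R$.

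Conversely, if $Q\notin\gamma$ and $\sigma_Q$ preserves the polar divisor of $R$, then, $\sigma_Q$ being a projective involution fixing $\gamma=\{R=0\}$, the rational function $R\circ\sigma_Q$ has the same zero and polar divisors as $R$, whence $R\circ\sigma_Q=\kappa R$ with $\kappa^2=1$; so $R$ (if $\kappa=1$) or $R^2$ (if $\kappa=-1$) is a rational integral of $(\gamma,(Q,\sigma_Q))$, and the same function is simultaneously invariant under all admissible vertex involutions, which lets one adjoin arbitrary admissible vertex collections. Minimality of degree then follows from the completeness step: any integral is $\phi\circ R$ with $\phi\circ(\kappa\,\cdot)=\phi$, so the minimum is $R$ itself when $\kappa=1$ and $R^2$ when $\kappa=-1$ ($\phi$ being forced even); inserting $\deg R=4N+2,\ 2N+2,\ 4,\ 6$ in types 2a1), 2a2), 2b1)--2b2), 2c1)--2c2) yields the stated minimal degrees $4N+2,\ 4N+4,\ 4,\ 6$. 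The remaining finite task is, for each of the six polar configurations, to enumerate the points $Q\notin\gamma$ whose $\gamma$-angular symmetry preserves that configuration and to compute the sign $\kappa$ by direct substitution: this produces exactly the vertices of (i) (with $\kappa=1$ for $\rho=2-\tfrac2{2N+1}$ and $\kappa=-1$ for $\rho=2-\tfrac1{N+1}$), (ii) and (iii) (all with $\kappa=1$), while in type 2d) the polar locus (a conic, a line and a cubic) admits no such nontrivial symmetry, giving (iv).

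The step I expect to be the main obstacle is the rigidity argument of the second paragraph: because the dual billiard structure at a vertex carries no regularity a priori, one has to extract the full force of the invariance of the reducible fibers of $R$ — in particular carefully excluding the alternative in which $\sigma_{Q,\ell}$ fixes rather than swaps the points of $\ell\cap\gamma$, and handling the possibility that $\sigma_{Q,\ell}$ maps $\ell\cap\gamma$ into a larger $\widetilde R$-level set by gluing the fibrewise correspondences into a global birational involution — before the problem collapses to the purely projective enumeration of involutions preserving the six polar configurations of (\ref{exot1})--(\ref{exodd}).
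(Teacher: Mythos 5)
Your overall architecture matches the paper's: reduce to showing that every vertex involution is the $\gamma$-angular symmetry preserving the foliation $R=const$, then enumerate the finitely many admissible centers via the base points and verify $R\circ\sigma_Q=\pm R$ by computation. The sufficiency direction and the minimal-degree count via completeness of $R$ (any integral is $\phi\circ R$, with $\phi$ forced even when $\kappa=-1$) are sound and essentially what the paper does. But the necessity direction has a genuine gap exactly where you flag "the main obstacle": you assert, without an argument, that the alternative in which $\sigma_{Q,\ell}$ \emph{fixes} both points of $\ell\cap\gamma$ (equivalently, that the vertex carries a degenerate, quasi-global $\gamma$-angular symmetry centered at a point $Q\in\gamma$) is "incompatible with the remaining level sets." This is not a soft dichotomy: such a degenerate involution does fix $\gamma$, does fix every line through $Q$, and is birational, so nothing in the fibrewise bookkeeping of $\ell\cap\gamma$ and $\ell\cap\{R=\infty\}$ rules it out. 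The paper's exclusion of this case (Lemma \ref{alt2}, Subcase 2.3) is the longest part of the argument: it first shows $Q$ must be an indeterminacy point of $R$, derives the functional equation $F\circ\sigma_Q=(-1)^{m+1}F+a(x-y^2)^m$ for the denominator $F$, and then obtains a contradiction separately for each of the six exotic integrals by comparing quasihomogeneous parts, signs of the coefficients $c_j$, degrees of transformed factors, etc. None of this is replaced by anything in your proposal.

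Two secondary points. First, gluing the a priori non-regular family $\sigma_{Q,\ell}$ into a single birational involution of $\cp^2$ (the paper's Proposition \ref{altpr}) is itself a real step, requiring an analytic-continuation and monodromy argument plus algebraicity of the invariance condition; "matching this over all lines through $Q$" does not yet deliver it. Second, your invariant condition "$\sigma_Q$ maps the polar divisor of $R$ onto itself" is not the a priori correct one: preservation of the foliation only gives $R\circ\sigma_Q=\nu\circ R$ for a M\"obius involution $\nu$ fixing $0$, and $\nu$ could in principle swap the polar level with a finite level of $R$ (so that the integral of the multibilliard would be, say, $R\cdot(\nu\circ R)$ rather than $R$ or $R^2$). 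The paper rules this out case by case, e.g.\ in type 2b by tracking which level sets contain the lines $XY$, $XZ$, $YZ$; your write-up should either prove $\nu(\infty)=\infty$ or reformulate the characterization in terms of preservation of the foliation and of the set of indeterminacy points.
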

 \begin{figure}
 \begin{center}
   \epsfig{file=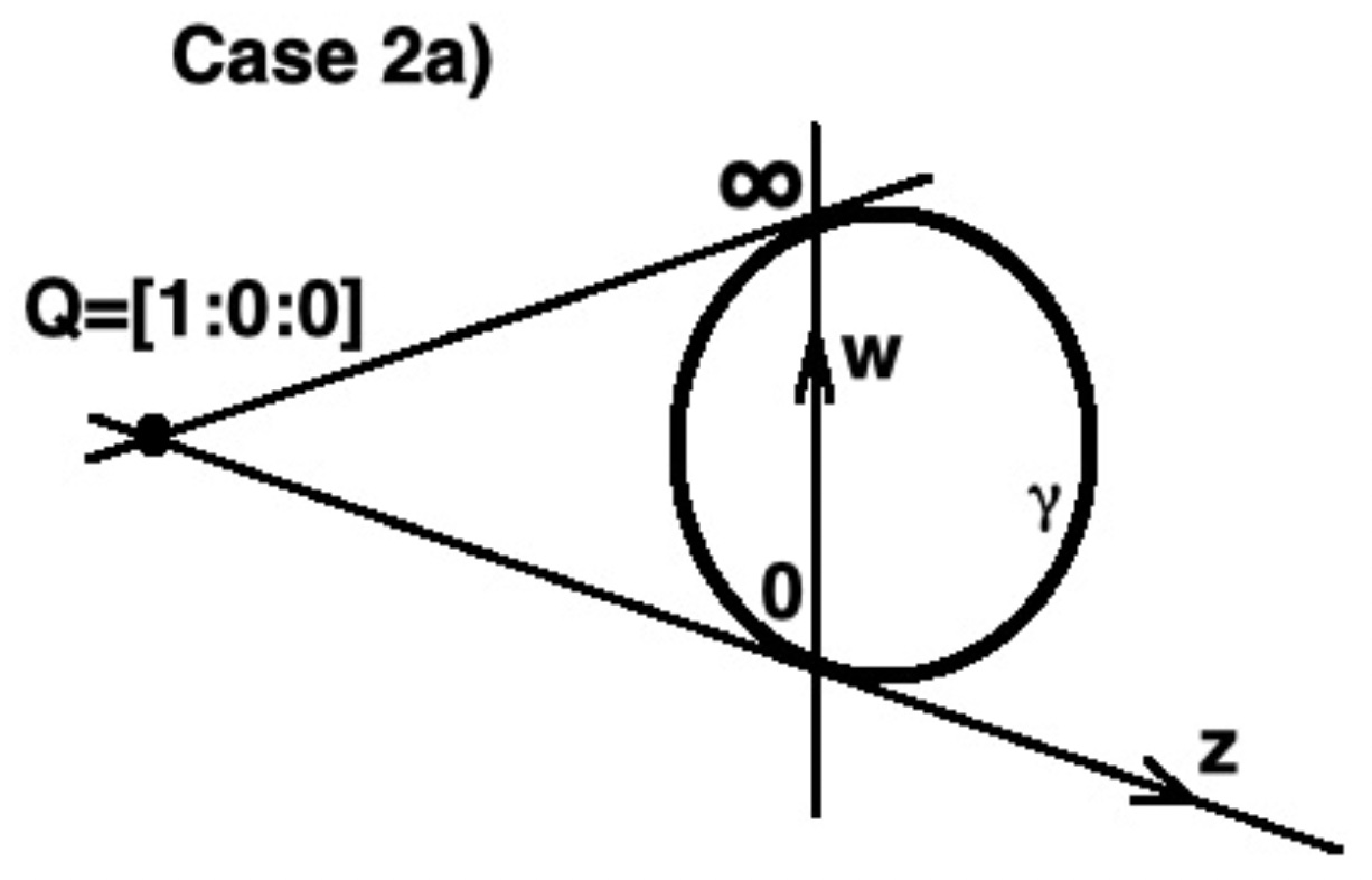, width=15em}
   \caption{The only admissible vertex in Case 2a) is the point $Q=[1:0:0]$.}\label{figa}
   \end{center}
\end{figure}
\begin{figure}
  \begin{center}
   \epsfig{file=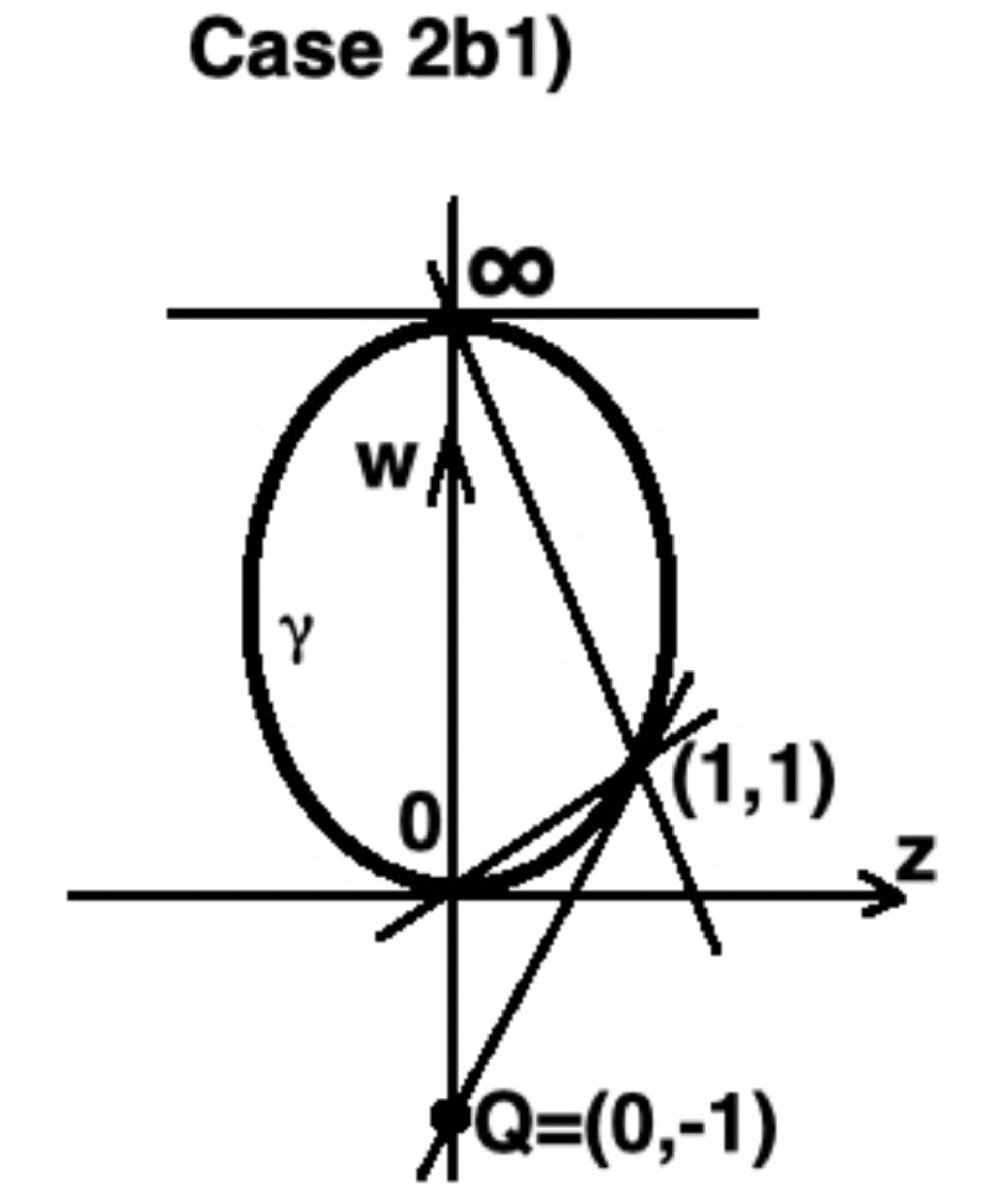, width=10em}
   \hspace{4em}
   \epsfig{file=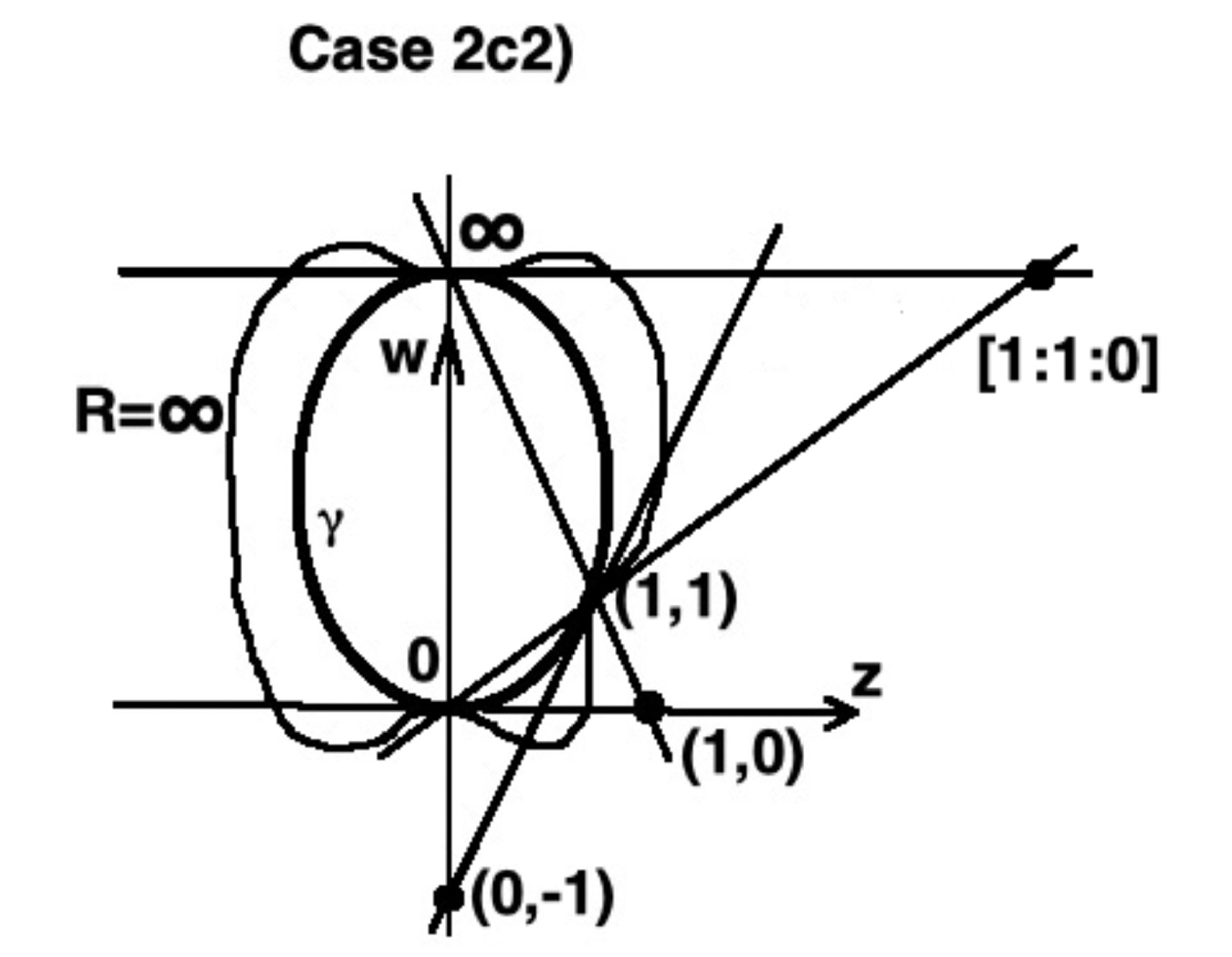, width=16em}
   \end{center}
   \caption{The admissible vertices in Cases 2b1) and 2c2) are marked in bold.}\label{figbc}
\end{figure}

 \begin{proposition} \label{pthmd2}
 1) The two  multibilliards of type (ii) (Cases 2b1), 2b2)) with one admissible vertex 
 are complex-projectively equivalent. 2) Two multibilliards  of 
 type (iii) (either of different subtypes 2c1), 2c2), or of the same subtype) 
 are complex-projectively equivalent, if and only if they have the same number of vertices. 3) Two real  multibilliards of type (iii) with at least one vertex are real-projectively equivalent, if and only if  they have either both subtype 2c1) and one real admissible vertex, 
 or both subtype 2c2) and the same number (arbitrary, from 1 to 3) of real admissible vertices.
 \end{proposition}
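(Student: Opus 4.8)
The plan is to reduce each claimed equivalence (or non-equivalence) to a comparison of the discrete projective invariants attached to the conic-plus-base-point-plus-vertex configuration, using that a projective transformation must preserve the conic $\gamma$, the (finite) set of base points, the exotic dual billiard structure, and the admissible vertex collection. First I would record the rigid structure available: by Theorem \ref{tgerm} and its Addendum, an exotic dual billiard of type 2b) or 2c) has exactly three base points lying on $\gamma$, and the integral $R$ is determined (up to the obvious ambiguities) by the configuration; hence any projective equivalence between two such multibilliards permutes the three base points and sends vertices to vertices. Since a conic together with three distinct marked points on it has no nontrivial projective moduli (the stabilizer of a conic in $PGL_3$ acts on triples of points on it as $PGL_2$ acts on $\mathbb{CP}^1$, which is sharply 3-transitive), in the complex category any two type-2b) structures are projectively equivalent and any two type-2c) structures are projectively equivalent as bare dual billiards; the content of the Proposition is what happens once vertices are added.

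For part 1), I would use Theorem \ref{tcompl}, which already asserts that the type-2b1) and 2b2) dual billiards on $\gamma$ are complex-projectively equivalent. It then remains to check that the unique admissible vertex $Q$ of each (described in Theorem \ref{thmd2}(ii) as the intersection of the tangent line at the distinguished base point $X$ with the line through the other two base points) is intrinsically determined by the base-point configuration and the structure, hence is carried to the corresponding vertex by any equivalence realizing the 2b1)$\sim$2b2) equivalence. That is immediate because $Q$ is defined by incidence data among $\gamma$ and the three base points, which the equivalence respects; so the one-vertex multibilliards are equivalent. For part 2), in the complex case the three admissible vertices of a type-2c) structure are likewise the three incidence points "tangent line at one base point $\cap$ line through the other two"; an equivalence must permute the three base points and therefore permute these three vertices accordingly. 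Hence if two type-2c) multibilliards carry vertex subsets of different cardinalities they cannot be equivalent, and conversely if they carry the same number $k\in\{1,2,3\}$ of vertices: choose a base-point-permuting projective equivalence $\gamma\to\gamma'$ realizing the bare 2c)$\sim$2c) equivalence (possible by 3-transitivity on base points, and we may further precompose with the $S_3$-action permuting the three base points since each such permutation is induced by an element of the stabilizer) that matches up the chosen $k$-element vertex subsets; this proves complex-projective equivalence.

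For part 3) I would run the same argument over $\mathbb{R}$, where the obstruction is reality of the base points and of the vertices. In subtype 2c1) only one of the three base points (hence one of the three vertices) is real, by the computation recorded in Theorem \ref{thmd2}(iii) (the real admissible vertex is $(0,-1)$), so a real multibilliard of subtype 2c1) can only contain that one real vertex, and any two such are real-projectively equivalent by the real form of the 3-transitivity argument applied to the unique real base point together with the complex-conjugate pair. In subtype 2c2) all three base points and all three vertices are real, so a real multibilliard may contain any $k\in\{1,2,3\}$ of them; two with the same $k$ are real-projectively equivalent because the real stabilizer of $\gamma$ acting on real triples on $\gamma$ is still sharply $3$-transitive, and the $S_3$ permuting base points is realized by real elements, so we can match vertex subsets of equal size; two with different $k$ are inequivalent as before. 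Since the complexifications of 2c1) and 2c2) agree but their real base-point patterns differ ($1$ real versus $3$ real), a 2c1) multibilliard and a 2c2) multibilliard are never real-projectively equivalent, which rules out the remaining cross cases and completes the classification.

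The main obstacle I anticipate is bookkeeping rather than conceptual: one must be careful that the "obvious ambiguities" in the integral $R$ (overall constant, and the power $R$ versus $R^2$) do not hide an extra automorphism of the structure that would enlarge the equivalence classes, and one must verify that the $S_3$-action permuting the three base points of a 2c) structure really is induced by projective automorphisms of $(\gamma,\text{structure})$ and not merely by set-theoretic permutations — this is what guarantees that "same number of vertices" suffices rather than "same labelled subset of vertices." I would settle this by exhibiting the automorphisms explicitly in the normal-form chart $\{w=z^2\}$ using the formulas (\ref{sigma3}) for $f_{c1},f_{c2}$, checking that permuting the base points conjugates $f_{ci}$ to itself.
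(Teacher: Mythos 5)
Your proposal is correct and follows essentially the same route as the paper: reduce to the normal forms via the complex equivalence of the subtypes (Theorem \ref{tcompl}), note that the admissible vertices are determined by incidence data among the base points so any equivalence permutes them, and then use a projective symmetry of the structure permuting the three base points to match vertex subsets of equal cardinality, with reality of the base points deciding part 3). The only remark worth making is that the paper gets by with the explicit order-three symmetry $(z,w)\mapsto(\varepsilon z,\varepsilon^2 w)$ of Case 2c1) (which already acts transitively on the three vertices and on their unordered pairs), so the full $S_3$-action you propose to verify is not actually needed.
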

 
 Proposition \ref{pthmd2} follows from the last statement of Theorem \ref{tcompl} and the fact that 
 the dual  billiard of type 2c2) has real order three projective symmetry cyclically permuting admissible points.
 
 \subsection{Application: classification of rationally $0$-homogeneously integrable piecewise $C^4$-smooth non-polygonal projective billiards}
 
 Let us recall the following definition.
 
 \begin{definition} \cite[p. 962]{tabpr} The {\it central} projective billiard structure on a  curve $C\subset\rr^2$ with center $O\in\rr^2$ is 
 the field of lines  through $O$ on $C$. 
 \end{definition}
 
 Everywhere below for two projective lines $e$ and $f$ by $ef$ we will denote their intersection point.
 
 \begin{definition} \label{multipr} Consider a complex dual pencil of conics: a family of conics whose dual form a 
 pencil. Let $\ell$ be 
 a real or complex line equipped with a projective billiard structure; the corresponding (real or complex) line field is well-defined either on all of $\ell$, or 
on $\ell$ punctured at one point called singular. The line $\ell$ is said to be {\it admissible} for the dual pencil, if it 
belongs to the following list of lines equipped with projective billiard structures, called either standard, or skew.

 Case a): dual pencil of conics tangent to four distinct complex lines $a$, $b$, $c$, $d$. 
 
 a1) The three {\it standard admissible lines} are the lines  $m_1$, $m_2$, $m_3$ 
 through the points $ab$ and $cd$, 
 the points $bc$ and $ad$, the points $ac$ and $bd$ respectively. The line $m_1$ is equipped with projective 
 billiard structure centered at $m_2m_3$, and the projective billiard structures on $m_2$, $m_3$ are defined 
 analogously. 
 
 a2) Let $k_{bc}$  denote the line through 
 the points $m_1m_3$ and $bc$, equipped with the projective billiard structure centered at $ad$: the  field of lines
  through $ad$.  
 Let $k_{ad}$ be the line through $m_1m_3$ and $ad$,  equipped with the projective billiard structure centered at $bc$.  
 The other lines $k_{ef}$, $e,f\in\{ a,b,c,d\}$, $e\neq f$, equipped with central projective billiard structures are defined analogously. We identify $k_{ef}$ with $k_{fe}$. All the six lines $k_{ef}$ thus constructed 
  are called {\it skew admissible lines.} See Fig. \ref{fig-a-d}. 
  
  Case b): dual pencil of conics tangent to three distinct lines $a$, $b$, $c$ and having common tangency point $C$ with $c$. See Fig. \ref{fig-b-d}. 
  
  b1) The {\it skew line} $c$ equipped with the field of lines through  $ab$. 
  
  b2) The {\it skew line} $k$ such that the quadruple of lines $a$, $b$, $m$, $k$ through the point $ab$  
  is harmonic. It is equipped with the field of lines through $C$. Here $m$ is the line through $C$ and $ab$. 
  
  b3) The {\it standard line} $m$  with the field of lines through the  point $ck$. 
  
  b4) For arbitrary given regular conic $\mcs$ of the dual pencil the line $c$ equipped with the field of lines tangent 
  to $\mcs$ is a {\it skew line}.
  
  Case c): dual pencil of conics tangent to each other at two points $A$ and $B$. Led $a$ and $b$ denote the 
  corresponding tangent lines. See Fig. \ref{fig-c-d}. 
  
  c1) The {\it standard line} $m=AB$  with the field of lines through  $ab$. 
  
  c2) The {\it skew lines} $a$ and $b$ equipped with the fields of lines through $B$ and $A$ respectively. 
  
  c3) Fix arbitrary line $c\neq a,b$ through $ab$. Let $Z\in m$ denote the point such that the quadruple of points 
  $cm, Z, B, A\in m$ is harmonic. The line $c$ equipped with the field of lines through $Z$ is called a {\it skew line.} 
  
  c4) Fix regular conics $\mcs_A$, $\mcs_B$ from the dual pencil. The lines $a$ and $b$, each being equipped with the field 
  of lines tangent to $\mcs_A$ (respectively $\mcs_B$) at points distinct from $A$ and $B$ respectively are called {\it skew lines.}
  
  Case d): dual pencil of conics tangent to a given line $a$ at a given point  $A$, having triple contact 
  between each other at $A$, and tangent to another given line $b\neq a$. See Fig. \ref{fig-d-d}. 
  
  d1) The {\it skew line} $a$ equipped with the line field tangent to a given (arbitrary) regular conic $\mcs$ from the pencil.
  
  d2) Any line $c\neq a$ through $A$ called {\it skew}, equipped with the field of lines through the point $ab$.
  
Case e): dual pencil of conics tangent to each other at a point $A$ with order 4 contact. Let $a$ denote their 
common tangent line at $A$. See Fig. \ref{fig-e-d}.

e1) The {\it skew line} $a$ equipped with the line field tangent to a given (arbitrary) regular conic $\mcs$ from the pencil.

e2) Any line $b$ through $A$ called {\it skew}, equipped with the field of lines tangent to the conics of the pencil 
at points of the line $b$: these tangent lines pass through the same point $C=C(b)\in a$. 
  \end{definition}
   \begin{figure}
 \begin{center}
 \vspace{-1cm}
   \epsfig{file=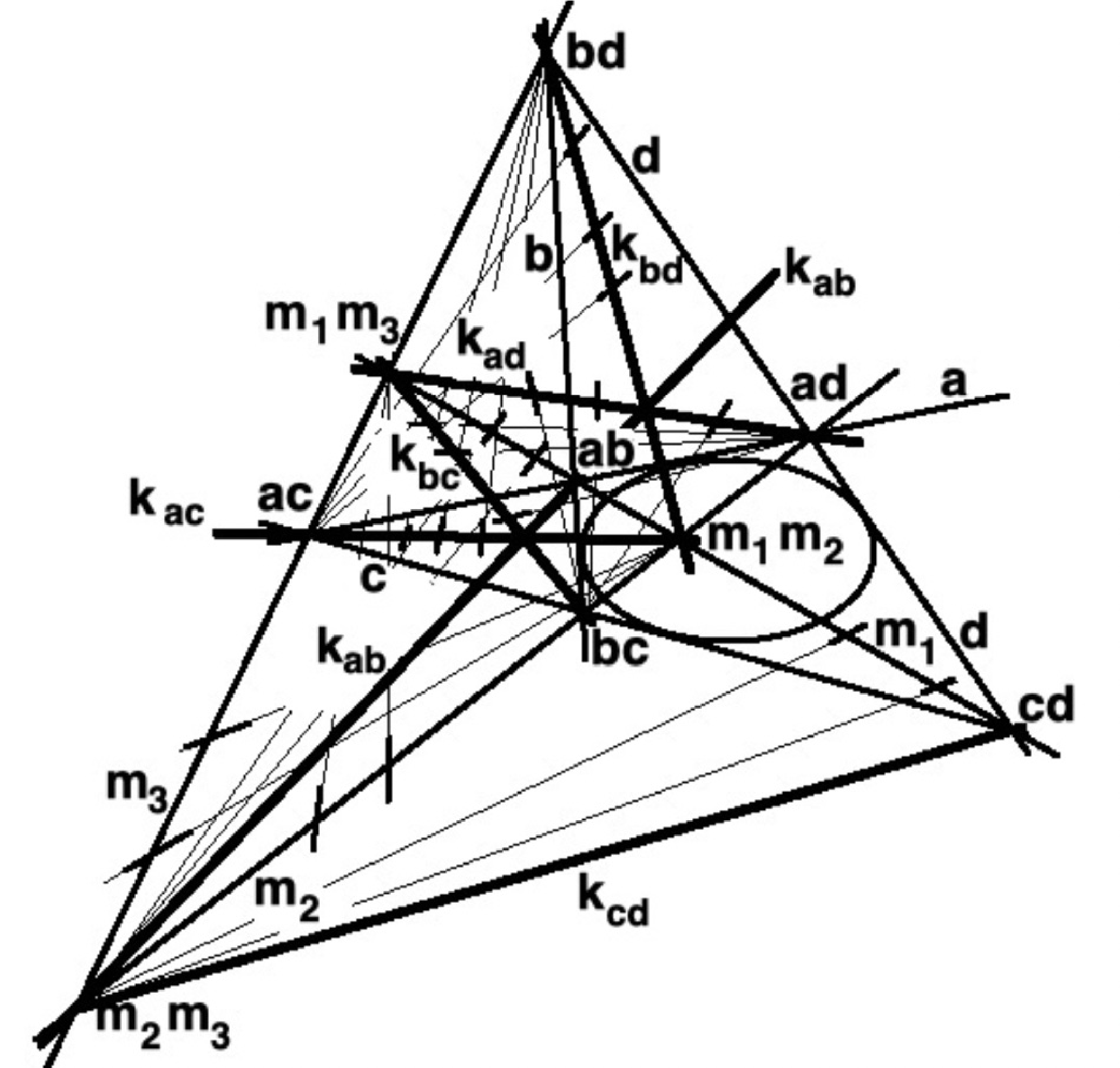, width=20em}
   \caption{Dual pencil of type a). The standard admissible lines are $m_1$, $m_2$, $m_3$. 
   The skew admissible lines $k_{ef}$ 
   are marked in bold.}\label{fig-a-d}
   \end{center}
\end{figure}
\begin{figure}
 \begin{center}
   \epsfig{file=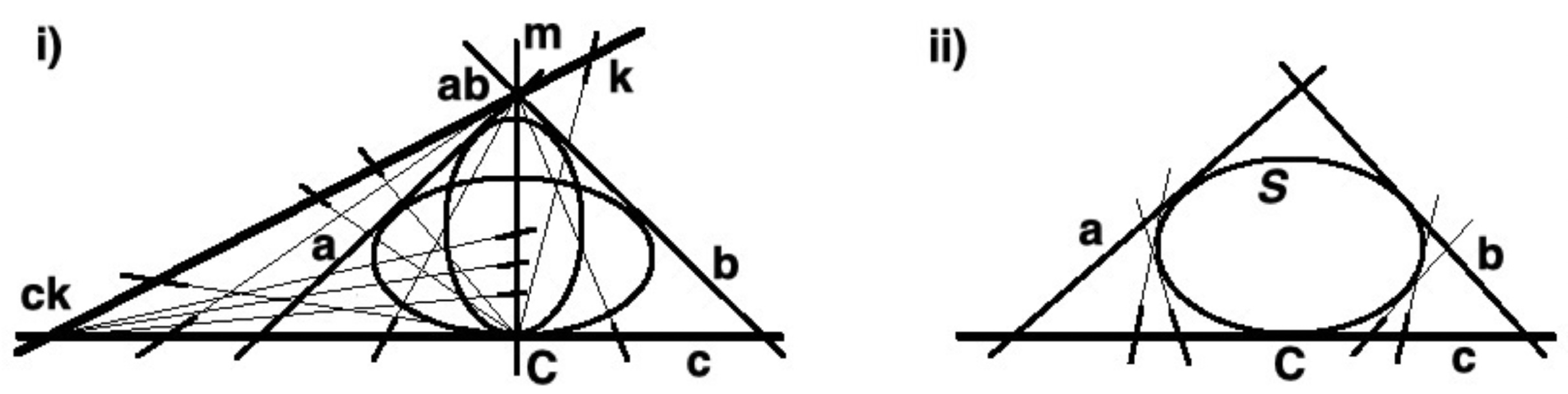, width=30em}
   \caption{Dual pencil of type b): i) one standard line $m$ and two skew lines $c$, $k$; ii) skew line $c$ with another line field, tangent to a given conic $\mcs$.}\label{fig-b-d}
   \end{center}
\end{figure}
\begin{figure}
 \begin{center}
   \epsfig{file=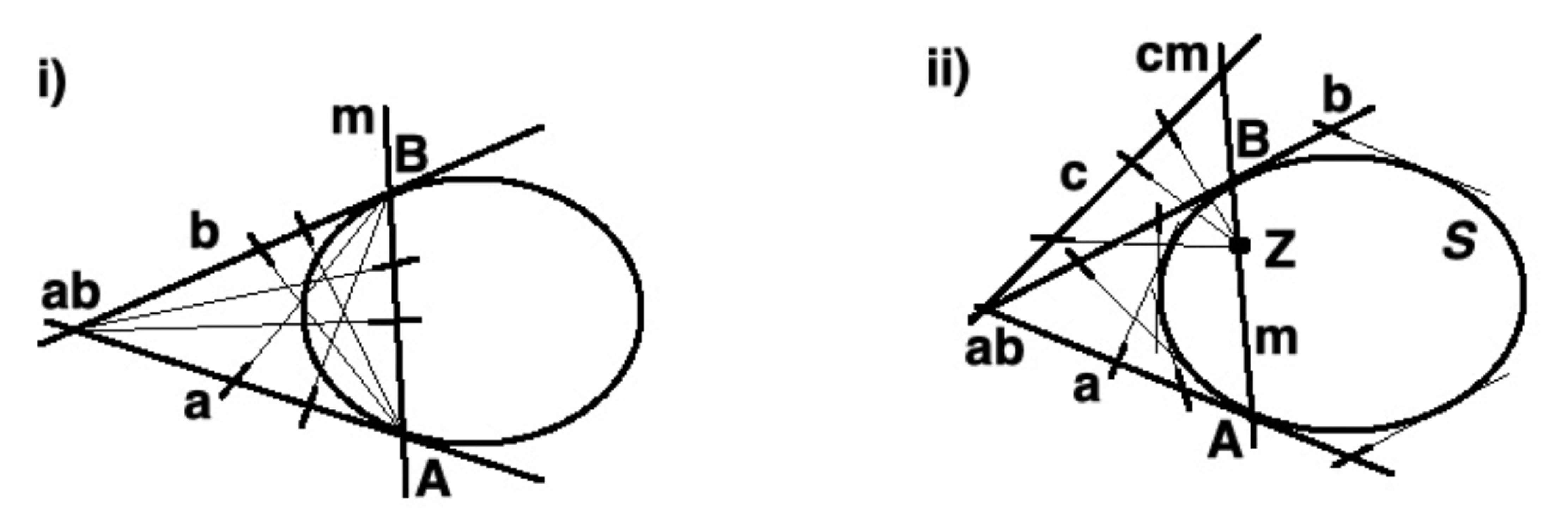, width=30em}
   \caption{Dual pencil of type c): i) one standard line $m$ and two skew lines $a$, $b$ equipped with central projective 
   billiard structures; ii) arbitrary  line $c\neq a,b$ through $ab$ (called skew)  with field of lines through $Z$,  
   and the skew lines $a$, $b$  with fields of lines tangent to a given conic $\mcs$ from the 
   pencil.}\label{fig-c-d}
   \end{center}
\end{figure}
\begin{figure}
 \begin{center}
   \epsfig{file=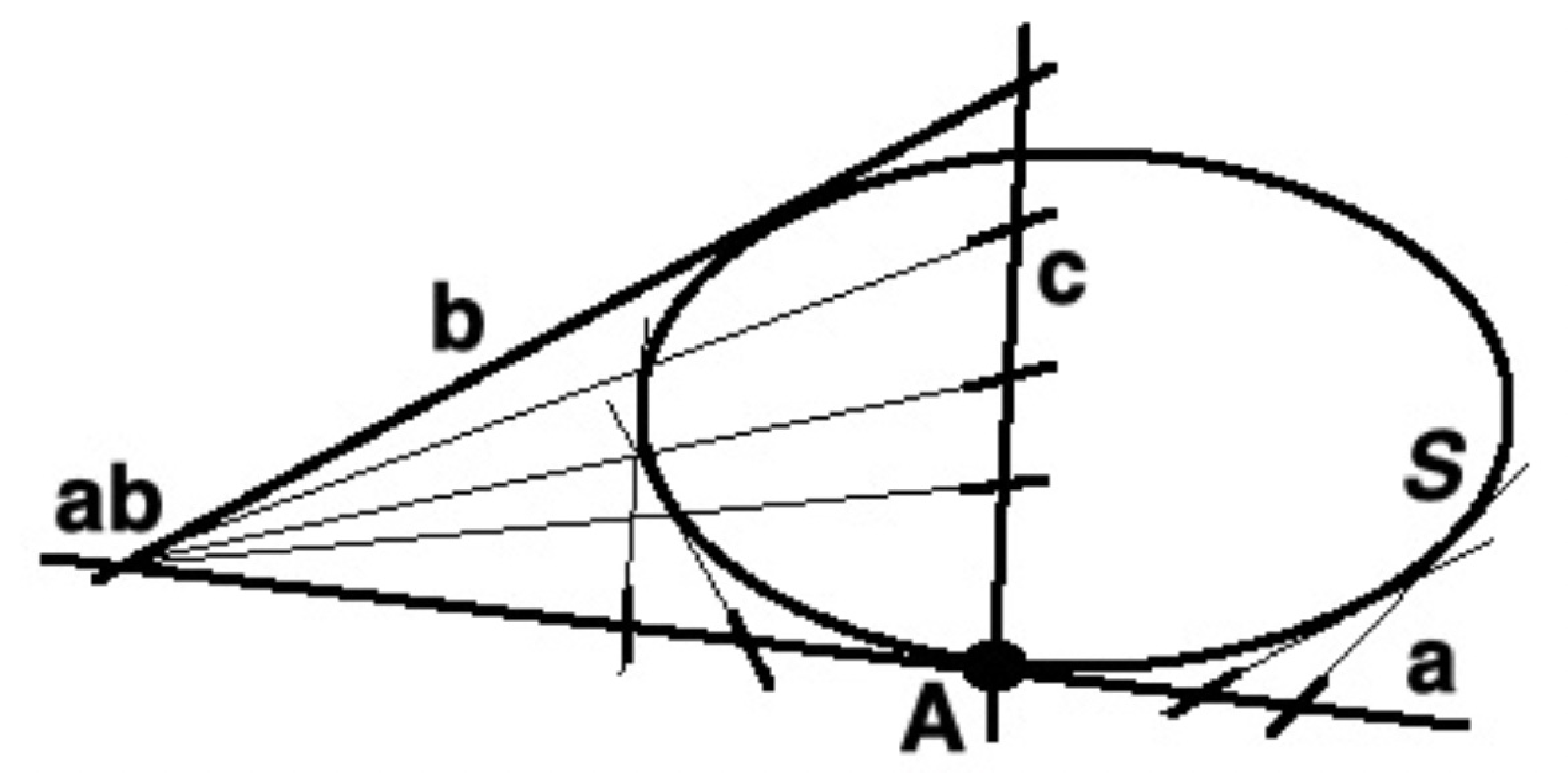, width=15em}
   \caption{Dual pencil of type d):  the skew  line $a$ and an arbitrary skew line $c\neq a$.}\label{fig-d-d}
    \end{center}
\end{figure}
\begin{figure}
 \begin{center}
   \epsfig{file=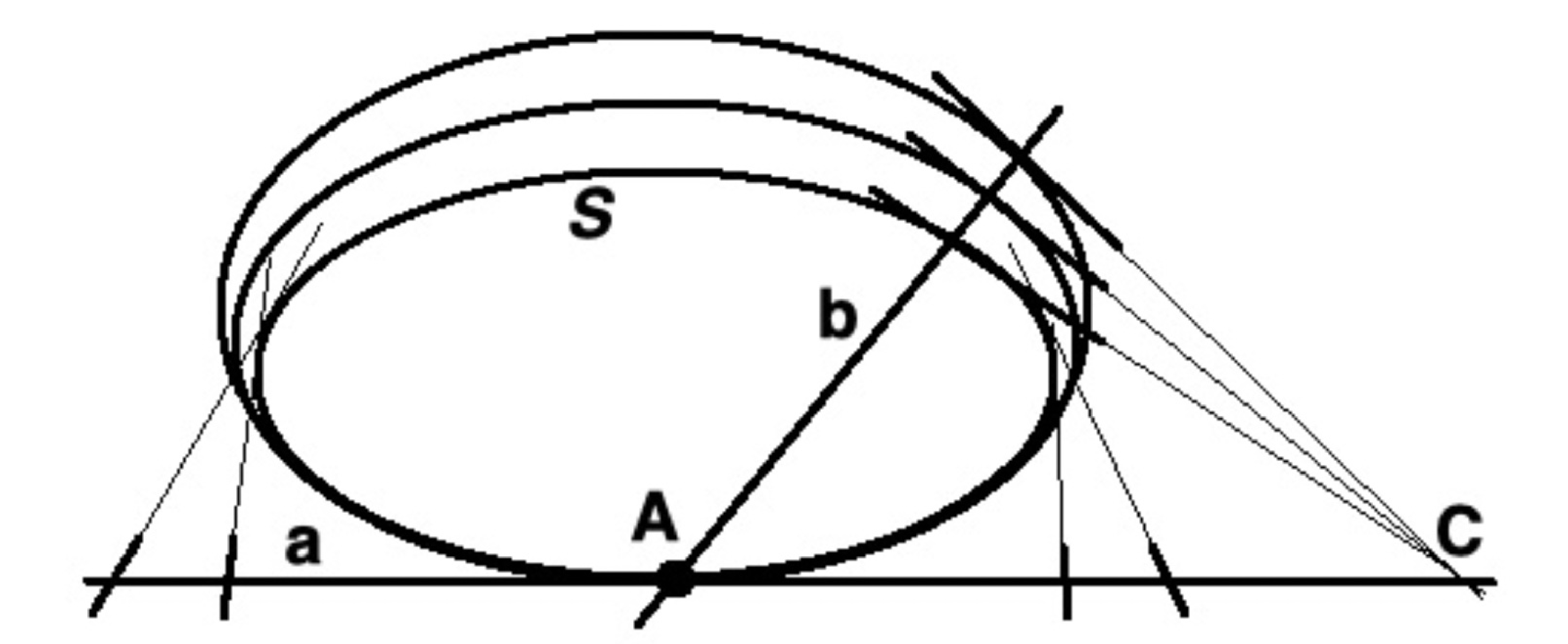, width=17em}
   \caption{Dual pencil of type e):  the skew  line $a$ and a standard line $b\neq a$.} \label{fig-e-d}
   \end{center}
\end{figure}
  \begin{proposition} In Case a) for every distinct $e,f,g\in\{ a,b,c,d\}$ the lines $k_{ef}$, $k_{fg}$, $k_{ge}$ 
  pass through the same point. In particular, the line $k_{ab}$ passes through the intersection points 
  $k_{bd}\cap k_{ad}$ and $k_{ac}\cap k_{bc}$, see Fig. \ref{fig-a-d}. 
    \end{proposition}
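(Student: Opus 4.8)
The plan is to strip the statement down to a pure incidence fact about the four lines $a,b,c,d$ and then settle it by projective duality together with a classical property of the complete quadrangle. Note first that the dual pencil itself enters only through the requirement that $a,b,c,d$ be four distinct lines with no three concurrent (equivalently, that the dual points be in general position), which is exactly the genericity built into Case a) of Definition \ref{multipr}; everything else in the construction of the $m_i$ and the $k_{ef}$ is made from $a,b,c,d$ by intersections and joins. Passing to the dual plane, put $A=a^{*}$, $B=b^{*}$, $C=c^{*}$, $D=d^{*}$. Then $A,B,C,D$ are four points in general position; the point $ef$ dualizes to the line $EF$; the lines $m_1,m_2,m_3$ dualize to $M_1=AB\cap CD$, $M_2=AD\cap BC$, $M_3=AC\cap BD$ respectively; and the line $k_{ef}$ dualizes to the point $k_{ef}^{*}$, which by a direct comparison of the two constructions is precisely the skew vertex $K_{EF}$ of Definition \ref{multip}, Case a2) (for instance $k_{bc}$, the join of $bc$ with $m_1\cap m_3$, dualizes to $BC\cap M_1M_3=K_{BC}$, since $M_2\in BC$). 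Concurrence of $k_{ef},k_{fg},k_{ge}$ is therefore equivalent to collinearity of $K_{EF},K_{FG},K_{GE}$, so it suffices to prove the latter.

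For this I would first recall the classical complete-quadrangle fact that $K_{EF}$ is the harmonic conjugate, with respect to $E$ and $F$, of the diagonal point that lies on the line $EF$: for $EF=BC$ this diagonal point is $M_2=AD\cap BC$, and $(B,C;M_2,K_{BC})=-1$, and similarly on the other sides. Fix a triple, say $\{E,F,G\}=\{A,B,C\}$, the three remaining triples being obtained by relabelling. The lines $AD$, $BD$, $CD$ are cevians of the triangle $ABC$ meeting $BC$, $CA$, $AB$ in $M_2$, $M_3$, $M_1$ respectively, and they concur at $D$; Ceva's theorem gives
\[
\frac{\overline{BM_2}}{\overline{M_2C}}\cdot\frac{\overline{CM_3}}{\overline{M_3A}}\cdot\frac{\overline{AM_1}}{\overline{M_1B}}=1 .
\]
Replacing each diagonal point by its harmonic conjugate negates the corresponding directed ratio, so
\[
\frac{\overline{BK_{BC}}}{\overline{K_{BC}C}}\cdot\frac{\overline{CK_{CA}}}{\overline{K_{CA}A}}\cdot\frac{\overline{AK_{AB}}}{\overline{K_{AB}B}}=-1 ,
\]
whence $K_{AB}$, $K_{BC}$, $K_{CA}$ are collinear by Menelaus' theorem. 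Dualizing back, $k_{ab},k_{bc},k_{ca}$ pass through one point, and the same holds for every triple; the two assertions of the ``in particular'' clause are just the triples $\{a,b,d\}$ and $\{a,b,c\}$, both of which contain $k_{ab}$.

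I do not expect a genuine obstacle. The only points needing care are the dictionary matching the labelling of Definitions \ref{multip} and \ref{multipr} under duality and the identification of $K_{EF}$ as a harmonic conjugate; both are routine. Over $\cc$ the Ceva--Menelaus step should be read in an affine chart chosen to contain the finitely many points involved (or phrased via cross-ratios); alternatively, to avoid this entirely one can simply verify the collinearity of $K_{EF},K_{FG},K_{GE}$ by a one-line determinant in the standard frame $A=[1:0:0]$, $B=[0:1:0]$, $C=[0:0:1]$, $D=[1:1:1]$, in which $M_1=[1:1:0]$, $M_2=[0:1:1]$, $M_3=[1:0:1]$ and $K_{AB}=[1:-1:0]$, $K_{BC}=[0:1:-1]$, $K_{AC}=[1:0:-1]$, etc.; this computation also exhibits the non-degeneracy of the whole configuration when $A,B,C,D$ are in general position.
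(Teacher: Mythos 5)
Your proof is correct, but it takes a genuinely different route from the paper's. The paper works directly with the configuration of lines: it shows that the two intersection points $k_{bd}\cap k_{ad}$ and $k_{ac}\cap k_{bc}$ are collinear with the point $m_2m_3$ by the dual Desargues theorem applied to the triangles $(bd,\, ad,\, k_{bd}\cap k_{ad})$ and $(ac,\, bc,\, k_{ac}\cap k_{bc})$, and then adds the point $ab$ to that line via Pappus applied to the triples $bd,\ m_1m_3,\ ac$ and $ad,\ m_1m_2,\ bc$; since $k_{ab}$ is by definition the line through $m_2m_3$ and $ab$, the ``in particular'' clause follows, and the general triple statement is a relabelling. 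You instead dualize first, correctly match $k_{ef}^{*}$ with the skew vertex $K_{EF}$ of Definition \ref{multip}, and reduce to the classical fact that the harmonic conjugates of the feet of three concurrent cevians are collinear (the trilinear polar of $D$ with respect to the triangle $EFG$), proved by Ceva plus Menelaus; the harmonicity $(B,C;M_2,K_{BC})=-1$ you invoke is exactly Claim 1 in the paper's Subsection 2.1, so your argument meshes well with the rest of the text. Your approach buys a uniform treatment of all four triples at once and a completely mechanical fallback (the determinant check in the frame $A=[1:0:0]$, $B=[0:1:0]$, $C=[0:0:1]$, $D=[1:1:1]$, which I verified gives $K_{AB}=[1:-1:0]$, $K_{BC}=[0:1:-1]$, $K_{AC}=[1:0:-1]$ with vanishing determinant), at the cost of the dualization dictionary; the paper's version is shorter but requires spotting the right Desargues and Pappus configurations. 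The only hypothesis you use beyond the definitions, that no three of $a,b,c,d$ are concurrent, does hold in Case a) since the four lines are tangent to a regular conic, so there is no gap.
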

 \begin{proof} 
  The latter intersection points and the point $m_2m_3$ lie on one line, by the dual  
   Desargues Theorem applied to the triangles $(bd, ad, k_{bd}\cap k_{ad})$ and 
   $(ac, bc, k_{ac}\cap k_{bc})$: the intersection points $m_1m_2$, $cd$, $m_1m_3$ of pairs 
   of lines containing the corresponding pairs of their sides lie on the same line $m_1$; 
   hence, the lines  through the corresponding pairs of vertices $(ac, bd)$ (the line $m_3$), 
   $(bc, ad)$ (the line $m_2$), $(k_{bd}\cap k_{ad}, k_{ac}\cap k_{bc})$ 
   pass through the same point, namely, $m_2m_3$. 
   The point $ab$ lies on the same line, by Pappus Theorem 
   applied to the triples of points $bd$, $m_1m_3$, $ac$ and $ad$, $m_1m_2$, $bc$. 
   \end{proof}
    
 \begin{definition}
  A projective billiard in $\rr^2$ with {\it piecewise-$C^4$-smooth boundary} having at least one nonlinear smooth arc  
   is said to be of {\it 
 dual pencil type,} if it satisfies the following conditions: 
 
1) Each $C^4$-smooth arc of the boundary is either a conical arc, or a segment. 
All the conical arcs lie in the same dual 
pencil and are equipped with the projective billiard structure defined by the same pencil.

2) The  segments in the boundary are contained in lines admissible for the pencil and 
are equipped  with the projective billiard structures of the ambient admissible lines. 
 
 3) If the  boundary contains a skew line segment whose projective billiard structure is not 
 a central one (hence, it is given by a field of lines tangent to a conic of the pencil), then the 
 boundary contains no segments of other skew lines with the following exceptions of possible 
  ambient skew line collections:
 
 - in Case c) the skew line collection is allowed to be the pair of   lines $a$ and $b$ 
 equipped with the fields of lines tangent to one and the same (but arbitrary) conic of the pencil; 
 
 - in Case d) the skew line collection is allowed to be a pair of lines $a$ and $c$: $a$ being equipped with 
 the field of lines tangent to a given conic $\mcs$ from the pencil; $c$ is the line through the point $A$ and 
 the tangency point of the conic $\mcs$ with the line $b$, equipped with the field of lines throgh the point $ab$.
 
 4) In Case d) the boundary may contain a segment of at most one line $c\neq a$.
 
 5) Each ambient skew line of a boundary segment that a priori admits several possible projective structures listed 
 above is allowed to be included in the boundary with no more than one projective billiard structure.  
 \end{definition}

 \begin{remark} \label{remdu} The admissible standard (skew) vertices of a pencil are projective dual  to the admissible standard (skew) lines of its dual pencil. 
 \end{remark}
 
 \begin{theorem} \label{thmd1pr} Let  a planar projective billiard with piecewise $C^4$-smooth boundary containing 
 a nonlinear arc be 
 rationally integrable. Then the following statements hold.
 
 1) All the nonlinear arcs of the boundary are conical. Different arcs of the same conic are equipped with 
 the restriction to them of one and the same projective billiard structure on the ambient conic: either of 
 dual pencil type, or a one from Theorem \ref{tgermpr}, Case 2).
 
 2) If the boundary contains at least two arcs of two distinct  regular conics, then all the 
 ambient conics of nonlinear arcs lie in the same dual pencil and their projective billiard 
 structures are defined by the same dual pencil. 
 \end{theorem}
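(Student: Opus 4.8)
The plan is to derive Theorem \ref{thmd1pr} from Theorem \ref{thmd1} by projective duality, as announced in Subsection 1.1. First I would set up the duality dictionary piece by piece. A nonlinear $C^4$ boundary arc $C_i$ --- which, after passing to a subarc, may be assumed strictly convex, since $C_i$ is not contained in a line --- dualizes to a smooth nonlinear curve $\gamma_i:=C_i^*$, and its transversal line field dualizes to a dual billiard structure on $\gamma_i$; this is exactly the correspondence underlying Theorems \ref{tgerm}, \ref{tgermpr}, \ref{tcompl}. A straightline boundary segment $s_k\subset\ell_k$ equipped with a projective billiard structure dualizes to a single point $\wh Q_k:=\ell_k^*$: for $Q\in s_k$ the lines through $Q$ are the points of the line $Q^*\ni\wh Q_k$, and the projective billiard reflection involution at $Q$ --- which fixes the tangent line $\ell_k$ to $\ell_k$ at $Q$ and hence fixes $\ell_k^*=\wh Q_k$ --- dualizes to a projective involution $\sigma_{\wh Q_k,Q^*}$ of $Q^*$ fixing $\wh Q_k$. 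Letting $Q$ run over $s_k$, this is precisely a dual billiard structure at the point $\wh Q_k$, defined on the open subset of $\rp^1$ ($\cp^1$) cut out by $s_k$, with the singular point of the structure (if any) corresponding to the singular point of the line field on $\ell_k$. Thus the projective billiard dualizes to a dual multibilliard with curves $\gamma_i$ and vertices $\wh Q_k$.

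Next I would match the integrability notions. By Remark \ref{remoside} (that is, \cite[proposition 1.23]{grat}), a non-constant rational $0$-homogeneous integral of the projective billiard flow is a non-constant rational $0$-homogeneous function $\Psi$ of the three variables $v_1$, $v_2$, $\Delta=x_1v_2-x_2v_1$, with constant coefficients. Since $[-v_2:v_1:\Delta]$ are homogeneous coordinates, in the dual projective plane, of the line through $Q=(x_1,x_2)$ directed by $v$, the function $\Psi$ descends to a non-constant rational function $R$ on the dual $\rp^2$ ($\cp^2$). Constancy of $\Psi$ along a billiard orbit is automatic along each of its segments, and at a reflection point $H\in C_i$ it says that $R$ takes equal values at the duals of the incoming and outgoing lines; these duals lie on the line $H^*=T_P\gamma_i=:L_P$ where $P:=(T_H C_i)^*\in\gamma_i$, and they are exchanged by the involution $\sigma_P$, so the condition becomes $(R\circ\sigma_P)|_{L_P}=R|_{L_P}$, i.e.\ (\ref{ratint}). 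The same computation at a reflection point lying on a segment $s_k$ yields invariance of $R$ along each line through $\wh Q_k$ under the corresponding involution $\sigma_{\wh Q_k,\cdot}$. Hence $R$ is a rational integral of the dual multibilliard.

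Now I would invoke Theorem \ref{thmd1}. Its part 1) gives that each $\gamma_i$ is a conic carrying a dual billiard structure either of pencil type or as in Theorem \ref{tgerm}, Case 2); dualizing back, $C_i=\gamma_i^*$ is a conic whose projective billiard structure is of dual pencil type (Example \ref{projdp}) or one of the structures of Theorem \ref{tgermpr}, Case 2). If two arcs $C_i$, $C_j$ lie on one and the same conic $C$, then $\gamma_i$, $\gamma_j$ are arcs of the conic $C^*$, both carrying dual billiard structures compatible with the one integral $R$; each such structure extends to a global analytic rationally integrable dual billiard structure on $C^*$ minus at most four base points (the extension statement recalled before Theorem \ref{tgerm}, from \cite[theorem 1.16]{grat}), and such a global extension is determined once its integral $R$ is fixed --- by the classification of Theorem \ref{tgerm} and its addendum --- so the two structures are restrictions of one and the same structure on $C^*$; dualizing back proves statement 1). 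Finally, if the boundary contains arcs of two geometrically distinct regular conics, then the corresponding $\gamma_i$, $\gamma_j$ are distinct conics in the sense of Definition \ref{ddist}, so by Theorem \ref{thmd1}, part 2), all curves of the dual multibilliard lie in one pencil $\mcp$ with dual billiard structures defined by $\mcp$; dualizing, all ambient conics of nonlinear boundary arcs lie in the dual pencil $\mcp^*$ with projective billiard structures defined by $\mcp^*$. This is statement 2).

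The step requiring the most care is making the duality dictionary airtight: checking that a boundary segment's projective billiard structure dualizes exactly to a dual billiard structure at a point with the matching conventions on the open domain of definition and the singular point; that a rational $0$-homogeneous integral genuinely descends to a single rational function on the dual plane (independence of the base point $Q$, supplied by Remark \ref{remoside}); that the rationally integrable analytic extension of a dual billiard structure on a conic is indeed determined by its integral; and that possible points of vanishing curvature on a nonlinear arc do not obstruct applying the single-curve results of \cite{grat}. All of these are local or formal, and the whole substance of the theorem is carried by Theorems \ref{tgerm}, \ref{tgermpr} and \ref{thmd1}.
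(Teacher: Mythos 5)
Your proposal is correct and follows essentially the same route as the paper: dualize the billiard to a dual multibilliard via polarity, transport the rational $0$-homogeneous integral to a rational integral of the multibilliard (you re-derive by hand what the paper packages as Proposition \ref{proint}, itself resting on \cite[propositions 1.23, 1.24]{grat}), apply Theorem \ref{thmd1}, and dualize back via the correspondence of Proposition \ref{produal}. The only cosmetic difference is that the paper first applies Theorem \ref{tgermpr} arc-by-arc (Lemma \ref{lemcon}) to get strict convexity before dualizing, and handles the ``same conic, same structure'' claim inside the proof of Theorem \ref{thmd1} (Case 1, via the parabolic-composition argument) rather than by your uniqueness-of-extension remark, which amounts to the same thing.
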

 \begin{theorem} \label{thmd12pr} 
 Let a planar projective billiard have piecewise $C^4$-smooth boundary whose all nonlinear 
 $C^4$-smooth pieces are conical arcs lying in the same dual pencil and equipped with projective billiard 
 structures defined by the same pencil. Then the billiard is $0$-homogeneously rationally integrable, if and only if 
 it is of dual pencil type. 
 \end{theorem}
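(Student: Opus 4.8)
The plan is to deduce Theorem \ref{thmd12pr} from its dual, Theorem \ref{thmd12}, by means of projective duality, reusing the dictionary between piecewise smooth projective billiards and dual multibilliards that underlies the whole paper.

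First I would set up the duality correspondence explicitly for the billiard in question. Projective duality $\rp^2\leftrightarrow(\rp^2)^*$ sends each $C^4$-smooth conical arc of the boundary to a $C^4$-smooth arc of the dual conic, and transforms the projective billiard reflection along that arc into a dual billiard structure on the dual arc; it sends each straightline segment of the boundary (an open subset of a projective line) to a single point, a vertex, and transforms the projective billiard structure along the segment into a dual billiard structure at that vertex, in the sense of the definition of a dual billiard structure at a point. Thus the projective dual of our billiard is a dual multibilliard $\mathcal M^*$ whose curves are the dual conics of the conical arcs and whose vertices are the duals of the straightline segments. Moreover, a rational $0$-homogeneous first integral of the projective billiard flow, being (by Remark \ref{remoside} and \cite[proposition 1.23]{grat}) a rational $0$-homogeneous function of $(v_1,v_2,\Delta)$, corresponds under duality to an honest rational function on $\rp^2$ whose restriction to every tangent line of every curve of $\mathcal M^*$, and to every line $\ell$ through every vertex $Q$ for which $\sigma_{Q,\ell}$ is defined, is invariant under the corresponding involution; conversely such a rational function pulls back to a rational $0$-homogeneous integral of the billiard. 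Hence the projective billiard is rationally $0$-homogeneously integrable if and only if $\mathcal M^*$ is rationally integrable.

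Second, the hypothesis of Theorem \ref{thmd12pr} -- all nonlinear $C^4$-smooth pieces are conical arcs in one dual pencil $\mathcal P^*$, with projective billiard structures defined by $\mathcal P^*$ -- dualizes to the statement that all curves of $\mathcal M^*$ are conics lying in the pencil $\mathcal P$ dual to $\mathcal P^*$ and carrying the dual billiard structures defined by $\mathcal P$. Theorem \ref{thmd12} then applies verbatim to $\mathcal M^*$: it is rationally integrable if and only if it is of pencil type in the sense of Definition \ref{multipb}. It remains to check that $\mathcal M^*$ is of pencil type if and only if the original billiard is of dual pencil type. This is a direct translation: by Remark \ref{remdu} the admissible lines for $\mathcal P^*$ (Definition \ref{multipr}, cases a) through e)) are exactly the projective duals of the admissible vertices for $\mathcal P$ (Definition \ref{multip}), with a central projective billiard structure on a line (a field of lines through a point) dual to a projective angular symmetry at a point, and a field of lines tangent to a conic of the dual pencil dual to a (possibly degenerate) angular symmetry defined by the corresponding conic of $\mathcal P$; and the exceptional clauses 3), 4), 5) in the definition of a dual pencil type billiard are literally the duals of clauses 3), 4), 5) in Definition \ref{multipb}. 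Combining this with the first step yields the theorem; in the easy (``if'') direction one may alternatively just dualize back the quadratic integral of Example \ref{projdp} together with the integrals supplied for vertex collections by Theorem \ref{thmd12}.

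The main obstacle is the bookkeeping in the second step: one must verify, without omission, that projective duality matches the full list of admissible lines with projective billiard structures (including the skew ones with non-central structures and the one-parameter families tangent to a chosen conic of the dual pencil) with the full list of admissible vertices with dual billiard structures, and that the distinctness convention (Definition \ref{ddist}) and the restrictions on simultaneous inclusion of skew items correspond exactly. A secondary point needing care is that different arcs of the same conic in the boundary must carry the restriction of a single projective billiard structure on the whole ambient conic; this is guaranteed by Theorem \ref{thmd1pr}, part 1, together with the global-extension statement of Theorem \ref{tgermpr}, so that $\mathcal M^*$ really has one well-defined curve per ambient conic rather than several incompatible ones, and Theorem \ref{thmd12} is genuinely applicable.
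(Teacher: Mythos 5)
Your proposal is correct and follows essentially the same route as the paper: the paper's proof dualizes the billiard to a dual multibilliard via orthogonal polarity, invokes the integral correspondence (its Proposition \ref{proint}), applies Theorem \ref{thmd12}, and translates ``pencil type'' back to ``dual pencil type'' via its Proposition \ref{produal}, which encapsulates exactly the admissible-lines/admissible-vertices bookkeeping you describe in your second step.
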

 
 \begin{theorem} \label{thdegpr}
  The minimal degree of $0$-homogeneous rational integral of a dual pencil type projective billiard 
 is 
 
 (i) degree two, if its boundary  contains no skew line segment;
 
 {\bf (ii)) degree 12,} if the dual pencil has type a) and the billiard boundary contains segments of 
 some two so-called {\bf neighbor} skew admissible lines: namely, lines $k_{ef}$, $k_{fs}$ for  some three distinct $e,f,s\in\{ a,b,s,d\}$; 
 
 (iii) degree four in any other case.
 \end{theorem}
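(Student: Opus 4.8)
The strategy is to deduce Theorem~\ref{thdegpr} from Theorem~\ref{thdeg} by projective duality, in the same way that Theorems~\ref{thmd1pr} and \ref{thmd12pr} are dual to Theorems~\ref{thmd1} and \ref{thmd12}. First I would set up the dictionary. The projective duality $\rp^2\to(\rp^2)^*$ carries a projective billiard with piecewise $C^4$-smooth boundary to a dual multibilliard: every $C^4$-smooth conical arc of the boundary goes to a (sub)conic of the dual pencil equipped with the dual billiard structure defined by that pencil, and every boundary segment lying on an admissible line $\ell$ (standard or skew) goes to the corresponding admissible vertex from Definition~\ref{multip} --- this is precisely the content of Remark~\ref{remdu} together with the parallel constructions in Definitions~\ref{multipr} and \ref{multip}. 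Under this correspondence a dual pencil type projective billiard becomes a pencil type dual multibilliard, a dual pencil of type a) becomes a pencil of type a), and the skew admissible lines $k_{ef}$ become the skew admissible vertices $K_{EL}$; two skew admissible lines $k_{ef},k_{fs}$ sharing a label $f$ correspond exactly to two non-opposite skew vertices, i.e.\ a pair of type $K_{EL},K_{ES}$. Hence cases (i), (ii), (iii) of Theorem~\ref{thdegpr} are carried, one to one, to cases (i), (ii), (iii) of Theorem~\ref{thdeg}.

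Second, I would make precise the degree-preserving bijection between $0$-homogeneous rational first integrals of the projective billiard and rational integrals of the dual multibilliard. By \cite[proposition~1.23]{grat} (recalled in Remark~\ref{remoside}) any $0$-homogeneous rational integral $\Psi(Q,v)$ is in fact a $0$-homogeneous rational function of the three variables $v_1$, $v_2$ and $\Delta:=x_1v_2-x_2v_1$ alone; reading $(v_1:v_2:\Delta)$ as homogeneous coordinates on the dual plane $(\rp^2)^*$, this turns $\Psi$ into a rational function $R$ on $(\rp^2)^*$, and the invariance of $\Psi$ along billiard orbits becomes, under duality, the invariance (\ref{ratint}) of $R$ under all the dual billiard involutions --- so $R$ is a rational integral of the dual multibilliard. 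Conversely a rational integral $R$ of the dual multibilliard pulls back through the same duality to a $0$-homogeneous rational integral of the projective billiard. The degree of a rational function on a projective plane (the common degree of a coprime homogeneous numerator and denominator) is unchanged under the linear identification $(\rp^2)^*\cong\rp^2$, and it coincides with the maximal degree over all positions $Q$ that figures in the definition of the degree of $\Psi$ (the degree of $\Psi(Q,\cdot)$ equals the degree of the restriction of $R$ to the line of all lines through $Q$, which attains this common degree for generic $Q$). Hence the bijection preserves the minimal degree.

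Third, assembling the two steps: by Theorem~\ref{thmd12pr} a dual pencil type projective billiard is $0$-homogeneously rationally integrable, its dual is a pencil type dual multibilliard, and Theorem~\ref{thdeg} says the minimal degree of a rational integral of the latter is $2$, $12$ or $4$ in cases (i), (ii), (iii) respectively; transporting back along the bijection of the previous paragraph gives the same minimal degrees for the projective billiard, which is Theorem~\ref{thdegpr}. In case (ii) an explicit integral of the minimal degree $12$ is the pull-back through the duality of the integral (\ref{integr12}) of Theorem~\ref{proformint}; this is its dual counterpart (see Theorem~\ref{tintpr} and Lemma~\ref{lintpr}).

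The only real work is the second step: verifying that projective duality is an honest degree-preserving \emph{bijection} between the two spaces of integrals in the \emph{piecewise}-smooth situation, where the dual object is a finite union of conical arcs and vertices rather than a single curve, and checking that the maximum over all $Q$ in the definition of degree equals the intrinsic degree of the single rational function on $(\rp^2)^*$ that the integral turns out to be. Both points follow from the single-curve results of \cite{grat} recalled in Subsections~1.2--1.3 together with the fact that duality acts piece by piece, so the verification is routine but must be spelled out carefully; once it is in place, the minimality assertions transfer automatically, since a lower-degree integral on either side would produce a lower-degree integral on the other, contradicting Theorem~\ref{thdeg}.
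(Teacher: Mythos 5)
Your proposal is correct and follows essentially the same route as the paper: the paper derives Theorem~\ref{thdegpr} in one line from Theorem~\ref{thdeg} and Corollary~\ref{cordegd} (the degree-preserving correspondence of integrals under orthogonal-polarity duality, established in Proposition~\ref{proint} by extending \cite[propositions 1.23, 1.24]{grat} to the piecewise-smooth case), together with the observation that neighboring skew admissible lines are dual to non-opposite skew admissible vertices. Your second step is exactly the content of Proposition~\ref{proint} and Corollary~\ref{cordegd}, so nothing is missing.
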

 
 \begin{proposition} \label{prorelines} A type a) dual pencil $\mcc^*$, 
 i.e., a family of conics tangent to 
 given four distinct complex lines, has a pair of {\bf real} neighbor skew admissible lines, if 
 and only if  all their four  common tangent lines  are real. Thus,  in this and only in this case 
$\mcc^*$ admits  a dual pencil type projective billiard with minimal degree of 
 rational integral being equal to 12, see (ii). 
 \end{proposition} 
 
 \begin{theorem} \label{tintpr} Consider a type a)  dual pencil of conics tangent to given four distinct real lines 
 $a$, $b$, $c$, $d$. Let us consider the ambient plane $\rr^2_{x_1,x_2}$ as the horizontal plane 
 $\{ x_3=1\}\subset\rr^3_{x_1,x_2,x_3}$. Set 
$$ r:=(x_1,x_2,1)\in\rr^3, \ v=(v_1,v_2,0) \text{ for every } (v_1,v_2)\in T_{(x_1,x_2)}\rr^2,$$
 \begin{equation} \mcm=\mcm(r,v):=[r,v]=(-v_2,v_1,\Delta), \ \Delta:=x_1v_2-x_2v_1.\label{moments}\end{equation}
In the above notations for intersection points $em$ of lines $e$ and $m$ set 
$$r(em)=(x_1(em), x_2(em), 1).$$
There exists a collection of three numbers $\chi_{em;fn}\in\rr$, $\{ e, m, f, n\}=\{ a, b, c, d\}$, 
numerated by {\it unordered pairs} of intersection points $em=e\cap m$, $fn=f\cap n$ ($(em; fn)=(fn;em)$, 
by definition) such that 
\begin{equation}\sum_{(em;fn)}\chi_{em;fn}<r(em),\mcm><r(fn),\mcm>=0\label{sum0pr}\end{equation}
as a quadratic form in $\mcm$. This collection is unique up to common constant factor. For every  
$\mu\in\rr\setminus\{0\}$ the corresponding expression
 \begin{equation}\prod_{(em; fn)\neq(e'm';f'n')}
\left(\frac{\chi_{em; fn}<r(em),\mcm><r(fn),\mcm>}{\chi_{e'm';f'n'}<r(e'm'),\mcm><r(f'n'),\mcm>}+\mu\right)\label{intpr12}
\end{equation}
is a degree 12 first integral of every projective billiard of dual pencil type defined by the pencil in question. 
Here the product is taken over ordered "big" pairs 
$((em; fn); (e'm'; f'n'))$.
\end{theorem}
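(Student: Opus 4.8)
The plan is to deduce Theorem \ref{tintpr} from Theorem \ref{proformint} by projective duality, reusing the dictionary already set up in the proofs of Theorems \ref{thmd1pr} and \ref{thmd12pr} (which are the duals of Theorems \ref{thmd1} and \ref{thmd12}). Projective duality identifies the billiard plane $\rr^2_{x_1,x_2}\subset\rp^2$ with the dual of the multibilliard plane $\rp^2_{[y_1:y_2:y_3]}$: a point $Y=[y_1:y_2:y_3]$ of the multibilliard plane corresponds to a line $\ell_Y\subset\rr^2_{x_1,x_2}$, and conversely the line through $(x_1,x_2)$ directed by $(v_1,v_2)$ corresponds to the point $[\mcm]$ with $\mcm=(-v_2,v_1,\Delta)$ as in (\ref{moments}). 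Under this duality the dual pencil tangent to $a,b,c,d$ becomes a pencil $\mcp$ of conics through four base points $A,B,C,D$ (the duals of $a,b,c,d$); the intersection point $em=e\cap m$ becomes the line $EL$ through the corresponding base points; and a dual pencil type projective billiard defined by the dual pencil becomes a pencil type dual multibilliard defined by $\mcp$, the matching of admissible lines with admissible vertices being recorded in Remark \ref{remdu}, Example \ref{projdp} and in the parallel structure of Definitions \ref{multip} and \ref{multipr}. Finally, by the results quoted in Remark \ref{remoside} (see \cite{grat}), a $0$-homogeneous rational integral $\Psi(x_1,x_2,v_1,v_2)$ is a rational $0$-homogeneous function of $(v_1,v_2,\Delta)$, hence descends via $(v_1,v_2,\Delta)\mapsto[\mcm]$ to a rational function $\widetilde\Psi$ on $\rp^2_{[y_1:y_2:y_3]}$ with $\Psi=\widetilde\Psi([\mcm])$; moreover $\Psi$ is a first integral of the projective billiard if and only if $\widetilde\Psi$ is a rational integral of the dual multibilliard.

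Next I would identify the functionals. The point $em$ of the billiard plane, with homogeneous coordinates $r(em)=(x_1(em),x_2(em),1)$, lies on the line $\ell_Y$ with moment $\mcm$ exactly when $\langle r(em),\mcm\rangle=0$; and the same line $EL$ of the multibilliard plane is the zero locus of the linear functional $\xi_{EL}$ on $\rr^3_{y_1,y_2,y_3}$. Hence, evaluated at $Y=\mcm$, the functional $\xi_{EL}$ is proportional to $\langle r(em),\cdot\rangle$: there is a non-zero constant $\lambda_{em}$ with $\xi_{EL}(\mcm)=\lambda_{em}\langle r(em),\mcm\rangle$. Set $\chi_{em;fn}:=\lambda_{em}\lambda_{fn}$. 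The normalization (\ref{normfunct}) of Theorem \ref{proformint} is an identity of quadratic forms on $\rr^3_{y_1,y_2,y_3}$; restricting it to $Y=\mcm$, which ranges over all of $\rr^3$ as $(x,v)$ varies, yields exactly (\ref{sum0pr}) with the above $\chi_{em;fn}$. Existence of such a $\chi$ and its uniqueness up to a common constant factor follow from the corresponding assertion of Theorem \ref{proformint}, part 1) --- equivalently, from the fact that the three quadratic forms $\langle r(em),\mcm\rangle\langle r(fn),\mcm\rangle$ satisfy exactly one linear relation. This establishes part 1) of the present theorem.

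It then remains to substitute. With $\chi_{em;fn}=\lambda_{em}\lambda_{fn}$ one has $\chi_{em;fn}\langle r(em),\mcm\rangle\langle r(fn),\mcm\rangle=\xi_{EL}(\mcm)\,\xi_{FN}(\mcm)$, so the expression (\ref{intpr12}) is precisely the pullback of the degree $12$ rational function (\ref{integr12}) under the map $(x_1,x_2,v_1,v_2)\mapsto[\mcm]$. By Theorem \ref{proformint}, part 2), the function (\ref{integr12}) is a rational integral of every pencil type dual multibilliard defined by $\mcp$, in particular of the one dual to the given dual pencil type projective billiard; hence its pullback (\ref{intpr12}) is a $0$-homogeneous rational first integral of that billiard. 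Its degree is $12$: each of the six factors $\frac{\xi_{EL}\xi_{FN}}{\xi_{E'L'}\xi_{F'N'}}+\mu$ is, after clearing denominators, a ratio of two quadratic forms in $\mcm$ with no common factor for generic $\mu$, and $6\cdot 2=12$.

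The only genuinely non-routine ingredient is the duality dictionary invoked in the first paragraph --- namely that pullback by $(x,v)\mapsto[\mcm]$ carries rational integrals of a dual multibilliard bijectively to $0$-homogeneous rational integrals of the dual projective billiard, and that the projective dual of a pencil type dual multibilliard (vertices and all) is exactly a dual pencil type projective billiard. I expect this, rather than the substitution bookkeeping, to be the main point; but it is precisely the correspondence already carried out in the proofs of Theorems \ref{thmd1pr} and \ref{thmd12pr}, so here it is used as a black box and only the explicit matching $\xi_{EL}(\mcm)\leftrightarrow\langle r(em),\mcm\rangle$ has to be checked by hand.
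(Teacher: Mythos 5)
Your proposal is correct and follows essentially the same route as the paper: the paper's proof likewise passes to the dual multibilliard via orthogonal polarity, observes that $<r(em),\mcm>$ is (up to a constant) the functional $\xi_{EM}$ vanishing on the subspace $\pi^{-1}(EM)$, invokes Theorem \ref{proformint} for existence of the $\chi_{em;fn}$ and the degree 12 integral, and concludes by Proposition \ref{proint}, Statement 3) that substituting $\mcm=[r,v]$ gives an integral of the projective billiard. Your explicit bookkeeping $\chi_{em;fn}=\lambda_{em}\lambda_{fn}$ and the uniqueness remark are exactly the content the paper leaves implicit.
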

\begin{lemma} \label{lintpr} Consider the following  segment lengths, see Fig. \ref{fig-a-int-form}:
 \begin{equation} \rho:=|bc - ab|, \ t:= |ab - bd|, \ \tau:=|ad-ab|, \ s:=|ab-ac|.\label{distances}\end{equation}
Here the lengths are oriented:  lengths of two adjacent aligned segments 
(say, $s$ and $\tau$), are taken  with the same sign, 
if their common end separates them ($ab$ lies between the points $ac$ and $ad$), and with  opposite signs 
 otherwise. Relation (\ref{sum0pr}) (and hence, 
 the statements of Theorem \ref{tintpr}) hold for 
 \begin{equation}(\chi_{ab;cd},\ \chi_{bc; ad},\  \chi_{ac;bd})=\left(\frac{st}{\rho\tau}-1,\ -\frac{st}{\rho\tau},1\right).\label{chi-pr}\end{equation}
 \end{lemma}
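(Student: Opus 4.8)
The identity asserted in the lemma says that the explicit numbers in (\ref{chi-pr}) are the coefficients $\chi_{em;fn}$ of Theorem~\ref{tintpr}. By that theorem a collection $(\chi_{em;fn})$ with property (\ref{sum0pr}) exists and is unique up to a common nonzero factor, so it suffices to compute the three ratios $\chi_{ab;cd}:\chi_{bc;ad}:\chi_{ac;bd}$ and to match them with (\ref{chi-pr}).

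The first step is to record the geometric meaning of the bracket. For a point $P$ of the affine plane $\{x_3=1\}\subset\rr^3$ set $L_P(\mcm):=<r(P),\mcm>$. Since $\mcm=[r,v]$, one has $L_P(\mcm)=\det(r(P),r,v)$; thus $L_P$ is a linear functional on $\rr^3_{\mcm}$ that vanishes exactly on those $\mcm$ representing a line of the plane through $P$. Consequently, for three collinear points $P_1,P_2,P_3$ of the plane the vectors $r(P_1),r(P_2),r(P_3)$ lie in one $2$-plane through the origin of $\rr^3$ and are linearly dependent, hence so are $L_{P_1},L_{P_2},L_{P_3}$; fixing a linear coordinate on the line and writing $\overline{P_iP_j}$ for the oriented length from $P_i$ to $P_j$, one has
\[
\overline{P_2P_3}\,L_{P_1}+\overline{P_3P_1}\,L_{P_2}+\overline{P_1P_2}\,L_{P_3}=0 .
\]
Applying this to the complete quadrilateral of the four lines $a,b,c,d$, one sees that the six functionals $L_{ab},L_{ac},L_{ad},L_{bc},L_{bd},L_{cd}$ span the $3$-dimensional space of linear functionals on $\rr^3_{\mcm}$, and that $L_{ac},L_{ad},L_{bc}$ form a basis of it (for a nondegenerate complete quadrilateral the points $ac$, $ad$, $bc$ are not collinear).

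The second step rewrites the three bilinear terms of (\ref{sum0pr}) in that basis. The collinearity relation on the line $a$, carrying the oriented lengths $s=|ab-ac|$ and $\tau=|ad-ab|$, expresses $L_{ab}$ through $L_{ac},L_{ad}$; the relation on the line $b$, carrying $\rho=|bc-ab|$ and $t=|ab-bd|$, then expresses $L_{bd}$ through $L_{ac},L_{ad},L_{bc}$; and the relation on the line $c$ expresses $L_{cd}$ through $L_{ac},L_{bc}$ alone, with a single a priori undetermined scalar $k$ coming from the position of $cd$ on the line $c$. Substituting these expressions into $\chi_{ab;cd}L_{ab}L_{cd}+\chi_{bc;ad}L_{bc}L_{ad}+\chi_{ac;bd}L_{ac}L_{bd}$ and expanding as a quadratic form in $L_{ac},L_{ad},L_{bc}$, the coefficients of $L_{ad}^2$ and of $L_{bc}^2$ vanish automatically, while the vanishing of the four remaining coefficients is a linear system in which the $L_{ac}^2$- and the $L_{ac}L_{ad}$-equations coincide. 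Since a nonzero solution exists by Theorem~\ref{tintpr}, this (effectively $3\times3$) system is consistent; solving it forces
\[
k=\frac{t(s+\tau)}{st-\rho\tau},\qquad (\chi_{ab;cd}:\chi_{bc;ad}:\chi_{ac;bd})=\Bigl(\tfrac{st}{\rho\tau}-1\ :\ -\tfrac{st}{\rho\tau}\ :\ 1\Bigr),
\]
which is exactly (\ref{chi-pr}). (Here $\rho,\tau\neq0$ because $a,b,c,d$ are distinct; $st\neq\rho\tau$ holds for lines in general position, i.e.\ when $c\not\parallel d$, the remaining configurations following by continuity or from the uniqueness in Theorem~\ref{tintpr}.)

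The step I expect to be the main obstacle is the sign bookkeeping in the collinearity relations. The oriented-length convention of the lemma --- two adjacent aligned segments sharing an endpoint are taken with equal signs precisely when that common endpoint separates their other endpoints --- must be matched exactly with the signs of the coefficients $\overline{P_iP_j}$ above; an inconsistent choice flips $s,\tau$ relative to $\rho,t$ and alters the final formula. Once the convention has been fixed coherently on the lines $a$, $b$ (and then $c$), the computation in the third step is short, and the automatic vanishing of the $L_{ad}^2$- and $L_{bc}^2$-coefficients together with the coincidence of two of the remaining equations serve as built-in checks that it has been carried out correctly.
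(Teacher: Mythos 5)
Your argument is correct, but it takes a more roundabout route than the paper's. The paper's proof extracts the two needed linear conditions on $(\chi_{ab;cd},\chi_{bc;ad},\chi_{ac;bd})$ by evaluating the identity (\ref{sum0pr}) at two well-chosen values of $\mcm$: first $\mcm=(0,0,1)$, for which every bracket $<r(em),\mcm>$ equals $1$ and one gets $\sum\chi_{em;fn}=0$; second $\mcm=[r(ab),v]\perp r(ab)$, which kills the $ab;cd$-term outright and reduces the remaining two terms, via the observation that the vector differences $x(bc)-x(ab)$, $x(bd)-x(ab)$ are parallel to $b$ and $x(ad)-x(ab)$, $x(ac)-x(ab)$ are parallel to $a$, to the single relation $\chi_{bc;ad}\rho\tau+\chi_{ac;bd}st=0$; normalizing $\chi_{ac;bd}=1$ then gives (\ref{chi-pr}) in two lines. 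You instead expand all six functionals $L_{em}$ in the basis $L_{ac},L_{ad},L_{bc}$ using the collinearity relation $\overline{P_2P_3}L_{P_1}+\overline{P_3P_1}L_{P_2}+\overline{P_1P_2}L_{P_3}=0$ on each side of the quadrilateral, and solve the resulting system of coefficient equations, carrying an auxiliary unknown $k$ for the position of $cd$ on $c$. Both proofs rest on the same logical skeleton (necessary linear conditions plus the existence and uniqueness up to scale guaranteed by Theorem \ref{tintpr}), and your built-in consistency checks (automatic vanishing of the $L_{ad}^2$- and $L_{bc}^2$-coefficients, proportionality of the $L_{ac}^2$- and $L_{ac}L_{ad}$-equations, which indeed follows from $L_{bd}$ being a combination of $L_{ab}$ and $L_{bc}$) are sound. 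What the paper's choice of test vectors buys is that the data $s,t,\rho,\tau$ enter only through the two lines $a$ and $b$, so the position of $cd$ never needs to be parametrized and no case distinction such as $st=\rho\tau$ arises; what your version buys is a self-contained verification that the quadratic form vanishes identically, rather than only at a spanning set of test vectors, at the cost of heavier sign bookkeeping, which you correctly identify as the delicate point.
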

 \begin{figure}
 \begin{center}
   \epsfig{file=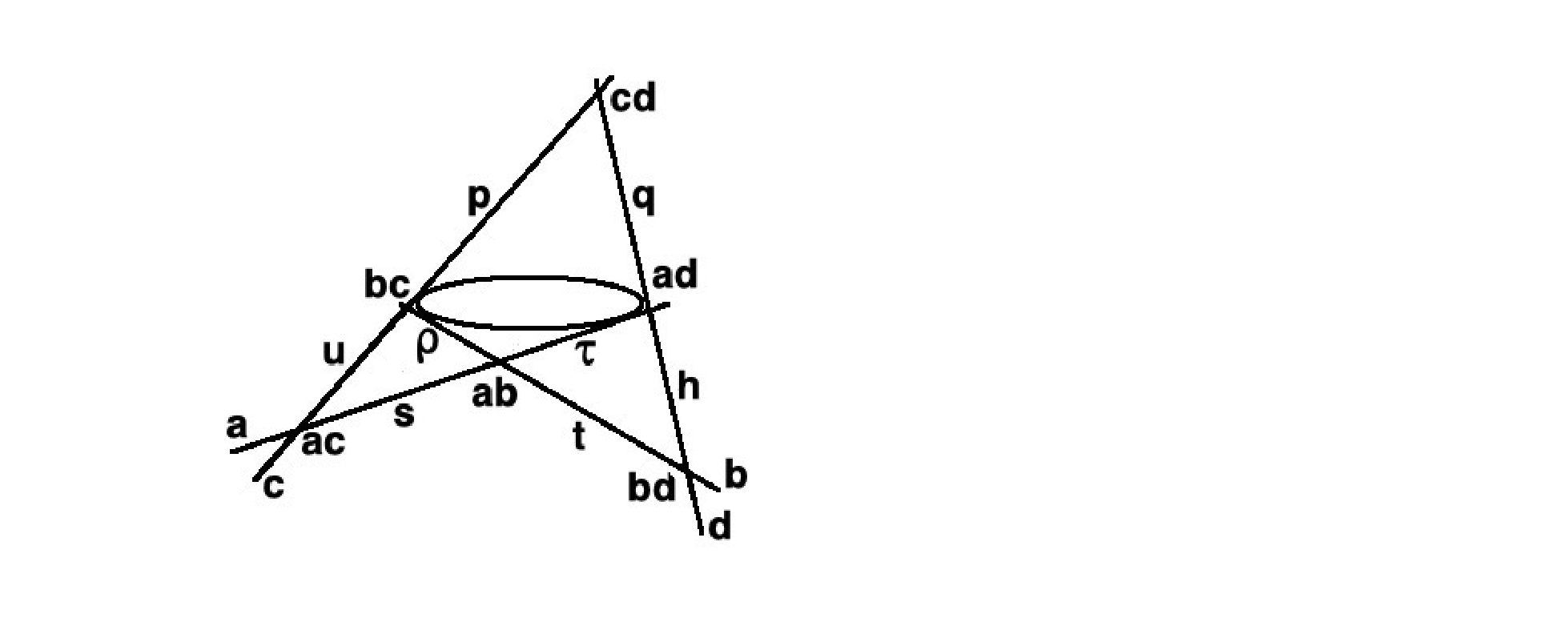, width=30em}
   \caption{Pencil of type a). Distances between intersection points.} \label{fig-a-int-form}
   \end{center}
\end{figure}

\begin{definition} \label{defexpr} A rationally $0$-homogeneously integrable  
projective billiard structure on a conic $\gamma$ that is not 
 of dual pencil type (i.e., any projective billiard structure from Theorem \ref{tgermpr}, Case 2)) 
 will be called {\it exotic.} Its singular points  (which are the points, 
 where the corresponding line field is either undefined, or tangent to $\gamma$) will be called the {\it base points.}
 \end{definition}
 
 \begin{corollary} \label{corexpr} Let a rationally integrable projective billiard with piecewise $C^4$-smooth boundary 
be not of dual pencil type. Let its boundary contain at least one nonlinear arc. 
Then all its nonlinear arcs  lie in one conic, equipped with an exotic 
 projective billiard structure from Theorem \ref{tgermpr}, Case 2). 
 \end{corollary}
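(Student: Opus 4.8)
The plan is to derive Corollary \ref{corexpr} from Theorems \ref{thmd1pr} and \ref{thmd12pr} (the projective-billiard counterparts of Theorems \ref{thmd1} and \ref{thmd12}), exactly mirroring the way Corollary \ref{corex} follows from Theorems \ref{thmd1}, \ref{thmd12} and \ref{tgerm}. Alternatively one could pass to the projective-dual object, which is a rationally integrable dual multibilliard via the duality dictionary, apply Corollary \ref{corex} there, and dualize the conclusion back. I will present the direct route, since the dual statements are already available.

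First I would invoke Theorem \ref{thmd1pr}, part 1: since the billiard is rationally integrable with piecewise $C^4$-smooth boundary containing a nonlinear arc, every nonlinear boundary arc is in fact a conical arc, and all arcs lying in one and the same ambient conic carry the restriction of a single projective billiard structure on that conic, which is either of dual pencil type or one of the exotic structures of Theorem \ref{tgermpr}, Case 2). Thus to each ambient conic of a nonlinear arc we may attach a label, ``pencil'' or ``exotic''.

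Next I would treat the two possibilities. If at least two ambient conics of nonlinear arcs are distinct regular conics, then Theorem \ref{thmd1pr}, part 2, forces all of them to lie in a common dual pencil with projective billiard structures defined by that pencil; since a structure defined by a dual pencil is by definition not exotic, every such conic is labelled ``pencil'', and in particular the mixed configuration ``one exotic conic together with one pencil conic'' cannot occur. Hence either (a) all ambient conics of nonlinear arcs (perhaps several, perhaps just one) are of dual pencil type and lie in a common dual pencil --- in the single-conic case one takes the dual pencil that defines its structure --- or (b) there is exactly one ambient conic and it carries an exotic structure. In case (a) the hypotheses of Theorem \ref{thmd12pr} are satisfied (all nonlinear $C^4$-smooth pieces are conical arcs in one dual pencil with structures defined by it), so rational integrability of the billiard would force it to be of dual pencil type, contradicting the assumption of the corollary. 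Therefore case (b) holds, which is precisely the assertion: all nonlinear arcs of the boundary lie in one conic equipped with an exotic projective billiard structure from Theorem \ref{tgermpr}, Case 2).

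This is essentially a bookkeeping consequence of the main theorems, so there is no serious new obstacle. The only steps that need a moment's care --- and which I would spell out --- are the exclusion of the ``mixed'' configuration above and the verification that Theorem \ref{thmd12pr} genuinely applies in the degenerate sub-case of a single ambient conic carrying several arcs and possibly many straight-line segments; both reduce to unwinding the definitions of ``dual pencil type billiard'' and ``exotic structure''. The real content, of course, is hidden inside Theorems \ref{thmd1pr} and \ref{thmd12pr} themselves, whose proofs proceed through the dual multibilliard and the classification results of Subsections 1.2--1.4.
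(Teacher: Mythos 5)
Your proposal is correct and follows essentially the same route the paper intends: the corollary is stated as an immediate consequence of Theorems \ref{thmd1pr} and \ref{thmd12pr}, and your case analysis (exclude two distinct ambient conics and the single pencil-type conic via Theorem \ref{thmd12pr}, leaving only the single exotic conic) is exactly the intended bookkeeping, mirroring how Corollary \ref{corex} follows from Theorems \ref{thmd1} and \ref{thmd12}. No gaps.
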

 
 \begin{theorem} \label{thmd2pr} Let a projective billiard has piecewise smooth boundary consisting of arcs   
 of one and the same  conic $\gamma$ equipped with an exotic projective billiard structure and maybe some straightline segments. The billiard 
  is rationally integrable, if and only if the collection of ambient lines of the boundary segments  
  either is empty, or consists of some of the following so-called {\bf admissible lines} equipped with 
   central projective billiard structures:
 
(i) Case of type 2a) projective billiard structure on $\gamma$; $\rho=2-\frac2m$, $m\in\nn$, $m\geq3$. 
The    unique  admissible line is 
the vertical $x_2$-axis, equipped with the normal (i.e., horizontal) line field. See Fig. \ref{figapr}. 
The projective billiard  bounded by  a half of $\gamma$ and  the $x_2$-axis has a 
rational $0$-homogeneous integral of minimal degree $2m$: the  
 function $\Psi_{2a1}$ from (\ref{r2a1v}) for odd $m$;  
  the function $\Psi_{2a2}^2$ with $\Psi_{2a2}$ the same, as in (\ref{r2a2v}), for even $m$. See Fig. \ref{figapr}. 
 
 (ii)  Case of  type 2b1). The unique admissible line is the line $\{ x_2=1\}$ equipped with the field of lines 
 through the point $(0,-1)$.
 
 (iii) Case of type 2b2).  The unique admissible line is the $x_2$-axis equipped with the normal (horizontal) 
 line field. See Fig. \ref{figbpr}. In both cases 2b1), 2b2) the  functions $\Psi_{2b1}$, $\Psi_{2b2}$ 
 from (\ref{r2b1v}) and (\ref{r2b2v})  
 are  integrals of minimal degree $4$ for each billiard bounded by $\gamma$ and the admissible line.

 (iv) Case of  type 2c1). The unique admissible line is the line $\{ x_2=1\}$ equipped with the field of lines 
 through the point $(0,-1)$.
 
 (v) Case of  type 2c2). There are three admissible lines:
 
 - the line $\{ x_2=1\}$, with the field of lines through the point $(0,-1)$;
 
 - the line $\{ x_1=-\frac12\}$, with the line field  parallel to the vector $(-1,1)$;
 
 - the line $\{ x_2=-2x_1\}$, with the field of lines through the point $(-1,0)$. 
 
 See Fig. \ref{figcpr}. In both  cases 2c1), 2c2) the corresponding functions $\Psi_{2c1}$, $\Psi_{2c2}$ 
 from (\ref{r2c1v}) and (\ref{r2c2v})  
 are  integrals of minimal degree $6$ for each billiard bounded by $\gamma$ and segments of 
 admissible lines. 
  
  (vi) Case of type 2d). No admissible lines.
 \end{theorem}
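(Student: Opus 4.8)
The plan is to deduce this theorem from Theorem \ref{thmd2} by projective duality, in exactly the way Theorem \ref{thmd1pr} is deduced from Theorem \ref{thmd1}. First I would invoke the duality correspondence $\rp^2\leftrightarrow\rp^{2*}$: a projective billiard with piecewise $C^4$-smooth boundary containing a nonlinear arc dualizes to a dual multibilliard whose curves are the dual conics of the conical boundary arcs (carrying the dual billiard structures dual to the given projective billiard structures) and whose vertices are the dual points $\ell^*$ of the lines $\ell$ supporting the boundary segments. This correspondence carries a rational $0$-homogeneous first integral of degree $d$ to a rational first integral of degree $d$ and back, so rational $0$-homogeneous integrability of the billiard is equivalent to rational integrability of the dual multibilliard, with equal minimal degrees; it also identifies the exotic projective billiard structures on a conic from Theorem \ref{tgermpr}, Case 2), with the exotic dual billiard structures on the dual conic from Theorem \ref{tgerm}, Case 2), subtype by subtype, matching the integrals $\Psi_\bullet$ of the Addendum to Theorem \ref{tgermpr} with the integrals $R_\bullet$ of the Addendum to Theorem \ref{tgerm}. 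I would isolate all of this as one lemma and reuse it.

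With that in hand, the billiard of the theorem dualizes to a dual multibilliard consisting of a single exotic conic together with one vertex per boundary segment. Theorem \ref{thmd2} then says this multibilliard is rationally integrable if and only if the vertices are absent or are precisely the admissible vertices listed there, each carrying the $\gamma^*$-angular symmetry. Pulling back through duality, this says the boundary segments are absent or lie on the dual lines of those admissible vertices and carry the projective billiard structures dual to the $\gamma^*$-angular symmetries. The remaining task is to make these dual objects explicit, for which I would use the general fact that a projective angular symmetry centered at $A$ with fixed point line $\Lambda$ — in particular the $\mcs$-angular symmetry, whose fixed point line is the polar of $A$ with respect to $\mcs$ — dualizes to the central projective billiard structure on the line $A^*$ with center $\Lambda^*$. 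Applying this with $\mcs$ the exotic conic and translating into the normalizing chart of Theorem \ref{tgermpr} (where $C=\{x_2=x_1^2\}$), a direct computation yields in each subtype the admissible line and its center as claimed: the $x_2$-axis with the horizontal line field in Cases 2a) and 2b2); the line $\{x_2=1\}$ with center $(0,-1)$ in Cases 2b1), 2c1), and as one of three in 2c2); together with $\{x_1=-\tfrac12\}$ directed by $(-1,1)$ and $\{x_2=-2x_1\}$ with center $(-1,0)$ in Case 2c2); and nothing in Case 2d). The numbers of admissible lines (one, one, one, one, three, zero) match the numbers of admissible vertices in Theorem \ref{thmd2}, the real/complex count in Case 2c) explaining why 2c1) contributes a single real admissible line while 2c2) contributes three. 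For type 2a) the two families $\rho=2-\tfrac2{2N+1}$ and $\rho=2-\tfrac1{N+1}$ merge into $\rho=2-\tfrac2m$, $m\ge3$, and the minimal degrees $4N+2=\deg R$ and $4N+4=\deg R^2$ from Theorem \ref{thmd2}(i) become $2m$, realized by $\Psi_{2a1}$ for odd $m$ and $\Psi_{2a2}^2$ for even $m$; the minimal degrees $4$ and $6$ in Cases 2b), 2c) transport the same way, realized by $\Psi_{2b1},\Psi_{2b2}$ and $\Psi_{2c1},\Psi_{2c2}$.

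The only genuine work is this last, explicit step; everything else is formal. Hence the main obstacle is not conceptual but computational: correctly dualizing the vertex dual billiard structures into line fields and placing each admissible line, together with its center, inside the specific affine chart of Theorem \ref{tgermpr}, then verifying that the line fields so obtained are exactly the ones listed (and that no other line is admissible, which is guaranteed by the "only if" direction of Theorem \ref{thmd2}). Once the duality dictionary "global dual billiard structure at a vertex $\leftrightarrow$ central projective billiard structure on the dual line" is pinned down, the theorem follows from Theorem \ref{thmd2} with no further ideas needed.
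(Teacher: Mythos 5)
Your proposal is correct and follows essentially the same route as the paper: Subsection 3.3 deduces Theorem \ref{thmd2pr} from Theorem \ref{thmd2} via Proposition \ref{proint} (equivalence of integrability and equality of minimal degrees under orthogonal polarity duality), then computes the dual lines of the admissible vertices case by case using exactly the dictionary "projective angular symmetry centered at $Q$ with fixed point line $L$ $\leftrightarrow$ central projective billiard structure on $Q^*$ with center $L^*$", normalized through the chart of Proposition \ref{produf}. The only part you leave implicit that the paper spells out is that explicit coordinate computation of each $Q^*$ and $L^*$, which you correctly identify as the sole remaining work.
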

 \begin{figure}
 \begin{center}
   \epsfig{file=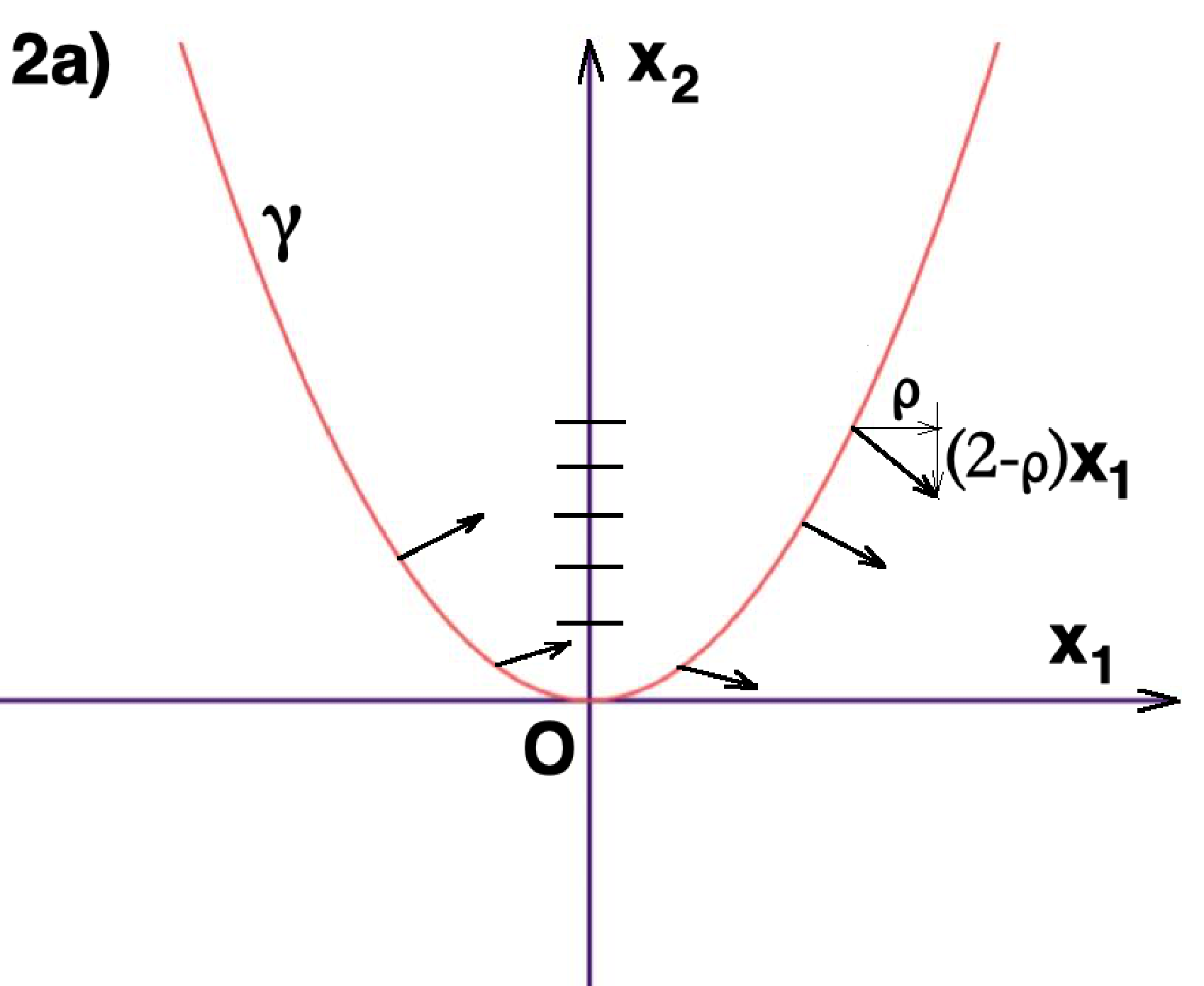, width=14em}
   \caption{The only admissible line in Case 2a) is the $x_2$-axis.}\label{figapr}
   \end{center}
\end{figure}
 \begin{figure}
 \begin{center}
   \epsfig{file=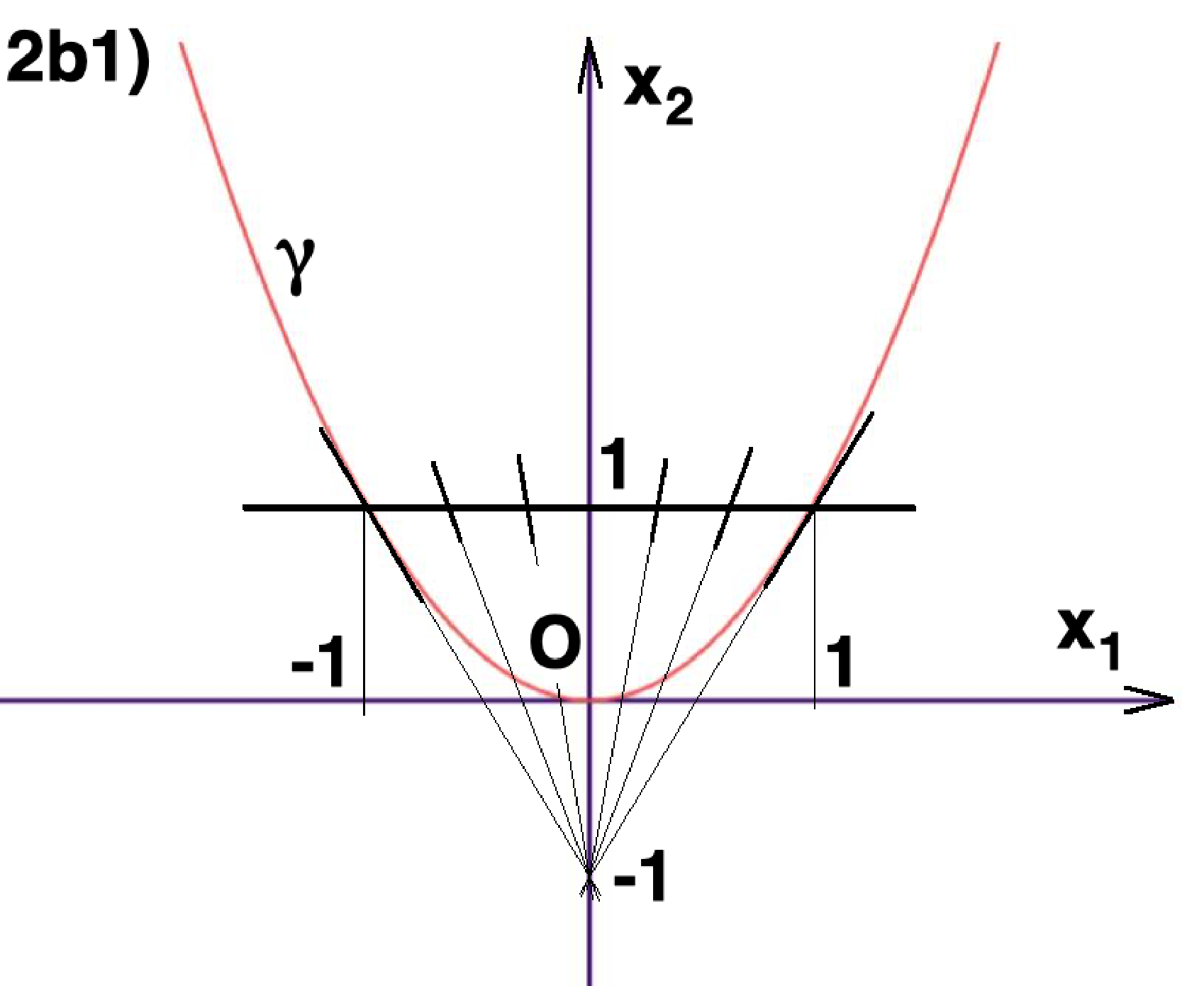, width=14em}
   \hspace{1cm}
   \epsfig{file=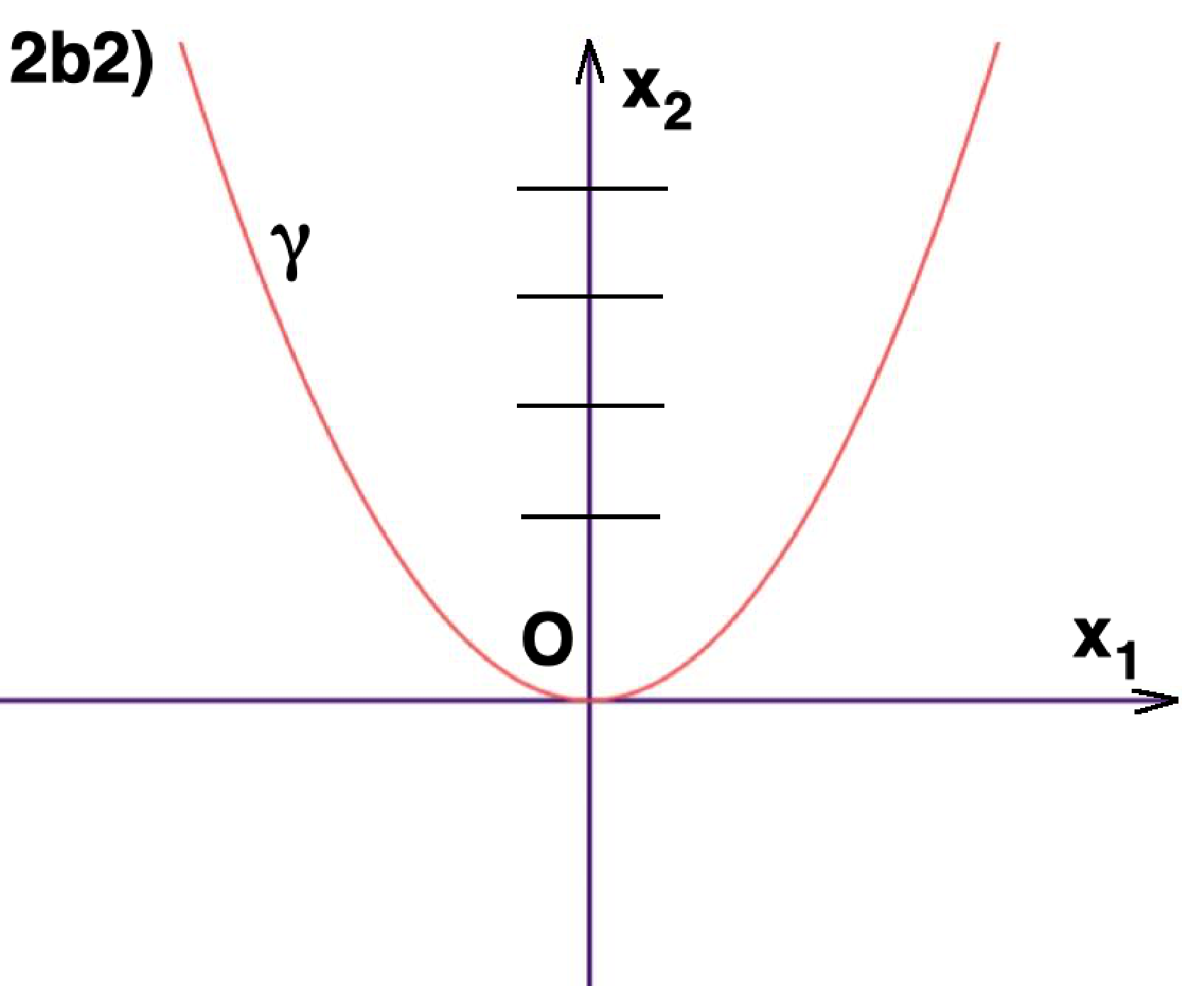, width=14em}
   \caption{The only admissible line in Case 2b1) is the line $\{ x_2=1\}$. 
   The only admissible line in Case 2b2) is the $x_2$-axis.}\label{figbpr}
   \end{center}
\end{figure}
 \begin{figure}
 \begin{center}
   \epsfig{file=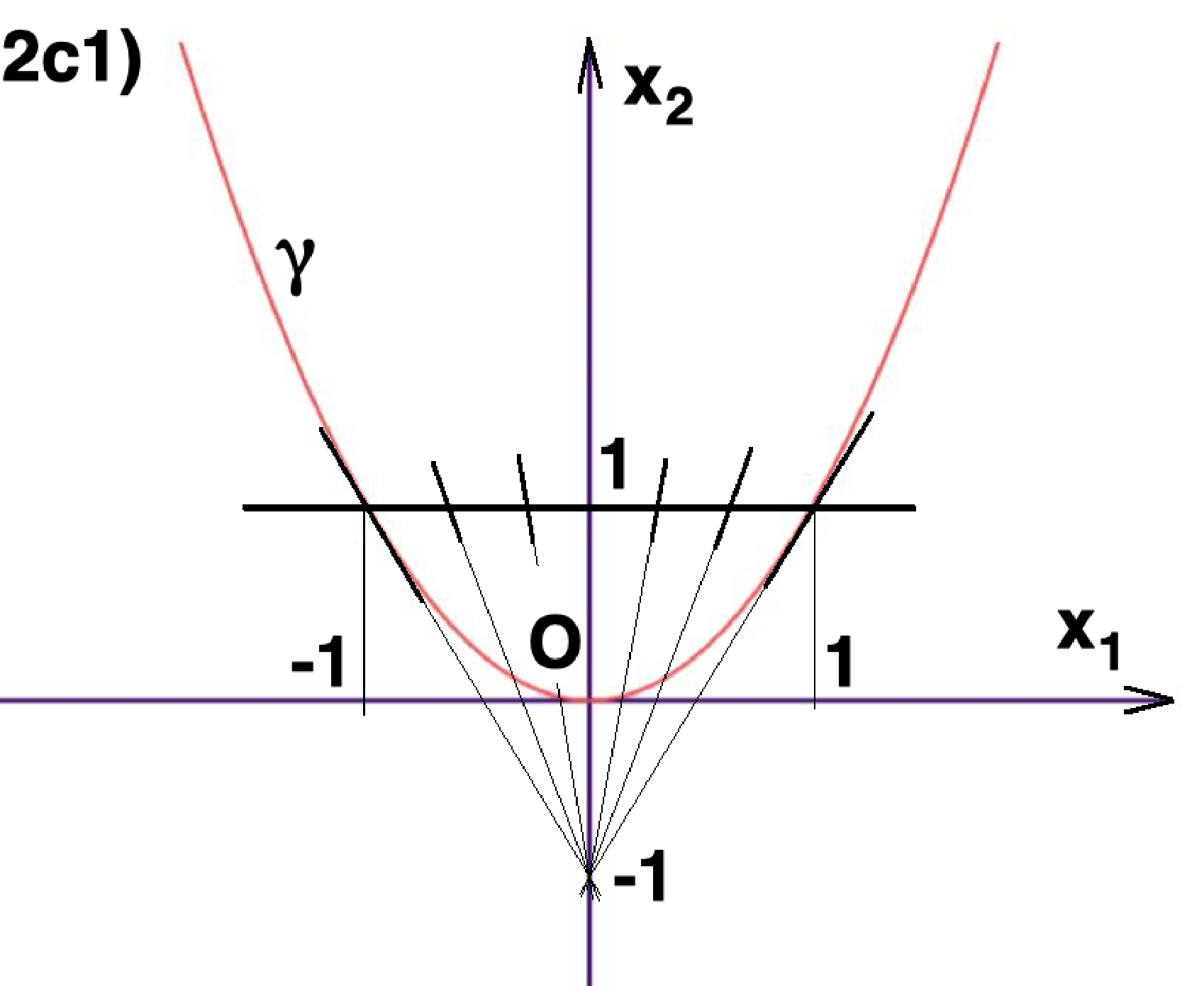, width=14em}
   \hspace{1cm}
   \epsfig{file=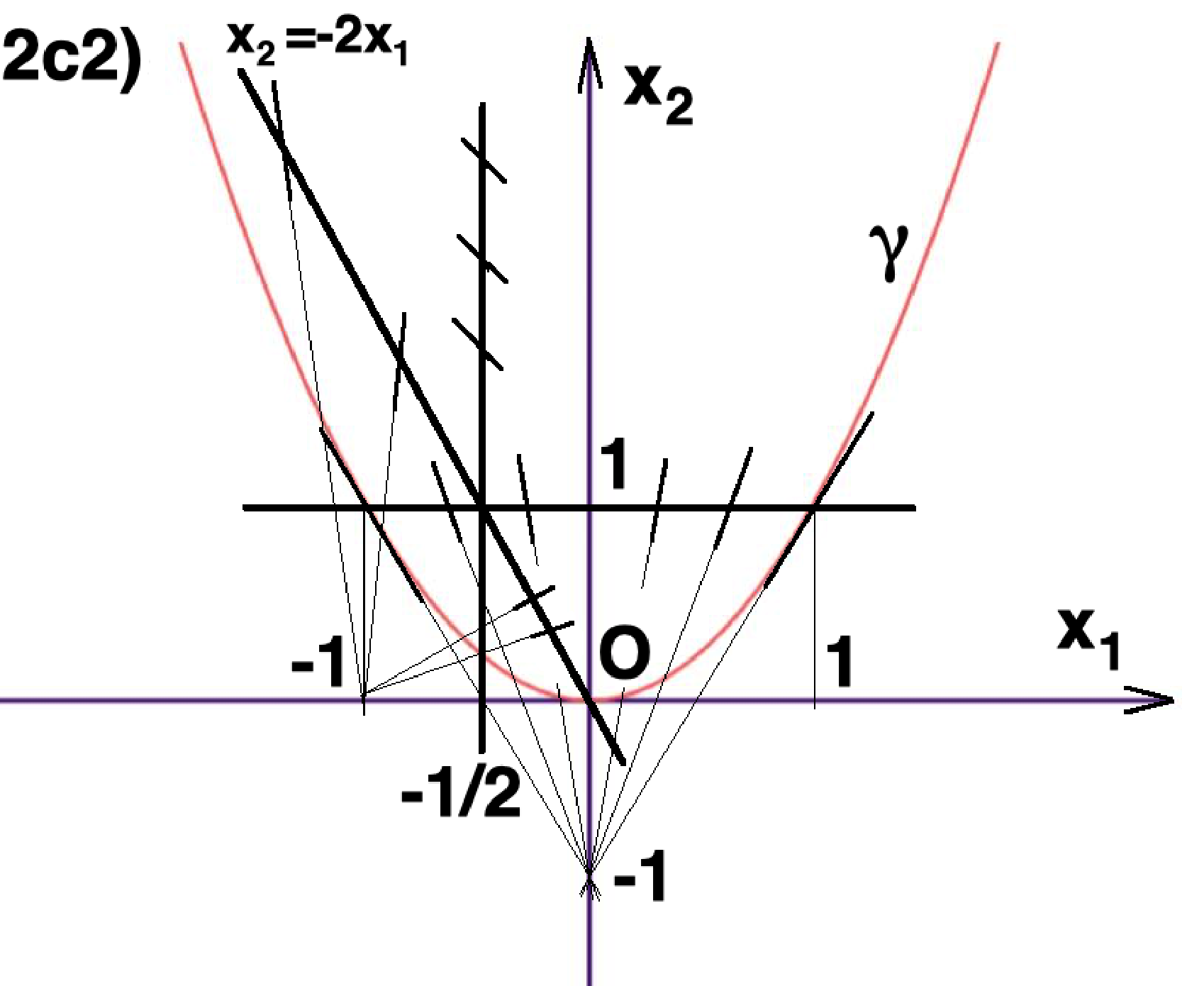, width=14em}
   \caption{The only admissible line in Case 2c1) is the line $\{ x_2=1\}$. 
   In Case 2c2) there are three admissible lines:  $\{ x_2=1\}$; $\{ x_1=-\frac12\}$, $\{ x_2=-2x_1\}$.}\label{figcpr}
   \end{center}
\end{figure}

 \subsection{Plan of proofs of main results}
 
 Step 1.  In Subsection 2.1 we prove  rational integrability 
 of pencil type complex multibilliard. (This implies analogous result in the real case.) 
 To do this, we show that for every pencil all the involutions associated to all the corresponding  
 admissible vertices preserve the pencil and act on its parameter space $\oc$ by 
 conformal involutions (Proposition \ref{glopro}). We fix an arbitrary collection of admissible vertices and 
 consider  the subgroup $G\subset Aut(\oc)=PSL_2(\cc)$ 
generated by the corresponding conformal involutions. We show that finiteness of the group  
 $G$ is equivalent to the system of Conditions 3)--5) 
 of Definition \ref{multipb} of pencil type multibilliard (Proposition \ref{progroup}), and in this case 
 $G$ is either trivial, or isomorphic to either $\zz_2$, or $S_3$. 
 We then deduce rational integrability of every pencil type multibilliard with integral of degree $2|G|\in\{ 2, 4, 12\}$.

To classify rationally integrable dual multibilliards, in what follows we consider 
an arbitrary  dual multibilliard with a rational integral $\Psi$. Each its curve is already known to be a conic 
equipped with either a pencil type dual billiard structure, or an exotic billiard structure from Theorem \ref{tgerm}, 
Case 2). We fix some its conic $\mcs$ and consider the canonical integral $R$ of its dual billiard structure: 
either a quadratic integral in the case of pencil, or the corresponding integral from the  Addendum to Theorem \ref{tgerm}.

Step 2. In Subsection 2.2 we 
show that the singular foliations $\Psi=const$ and $R=const$ on $\cp^2$ coincide. 
 We show that  a generic level curve of the integral $R$ is irreducible, 
of the same degree $d=\deg R$, 
and thus, $R$ is a rational first integral of minimal degree for the above foliation. 
In the exotic case we also show that the conic $\mcs$ is its unique level curve of multiplicity $d$, which means that the irreducible 
level curves of the function $R$ accumulating to $\mcs$ converge to $\frac d2[\mcs]$ as divisors: 
the intersection of a small cross-section to $\mcs$ with a level curve close to $\mcs$ consists of $\frac d2$ points. 

Step 3. We then deduce (in Subsection 2.2) that if on some conic of the multibilliard the dual billiard structure is defined by 
a pencil (or if the multibilliard contains at least two distinct conics), 
then the above foliation coincides with the pencil and the dual billiard structures on all the other conics 
are defined by the same pencil. This will prove Theorem \ref{thmd1}. Results of Step 1 together with constance 
of integral on the conics of the pencil (given by Step 2)  imply Theorem \ref{thdeg}.

Step 4. In Subsection 2.3 we study vertices of the multibilliard. First we show that the family of 
involutions $\sigma_{A,\ell}:\ell\to\ell$ associated to each vertex $A$ is given by the restrictions to the lines 
$\ell$ through $A$ of a  birational involution $\sigma_A:\cp^2\to\cp^2$ preserving the foliation $\Psi=const$. 
Then we deduce that each $\sigma_A$ is 
either a projective angular symmetry, or a degenerate $\mcs$-angular symmetry defined by a regular conic $\mcs$ 
through $A$. We show that  in the latter case the foliation $\Psi=const$ is a pencil of conics 
containing $\mcs$.

Step 5. In Subsection 2.4 we prove Theorem \ref{thmd12}. It deals with the case, 
when the foliation $\Psi=const$ is a pencil of conics. We  
 show that  each vertex of the multibilliard is admissible.  Each level curve of the function $\Psi$ is a collection of at most $\frac{\deg\Psi}2$ conics of the pencil, and it is invariant under the involutions defining the dual billiard structures at
  the vertices. This implies finiteness of the group $G$ generated by the conformal involutions corresponding to the vertices. Together with the results of Step 1 (Subsection 2.1), this implies that  the multibilliard  is 
of pencil type.

In Subsection 2.5 we prove Theorem \ref{thmd2} on classification of rationally integrable multibilliards consisting of  
a conic $\mcs$ with an exotic dual billiard structure and (may be) some vertices. We describe the corresponding 
admissible vertices using the result of Step 4 stating that the corresponding involutions are projective angular symmetries.   

 In Subsection 2.6 we  prove Propositions \ref{preal} and \ref{prorelines}. 
 
 In Section 3 we prove the main results on classification of rationally $0$-homogeneously integrable piecewise $C^4$-smooth projective billiards 
 (Theorems \ref{thmd1pr}, \ref{thmd12pr}, \ref{thdegpr}, \ref{thmd2pr}). 
 We reduce them to the main results on dual multibilliards via  the  projective duality given by orthogonal polarity. We prove bijectivity of the correspondence (given in Remark \ref{removect})  between rational integrals of a 
multibilliard and rational $0$-homogeneous integrals of the flow of its dual projective billiard. 
  This together with the results from 
  \cite{grat} on duality between exotic dual billiards from Theorem \ref{tgerm} and exotic projective billiards from 
  Theorem \ref{tgermpr} and the results of the present paper on dual multibilliards will imply the main results on 
  projective billiards. 
   
 \subsection{Historical remarks}
 
 Existence of a continuum of closed caustics in every strictly convex bounded planar billiard 
 with sufficiently smooth boundary was proved by V.F.Lazutkin \cite{laz}. Existence of continuum of foliations by 
 (non-closed) caustics in open billiards was proved by the author \cite{glcaust}. H.Poritsky \cite{poritsky} (and later E.Amiran \cite{amiran}) proved the Birkhoff Conjecture under the additional assumption that for every two caustics the smaller one 
 is a caustic for the bigger one.  M.Bialy \cite{bialy} proved that if the phase cylinder is foliated by non-contractible 
 invariant curves for the billiard map, then the billiard table is a disk. See also  \cite{wojt}, where 
 another proof of Bialy's result was given, and Bialy's papers \cite{bialy2, bialy1} for similar results on billiards on constant curvature surfaces and on magnetic billiards on these surfaces. D.V.Treschev conjectured existence of billiards where 
 the squared billiard map has fixed point where its germ is analytically conjugated to rotation and confirmed this by 
 numerical experiments: in two dimensions \cite{tres1, tres2} and in higher dimensions \cite{tres3}. V.Kaloshin and 
 A.Sorrentino \cite{kalsor} proved that {\it any integrable deformation of an ellipse is an ellipse} 
 (see  \cite{kavila} for the case of ellipses with small excentricities). 
 A stronger version of their result for almost every ellipse was very recently proved by Illya Koval 
 \cite{koval}. 
 M.Bialy and A.B.Mironov proved the Birkhoff Conjecture for centrally-symmetric billiards admitting a continuous 
 family of caustics extending up to a caustic of 4-periodic orbits \cite{bm6}. For a dynamical entropic version of the Birkhoff Conjecture and related results see \cite{marco}. For a survey on the Birkhoff Conjecture and  results on it see \cite{kalsor, KS18, bm6} and references therein. 

 A.P.Veselov proved a series of complete integrability results for billiards bounded by confocal quadrics 
 in space forms of any dimension and described billiard orbits there in terms of 
 a shift of the Jacobi variety corresponding to an appropriate hyperelliptic curve \cite{veselov, veselov2}. 
Dynamics in (not necessarily convex) billiards of this type was also studied in  \cite{drag, dr2, dr3, dr4, dr5}. 

 The Polynomial Birkhoff Conjecture together with its generalization to piecewise smooth billiards 
 on surfaces of constant curvature was  
 stated by S.V.Bolotin and partially studied by himself, see  \cite{bolotin}, \cite[section 4]{bolotin2}, 
 and  by M.Bialy and A.E.Mironov  \cite{bm3}. Its complete  solution 
 is a joint result of M.Bialy, A.E.Mironov and the author given in  the series of papers 
  \cite{bm, bm2, gl, gl2}. It implies that if a polynomial integral  of a piecewise smooth billiard exists, 
  then its minimal degree is equal to either two, or four.
   For a survey of Bolotin's Polynomial Birkhoff Conjecture and of its version
   for magnetic billiards (an open conjecture, with a substantial progress made in \cite{bm4, bm2.5}) and related results 
 see \cite{bm4, bm, bm2,  bm2.5, bm5, KS18, kozlov} and references therein. 

The generalization of the Birkhoff Conjecture to  dual billiards was stated by S.Tabachnikov 
in \cite{tab08}. Its rationally integrable version was solved by the author of the present paper in \cite{grat}. 
Its polynomially integrable version for outer billiards was stated and partially studied in \cite{tab08} and solved 
completely in \cite{gs}. Projective billiards were introduced by S.Tabachnikov \cite{tabpr}. He had shown there  
 that  if a projective billiard on circle has an invariant area form smooth up to the boundary of the 
phase cylinder, then it is integrable. 

A series of results on the analogue of Ivrii Conjecture on periodic orbits in billiard (stating that their Lebesgue measure 
is zero) for projective billiards was obtained by C.Fierobe \cite{fierobe-th, fierobe-proj, fierobe-proj-refl}.
 
 \section{Rationally integrable dual multibilliards. Proofs of Theorems \ref{thmd1}, \ref{thmd12}, \ref{thmd2}, \ref{thdeg}}
 
 \subsection{Rational integrability of pencil type multibilliards}

\begin{proposition} \label{glopro} Consider a complex pencil of conics  and the corresponding admissible 
vertices. For every  standard vertex the corresponding involution leaves invariant 
  each conic of the pencil. For every skew vertex the corresponding involution 
  permutes conics of the pencil non-trivially: it acts as a conformal involution of the  
  parameter space $\oc$ of the complex pencil. 
\end{proposition}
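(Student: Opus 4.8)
The plan is to reduce the proposition, for each admissible vertex $A$ with its associated involution $\sigma_A$, to two assertions: (i) $\sigma_A$ maps the pencil $\mcp$ to itself and hence induces an involution $\bar\sigma_A\in\mathrm{Aut}(\oc)=PSL_2(\cc)$ of its parameter line; (ii) $\bar\sigma_A=\mathrm{id}$ when $A$ is a standard vertex, and $\bar\sigma_A\neq\mathrm{id}$ when $A$ is a skew one. For the birational (quasi-global) vertices, ``$\sigma_A$ maps $\mcp$ to itself'' must be read in the proper-transform sense: discarding the lines contracted by $\sigma_A$, the image of each conic of $\mcp$ is again a conic of $\mcp$. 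Note that $\sigma_A$ leaves \emph{every} conic of $\mcp$ invariant exactly when $\bar\sigma_A=\mathrm{id}$, so (i)--(ii) give precisely the proposition; and since $\bar\sigma_A$ is a M\"obius involution, it is the identity as soon as it fixes three distinct points of $\oc$.

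First I would treat the \emph{global} vertices, i.e.\ all standard vertices and those skew vertices carrying a genuine projective angular symmetry $\sigma_A$ with prescribed center $A$ and fixed-point line $\Lambda_A$. Being projective, $\sigma_A$ preserves $\mcp$ as soon as it preserves the base scheme of $\mcp$, which I would check case by case from Definition \ref{multip}. For instance, in Case a) a standard vertex $M_j$ is a diagonal point of the complete quadrangle $ABCD$ and its axis $M_iM_k$ meets each of the two quadrangle sides through $M_j$ at the harmonic conjugate of $M_j$ with respect to the two vertices on that side; hence $\sigma_{M_j}$ permutes $\{A,B,C,D\}$ as a product of two disjoint transpositions. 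A skew vertex $K_{EL}$ has center on the line $EL$ and axis $ST$ through the points $S,T$, so $\sigma_{K_{EL}}$ fixes $S,T$ and, since $K_{EL}$ is by construction the harmonic conjugate (with respect to $E,L$) of the diagonal point lying on $EL$, it interchanges $E$ and $L$. In Cases b)--e) the base scheme is a few reduced points together with prescribed tangent directions, and an analogous check shows that $\sigma_A$ permutes the reduced base points and preserves the tangency data. Once $\sigma_A$ is known to preserve $\mcp$, I would read off $\bar\sigma_A$ from its action on the singular members of $\mcp$: in Case a) a product of two disjoint transpositions of $\{A,B,C,D\}$ fixes each of the three distinct singular conics $AB\cup CD$, $AC\cup BD$, $AD\cup BC$, so $\bar\sigma_{M_j}$ fixes three points of $\oc$ and is the identity; a single transposition $E\leftrightarrow L$ interchanges two of these three conics, so $\bar\sigma_{K_{EL}}$ is a non-trivial involution. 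The remaining global vertices in Cases b)--e) are handled by the same count of singular members; equivalently, for a standard vertex one checks that $\Lambda_A$ is the polar of $A$ with respect to two -- and hence, since the polars of $A$ with respect to the conics of a pencil form a pencil of lines, with respect to all -- conics of $\mcp$, which is exactly the condition that $\sigma_A$ leave each of them invariant.

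Next I would treat the \emph{quasi-global} skew vertices, equipped with a degenerate $\mcs$-angular symmetry $\sigma_A^{\mcs}$ for a regular conic $\mcs$ of $\mcp$ passing through $A$ (Cases b4), c3), d1), e1)). Here $\sigma_A^{\mcs}$ is only birational, so I would put $A$, the tangent line $L_A$ to $\mcs$ at $A$, and $\mcs$ itself into standard position and write $\sigma_A^{\mcs}$ explicitly as a quadratic Cremona involution contracting $L_A$; for example $[x:y:z]\mapsto[x^2:xy:2y^2-xz]$ when $A=[0:0:1]$, $L_A=\{x=0\}$ and $\mcs=\{xz=y^2\}$. Substituting into a pencil member $Q_\lambda$ then yields an identity of the form $(\sigma_A^{\mcs})^{*}Q_\lambda=\xi_{L_A}^{\,2}\,Q_{\phi(\lambda)}$, where $\xi_{L_A}$ is a linear form defining $L_A$ and $\phi$ is a non-trivial M\"obius involution of $\oc$ fixing the parameter of $\mcs$. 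Thus, discarding the contracted line $L_A$, the image of $Q_\lambda$ is the conic $Q_{\phi(\lambda)}$ of $\mcp$; so $\sigma_A^{\mcs}$ preserves $\mcp$ and acts on $\oc$ by the non-trivial conformal involution $\bar\sigma_A=\phi$, as claimed.

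The main obstacle I anticipate is organizational rather than conceptual: carrying out the harmonic-configuration and normal-form verifications uniformly over the five pencil types a)--e) and all vertex sub-types of Definition \ref{multip}, and, in the quasi-global case, keeping careful track of the contracted line $L_A$ so that ``preserves the pencil'' is interpreted correctly via proper transforms. One should also note that for a real pencil some admissible vertices (e.g.\ $M_j$, $K_{EL}$ in Case a)) may be complex, cf.\ Proposition \ref{preal}; accordingly all the computations above are performed over $\cc$, and the resulting statement about conformal involutions of $\oc$ is insensitive to reality.
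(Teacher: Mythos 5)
Your proposal is correct, and for Case a) and for the quasi-global vertices it essentially reproduces the paper's argument: in Case a) the paper likewise shows (via the harmonic quadruple $K_{BC},M_2,B,C$) that a standard vertex induces a product of two disjoint transpositions of the base points, hence fixes all three singular conics and acts trivially on $\oc$, while a skew vertex induces a single transposition and hence a non-trivial involution of $\oc$; and the paper's Proposition \ref{invpar} is exactly your explicit normal-form computation for the degenerate $\mcs$-angular symmetry, showing $\sigma_C(\mcs_\la)=\mcs_{-\la}$ in an affine chart where the conics are parabolas. The genuine divergence is in Cases b) and c): the paper does \emph{not} redo the harmonic/polar verifications there, but instead realizes these pencils as limits of type a) pencils under confluence of base points and passes the already-proved statements (and the convergent involutions $\sigma_{M_1(\mu)}$, $\sigma_{K_{C_\mu D_\mu}}$, etc.) to the limit, whereas you propose a direct check that each global $\sigma_A$ preserves the base scheme together with either the count of singular members or the polar-pencil identity. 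Your route is self-contained and avoids the (somewhat delicate) convergence argument, but note that in Cases b)--e) there are only one or two distinct singular conics, so the ``fixes three points of $\oc$'' mechanism is unavailable for standard vertices; you correctly anticipate this by invoking that $\La_A$ is the polar of $A$ with respect to two, hence all, conics of the pencil, but this does require an honest computation with at least one \emph{regular} member (e.g.\ for the vertex $M'$ in Case c1), where one of the two singular members is the double line $AC$ and gives no information). With that verification carried out, both approaches yield the same conclusion.
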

\begin{proof}  Case a): pencil of conics through 
 four distinct basic points $A$, $B$, $C$, $D$, see Fig. \ref{fig2}. It is well-known that in this case no three of them lie on the same line. This implies that the three vertices $M_j$ are well-defined, 
  distinct, do not lie on the same line
   and different from the basic points, and so are the vertices $K_{EL}$, and 
   the latter are distinct from the vertices $M_j$. Set  
   \begin{equation}\Gamma_1:=AB\cup CD, \ \Gamma_2:= BC\cup AD, \ \Gamma_3=AC\cup BD.\label{gammy}
   \end{equation}
   Let $\sigma_{M_1}:\cp^2\to\cp^2$ be 
   the  $\Gamma_2$-angular symmetry centered at $M_1$: the projective involution fixing 
   each line through $M_1$ and permuting its intersection points with the lines $AD$ and $BC$. 
   It permutes the points $A$ and $B$, $C$ and $D$. Hence, it preserves the pencil. 
   It fixes the line $M_2M_3$, which passes through the points $AD\cap BC$ and $AC\cap BD$. Hence, 
   it fixes each its point $X$, since it fixes the line $M_1X$. The pencil is parametrized by a parameter 
   $\lambda\in\oc$, and $\sigma_{M_1}$ acts on $\oc_\la$ by 
   conformal automorphism.  
  Let $\la_1,\la_2,\la_3\in\oc$ denote the parameter values corresponding to the singular conics 
 $\Gamma_1$, $\Gamma_2$, $\Gamma_3$ respectively. Each $\Gamma_j$ is $\sigma_{M_1}$-invariant, by 
 construction. Therefore, the conformal automorphism $\oc\to\oc$ induced by 
   $\sigma_{M_1}$ fixes three distinct points $\la_1$, $\la_2$, $\la_3$. Hence, 
   it is identity, and $\sigma_{M_1}$ preserves each conic of the pencil. This proof 
   is valid for the other vertices $M_j$.
   
   The involution $\sigma_{K_{BC}}$ is the projective angular symmetry centered at $K_{BC}$ 
   with fixed point line $AD$. Hence, it fixes $M_2$. 
   
   {\bf Claim 1.} {\it The involution $\sigma_{K_{BC}}$ permutes $B$ and $C$. Or equivalently, 
   the quadruple of points $K_{BC}$, $M_2$, $B$, $C$ on the line $BC$ is harmonic.}
   
   \begin{proof} The restriction of the involution $\sigma_{K_{BC}}$ to the line $BC$ coincides with the involution 
   $\sigma_{M_2}$, since both of them are non-trivial projective involutions of the line $BC$ fixing $K_{BC}$ and 
   $M_2$. The involution $\sigma_{M_2}$ permutes $B$ and $C$, as in the above discussion on $\sigma_{M_1}$. 
   Hence, so does $\sigma_{K_{BC}}$.
   \end{proof}
   
   \begin{corollary} \label{permut1} Each one of the involutions $\sigma_{K_{BC}}$, 
   $\sigma_{K_{AD}}$  fixes  
   $\Gamma_2$ and permutes $\Gamma_1$, $\Gamma_3$. Hence, it yields a 
   non-trivial conformal involution $\oc_\la\to\oc_\la$ of the parameter  space 
   of the pencil, fixing $\la_2$ and permuting $\la_1$, $\la_3$.
   \end{corollary}
   \begin{proof} The involution $\sigma_{K_{BC}}$ fixes $A$, $D$ and permutes 
   $B$, $C$. Similarly, the involution $\sigma_{K_{AD}}$ fixes $B$, $C$ and permutes $A$, $D$. 
   \end{proof}

   Case b): pencil of conics through three distinct points $A$, $B$, $C$, tangent at the point $C$ to the same line $L$, see Fig. \ref{fig3}. The involution $\sigma_M$ 
   fixes the points $C$, $K_{AB}$, the line $L$ and permutes $A$ and $B$, by definition 
   and harmonicity of the quadruple 
   $M$, $K_{AB}$, $A$, $B$. Therefore, it preserves the pencil. Similarly, the involution $\sigma_{K_{AB}}$ 
   preserves the pencil. And so does the involution $\sigma_C:\cp^2\to\cp^2$ defined to fix $C$ 
   and each point of the line $AB$. Now the pencil, 
   parametrized by a parameter $\la\in\oc$, contains just two singular conics: 
   \begin{equation}\Gamma_1:=AB\cup L, \ \Gamma_2:=AC\cup BC,\label{gammy2}\end{equation} 
   corresponding to some parameter values $\la_1$ and $\la_2$. 
   
   {\bf Claim 2.} {\it The involution $\sigma_M$ and the composition $\sigma_C\circ\sigma_{K_{AB}}$ 
    preserve each conic of the pencil. Each one of the involutions  $\sigma_C$, $\sigma_{K_{AB}}$
    fixes only the conics $\Gamma_1$, $\Gamma_2$ of the pencil.}
   
   \begin{proof} 
   The pencil in question is the limit 
   of a family of pencils of conics through $A$, $B$, $C_\mu$, $D_\mu$ 
   with basic points $C_\mu$, $D_\mu$ depending on small parameter $\mu$, 
   confluenting to $C$, as $\mu\to0$, so that the line $C_\mu D_\mu$ pass through 
   $M=M_1$ and tends to the tangent line $L$, as $\mu\to0$.  
   Then $M_2=M_2(\mu)\to C$, $M_3=M_3(\mu)\to C$, 
   and the involutions $\sigma_{M_1}=\sigma_{M_1(\mu)}$ corresponding to the 
   perturbed pencil, with $\mu\neq0$, converge to $\sigma_M$, as $\mu\to0$. 
   The involution $\sigma_{M_1(\mu)}$ preserves each conic of the 
   pencil for $\mu\neq0$. Hence, so does its limit $\sigma_M$. 
   The involutions at the vertices $K_{C_\mu D_\mu}$, 
   $K_{AB}$ converge to $\sigma_C$ and $\sigma_{K_{AB}}$, by construction. 
   They act on the perturbed pencil as non-trivial involutions, permuting conics in the 
   same way (Corollary \ref{permut1}). Hence, this statement remains valid for their 
   limits $\sigma_C$ and $\sigma_{K_{AB}}$. The claim is proved.
   \end{proof}
   
  \begin{proposition} \label{invpar} Consider a pencil of complex conics that are tangent to each other at a point $C$. 
  Let $\mcs$ be its regular conic, and let $C$ be equipped with the quasi-global  dual billiard structure defined by 
  $\mcs$. Then the corresponding involution $\sigma_C$  preserves the pencil 
   and induces a non-trivial conformal involution $\oc\to\oc$ of its parameter space.
   \end{proposition}
   
   \begin{proof} Let $L$ denote the common projective tangent line at $C$ to the regular conics 
   of the pencil. Let us take an affine 
chart $\cc_{z,w}=\cp^2\setminus L$ so that $C$ is the intersection point of the $w$-axis with the infinity line $L$.  
Then the conics of the pencil are  parabolas $\mcs_\la:=\{ w=(a_1z^2+b_1z+c_1)+\la(a_2z^2+b_2z+c_2)\}$. 
Let us normalize the parameter $\la$ so that $\mcs_0=\mcs$. Then in the affine chart $(z,w)$ one has  
$$\sigma_C(z,w)=(z,2(a_1z^2+b_1z+c_1)-y).$$
Hence, $\sigma_C(\mcs_\la)=\mcs_{-\la}$. The proposition is proved.
\end{proof}

  Case c): pencil of conics through two distinct points $A$ and $C$ tangent  to two given lines $L_A$ and $L_C$ through them; $L_A,L_C\neq AC$.  
  See Fig. \ref{fig4}. Fix an arbitrary point $M'\in AC\setminus\{ A, C\}$. 
  
  {\bf Claim 3.} {\it  The  projective angular symmetries
   $\sigma_A$, $\sigma_C$ with fixed point lines $L_C$ and $L_A$ respectively preserve the pencil. The involutions 
   $\sigma_M$, $\sigma_{M'}$ and  the composition 
  $\sigma_A\circ\sigma_C$ preserve each conic of the pencil.}
  
   The claim is proved analogously to 
  the above discussion, by considering the pencil in question as the limit of the family of 
  pencils through points $A_{\mu}$, $B_{\mu}$, $C_{\mu}$, $D_{\mu}$, 
  $A_{\mu}, B_{\mu}\to A$, $C_\mu, D_\mu\to C$, as $\mu\to0$ so that 
 $A_\mu B_\mu=L_A$, $C_\mu D_\mu=L_C$, and the lines $A_\mu C_\mu$, $B_\mu D_\mu$ are intersected at $M'$.  Similarly to the 
  above discussion, the involutions corresponding to $K_{A_{\mu}B_{\mu}}$ and 
  $K_{C_{\mu}D_{\mu}}$ converge to $\sigma_A$ and $\sigma_C$ respectively. 
  This implies the statement of the claim on the involutions $\sigma_M$, $\sigma_A$, 
  $\sigma_C$. 
  It remains to prove its statement  on the vertex $M'$. The intersection 
  point $M_2(\mu)$ of the lines $B_\mu C_\mu$ and $A_\mu D_\mu$, the point $M'$, 
  and the intersection points of the line $M_2(\mu)M'$ with lines $L_A$, $L_C$ form 
  a harmonic tuple of points on the line $M_2(\mu)M'$, as in  
  Claim 1. Hence, the involution $\sigma_{M',\mu}$ corresponding to the vertex 
  $M'$ and the perturbed pencil, with $\mu\neq0$, fixes $M'$ and each line through 
  $M'$  and permutes its intersection points with the lines $L_A$ and $L_C$. 
  Thus, it coincides with the involution $\sigma_{M'}$ corresponding to the nonperturbed 
  pencil. Hence, $\sigma_{M'}$ preserves  each conic of the nonperturbed pencil, as of  
  the perturbed one. 
  
  Consider the skew vertices in Cases c), d), e)  equipped with quasi-global dual billiard structures. 
  The corresponding involutions preserve the pencil and induce non-trivial conformal involutions of $\oc_\la$, by Proposition \ref{invpar}. 
  
  Consider now the vertices $C$ in Cases d) and e). In Case d) the involution $\sigma_C$ preserves 
  the singular conic $L\cup AB$ of the pencil and the conic tangent to $BC$ at $B$, as $\sigma_M$ in Claim 3. Hence, 
  it preserves the pencil. It does not preserve other conics, since their tangent lines at $B$ are not 
  $\sigma_C$-invariant. In Case e) $\sigma_C$ preserves each conic of the pencil. This can be 
  seen  in the affine chart $(z,w)$ for which $C$, $A$ are the intersection points of the infinity line with the 
  $z$- and $w$-axes respectively, and the conics  are the parabolas $w=z^2+\la$: 
  $\sigma_C(z,w)=\sigma_C(-z,w)$.   Proposition  \ref{glopro} is proved.
  \end{proof}
  \begin{proposition} \label{progroup} Let in a complex dual multibilliard all the curves be conics lying in a pencil, and 
their dual billiard structures  be defined by the same pencil. Let all its vertices be admissible for the pencil. 
Let $G\subset PSL_2(\cc)=Aut(\oc)$ denote the group generated by conformal transformations of the 
parameter space $\oc$ of the pencil induced by the involutions assotiated to the vertices, see Proposition \ref{glopro}. 
 Then the following statements are equivalent:

(i) The group $G$ is finite. 

(ii) The vertex collection satisfies Conditions 3)--5) of Definition \ref{multipb}. 

\noindent If  the group $G$ is finite, then it is either trivial (if and only if the multibilliard contains no skew vertex), 
or  isomorphic to $\zz_2$ or $S_3$.  One has $G=S_3$, if and only if the pencil has type a) 
and the multibilliard contains a pair of neighbor skew vertices $K_{EX}$, $K_{EY}$. 
\end{proposition}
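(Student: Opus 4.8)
The plan is to reduce the whole statement to a short comparison of M\"obius involutions, organised by the type of the pencil. By Proposition~\ref{glopro} every standard vertex induces the identity on the parameter space $\oc$ of the pencil, so $G$ is generated by the conformal involutions induced by the \emph{skew} vertices alone. A non-trivial conformal involution of $\oc\cong\cp^1$ is determined by its unordered pair of fixed points, and for two distinct conformal involutions $\iota_1,\iota_2$ the following elementary dichotomy holds: if their fixed-point pairs share exactly one point, then $\iota_1\iota_2$ is parabolic and $\langle\iota_1,\iota_2\rangle$ is infinite, whereas if $\iota_1=\iota_2$ then $\langle\iota_1,\iota_2\rangle\cong\zz_2$. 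Hence everything comes down to identifying, for every admissible skew vertex of every pencil type, the pair of fixed points on $\oc$ of its induced involution, and then comparing these pairs; the identification is essentially carried out already by Proposition~\ref{glopro}, Proposition~\ref{invpar}, Claims 1--3 from the proof of Proposition~\ref{glopro} and Corollary~\ref{permut1}, and what remains is bookkeeping.

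In type a) the six vertices $K_{EL}$ induce one of three involutions $\tau_1,\tau_2,\tau_3$ according to which of the line-pair conics $\Gamma_i$ of (\ref{gammy}) has $EL$ as a component: by Corollary~\ref{permut1} and its analogues, $\tau_i$ fixes the parameter $\la_i$ of $\Gamma_i$ and transposes the other two $\la_j$, and the two ``opposite'' vertices whose lines are the two components of the same $\Gamma_i$ induce the same $\tau_i$. Normalising $(\la_1,\la_2,\la_3)=(0,1,\infty)$ identifies $\{\tau_1,\tau_2,\tau_3\}$ with the three transpositions of the anharmonic group (which is $S_3$). Consequently $G$ is $\{1\}$, $\zz_2$ or $S_3$, with $G\cong S_3$ exactly when two present skew vertices induce distinct $\tau_i$'s --- equivalently, have base-point pairs with one common point --- and $G\cong\zz_2$ when all present skew vertices induce the same $\tau_i$. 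As type a) has no quasi-global structures and no $K_{EL}$ carries more than one structure, Conditions 3)--5) of Definition~\ref{multipb} are automatically fulfilled, so (i) and (ii) both hold unconditionally in this case.

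For pencils of types b)--e) I would go through the skew vertices case by case and compute the induced involutions in the same affine charts used in the proofs of Propositions~\ref{glopro} and \ref{invpar} (for instance the chart $w=az(z-1)$ for a type b) pencil, or $z^2=txy$ for type c)). The point of these computations is that in each type there is one distinguished parameter value that is a fixed point of \emph{every} induced involution: the value $\la_\infty$ of the singular conic that degenerates at the common tangency point (the conic $AC\cup BC$ in type b), $L\cup AB$ in type d)) in types b), d), e), and the value of the double line $AC$ in type c). The \emph{second} fixed point of an induced involution is then another singular conic, or the defining regular conic $[\mcs]$ for a quasi-global vertex, or (for the vertices $C$ of type d)) the conic tangent to $BC$ at $B$. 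Two present involutions coincide iff these second fixed points agree, and otherwise share exactly one fixed point, forcing $G$ infinite. Reading off when the second fixed points agree, one finds that $G$ is finite iff the skew-vertex collection is empty ($G=\{1\}$) or all its members induce one and the same involution ($G\cong\zz_2$), and that this is precisely the list of collections allowed by Conditions 3)--5): a single skew vertex; the global pair $\sigma_A,\sigma_C$ of type c) (which both induce $t\mapsto -t$); and the exceptional matched pairs of types c) and d), where the second fixed points were \emph{arranged} to coincide (same conic $\mcs$ in type c); the vertex $C$ placed on the tangent line to $\mcs$ at $B$ in type d)). Any other skew-vertex collection violates one of Conditions 3)--5) and, by the dichotomy, yields two distinct involutions with a single common fixed point, hence infinite $G$; in particular $G\in\{1,\zz_2\}$ throughout types b)--e), so $G\cong S_3$ forces type a).

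Combining the cases gives the equivalence (i)$\Leftrightarrow$(ii), the list of possible finite $G$ (trivial precisely without skew vertices, otherwise $\zz_2$ or $S_3$), and the stated characterisation of $G\cong S_3$. I expect the genuine work, and the only delicate part, to be exactly the computation sketched in the previous paragraph: determining, for each degenerate $\mcs$-angular symmetry and for each projective angular symmetry centred at a non-tangency admissible vertex, the precise pair of fixed points of the induced conformal involution of $\oc$, and in particular verifying the ``common fixed point'' phenomenon in types b)--e). Once that is established, the finiteness question reduces to the elementary bookkeeping of which collections of at most three prescribed involutions generate a finite group, and its coincidence with Conditions 3)--5) is immediate.
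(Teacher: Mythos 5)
Your proposal is correct and follows essentially the same route as the paper's proof: standard vertices act trivially on $\oc$, the type a) involutions are identified with transpositions in the anharmonic group $S_3$ acting on the three singular parameter values (with opposite vertices $K_{EL}$, $K_{(EL)'}$ inducing the same transposition, as in Corollary \ref{permut1}), and in types b)--e) all skew-vertex involutions share one distinguished fixed point on $\oc$, so that two distinct ones compose to a parabolic element and force $G$ infinite, while Conditions 3)--5) exactly encode coincidence of the second fixed points. The fixed-point computations you defer are precisely those carried out in the paper via Claims 1--3, Proposition \ref{invpar}, and the case d) bijection $\mcs\mapsto C$, and your claimed outcomes of those computations agree with the paper's.
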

 \begin{proof} 
 Case of pencil of type a). Then Conditions 3)--5) of Definition \ref{multip} impose no restriction 
 on admissible vertex collection. 
 The involution defining the dual billiard structure  
 at  each admissible vertex preserves the triple of 
 the singular conics $\Gamma_1$, $\Gamma_2$, $\Gamma_3$ of the pencil, see (\ref{gammy}). Therefore, 
 the conformal involutions $\oc\to\oc$ of the parameter space defined by the skew vertices 
 permute the corresponding parameter 
 values $\la_1$, $\la_2$, $\la_3$. The above conformal involutions (and their compositions) 
  are uniquely determined by the corresponding 
 permutations.  Hence, they generate a finite group $G\subset PSL_2(\cc)$ 
 isomorphic to a subgroup of $S_3$. 
 The conformal involution corresponding to a standard vertex is trivial. 
Each one of the involutions $\sigma_{K_{BC}}$, 
   $\sigma_{K_{AD}}$  fixes  
   $\Gamma_2$ and permutes $\Gamma_1$, $\Gamma_3$ (Corollary \ref {permut1}). Similarly, 
   each of $\sigma_{K_{AB}}$, $\sigma_{K_{CD}}$ fixes $\Gamma_1$ 
   and permutes $\Gamma_2$, $\Gamma_3$. 
   The two latter permutations generate all of $S_3$. This implies the statements of Proposition \ref{progroup}.

Case of pencil of type b). For every skew vertex 
equipped with a projective angular symmetry the latter symmetry fixes only the parameter 
values $\la_1$, $\la_2$ corresponding to the singular conics $\Gamma_1$, $\Gamma_2$ from (\ref{gammy2}), 
see Claim 2 in the proof of Proposition \ref{glopro}. 

Suppose the multibilliard contains only vertices of the above type. 
 Then the group $G$ is either trivial (if the skew vertex subset is empty), 
or isomorphic to $\zz_2$ (if it is non-empty), by the  above statement.

Let now the multibilliard contain the skew vertex $C$ equipped with a degenerate $\mcs$-angular symmetry defined 
by a regular conic $\mcs$ of the pencil. Let $\la_\mcs$ denote the parameter value corresponding to $\mcs$. 
 The conformal involution corresponding to the vertex $C$ fixes only $\la_2$ and $\la_{\mcs}$. Therefore, 
 if the multibilliard contains no other skew vertices, then $G\simeq\zz_2$. If it contains another skew vertex, then 
 $G$ is generated by two  involutions having only one common fixed  point $\la_2$. Their 
 composition is a parabolic transformation with the unique fixed point $\la_2$. It has infinite order. 
 Hence, $G$ is infinite. 
 
 Case of pencil of type c) is treated analogously.
 
 Case of pencil of type d). The involution $\sigma_C$ corresponding to a skew vertex $C\in L\setminus\{ A\}$ 
 is a projective angular symmetry fixing two conics: the singular conic $L\cup AB$ and the regular conic $\mcs$ of the 
 pencil that is tangent to the line $CB$ at $B$. The correspondence $\mcs\mapsto C$ is bijective. This implies that 
 $G$ is finite, if and only if the involution corresponding to any 
 other skew vertex of the multibilliard fixes the same conic $\mcs$, as in the above discussion. This holds, if and only if 
 Conditions 3)--5) of Definition \ref{multip} hold. 
 
 Case of pencil of type e) is treated analogously, with the singular conic now being the double line $L$. Proposition 
 \ref{progroup} is proved.
 \end{proof}
 
 \begin{proposition} \label{pratint} Every pencil type multibilliard  has a rational integral of  
 degree $2|G|\in\{2, 4, 12\}$, 
  where $|G|$ is the cardinality of  the group $G$.
  \end{proposition}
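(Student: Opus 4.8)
The plan is to realize all rational integrals of a pencil type multibilliard as rational functions of the pencil parameter, to identify the admissible ones with the $G$-invariant rational functions of that parameter, and then to read off the minimal degree from the order of the finite group $G$ of Proposition \ref{progroup}.

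First I would fix quadratic forms $Q_0,Q_1$ on $\cc^3$ so that the pencil $\mcp$ is the family of conics $\{Q_0+\la Q_1=0\}$, $\la\in\oc$. Then $\la=-Q_0/Q_1$ is a rational function on $\cp^2$ of degree two whose level curves are exactly the conics of $\mcp$, and for a rational function $\phi$ of one variable of degree $d$ the composite $\phi(\la)$ becomes, after clearing denominators by $Q_1^d$, a rational function on $\cp^2$ of degree $2d$. The first observation is that such a $\phi(\la)$ is automatically an integral of the dual billiard structure on every conic $\gamma$ of $\mcp$: for $P\in\gamma$ the involution $\sigma_P$ permutes the two intersection points of the tangent line $L_P$ with every conic $\{Q_0+\mu Q_1=0\}$ of $\mcp$ (this is what the pencil type structure means, cf. Example \ref{exdualint}), and both of those points carry the parameter value $\mu$, so $\phi(\la)|_{L_P}$ takes the common value $\phi(\mu)$ on them and hence is $\sigma_P$-invariant.

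Next I would handle the vertices. For a vertex $A$ with associated birational involution $\sigma_A:\cp^2\to\cp^2$ (a projective or degenerate angular symmetry), Proposition \ref{glopro} says that $\sigma_A$ preserves $\mcp$ and induces on the parameter line $\oc_\la$ a conformal involution $g_A\in PSL_2(\cc)=Aut(\oc)$, which is trivial precisely for standard vertices. Since $\sigma_A$ sends every line through $A$ to itself, demanding that $\phi(\la)|_\ell$ be $\sigma_{A,\ell}$-invariant for all lines $\ell\ni A$ is the same as demanding the identity $\phi(\la)\circ\sigma_A=\phi(\la)$ of rational functions, i.e. $\phi\circ g_A=\phi$. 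Combining with the previous paragraph: $\phi(\la)$ is a rational integral of the whole pencil type multibilliard if and only if $\phi$ is invariant under the group $G=\langle g_A\rangle\subset PSL_2(\cc)$ generated by the vertex involutions.

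Since the multibilliard is of pencil type, Proposition \ref{progroup} guarantees $G$ is finite of order $|G|\in\{1,2,6\}$. Then $\cc(\la)^G$ is a subfield of $\cc(\la)$ strictly containing $\cc$, hence $\cc(\la)^G=\cc(\psi)$ for a rational function $\psi$ by L\"uroth's theorem, and Artin's theorem gives $\deg\psi=[\cc(\la):\cc(\psi)]=[\cc(\la):\cc(\la)^G]=|G|$; so $\psi(\la)$ is a rational integral of the multibilliard of degree $2|G|\in\{2,4,12\}$, establishing rational integrability. For pencils of type a) this $\psi$ is the $S_3$-invariant of degree $6$ of the three parameter values $\la_1,\la_2,\la_3$ of the singular conics, and $\psi(\la)$ is the explicit degree $12$ integral of Theorem \ref{proformint}. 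For the minimality statement I would take an arbitrary rational integral $\Psi'$; restricting it to the tangent lines of a conic of the multibilliard (which is of pencil type, so the forthcoming analysis of Subsection 2.2 identifies the singular foliation $\{\Psi'=const\}$ with the foliation by the conics of $\mcp$) shows $\Psi'=\phi(\la)$ for a nonconstant rational $\phi$; by the equivalence above $\phi\in\cc(\la)^G=\cc(\psi)$, hence $\phi=h\circ\psi$ with $h$ nonconstant, so $\deg\Psi'=2\deg\phi=2(\deg h)|G|\geq 2|G|$. The only genuine input beyond Propositions \ref{glopro} and \ref{progroup} is this factorization $\Psi'=\phi(\la)$, which is exactly the content of Subsection 2.2 and is the step I expect to be the main obstacle; everything else---the passage to the quotient $\oc/G$ and the construction of $\psi$---is routine Galois theory of the rational function field.
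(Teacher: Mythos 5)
Your proposal is correct and takes essentially the same route as the paper: the paper's proof of Proposition \ref{pratint} takes the quadratic integral $F$ of the pencil (your $\la$) and exhibits the product $\prod_{g\in G}g\circ F$ as an integral of degree $2|G|$, which is exactly an explicit generator of your invariant field $\cc(\la)^G$, so your L\"uroth--Artin packaging is the same construction in abstract form. The minimality claim is likewise handled in the paper (in the proof of Theorem \ref{thdeg}) via constancy of any integral on the conics of the pencil and on their $G$-orbits, i.e.\ via Proposition \ref{proconst} --- precisely the external input you correctly flag as the one nontrivial ingredient.
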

 
 \begin{proof} The group $G$ is finite, by Proposition \ref{progroup} and since  the multibilliard 
 is of pencil type (hence, satisfying Conditions 3)--5) of Definition \ref{multip}). 
  Let $F$ be a quadratic first integral of the pencil: 
  the ratio of two quadratic polynomials defining two its conics. Its constant 
 value on each conic coincides 
 with the corresponding parameter $\la$ (after replacing $F$ by its post-composition with conformal automorphism 
 $\oc\to\oc$). The product $\prod_{g\in G}g\circ F$ is a rational first integral of the multibilliard, since it is 
 invariant under the vertex involutions  (by  
 definition) and   the dual billiard involution of each tangent line to a multibilliard conic  
 permutes its intersection points with each conic of the pencil. Proposition \ref{pratint} is proved.
 \end{proof}

 \subsection{Foliation by level curves of rational integral. Proof of Theorems \ref{thmd1} and 
 \ref{thdeg}}
 
 \begin{definition} Consider a rationally integrable dual billiard structure on a complex conic $\gamma$ 
 (which belongs  to the list  given by Theorem \ref{tgerm}). In the case, when  
it is  defined by a pencil of conics, its {\it canonical integral} is a quadratic rational function constant on each conic of the 
pencil that vanishes on $\gamma$. In the case, when it is exotic, its {\it canonical integral} is the one given by the Addendum to Theorem \ref{tgerm} (whose zero locus is $\gamma$). 
 \end{definition}
 \begin{proposition} \label{proconst}
  Every rational integral of a rationally integrable  dual billiard on a conic is constant on each 
 irreducible component of each level curve of its canonical integral. 
 \end{proposition}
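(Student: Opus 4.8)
The plan is to deduce from the $\sigma_P$-invariance of \emph{both} the canonical integral $R$ and the given rational integral $\Psi$ that $R$ and $\Psi$ are functionally dependent on $\cp^2$, i.e. $dR\wedge d\Psi\equiv 0$; this suffices. Indeed, assuming $\Psi$ nonconstant (otherwise there is nothing to prove), functional dependence makes $\Psi$ algebraic over $\cc(R)$, so there is a relation $\sum_{j=0}^{k}a_j(R)\Psi^{j}=0$ with $a_j\in\cc[X]$, $a_k\not\equiv 0$; restricting it to an irreducible component $Z$ of a level curve $\{R=c\}$ with $a_k(c)\neq 0$ shows $\Psi|_Z$ takes at most $k$ values, hence, $Z$ being irreducible, $\Psi$ is constant on $Z$. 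The finitely many level curves with $a_k(c)=0$, and any nonreduced/special fibres, are then covered by continuity.

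The main tool is restriction to the tangent lines. By Theorem \ref{tgerm} the curve $\gamma$ is a conic, so through a generic point of $\cp^2$ there pass exactly two tangent lines to $\gamma$; fix a generic $P\in\gamma$. The line $L_P\cong\cp^1$ carries the nontrivial projective involution $\sigma_P$, which has exactly two fixed points, $P$ itself and a second point $F_P$, and at each of them $\sigma_P$ has multiplier $-1$. Since $R|_{L_P}$ and $\Psi|_{L_P}$ are $\sigma_P$-invariant, their differentials along $L_P$ vanish at $P$ and at $F_P$; hence at both points $L_P$ is tangent to the level curve of $R$ and to the level curve of $\Psi$ passing through it, so $\ker dR=\ker d\Psi$ there, i.e. the Jacobian $dR\wedge d\Psi$ vanishes along the conic $\gamma$ and along the auxiliary curve $\Sigma:=\overline{\{F_P:P\in\gamma\}}$. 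A first consequence is that $\Psi|_\gamma$ is constant: otherwise, in an affine chart in which $L_P$ is a coordinate axis and $\gamma$ is tangent to it at a generic $P$, the tangency of $\{\Psi=\Psi(P)\}$ to $L_P$ at $P$ forces $\Psi|_\gamma-\Psi(P)$ to vanish to order at least two at $P$, i.e. $P$ is a critical point of the rational function $\Psi|_\gamma:\gamma\cong\cp^1\to\cp^1$, which is impossible for all but finitely many $P$.

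To propagate to all level curves of $R$, introduce the dual billiard correspondence $\mcc\subset\cp^2\times\cp^2$: $(x,x')\in\mcc$ iff $x$ and $x'$ lie on a common tangent line $L_P$ to $\gamma$ and $x'=\sigma_P(x)$. It is symmetric, its two projections are generically two-valued, it maps every level curve of $R$ to itself, and $\Psi$ is $\mcc$-invariant. For a generic $c$ and a generic $x\in\{R=c\}$, if the $\mcc$-orbit of $x$ is infinite then its Zariski closure is a $\mcc$-invariant algebraic curve inside $\{R=c\}$, and $\Psi$, being rational, is constant on it, hence on the component of $\{R=c\}$ through $x$; the components not reached this way, and the special level curves (in particular the component $\gamma$ of $\{R=0\}$), then follow by specialization and continuity from the previous paragraph.

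The hard part is to exclude the ``Poncelet-type'' alternative in which every $\mcc$-orbit on a generic leaf is finite: resonance-type conditions such as $\rho=2-\tfrac{2}{2N+1}$ in Theorem \ref{tgerm} are exactly the conditions producing closed orbits, so this case genuinely must be handled. I expect to treat it via the explicit canonical integrals in the Addendum to Theorem \ref{tgerm}: on each tangent line both $R|_{L_P}$ and $\Psi|_{L_P}$ descend to the quotient $L_P/\sigma_P\cong\cp^1$, and using the explicit form of $R|_{L_P}$ (which, in the parabola chart, is a power of a perfect square divided by a fixed product) together with Theorem \ref{tcompl}, one checks directly that no $\sigma_P$-invariant rational function on $\cp^2$ independent of $R$ can exist — for pencil-type structures this reduces to the classical fact that the billiard correspondence on a generic conical caustic is not Poncelet-closed. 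Alternatively, iterating the fixed-point construction of the second paragraph along the one-parameter family of leaves should force $dR\wedge d\Psi$ to vanish on a moving curve, hence identically, yielding the required functional dependence at once.
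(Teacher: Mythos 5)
Your reduction to functional dependence and your observation that $d(R|_{L_P})$ and $d(\Psi|_{L_P})$ both vanish at the fixed points of $\sigma_P$ are fine as far as they go, but they only give $dR\wedge d\Psi=0$ along the curve $\gamma\cup\Sigma$, not identically. Everything therefore hinges on the step you yourself label ``the hard part,'' and that step is not done: you only say you \emph{expect} to exclude the Poncelet-closed alternative via the explicit formulas, or that iterating the fixed-point construction \emph{should} force the Jacobian to vanish identically. This is not a cosmetic omission. The finite-orbit scenario is not an exotic degeneration here: the correspondence $\mcc$ restricted to (a double cover of) an irreducible level curve $Z$ of $R$ is generated by two involutions, and whenever that curve has genus at least two its automorphism group is finite, so the composed map is of finite order and \emph{every} orbit on $Z$ is finite. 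For the degree-$6$ canonical integrals of types 2c) and 2d) the generic level curve is an irreducible sextic, so this is exactly the situation you cannot rule out; even in the pencil case the Poncelet porism makes all orbits on a dense set of caustics periodic. There is also a smaller gap in your Zariski-closure step: an infinite orbit guarantees that $\Psi$ is constant on \emph{some} component of $\{R=c\}$ meeting the orbit infinitely often, not necessarily on the component through the starting point $x$.

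The paper's proof sidesteps all of this by working locally near $\gamma$ rather than with global orbits. Fix $P\in\gamma$ regular for the foliation $R=const$ and a flowbox $U$ around $P$. For $P_0\notin\gamma$ close to $P$, the billiard orbit $P_{j+1}=\sigma_{Q_j}(P_j)$ (reflecting each time in the new tangent line) stays in $U$ for a number of steps $N(P_0)\to\infty$ as $P_0\to P$, because the involutions $\sigma_Q|_{L_Q}$ are uniformly asymptotic to central symmetries about the tangency points. Choosing $P_0$ so that $N>\deg\Psi\cdot\deg R$ produces more than $\deg\Psi\cdot\deg R$ points lying simultaneously on $\{\Psi=\Psi(P_0)\}$ and on $\{R=R(P_0)\}$ (indeed on one local leaf of the flowbox), so by B\'ezout these level curves share a component; this holds for a continuum of nearby leaves and gives constancy of $\Psi$ on the leaves of $R=const$. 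The crucial point is that this argument needs the orbit to stay in the flowbox for many steps, not to be globally infinite, so it is insensitive to whether the global dynamics is Poncelet-closed. To repair your write-up you would need either to import this local glancing-orbit argument or to actually carry out the case analysis you defer, which would amount to a substantial separate verification for each canonical integral.
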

  
  \begin{proof} Let  $\gamma$ be the conic in question,  $\Psi$ be a rational integral of 
  the dual billiard, and let $R$ be its canonical integral. 
   We have to show that $\Psi\equiv const$ along the leaves of the foliation $R=const$ (which are, 
  by definition, the  irreducible components of level curves of the function $R$ with its critical and indeterminacy 
  points deleted). It 
  suffices to prove the above statement in a small neighborhood of the conic $\gamma$. 
Fix a point $P\in\gamma$ such that it is a regular point for the foliation and  the dual billiard involution $\sigma_P$ 
is defined there. (In fact, $\sigma_P$ is well-defined whenever $P$ is regular for the foliation. But we will not use this.) 
  Let $U\subset\cp^2$ 
   be a small neighborhood   of the point $P$ that is a  flowbox 
   for the foliation $R=const$ and whose closure is disjoint from singular points of the foliation 
   and indeterminacy points for the 
   involution family $\sigma_t$, $t\in\gamma$. We equip it with biholomorphic 
   coordinates $(x,y)$, where the local leaves of the flowbox are the horizontal 
   fibers $y=const$. Fix a point $P_0\notin\gamma$ close to $P$.  Take a tangent line $\ell_0$ to $\gamma$ through $P_0$; 
   let $Q_0$ denote the tangency point. Set $P_1=\sigma_{Q_0}(P_0)$. 
   Let $\ell_1$ be the tangent line to $\gamma$ through $P_1$ distinct from $\ell_0$, and let $Q_1$ be their 
   tangency point. Set 
  $$P_2=\sigma_{Q_1}(P_1),\ \text{ etc. } \ P_N=\sigma_{Q_{N-1}}(P_{N-1}); \ \ x_j=x(P_j).$$ 
  Here $N$ is the biggest number such that $P_1,\dots,P_N, Q_0,\dots,Q_{N-1}\in U$. 
   We claim that  as $P_0\to P$, the cardinality $N=N(P_0)$ of the above sequence tends to infinity. 
   This follows from the fact the involutions $\sigma_Q|_{L_Q}$, $Q\in\gamma\cap U$ 
   are uniformly asymptotic to the central symmetries $x\mapsto2x(Q)-x$ with respect to the 
   points $x(Q)$, as $x-x(Q)\to0$ and $Q\in U$: they are non-trivial 
   conformal involutions of the lines  $L_Q$ with fixed points $Q$.   Therefore, 
   $N>\deg\Psi\deg R$, whenever $P_0$ is close enough to $P$. One has 
   $$\Psi(P_0)=\dots=\Psi(P_N), \ \ R(P_0)=\dots=R(P_N),$$
   since both $\Psi$ and $R$ are integrals. This together with Bezout Theorem and the above inequality 
   implies that $\Psi\equiv const$ along each leaf of the foliation $R=const$. Proposition \ref{proconst} is proved.
   \end{proof}

   \begin{lemma} \label{lirr} Let $R$ be a  rational first integral  of an exotic 
   dual billiard structure from Theorem \ref{tgerm} given by the corresponding formula in its addendum. 
   
   1) For all but a finite number of values of  
   $\lambda\in\cc$  the complex level curve 
   $$\Gamma_{\la}:=\{ R=\lambda\}$$
    is irreducible of degree $d=\deg R$.

    2) The (punctured) curve $\gamma=\{ w=z^2\}$ is a {\bf multiplicity $\frac d2$ leaf} 
   of the foliation $R=const$, which means  that  each small  transversal cross-section 
   to $\gamma$ intersects each leaf   close enough to $\gamma$ (depending on  cross-section) 
    transversely at $\frac d2$ distinct points;  moreover, $[\Gamma_\la]\to\frac d2[\gamma]$ as divisors, as $\la\to0$.  
    
    3) The   curve $\gamma$ is the unique nonlinear multiplicity $\frac d2$ leaf.
   \end{lemma}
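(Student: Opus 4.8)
\medskip\noindent\emph{Sketch of proof.} In every one of the cases listed in the Addendum to Theorem~\ref{tgerm} the integral is $R=A/B$ in lowest terms with numerator $A=(w-z^2)^{d/2}$, $d=\deg R$, and $\deg B\le d$; thus the projective level curves $\Gamma_\la=\{A-\la B=0\}$ form a pencil of plane curves of degree $d$ without fixed component (since $\gcd(A,B)=1$), and $\Gamma_0=\{A=0\}=\tfrac d2[\gamma]$. For Statement~2, fix a point of $\gamma$ off the base locus $\gamma\cap\{B=0\}$ (such points exist because $w-z^2\nmid B$): in the transversal coordinate $u=w-z^2$ one has $R=u^{d/2}/B(z,z^2+u)=c(z)\,u^{d/2}(1+O(u))$ with $c(z)=1/B(z,z^2)\ne0$, so $R=\la$ has exactly $\tfrac d2$ simple roots $u$ for $0<|\la|\ll1$, all tending to $0$ as $\la\to0$. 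This is the transversal description of $\gamma$ as a multiplicity $\tfrac d2$ leaf, and it is equivalent to $[\Gamma_\la]\to\tfrac d2[\gamma]$, which in any case is immediate from $\Gamma_0=\tfrac d2[\gamma]$.

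For Statement~1 it suffices to show the pencil $\{A-\la B\}$ is not composite with a pencil (equivalently, $R$ has connected generic fibres): then Bertini gives that $\Gamma_\la$ is irreducible for all but finitely many $\la$, of degree $d$ by the absence of a fixed component. I would prove non-compositeness by a monodromy argument near $\gamma$. Over $\gamma$ minus its base points a level curve $\Gamma_\la$, $0<|\la|\ll1$, is a smooth $\tfrac d2$-sheeted cover (the $\tfrac d2$ sheets of Statement~2), and the monodromy of a small loop around a base point $P^\ast\in\gamma$ at which $B|_\gamma=B(z,z^2)$ vanishes to order $k$ is multiplication by $e^{2\pi ik/(d/2)}$ on the sheets. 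In each case the explicit formula exhibits such a point with $\gcd(k,\tfrac d2)=1$: for (\ref{exot1}), $B(z,z^2)=\mathrm{const}\cdot z^{4N}$ and $\gcd(4N,2N+1)=1$; for (\ref{exot2}), $B(z,z^2)=\mathrm{const}\cdot z^{2N+1}$ and $\gcd(2N+1,N+1)=1$; in the remaining cases 2b1), 2b2), 2c1), 2c2), 2d) one has $\tfrac d2\in\{2,3\}$ and $B|_\gamma$ (equal to $-4z^3(z-1)^2$, $(z^2+1)^3$, $(z^3-1)^4$, $81z^4(z-1)^4$, $81z^4(z-1)^3$ respectively) vanishes to an order coprime to $\tfrac d2$ at a suitable base point. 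That loop then acts transitively, so $\Gamma_\la$ minus small balls about the base points is connected and smooth, hence irreducible; an extra component of $\Gamma_\la$ would, for $\la$ small, lie in a neighbourhood of $\gamma$ and, having positive degree, meet this irreducible piece in a whole curve — impossible. Hence the generic fibre of $R$ is irreducible. For the sharper assertion in cases (\ref{exot1}), (\ref{exot2}), substitute $w=sz^2$ into $R=\la$ and cancel powers of $z$: the equation becomes $z=\la\prod_j(s-c_j)/(s-1)^{N+1}$ in case (\ref{exot2}), so $\Gamma_\la$ is the rational graph of a function of $s$ and is irreducible for every $\la\ne0$; and $z^2=\la\prod_j(s-c_j)^2/(s-1)^{2N+1}$ in case (\ref{exot1}), a double cover of $\cp^1_s$ branched only over $s=1$ (pole of odd order) and $s=\infty$, whose right-hand side is never a square in $\cc(s)$ for $\la\ne0$, hence again irreducible for every $\la\ne0,\infty$.

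For Statement~3, let $\gamma'$ be a nonlinear leaf of multiplicity $\tfrac d2$, hence a component of some $\Gamma_{\la_0}$ with that multiplicity. From $\tfrac d2\deg\gamma'\le\deg\Gamma_{\la_0}=d$ and $\deg\gamma'\ge2$ we get $\deg\gamma'=2$ and $\Gamma_{\la_0}=\tfrac d2[\gamma']$, i.e. $A-\la_0B$ (or the homogenized $B$, if $\la_0=\infty$) equals a constant times $q^{d/2}$ for a quadratic form $q$. If $\la_0=0$ then $q\propto w-z^2$ and $\gamma'=\gamma$. If $\la_0=\infty$, the explicit factorization of $B$ — always having a linear factor or two distinct irreducible factors — shows $\{B=0\}$ is not supported on a single conic, a contradiction. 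If $\la_0\ne0,\infty$, then $(w-z^2)^{d/2}-c\,q^{d/2}=\prod_{\omega^{d/2}=1}\bigl((w-z^2)-\omega\beta q\bigr)$ (with $\beta^{d/2}=c$), so $B$ would be a product of $\tfrac d2$ \emph{distinct} conics all lying in the pencil spanned by $w-z^2$ and $q$; this is excluded by the explicit factorizations — e.g. $B=\prod_j(w-c_jz^2)^2$ in (\ref{exot1}) has repeated conic factors, and in (\ref{exot2}) the conic factors of the homogenized $B$ (namely $zt$ and the $wt-c_jz^2$) do not all lie in one pencil containing $wt-z^2$; the analogous short check settles cases 2b), 2c), 2d). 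Therefore $\gamma$ is the unique nonlinear multiplicity $\tfrac d2$ leaf. The principal work is concentrated in Statement~3, together with the case-by-case bookkeeping in Statement~1 (verifying $\gcd(k,\tfrac d2)=1$ and, for $\la_0\ne0,\infty$, that no coincidental perfect-power factorization of $B$ occurs).
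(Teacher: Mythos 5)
Your proof is correct, but it takes a genuinely different route from the paper's in Statements 1) and 3). For Statement 1) the paper does not argue via monodromy along $\gamma$: in Cases 2a) it proves irreducibility of the \emph{germ} of $\Gamma_\la$ at the base point $[0:1:0]$ by a Newton-diagram argument (the diagram is a single edge $[(4N+4,0),(0,2N+1)]$, resp. $[(2N+3,0),(0,N+1)]$, with coprime endpoints) followed by a degree count, while in the remaining cases it shows (Proposition \ref{prat2m}) that reducibility of $\Gamma_\la$ along a sequence $\la\to0$ would force the foliation $R=const$ to be a pencil of conics, which is then excluded case by case from the structure of the polar locus. Your cyclic-monodromy argument around a finite base point at which the order $k$ of vanishing of $B|_\gamma$ is coprime to $d/2$ is uniform across all cases, and your computed orders (and the $w=sz^2$ parametrization giving irreducibility for \emph{every} $\la\neq0,\infty$ in Cases 2a)) all check out. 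For Statement 3) the paper shows that a second multiplicity-$\frac d2$ conic leaf would make $R$ the $\frac d2$-th power of a ratio of quadratics, hence the foliation a pencil of conics, contradicting the irreducibility and degree of generic fibers from Statement 1); your factorization of $(wt-z^2)^{d/2}-cq^{d/2}$ into $\frac d2$ distinct conics of one pencil, matched against the explicit factorization of $B$, avoids invoking Statement 1) at the price of a case-by-case verification. Two minor points: your parenthetical reason for the case $\la_0=\infty$ (``a linear factor or two distinct irreducible factors'') does not literally apply in Cases 2c), where the homogenized denominator is the square of an irreducible cubic, though the conclusion that $\{B=0\}$ is not supported on a single conic still follows there by unique factorization; and the ``leaf'' (as opposed to divisor) form of Statement 2) does require the connectedness supplied by Statement 1) to know that the $\frac d2$ transversal points lie on one leaf, which is why the paper derives Statement 2) from Statement 1) rather than the other way around.
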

   \begin{proof}   For the proof of Statement 1)   it suffices to prove irreducibility of the level curve $\Gamma_\la$  for an open subset of values $\la$. We  
    will prove this for generic small $\la$: for an open set of values $\la$ accumulating to zero 
    (for all $\la\neq0$ small enough in Cases 2a1), 2a2).) 
  Indeed, it is well-known that if the level curve $\{ R=\la\}$ of a rational function is irreducible for an open subset 
  of values $\la$, then it is irreducible for all but a finite number of $\la$. This is implied by  the two following statements: 
  
-  each indeterminacy point can be resolved by a sequence of blow-ups, so that the function in question becomes a 
  well-defined $\oc$-valued holomorphic funciton on a new connected compact manifold, a blown-up $\cp^2$;  
  
  -  every non-constant holomorphic $\oc$-valued function on a connected compact complex manifold 
  has finite number of critical values.

  Let us first consider Case 2a1), when 
$R(z,w)=\frac{(w-z^2)^{2N+1}}{\prod_{j=1}^N(w-c_jz^2)^2}$, see (\ref{exot1}). 
   
   {\bf Claim 4.} {\it The germ of the curve $\Gamma_\la$ at the point $Q=[0:1:0]\in\cp^2$ (i.e., at the intersection point 
   of the infinity line with the $w$-axis) is irreducible, whenever $\la\neq 0, 1, \infty$. Its intersection index with each line through $Q$ distinct from the infinity line $L_Q=T_Q\gamma$ is equal to $2N+1$.}
   
   \begin{proof} Let $\la\neq0,1,\infty$. In the affine chart $(\wt z,\wt w)=(\frac zw, \frac1w)$ centered at $Q$ one has 
   $$\Gamma_\la=\{(\wt w-\wt z^2)^{2N+1}-\la\wt w^2\prod_{j=1}^N(\wt w-c_j\wt z^2)^2=0\}.$$
   In the coordinates $(\wt z, u)$, $u:=\wt w-\wt z^2$, $\Gamma_\la$ is the zero locus of the polynomial 
   \begin{equation} \mathcal P_\la(\wt z,u):=u^{2N+1}-\la(u+\wt z^2)^2\prod_{j=1}^N(u+(1-c_j)\wt z^2)^2.\label{pla}
   \end{equation}
   To prove irreducibility of the germ $(\Gamma_\la,Q)$, it suffices to show that the germ of the polynomial $\mathcal P_\la$ at the origin is irreducible. To do this, we 
    will deal with its Newton diagram. Namely, 
   consider the bidegrees $(m,n)\in(\rr_{\geq0}^2)_{x,y}$ of all the 
   monomials $\wt z^m u^n$ entering $P_\la$. Consider the convex hull of the union of the corresponding quadrants $(m,n)+\rr_{\geq0}^2$. 
   The union $\mathcal{ND}$ of its  boundary edges except for the coordinate axes  is called the {\it Newton diagram.} 
   We claim that the Newton diagram of the polynomial $P_\la$ is one edge 
   $E=[(4N+4,0), (0, 2N+1)]$. Indeed, for $\la\neq0,1,\infty$ the bidegrees of the monomials entering $P_\la$ are $(0,2N+1)$ and 
   a collection of points of the line $\{ 2y+x=4N+4\}$, since the  multiplier at $\la$ in (\ref{pla}) 
   is a $(2,1)$-quasihomogeneous polynomial. But the bidegrees  in the latter line lie above the edge 
    $E$, except for its 
   vertex $(4N+4,0)$. This 
   proves that $\mathcal{ND}=E$. 
   
   Suppose the contrary: the germ of the polynomial $\mathcal P_\la$ is not irreducible. Then it is the product of 
   two germs of analytic functions with Newton diagrams being edges parallel to $E$ 
   whose endpoints lie in  $\zz^2$. The latter edges should be  closer to the origin 
   than $E$ and have smaller lengths. But $E$ is the edge of smallest length among all the above 
   edges, since $E$ contains no integer points in its interior: the numbers $4N+4$ 
   and $2N+1$ are coprime. The contradiction thus obtained proves irreducibility of the germ of the polynomial 
   $\mathcal P_\la$. The intersection index statement of Claim 4 follows from irreducibility,  the Newton diagram statement $\mathcal{ND}=E$ 
    and the fact that $T_Q\gamma=\{ du=0\}\subset T_Q\cp^2$ (since $L_Q=\{\wt w=0\}=\{ u+\wt z^2=0\}$).
   \end{proof}
   
 Let us prove irreducibility of the curve $\Gamma_\la$ with $\la\neq0$ small enough. Fix two 
 distinct points $P_1, P_2\in\gamma^o:=\gamma\setminus\{ Q, O\}$, a path 
 $\alpha\subset\gamma^o$ connecting them and a path $\beta\subset\gamma\setminus\{ O, P_2\}$ connecting $P_1$ to $Q$. Fix small neighborhoods $U_j=U_j(P_j)\subset\cp^2$.  
The line  $P_1P_2$ intersects $\Gamma_\la$ with small 
 $\la\neq0$ at two subsets $\Sigma_j\subset U_j$, each consisting of $2N+1$ points, 
 by (\ref{pla}) and since $\deg\Gamma_\la=4N+2$. 
 It suffices to show that the whole intersection $\Sigma_1\cup\Sigma_2$ lies in one 
 irreducible component of the curve $\Gamma_\la$. 
 As $P_1\to Q$ along the path $\beta$, $\Sigma_2$ remains in $U_2$ (if $\la$ is small enough). 
 Some $2N+1$ points in $\Sigma_1$ should converge to $Q$, by the intersection index 
 statement of Claim 4. But the set $\Sigma_1$ consists of exactly $2N+1$ points, and hence, 
all its points converges to $Q$. (Modifying slightly the path $\beta$, one can achieve that 
 the trajectories of points in $\Sigma_1$ going to $Q$ avoid  possible singularities 
 of the curve $\Gamma_\la$.) Thus, $\Sigma_1$ lies in the irreducible component 
 of the curve $\Gamma_\la$ containing the germ $(\Gamma_\la,Q)$. Points of the set 
 $\Sigma_2$ are connected to points in $\Sigma_1$ by paths close to $\alpha$ going along 
 leaves of the foliation $R=const$, thus, along the regular part of the curve $\Gamma_\la$. 
  (The latter paths define the holonomy map between  neighborhoods 
 of the points $P_2$, $P_1$ in the line $P_1P_2$.) Hence, $\Sigma_2$ lies in the same 
 irreducible component. This component has degree  no less than 
 the sum $4N+2$ of cardinalities of the sets $\Sigma_j$, which is equal to 
 $\deg\Gamma_\la$. Hence, the component in question is the whole curve 
 $\Gamma_\la$, and thus, $\Gamma_\la$ is irreducible.

  Case of  integral $R$ given by (\ref{exot2}) is treated analogously with the following modification: 
  in the above coordinates $(\wt z,u)$ the Newton diagram of the new polynomial $\mathcal P_\la$ 
  is $[(2N+3,0),(0,N+1)]$;  $2N+3$, $N+1$ are again coprime.

  The other canonical rational integrals have degrees 4 or 6 and the type  
  \begin{equation}R(z,w)=\frac{(w-z^2)^m}{\Phi(z,w)}, \ \Phi \text{ is a polynomial, } \deg\Phi=2m, \ m\in\{2,3\}.
  \label{rat2m}\end{equation}
   \begin{proposition} \label{prat2m} 
   Let $R$ be as in (\ref{rat2m}). Let there exist a sequence of values $\la$ converging to zero for 
   which the  curve $\Ga_\la:=\{ R=\la\}$ is not irreducible. Then the foliation $R=const$ is a pencil of conics. 
   \end{proposition}
   \begin{proof} Passing to a subsequence we can and will consider that one of the following statements holds for all 
   above $\la$:

  (i) $m=2$ and $\Gamma_\la$ is a union of two regular conics $C_{1,\la}$, $C_{2,\la}$; 
  
  (ii) $\Gamma_\la$ contains a line;
  
  (iii) $m=3$ and $\Ga_\la$ is a union of two regular cubics $C_{1,\la}$, $C_{2,\la}$;
  
  (iv) $m=3$ and $\Ga_\la$ is a union of three regular conics. 
  
 Statement (ii) cannot hold: the contrary would imply that 
  the limit conic $\Gamma_0=\gamma=\{ w=z^2\}=\lim_{\la\to0}\Gamma_\la$ contains a line, which is not true. 
  Suppose  (iii) holds. Then each cubic considered as a divisor of degree three converges to an integer multiple 
  of the divisor $[\gamma]$ of degree two: thus, to a divisor of even degree. This is obviously impossible. 
  Therefore, the only possible cases are (i) and (iv). The a priori possible intersection points of 
  the conics from (i), (iv) lie in the  finite set of indeterminacy and critical points of the rational function $R$. 
  Therefore, passing to a subsequence one can and will achieve that a family of conics $C_\la\subset\Ga_\la$ 
  lies in a pencil. The function $R$ is constant on them for infinite number of values of $\la$. 
    Therefore, it is constant on each conic of the pencil, since the set of those parameters of the pencil for which 
    $R=const$ on the corresponding conics is finite (being algebraic). Finally, 
 the foliation $R=const$ is a pencil of conics. 
 \end{proof}
 
Let $R$ be a degree four integral given by (\ref{exo2bnew}) or (\ref{exo2bnew2}). We treat only  case 
  (\ref{exo2bnew}), since the integrals (\ref{exo2bnew}) and (\ref{exo2bnew2}) are obtained one from the other 
  (up to constant factor) by complex projective transformation fixing the conic $\gamma=\{ w=z^2\}$. Thus, 
  $$R=R_{b1}(z,w)=\frac{(w-z^2)^2}{(w+3z^2)(z-1)(z-w)}.$$
  Suppose the contrary: the curve $\Gamma_\la:=\{ R=\la\}$ is not irreducible 
  for a sequence of numbers $\la$ converging to zero. Then the foliation $R=const$ is a pencil of conics, 
  by Proposition \ref{prat2m}. It contains the 
  conics $\gamma$ and $\{ w+3z^2=0\}$, which are tangent to each other at the origin and at infinity. Therefore, the pencil consists 
  of conics tangent to them at these points. On the other hand, the line $\{z=1\}$ lies in the polar locus $\{R=\infty\}$. 
 Hence, it should lie in a conic from the pencil. But this is obviously impossible, -- a contradiction.
  
  Let now $R$ be a degree 6 integral from the Addendum to Theorem \ref{tgerm}, Cases 2c) or 2d). 
  Supposing the contrary to irreducibility, we similarly get that the foliation $R=const$ is a pencil of conics. 
  But in both Cases 2c) and 2d) the polar locus $\{ R=\infty\}$  
  contains an irreducible cubic, see \cite[subsections 7.5, 7.6]{grat}. This contradiction  proves Statement 1) of 
  Lemma \ref{lirr}.
  
  Statement 2) of Lemma \ref{lirr} follows from Statement 1) and the fact that $\gamma$ is a multiplicity $\frac d2$ 
  zero curve of the integral $R$. 
  
  Let us prove Statement 3). Suppose the contrary: there exists another leaf $\alpha$ of multiplicity $\frac d2$  
  and degree $\mu\geq2$. Then for every given line $L$ that is transversal to $\alpha$ 
  and does not pass through singularities of the foliation 
 each leaf close enough to $\alpha$ intersects $L$ in at least $\mu\frac d2\geq d$ points. 
 The number of intersection points cannot be greater than $d$. Hence, $\mu=2$ and $\alpha$ is a conic. 
 Let us renormalize the integral $R$ by postcomposition with M\"obius transformation $\nu$ to an integral 
 $\wt R=\nu\circ R$ so that $\wt R|_\gamma=0$, $\wt R|_\alpha=\infty$. Let $Y(z,w)$ be a quadratic polynomial 
 vanishing on $\alpha$. Then 
 $$\wt R=\left(\frac{z-w^2}{Y(z,w)}\right)^{\frac d2},$$
 up to constant factor, by construction and multiplicity assumption. Therefore, the foliation $\wt R=const$ is a pencil 
 of conics containing $\gamma$ and $\alpha$, and so is $R=const$. But its generic leaves 
 are punctured irreducible algebraic curves $\Gamma_\la$ of degree $d\geq4$. This contradiction   
 proves Lemma \ref{lirr}. 
  \end{proof}

   \begin{proof} {\bf  of Theorem \ref{thmd1}.}  Consider a rationally integrable 
   dual multibilliard  with integral $\Psi\not\equiv const$. 
   Then the dual billiard on each its curve $\gamma_j$ is rationally 
   integrable with  integral $\Psi$. Hence, each $\gamma_j$ is a conic equipped with either 
   pencil type, or exotic dual billiard structure, by Theorem \ref{tgerm},  and 
   $\Psi|_{\gamma_j}\equiv const$,  by \cite[proposition 1.35]{grat} (or by Proposition \ref{proconst}). 
   
   Case 1). Let some two conics $\gamma_1$, $\gamma_2$ be the same conic $\gamma$ 
   equipped with two distinct dual billiard structures, given by projective involution families 
   $\sigma_{P,j}:L_P\to L_P$, $j=1,2$. Here $P$ lies outside a finite set: the union of the indeterminacy loci of 
   families $\sigma_{P,j}$, which are finite by Theorem \ref{tgerm}. The product 
   $g:=\sigma_{P,1}\circ\sigma_{P,2}$ is a parabolic transformation $L_P\to L_P$, having unique fixed point $P$. 
   The integral $\Psi$ is $g$-invariant: $\Psi\circ g=\Psi$ along each line $L_P$. But each non-fixed 
   point of a parabolic transformation has infinite orbit. Therefore, $\Psi\equiv const$ along each line tangent to 
   $\gamma$. But we know that $\Psi$ is constant along the curve $\gamma$, as noted above. 
   Therefore, $\Psi\equiv const$, by the two latter statements and since the union of 
  lines tangent to $\gamma$ at points lying in an open subset in $\gamma$ contains an open subset in $\cp^2$. 
  The contradiction thus obtained proves that Case 1) is impossible.
  
  Case 2):  there are at least two geometrically distinct conics, say, $\gamma_1$, $\gamma_2$. 
  For every $j=1, 2$ let $R_j$ denote the 
  canonical integral of the corresponding dual billiard structure.   We have to prove the two following statements: 
  
  1) the  dual billiard structure on each $\gamma_j$ is defined by a pencil of conics, that is, the degree 
  $d_j:=\deg R_j$ is equal to 2; 
  
  2) the latter pencil is  the same for $j=1, 2$, and it contains both $\gamma_j$.

  Let $\mcf$ denote the foliation $\Psi=const$.  
   For every $j$ for all but a finite number of  values $\la\in\cc$ the complex level curve 
  $\{ R_j=\la\}$ is irreducible of degree $d_j$, by Lemma \ref{lirr}, and $\Psi\equiv const$ along it (Proposition \ref{proconst}). Hence, each foliation $R_j=const$ coincides with $\mcf$. This together with the previous 
 statement implies that  the degrees $d_j$ are equal, set $d=d_j$, and both (punctured) 
 conics $\gamma_1$, $\gamma_2$ 
are leaves of the same multiplicity $\frac d2$ for the foliation $\mcf$. Therefore, the foliation $\mcf$ is 
a pencil of conics containing $\gamma_1$ and $\gamma_2$, by Statement 3) of Lemma \ref{lirr}.
 Hence, all the conics of the multibilliard lie in this pencil, and $d=2$ 
  (since  $d$ is  the degree of irreducible level curve of the function $R_1$).  
  Thus, each $R_j$ is a ratio of two quadratic polynomials, and its level curves are conics from the 
 pencil. Hence, the dual billiard structure on $\gamma_j$ is given by the same pencil.  
  Theorem \ref{thmd1} is proved.
  \end{proof}
  
  \begin{proof} {\bf of Theorem \ref{thdeg}.} A rational first integral of a pencil type multibilliard is constant on 
  each conic of the pencil (Proposition \ref{proconst}). Moreover, it is constant on every union of those conics 
 whose parameter values $\la$ lie in the same $G$-orbit. Here $G$ is the group from Proposition \ref{progroup}. 
 The cardinality of a generic $G$-orbit is equal to the cardinality $|G|$ of the group $G$, since a generic point 
 in $\oc$  has trivial stabilizer in $G$. Thus, the minimal degree of the integral (which is achieved, by 
  Proposition \ref{pratint}) is $2|G|\in\{2,4,12\}$. This together with Proposition \ref{progroup} 
   implies the statement of Theorem \ref{thdeg}. 
  \end{proof}
   
   \subsection{Dual billiard structures at vertices. Birationality and types of involutions}
   
   \begin{proposition} \label{altpr} Let $A$ be a point in $\rp^2$ ($\cp^2$) equipped with real (complex) dual 
   billiard structure given by involution family $\sigma_{A,\ell}$ that has a real (complex) rational first integral $\Psi\not\equiv const$: 
   $\Psi\circ\sigma_{A,\ell}=\Psi$ on each line $\ell$ through $A$ on which the involution is defined. 
      Let the foliation $\Psi=const$ be not the family of lines through $A$.  Then $\sigma_{A,\ell}$  
 coincide (up to correction at a finite number of lines $\ell$) 
 with a  birational involution $\sigma_A:\cp^2\to\cp^2$ fixing all but a finite number of lines 
  through $A$ 
 and holomorphic and bijective on the complement to a finite number of lines through $A$. 
 (Thus, it is a de Jonqui\`eres involution, see Footnote 2 and \cite[p.422]{blanc}.)  The rational integral 
 $\Psi$ and the corresponding foliation $\Psi=const$ are $\sigma_A$-invariant\footnote{Invariance of a rational function $\Psi$ under a birational transformation 
 $\sigma:\cp^2\to\cp^2$ means that $\Psi\circ\sigma=\Psi$ on the open and dense subset 
 where both $\Psi$ and $\Psi\circ\sigma$ are well-defined. If $\sigma$ is a de Jonqui\`eres involution $\sigma_A$ fixing all but a finite of number of lines through $A$, and $\Psi$ is 
 $\sigma_A$-invariant, then the $\sigma_A$-image (strict transform) of each level curve 
 $\Gamma_\la=\{\Psi=\la\}$ coincides with $\Gamma_\la$ except for the following case:  if 
 $\Gamma_\la$ contains some lines through $A$, then $\sigma_A$ may 
 contract some of its lines to $A$, and its other irreducible components (if any) 
 are permuted by 
 $\sigma_A$.}
 \end{proposition} 
\begin{proof}  Let $\sigma_{A,\ell}:\ell\to\ell$ be the 
   corresponding projective involution family acting on lines $\ell$ through $A$. They are defined on lines 
   $\ell$ through $A$ from an open subset $U\in\cp^1$ in complex case ($U\subset\rp^1$ in real case). 
   
   Fix a  nonlinear complex level curve $X:=\{\Psi=\la\}$. Fix an $\ell_0\in U$ (consider it as a complex line) such that the 
  points of intersection $X\cap\ell_0$ distinct from $A$ are regular points of the curve $X$, the intersections are 
  transversal, and the multiplicity of the intersection $X\cap\ell_0$ at $A$ is minimal. 
  There exists a simply connected neighborhood $V=V(\ell_0)\subset\cp^1$ such that for every 
  $\ell\in V$ the number of  geometrically distinct points of the set $X_\ell:=(X\cap\ell)\setminus\{ A\}\subset\cp^2$ 
  is the same 
  (let us denote their number by $d$),  
  and they depend holomorphically on $\ell$ (Implicit Function Theorem). We numerate these holomorphic 
  intersection point families by indices $1,\dots,d$. For every $\ell\in V$ the involution 
  $\sigma_{A,\ell}$ makes a ($\ell$-dependent) permutation of the latter intersection  points, which is identified 
  with a permutation  of indices $1,\dots,d$: an element in $S_d$. There exists a 
  permutation $\alpha\in S_d$ realized by $\sigma_{A,\ell}$ for a continuum cardinality subset $Y\subset V$ 
   of lines.  Let us fix it. 
  
  {\bf Claim 6.} {\it There exists a projective involution family $\wt\sigma_{A,\ell}:\ell\to\ell$ 
 depending holomorphically on the parameter $\ell\in V$ 
  that makes the permutation $\alpha$ on $X_\ell$ for every $\ell\in V$. The rational function $\Psi|_\ell$ is 
  $\wt\sigma_{A,\ell}$-invariant: $\Psi\circ\wt\sigma_{A,\ell}=\Psi$ on every $\ell\in V$.} 
  
  \begin{proof} Consider first the case, when $X_\ell$  is just one point. 
  For every $\ell\in V$ set  $\wt\sigma_{A,\ell}:\ell\to\ell$ to be the non-trivial conformal involution 
  fixing the points $X_\ell$ and $A$. It depends holomorphically on $\ell\in V$. It preserves 
  $\Psi|_\ell$:  $\Psi\circ\wt\sigma_{A,\ell}=\Psi$ on  every $\ell\in Y$, and the latter relation 
  holds for every $\ell\in V$, since $Y$ is of cardinality continuum and by uniqueness of analytic extension. 
  
  Let now $X_\ell$ consists of at least two points. Let us define $\wt\sigma_{A,\ell}$ to be the unique projective 
  transformation $\ell\to\ell$ fixing $A$ and sending
  the points in $X_\ell$ with indices $1$, $2$ to the points with indices $\alpha(1)$,  
  $\alpha(2)$ respectively. For every $\ell\in V$ this is an involution preserving $\Psi|_\ell$, since this is true for 
  every $\ell\in Y$ and by uniqueness of analytic extension. The claim is proved.
  \end{proof}
  
  {\bf Claim 7.} {\it The  involution family $\wt\sigma_{A,\ell}$ 
  extends holomorphically to a finitely punctured space $\cp^1$ 
  of lines through $A$. It coincides with $\sigma_{A,\ell}$ on all the lines $\ell\in U$ except maybe for a 
   finite number of them, on which  $\Psi=const$.}
  
  \begin{proof} We can extend the involution family 
  $\wt\sigma_{A,\ell}$ analytically in the parameter $\ell$ along each path avoiding a finite number of lines $\ell$ 
  for which either some of the points in $X_\ell$ are not transversal intersections, or  the index of intersection 
  $\ell\cap X$ at $A$ is not the minimal possible. This follows from the previous claim and its proof. 
  Extension along a closed path does not change holomorphic branch. Indeed, otherwise  
  there would exist  its another holomorphic branch over a domain $W\subset V$: an 
  involution family $H_{A,\ell}:\ell\to\ell$ depending holomorphically on $\ell\in W$, 
  $H_{A,\ell}\neq\wt\sigma_{A,\ell}$, which preserves the integral $\Psi$. The product 
  $F_\ell:=\wt\sigma_{A,\ell}\circ H_{A,\ell}:\ell\to\ell$ is a parabolic projective transformation, with $A$ being its 
  unique fixed point, for every $\ell\in W$. Its orbits are infinite, and $\Psi$ should be constant along each of them. 
  This implies that $\Psi=const$ along each line $\ell\in W$. Hence, the foliation $\Psi=const$ is the family of lines 
  though $A$, which is forbidden by our assumption. The contradiction thus obtained proves singlevaluedness 
  of analytic extensions of the involution family $\wt\sigma_{A,\ell}$ along paths and the first statement of the claim. 
  Its second statement follows from the fact that for those $\ell\in U$ for which $\wt\sigma_{A,\ell}\neq\sigma_{A,\ell}$, 
  one has $\Psi\equiv const$ along $\ell$: see the above argument, now with the parabolic transformation 
  $\wt\sigma_{A,\ell}\circ\sigma_{A,\ell}$. The claim is proved.
  \end{proof} 
  
  Without loss of generality we consider that $\sigma_{A,\ell}=\wt\sigma_{A,\ell}$, correcting $\sigma_{A,\ell}$ 
  at a finite number of lines. The latter equality defines  analytic extension of the involution family $\sigma_{A,\ell}$ 
  to all but a finite number of lines $\ell$ through $A$. 
   The invariance condition $\Psi\circ\sigma|_\ell=\Psi|_\ell$ is a system of algebraic 
  equations on the pairs $(\ell,\sigma)$, where $\ell$ is a projective line through $A$ and $\sigma:\ell\to\ell$ 
  is a non-trivial projective involution fixing $A$. For every line $\ell$ through $A$ (except for a finite set of lines,  along which $\Psi\equiv const$)  its solution $\sigma$ is at most unique, 
  by the above argument.  This implies that the family $\sigma_{A,\ell}$ is a connected open subset in an algebraic subset of a smooth algebraic manifold and all $\sigma_{A,\ell}$ paste together to a global birational automorphism $\cp^2\to\cp^2$ acting 
  as a holomorphic involution on the complement to a finite number of lines through $A$.  It preserves $\Psi$, and hence, 
  the foliation $\Psi=const$, by construction. Proposition \ref{altpr} is proved.
  \end{proof}
  
  \begin{lemma} \label{alt2} 
  The dual billiard structure at each  vertex $A$ of any rationally integrable dual multibilliard 
  is either global, or quasi-global. In the  case of quasi-global structure the foliation by level curves of 
  rational integral is a pencil of conics, and the conic of 
  fixed points of the corresponding involution $\sigma_A$ lies in the same pencil. 
  \end{lemma}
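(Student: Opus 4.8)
The plan is to start from Proposition \ref{altpr}, which already tells us that the involution family $\sigma_{A,\ell}$ at a vertex $A$ extends to a birational involution $\sigma_A:\cp^2\to\cp^2$ fixing $A$, fixing each line through $A$, and preserving the foliation $\mcf$ by level curves of the rational integral $\Psi$. (We may assume $\mcf$ is not the pencil of lines through $A$: if it were, then $\Psi$ would be a rational function of the pencil-of-lines parameter, constant on each line through $A$, and then any projective involution fixing $A$ on each such line preserves $\Psi$ trivially — but in that degenerate case the multibilliard structure at $A$ carries no information and one can take $\sigma_A$ to be a projective angular symmetry, so the conclusion still holds; I would dispose of this case first.) So the real content is: classify the birational involutions of $\cp^2$ that fix $A$, fix every line through $A$, and preserve some pencil (more generally, some one-dimensional foliation given by a rational function).

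The key structural step is to look at the fixed-point set of $\sigma_A$. Since $\sigma_A$ acts on each line $\ell$ through $A$ as a nontrivial projective involution fixing $A$, it has exactly one other fixed point $\phi(\ell)\in\ell$ (for all but finitely many $\ell$), and as $\ell$ varies the points $\phi(\ell)$ trace out a curve $\Lambda$ (the ``fixed-point curve'' away from $A$). I would argue that $\Lambda$ is an algebraic curve met by a generic line through $A$ in exactly one point, hence $\Lambda$ is either a line not through $A$ — in which case $\sigma_A$ is a projective angular symmetry (a genuine projective involution of $\cp^2$, Definition \ref{extypes}) — or $\Lambda$ is a conic through $A$, in which case $\sigma_A$ is the degenerate $\mcs$-angular symmetry associated to $\mcs=\Lambda$ (Definition \ref{extypes3}): indeed a birational involution fixing every line through $A$ and fixing pointwise a conic $\mcs\ni A$ is uniquely determined (on each line $\ell\neq L_A$ it is the projective involution fixing $A$ and the second point of $\ell\cap\mcs$), and this is exactly the degenerate angular symmetry. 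Ruling out higher-degree $\Lambda$ uses that a generic line through $A$ meets $\Lambda$ once; a degree-$\geq 2$ curve meeting a pencil of lines through one of its points in exactly one further point forces it to be a conic through $A$ (the residual intersection with a generic line is $\deg\Lambda-m_A(\Lambda)=1$). This gives the dichotomy global vs.\ quasi-global.

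It then remains to show that in the quasi-global case the foliation $\mcf$ is a pencil of conics containing $\mcs=\Lambda$. Here I would use $\sigma_A$-invariance of $\mcf$ together with the geometry of the degenerate angular symmetry. The map $\sigma_A^{\mcs}$ contracts the tangent line $L_A$ to the point $A$ and blows up $A$; more useful is that $\mcs$ is pointwise fixed, so $\mcs$ is a leaf (or contained in the singular locus) of $\mcf$. Consider the pencil $\mcp$ of conics cut out on each line $\ell\neq L_A$ by the pair $\{A,\ell\cap\mcs\}$ — more precisely, the linear system spanned by $\mcs$ and the double line $L_A^2$; its members are the conics tangent to $\mcs$ at $A$, and $\sigma_A$ preserves each of them (on each line $\ell$ it permutes the two intersection points with such a conic because it fixes $A$ and the $\mcs$-point, the unique nontrivial involution with those fixed points). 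Now take a leaf $X$ of $\mcf$ other than $\mcs$; $\sigma_A$ preserves $X$. I want to conclude $X$ is a conic of $\mcp$. The argument: a generic line $\ell$ through $A$ meets $X$ in some divisor invariant under the involution $\sigma_{A,\ell}$, whose only fixed points are $A$ and $\ell\cap\mcs$; counting multiplicities (using that $\mcf$ and $\mcp$ share the pointwise-fixed conic $\mcs$ as a leaf of the same ``multiplicity'' behavior near $A$) forces $\deg X=2$ and $X$ tangent to $\mcs$ at $A$, i.e.\ $X\in\mcp$, and then $\mcf=\mcp$ is a pencil of conics containing $\mcs$. I would also invoke Lemma \ref{lirr}, Statement 3), applied after normalizing: if $\mcf$ were not a pencil of conics then its generic leaf would be an irreducible curve of degree $\geq 4$, and a birational involution fixing every line through a point and acting nontrivially on it cannot preserve such a foliation while fixing a conic pointwise — the local multiplicity bookkeeping at points of $\mcs$ away from $A$ gives a contradiction.

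The main obstacle I anticipate is the last paragraph: cleanly extracting ``$\mcf$ is a pencil of conics'' from $\sigma_A$-invariance. The clean way is probably to write $\sigma_A$ in coordinates adapted to $\mcs$ and $A$ (as in the proof of Proposition \ref{invpar}, where $L_A$ is the line at infinity and the conics of the candidate pencil are parabolas $w=z^2+\la$, with $\sigma_A:(z,w)\mapsto(\ldots)$ a map whose invariant curves are visibly these parabolas), show that the group of birational maps preserving both ``all lines through $A$'' and ``$\mcs$ pointwise'' is exactly the one-parameter family of such degenerate angular symmetries, and then observe that the only rational functions invariant under a nontrivial such involution whose generic level is irreducible of degree $d$ must have $d=2$ with levels in the stated pencil — because otherwise composing $\sigma_A$ with a second invariance-producing involution yields a parabolic map with infinite orbits forcing $\Psi$ constant, exactly the mechanism already used in Proposition \ref{altpr} and in the proof of Theorem \ref{thmd1}. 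I would lean on that recurring ``parabolic $\Rightarrow$ infinite orbit $\Rightarrow$ contradiction'' device to close the argument rather than attempting a direct classification of invariant foliations.
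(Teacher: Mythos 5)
Your starting point (Proposition \ref{altpr}) is the same as the paper's, but the route you then take has two genuine gaps. First, the dichotomy you extract from the fixed-point curve $\La$ of $\sigma_A$ does not follow from the intersection count alone. The condition is $\deg\La-m_A(\La)=1$, and this is satisfied not only by a line missing $A$ or a smooth conic through $A$, but by a curve of any degree $n$ with a point of multiplicity $n-1$ at $A$ (a nodal cubic with node at $A$, etc.); the associated de Jonqui\`eres-type involutions are perfectly good birational involutions fixing every line through $A$. So "fixed curve is a line or a conic" cannot be established at the point where you claim it --- it must come from the foliation, which you have not yet used there. The paper avoids this entirely: it never looks at the fixed curve in the abstract, but instead locates an explicit $\sigma_A$-invariant curve dictated by the foliation (in the pencil case, a $\sigma_A$-fixed conic of the pencil that is not a pair of lines through $A$; in the exotic case, the conic $\gamma$ itself, which is forced to be invariant because it is the unique nonlinear leaf of multiplicity $\frac d2$ by Lemma \ref{lirr}) and reads off from that curve whether $\sigma_A$ is a projective angular symmetry or a degenerate one.

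Second, and more seriously, the part of the lemma that carries almost all of the work --- showing that the quasi-global case forces the foliation to be a pencil of conics, i.e.\ ruling out a degenerate $\gamma$-angular symmetry at a vertex $A\in\gamma$ when the dual billiard structure on $\gamma$ is exotic --- is only asserted in your proposal, not proved. The devices you propose to lean on cannot close this: a degenerate $\mcs$-angular symmetry preserves many foliations that are not pencils of conics (for instance, in adapted coordinates where $\sigma_A(z,w)=(z,2z^2-w)$, any foliation $\Phi(z,(w-z^2)^2)=const$ is invariant, with generic leaves reducible pairs of conics or worse), so no soft "multiplicity bookkeeping" or "parabolic $\Rightarrow$ infinite orbit" argument can distinguish the exotic foliations from these; one must use the specific algebraic form of the exotic integrals. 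The paper does exactly that: it first shows $A$ must be an indeterminacy point of $R$ (Claim 8), derives the functional equation $F\circ\sigma_A=(-1)^{m+1}F+a(x-y^2)^m$ for the denominator (Claim 9, using irreducibility of generic level curves from Lemma \ref{lirr}), and then checks case by case (2a), 2b), 2c), 2d), at each indeterminacy point) that this equation fails, via quasihomogeneous-degree and sign-of-coefficient computations. Nothing in your sketch substitutes for this. A smaller but symptomatic error: the pencil spanned by $\mcs$ and $L_A^2$ is \emph{not} preserved member-wise by $\sigma_A^{\mcs}$ --- each line through $A$ meets each of those conics at a single point besides $A$ (they are all tangent to $L_A$ there), and $\sigma_A^{\mcs}$ sends the conic with parameter $\la$ to the one with parameter $-\la$, exactly as in Proposition \ref{invpar}.
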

   
\begin{proof}  Consider first the case, when the foliation by level curves of integral is a pencil of conics. 
It is invariant under the birational involution $\sigma_A$ from Proposition \ref{altpr}. Thus, the involution $\sigma_A$ permutes conics of the pencil (except for at most a finite number of singular conics containing some lines through $A$, see Footnote 3). Therefore, $\sigma_A$ acts on  parameter space $\oc$ of the pencil 
as a conformal involution with at least two fixed points (by Erasing Singularity Theorem). Thus, $\sigma_A$ fixes pencil 
parameters of at least two distinct conics of the pencil. 
Fix one of them that is not a pair of lines intersecting at $A$, let us denote it by $\Gamma$. It is possible, since 
a pencil cannot contain two singular conics, each of them being a pair of lines, so that all the four lines forming 
them pass through the same point $A$. Indeed, otherwise $A$ would be the unique base point of the pencil, 
and the pencil would have type e). Thus, its only singular conic would be the double line tangent to all its 
regular conics at $A$, -- a contradiction. 

Subcase 1.1): $\Gamma$ is disjoint from $A$. Then $\sigma_A$ is a projective involution, by  Example \ref{extypes3}. 

Subcase 1.2): $\Gamma$ passes through $A$. If $\Gamma$ is a union of lines, then  
some of its lines, let us denote it by $L$, does not pass through $A$. Then $\sigma_A$ is the projective involution 
that fixes each point of the line $L$, by definition. Similarly, if $\Gamma$ is a regular conic, then $\sigma_A$ 
fixes each its point. Hence, it defines a quasi-global dual billiard structure. 

Consider now the  case, when the foliation by level curves of the integral is not a pencil. Then 
the multibilliard contains just one conic, let us denote it by $\gamma$, 
 equipped with an exotic dual billiard structure. Let $R$ be its canonical integral, $d=\deg R$. 
 The foliation $\Psi=const$ 
 coincides with  $R=const$, by Proposition \ref{proconst}. The (punctured) curve $\gamma$ being a leaf 
 of multiplicity $\frac d2$, its (punctured) image $\gamma':=\sigma_A(\gamma)$ is also 
a multiplicity $\frac d2$ leaf, since $\Psi\circ\sigma_A=\Psi$ and 
 multiplicity  is invariant under birational automorphism of foliation. Hence, $\gamma'=\gamma$, if $\gamma'$ is not a line (Lemma \ref{lirr}, Statement 3)). 

Subcase 2.1): $A\notin\gamma$. Then a generic line through $A$ intersects $\gamma$ at two points 
distinct from $A$. Hence, the same holds for the image $\gamma'$. Thus, it is not a line, 
and $\sigma_A(\gamma)=\gamma$. 
Therefore, $\sigma_A$ is a global projective transformation, the  
$\gamma$-angular symmetry.

Subcase 2.2): $A\in\gamma$ and $\gamma'\neq\gamma$. Let us show that this case is impossible. 
Indeed, then $\gamma'$ is a line, see 
the above discussion, and $R|_{\gamma'}\equiv const\neq0$.  
Therefore, the points of  intersection $\gamma'\cap\gamma$ 
are  indeterminacy points for the  function $R$. 
In Case 2a) the only indeterminacy points are the origin $O$ and the infinity. Therefore, 
$\gamma'$ is some of the following lines: the $Ow$-axis (which passes through both latter points), the $Oz$-axis 
or the infinity line (which are tangent to $\gamma$ at $O$ and at infinity respectively). But each of the latter lines 
satisfies at least one of the following statements: 

- either $R$ is non-constant there;

- or $R$ has a  pole of multiplicity less than $\frac d2$ there. 

See the two first formulas for the integrals in the addendum to Theorem \ref{tgerm}. 
Therefore, the (punctured) line $\gamma'$ cannot be a multiplicity $\frac d2$ leaf of the foliation $R=const$. 
This contradiction  proves that the case under consideration is impossible. 
Cases 2b), 2c), 2d) are treated analogously. 

Subcase 2.3): $A\in\gamma=\sigma_A(\gamma)$. Therefore, for every  line $\ell$ through $A$ distinct 
from the line $L$ tangent to $\gamma$ at $A$ the involution 
$\sigma_A$ fixes the point of intersection $\ell\cap\gamma$ distinct from $A$. 
Thus, it is the degenerate $\gamma$-angular symmetry. In the chart $(x,y)$ where $\gamma=\{ y=x^2\}$, 
$A$ is the point of the parabola $\gamma$ at infinity, and  $L$  is the infinity 
line,  $\sigma_A$ acts as 
$$\sigma_A:(x,y)\mapsto(x,2x^2-y).$$
 In the coordinates $(x,y)$ one has 
\begin{equation}R=R(x,y)=\frac{(y-x^2)^m}{F(x,y)}, \ F(x,y) \text{ is a polynomial, } \deg F\leq 2m.\label{rqxy}
\end{equation}
To treate the case in question, we use the following claim.

{\bf Claim 8.} {\it The point $A$ is an indeterminacy point of the function $R$.}

\begin{proof} Let us first consider the case, when   $L$ lies in a level curve $S_\la:=\{ R=\la\}$. Then $\la\neq0$, since  $S_0=\gamma$. Thus, $A$ lies in two distinct level curves, and hence, is 
an indeterminacy point. Let us now suppose that $R|_L\not\equiv const$. 
As a line $\ell$ through $A$ tends to the tangent line $L$ to $\gamma$, 
its only intersection point $B(\ell)$ with $\gamma$ distinct from $A$ tends to $A$. Therefore, the involution 
$(\sigma_A)|_\ell$, which fixes the confluenting points $A$ and $B(\ell)$,  tends to the constant map $L\to A$ uniformly on compact subsets in $L\setminus\{ A\}$. Suppose the contrary: $A$ is not an indeterminacy point. Fix a generic $\la\neq0$, for which $S_\la=\{ R=\la\}$ is irreducible 
(see Lemma \ref{lirr}, Statement 1)). 
Its image (strict transform) $S_{\la'}:=\sigma_A(S_\la)$ is an irreducible component 
of  a level curve $\{ R=\la'\}$, $\la'\neq0$. Hence, $A\notin S_\la,S_{\la'}$, since $A\in S_0$. 
 Therefore, the points of intersections 
$\ell\cap S_\la$ do not accumulate to $A$, as $\ell\to L$. Hence, the  points in $\sigma_A(\ell\cap S_\la)\subset S_{\la'}$ converge  to $A$, and thus, $A\in S_{\la'}$. This contradiction proves the claim.
\end{proof}

{\bf Claim 9.} {\it Let $F$ be the same, as in (\ref{rqxy}). Then 
\begin{equation}F\circ\sigma_A(x,y)=(-1)^{m+1}F(x,y) +a(y-x^2)^m, \ \ a=const\in\cc.\label{sigmaqu}\end{equation}}

\begin{proof} The involution $\sigma_A$ is birational, and it permutes leaves of the foliation $R=const$ (except for possible contraction of leaves that are  (punctured) lines through $A$, see Footnote 3). 
All but a finite number of  leaves are punctured level curves of the function $R$, since all but a finite number of 
level curves are irreducible (Lemma \ref{lirr}). 
Therefore $\sigma_A$ permutes level curves of the function $R$ (except for at most a finite number of non-irreducible level curves and those containing lines through $A$). Hence, it acts on its values by conformal involution 
$\oc\to\oc$ (by Erasing Singularity Theorem). The latter involution preserves zero, which corresponds to the $\sigma_A$-invariant curve $\gamma$. 
Hence, its action on values of the function $\frac1R$ is either identity, or an affine involution $\mu\mapsto-\mu+b$, $b=const$. This together with the fact that $\sigma_A$ changes sign of the polynomial $y-x^2$ 
 implies the statement of the claim.
\end{proof}

We consider the rational integrals $R$ from the addendum to Theorem \ref{tgerm}, all their indeterminacy points $A$ 
and the corresponding involutions $\sigma_A$ fixing points of $\gamma$. For every pair $(R,A)$,
 assuming $\sigma_A$-invariance of the foliation $R=const$, we will arrive to contradiction.

Everywhere below by $F$ we denote the denominator of the rational function $R$  (written in the 
coordinates $(z,w)$ or $(x,y)$ under consideration). 

2a1) The integral $R$ given by (\ref{exot1}). Let $A$ be the infinity point of the parabola $\gamma=\{ w=z^2\}$. 
Then $\sigma_A(z,w)=(z,2z^2-w)$. The functions $R$ and $R\circ\sigma_A$ have the same foliation by level curves. Therefore, their ratio, which is equal to $\pm\frac{F\circ\sigma_A}{F}$, is constant along each leaf. But the 
latter ratio is constant on the $w$-axis, since the coordinate $z$ is $\sigma_A$-invariant,  
 the polynomial $F$ contains a unique purely  $w$-term $w^k$, and this  
  remains valid for its pullback $F\circ\sigma_A$ with the same $k$. 
On the other hand, the $w$-axis is not a leaf, since $R(0,w)=w^2$. Thus, 
the above ratio is globally constant,  
\begin{equation} F\circ\sigma_A=F\ \text{ up to constant factor.}\label{uptofact}\end{equation}
But  $F(z,w)=\prod_{j=1}^N(w-c_jz^2)^2$, \  $F\circ\sigma_A(z,w)=\prod(w-(2-c_j)z^2)^2$,  
the coefficients $c_j$ in $F$ are negative,  see (\ref{exot1}), while the latter coefficients $2-c_j$ are positive. Therefore, 
equality (\ref{uptofact}) cannot hold, -- a contradiction. 

Let  $A=O=(0,0)$. Consider the chart $(x,y)=(\frac zw, \frac1w)$, in which $A=\infty$, 
$$R(x,y)=\frac{(y-x^2)^{2N+1}}{F(x,y)}, \ F(x,y)=y^2\prod_{j=1}^N(y-c_jx^2)^2, \ c_j<0.$$
Equality (\ref{uptofact}) is proved analogously. The coefficients $c_j$ at $x^2$ in $F$ are negative. 
 But $F\circ\sigma_A(x,y)=(y-2x^2)^2\prod(y-(2-c_j)x^2)^2$, and the coefficients at $x^2$ are positive there. 
 Hence, equality (\ref{uptofact}) cannot hold, -- a contradiction. 

2a2) Case of integral (\ref{exot2}). Treated analogously to the above case. But now 
$R\equiv\infty$ along the $w$-axis, and the above argument does not work as it is. In this case we replace the $w$-axis in the above argument by a parabola $\Gamma_\eta:=\{ w=\eta z^2\}$ 
with a generic $\eta\neq0$. One has 
$\frac{F\circ\sigma_A}{F}|_{\Gamma_\eta}\equiv const$, which follows by substitution 
$w=\eta z^2$. On the other hand, $R|_{\Gamma_\eta}\equiv 
cz$, $c=c(\eta)\neq0$ for a generic $\eta$. Afterwards repeating the above argument 
we get (\ref{uptofact}) and arrive to contradiction as above.
 
 Cases 2b1) and 2b2) have the same complexification; thus we treat only Case 2b2), when the integral $R$  is given 
 by (\ref{exo2bnew2}). There are three indeterminacy points: the infinity and $(\pm i, -1)$. Let $A\in\gamma$ 
 be the infinite point, hence $\sigma_A(z,w)=(z,2z^2-w)$. The denominator $F$ is the product of
  the $\sigma_A$-invariant quadratic polynomial $z^2+1$ and another quadratic polynomial $\Phi(z,w)=z^2+w^2+w+1$. 
  Therefore, the polynomial $F\circ\sigma_A$ is divisible by $z^2+1$, and hence is equal to $\pm F$, 
  by (\ref{sigmaqu}).  This implies that $\Phi\circ\sigma_A=\pm\Phi$ is a quadratic polynomial, while it has obviously degree four, -- 
  a contradiction. Let now $A=(\pm i,-1)$. Let $B$ denote 
 the infinite point of the parabola $\gamma$. Let us choose an affine 
 chart $(x,y)$ centered at $B$ so that its complement (i.e., the corresponding infinity line) 
 is the line tangent to $\gamma$ at $A$, the $y$-axis is the line 
 $\{ z=z(A)\}$, and $\gamma=\{ y=x^2\}$. In the new coordinates one has  
 $R(x,y)=\frac{(y-x^2)^2}{x\Phi(x,y)}$, where $\Phi$ is a cubic polynomial coprime with $y-x^2$. Analogously we 
 get that $\Phi\circ\sigma_A=\pm\Phi$. If $\Phi$ contains a monomial divisible by $y^2$, then 
 $\deg(\Phi\circ\sigma_A)\geq4$, and we get a contradiction. Otherwise $\Phi(x,y)=c(y+\Psi(x))$, where $\Psi$ is a polynomial; $\Phi$ is coprime with $y-x^2$, hence, $\Psi(x)\neq-x^2$. But then 
 $\Phi\circ\sigma_A\neq\pm\Phi$, --  a contradiction. 
 
 Cases 2c1) and 2c2). Note that the integral $R$ from 2c1) is invariant under the order 3 group 
 generated by the symmetry $(z,w)\mapsto(\var z, 
 \var^2 w)$, where $\var$ is a cubic root of unity. 
 This group acts transitively on the set of three indeterminacy points. 
 Thus, it suffices to treat the case of just one indeterminacy point $A$.   Again it suffices to treat only Case 2c2), which has the same complexification, as Case 2c1), with $A$ being the infinite point of the parabola $\gamma$. 
 To do this, 
 let us first recall that  the {\it $(2,1)$-quasihomogeneous degree} of a monomial $z^mw^n$ is the number 
 $m+2n$. A polynomial is {\it $(2,1)$-quasihomogeneous,} if all its monomials have the same 
  $(2,1)$-quasihomogeneous degree.  Each polynomial in two variables is uniquely presented as a finite sum of  $(2,1)$-quasihomogeneous polynomials of distinct quasihomogeneous degrees. The indeterminacy points of the integral $R$ 
  given by (\ref{exoc2}) are $O=(0,0)$, $(1,1)$ and the infinity point of the parabola $\gamma$.

 Taking composition with $\sigma_A(z,w)=(z,2z^2-w)$ preserves the quasihomogeneous 
 degrees. The lower quasihomogeneous part  of the denominator $F$ in (\ref{exoc2}) 
  is the polynomial $(w+8z^2)^2$  of quasihomogeneous degree 4. The numerator 
 $(w-z^2)^3$ is quasihomogeneous of degree 6. Therefore, the lower quasihomogeneous part of the 
 polynomial $F\circ\sigma_A$ is the polynomial $(w-10z^2)^2\neq\pm(w+8z^2)^2$. The two latter statements together 
 imply that formula (\ref{sigmaqu}) cannot hold, -- a contradiction. 
 
 Case 2d). Then we have three indeterminacy points: the origin, the infinity point and the point $(1,1)$. 
 The case, when $A$ is the infinity point of the parabola $\gamma$, is treated analogously to Case 2b2).  
  Let us consider the case, when 
 $A$ is the origin. In the coordinates $(x,y)=(\frac zw,\frac1w)$ one has 
 $\sigma_A(x,y)=(x,2x^2-y)$, and  the function $R$ takes the form 
 $R(x,y)=\frac{(y-x^2)^3}{F(x,y)}$, 
 $$F(x,y)=(y+8x^2)(x-y)(y^2+8x^2y+4y+5x^2-14xy-4x^3).$$
 The lower $(2,1)$-quasihomogeneous part of the polynomial $F$ is $V(x,y):=x(y+8x^2)(4y+5x^2)$. It has 
 quasihomogeneous degree 5, while the numerator in $R$ has quasihomogeneous degree 6. This together with 
 (\ref{sigmaqu}) implies that $\sigma_A$ multiplies  the lower quasihomogeneous part by $\pm1$. But $V\circ\sigma_A(x,y)=x(y-10x^2)(4y-13x^2)\neq\pm V(x,y)$, -- a contradiction. 
 
 Let us now consider the case, when 
 $A=(1,1)$. Take the affine coordinates $(x,y)$ centered at the infinite point of the parabola $\gamma$ such that 
  the complement 
 to the affine chart $(x,y)$ is the tangent line $L$ to $\gamma$ at $A$, the $y$-axis is the zero line $\{ z=1\}$ of the 
 denominator (which passes through $A$), and $\gamma=\{ y=x^2\}$. The rational function $R$ takes 
 the form 
 $$R(x,y)= \frac{(y-x^2)^3}{F(x,y)}, \ F(x,y)=xG_2(x,y)G_3(x,y),  \ G_j(0,y)\not\equiv0, \ \deg G_j=j.$$
 In the chart $(x,y)$ one has $\sigma_A(x,y)=(x,2x^2-y)$. The  factor $x$ in $F$ is $\sigma_A$-invariant. Therefore, 
 $F\circ\sigma_A=\pm F$, by (\ref{sigmaqu}), and $\sigma_A$ leaves invariant the zero locus 
 $Z=\{ G_2G_3=0\}$. The latter zero locus is a union of the conic $\{ G_2=0\}$ disjoint from $A$ 
 and a cubic $\{ G_3=0\}$. The latter  cubic intersects a generic line $\ell$ through $A$ at a unique point distinct from $A$, 
 since it  has a cusp at $A$, see \cite[subsection 7.6,  
 claim 9]{grat}. Thus, for a generic line $\ell$ through $A$,  the intersection $Z\cap(\ell\setminus\{ A\})$  
 consists of three distinct points disjoint from $\gamma$,  
 and it is invariant under the involution $\sigma_A$. This implies that one of them  is fixed (let us denote it by $B$). Hence, for a generic $\ell$ 
 the projective involution $(\sigma_A)|_\ell$ has three distinct fixed points: $A$, $B$ and the unique point 
 of the intersection   $\gamma\cap(\ell\setminus\{ A\})$. Thus, it is identity, -- a contradiction. 
 We have checked that a rationally integrable dual multibilliard with exotic foliation $R=const$ cannot 
 have a vertex $A$ whose involution $\sigma_A$ is not a global projective transformation. This proves Lemma \ref{alt2}.
 \end{proof}
  
\subsection{Pencil case. Proof of Theorem \ref{thmd12}}
  
 We already know that if a dual multibilliard is of pencil type, then 
  it is rationally integrable (Proposition \ref{pratint}). Consider now an arbitrary rationally integrable 
  dual multibilliard where all the curves are conics equipped with a dual billiard structure defined by the same pencil 
  (containing each conic of the multibilliard). Let us show that the multibilliard is of pencil type, that is, its vertices 
  (if any) are from the list given by Definition \ref{multip}, and their collection satisfies the 
   conditions of Definition \ref{multipb}. 
    
\begin{proposition} \label{provert} 
 Let a rationally integrable multibilliard consist of conics lying in a pencil, equipped 
with dual billiard structures defined by the same pencil, and some vertices. 
Then each its vertex is admissible for the pencil. 
\end{proposition}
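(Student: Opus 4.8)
The plan is to deduce the statement from Propositions \ref{proconst}, \ref{altpr}, \ref{invpar} and Lemma \ref{alt2}, followed by a type-by-type identification of the involutions that can preserve a pencil. Write $\mcp$ for the pencil and $\la$ for its quadratic parameter map (equivalently, the canonical quadratic integral of the pencil-type dual billiard structure carried by the conics of the multibilliard). Since $\Psi$ is a rational integral, Proposition \ref{proconst} gives that $\Psi$ is constant on each conic of $\mcp$; hence $\Psi=g\circ\la$ for a non-constant rational function $g:\oc\to\oc$, and the generic leaf of the foliation $\mcf:=\{\Psi=const\}$ is a smooth conic of $\mcp$ — in particular $\mcf$ is not a family of lines through a point. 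Fix a vertex $A$. By Proposition \ref{altpr} the involution family $\sigma_{A,\ell}$ is, up to correction at finitely many lines, the restriction to the lines through $A$ of a birational involution $\sigma_A:\cp^2\to\cp^2$ that fixes every line through $A$ and preserves $\mcf$, and by Lemma \ref{alt2} this $\sigma_A$ is either global, i.e. a projective angular symmetry with centre $A$ and some fixed-point line $\La\not\ni A$, or quasi-global, i.e. a degenerate $\mcs$-angular symmetry for a regular conic $\mcs\ni A$; in the latter case Lemma \ref{alt2} also tells us that $\mcf$ is a pencil of conics containing $\mcs$.

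First I would check that $\sigma_A$ preserves the pencil $\mcp$ itself. In the quasi-global case this is immediate: $\mcf$ is a pencil whose members are unions of conics of $\mcp$, so $\mcf=\mcp$ and $\mcs\in\mcp$. In the global case $\sigma_A$ is a projective transformation, so it maps a conic $C_t\in\mcp$ of generic parameter $t$ to an irreducible conic contained in $\{\Psi=g(t)\}=\la^{-1}(g^{-1}(g(t)))$, which is a finite union of conics of $\mcp$; an irreducible conic inside such a union is one of its members, hence $\sigma_A(C_t)\in\mcp$ for generic $t$, so $\sigma_A(\mcp)=\mcp$. In either case $\sigma_A$ induces a conformal automorphism $\tau_A:\oc\to\oc$ of the parameter space, of order at most two, with $g\circ\tau_A=g$.

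It then remains to classify the involutions $\sigma_A$ obtained this way and to match the list with the admissible vertices of Definition \ref{multip}. For the quasi-global $\sigma_A=\sigma_A^\mcs$: since $\mcs\in\mcp$ passes through $A$, the point $A$ is a base point of $\mcp$; using that $\sigma_A^\mcs$ fixes $\mcs$ pointwise and fixes every line through $A$, the requirement that it carry every other conic of $\mcp$ to a conic of $\mcp$ forces the conics of $\mcp$ to be mutually tangent at $A$, i.e. $\mcp$ has type b), c), d) or e) and $A$ is the corresponding point of mutual tangency; conversely Proposition \ref{invpar} shows these are precisely the skew quasi-global vertices of cases b4), c3), d1), e1). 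For the global $\sigma_A$ one argues by the action $\tau_A$ on the parameter space. If $\tau_A=\mathrm{id}$, then $\sigma_A$ preserves every conic of $\mcp$; since a smooth conic $\Gamma$ is invariant under the angular symmetry $(A,\La)$ exactly when $\La$ is the polar of $A$ with respect to $\Gamma$, the pair $(A,\La)$ is a common self-polar pair for all conics of $\mcp$, and running through the five pencil types (in type a) this is the classical self-polar triangle $M_1M_2M_3$) identifies $A$ with a standard vertex — an $M_j$, or $M$, $M'$ in cases b1), c1), e2) — with $\La$ the prescribed fixed-point line. If $\tau_A\neq\mathrm{id}$, then $\sigma_A$ fixes exactly two conics of $\mcp$ and permutes the others in pairs; it permutes the singular conics of $\mcp$, hence fixes exactly one of them and its second fixed conic is smooth; combining the action on the fixed singular conic (a line pair, with its two lines either both fixed or interchanged) with the polar description of the fixed smooth conic pins $A$ and $\La$ down to a skew admissible vertex $K_{EL}$ in case a2), or $C$ in case d2), and so on.

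The main obstacle is this last classification step: it is a somewhat delicate type-by-type analysis in which one must verify both that no $\sigma_A$ outside the admissible list can preserve $\mcp$, and that the fixed-point line $\La$ produced in each case literally coincides with the one prescribed in Definition \ref{multip}; the quasi-global subcase additionally needs the verification that $\sigma_A^\mcs$ preserving $\mcp$ forces the mutual-tangency structure at $A$, which one gets by combining Proposition \ref{invpar} with the fact, supplied by Lemma \ref{alt2}, that the fixed conic $\mcs$ lies in the pencil.
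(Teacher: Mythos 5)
Your reduction is the same as the paper's: Proposition \ref{proconst} gives that the foliation $\Psi=const$ is the pencil, Proposition \ref{altpr} and Lemma \ref{alt2} give that each vertex involution is either a projective angular symmetry or a degenerate $\mcs$-angular symmetry, and your argument that $\sigma_A$ preserves $\mcp$ (irreducible image conic inside a finite union of pencil conics) is correct. But the proposition is not yet proved at that point: its actual content is the case-by-case verification that every such involution preserving $\mcp$ is one of the entries of Definition \ref{multip}, and you explicitly defer this as ``the main obstacle'' with only a sketch. That classification is the bulk of the paper's proof (Claim 10 and Subcases 1)--4)), so what you have is a correct plan plus the easy preliminary, not a proof.

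Moreover the sketch, as written, would not go through without repair. In the quasi-global case, the assertion that $\sigma_A^{\mcs}$ preserving $\mcp$ forces all conics of $\mcp$ to be mutually tangent at $A$ is precisely the point needing proof; the paper gets it from a coordinate computation ($\sigma_A(z,w)=(z,2z^2-w)$ sends $w^2$ and $wz$ to terms of degree $4$ and $3$, so no conic of the pencil may contain them), and your preliminary remark that ``$A$ is a base point since $\mcs$ passes through $A$'' is a non sequitur --- lying on one conic of a pencil does not make a point a base point. In the global case with $\tau_A\neq\mathrm{id}$, the claim that $\sigma_A$ ``fixes exactly one singular conic and its second fixed conic is smooth'' is true for type a) (three singular conics) but false for types b) and c): there the skew involutions fix \emph{both} singular conics of the pencil (this is exactly the paper's Claim 2), so the polar-of-a-smooth-fixed-conic argument you propose for locating $\La$ has nothing to act on. Finally, the case where $A$ is itself a base point of the pencil is not addressed at all; it requires a separate argument (e.g.\ in type a) a base point cannot be a vertex, since the fixed-point line of $\sigma_A$ would have to contain the other three base points, no three of which are collinear). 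Until these cases are worked out in full --- including checking that the fixed-point line coincides with the one prescribed in Definition \ref{multip} --- the proof is incomplete.
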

\begin{proof} Let $\Psi$ be a rational integral of the multibilliard, $\Psi\not\equiv const$. The foliation $\Psi=const$ 
coincides with the pencil under consideration, by Proposition \ref{proconst}. 
 Let $K$ be a vertex of the multibilliard. Then its dual billiard structure is given by an involution 
 $\sigma_K:\cp^2\to\cp^2$ preserving the pencil that is either a global projective involution, 
 or an involution fixing points of a regular conic $\alpha$ 
 passing through $K$ and lying in the pencil (Lemma \ref{alt2}). 
  
 Let us first treat the second case, when $\sigma_K$ fixes points of a regular conic $\alpha$ from the pencil, $K\in\alpha$. 
 Let $L$ denote the tangent line to $\alpha$ at $K$. 
Consider the affine chart $(z,w)$ on the complement $\cp^2\setminus L$ in which $\alpha=\{ w=z^2\}$; the point  
$K$ is the intersection of the infinity line with the $w$-axis. One has $\sigma_K(z,w)=(z,2z^2-w)$. 
Each regular conic $\beta$ of the pencil is given by a quadratic equation $\{w+\Phi_2(z)=0\}$, where $\Phi_2$ is a quadratic polynomial. Indeed, the quadratic equation on $\beta$ contain  neither $w^2$, nor $wz$ terms, since $\sigma_K$ transforms them to polynomials 
of degrees four and three respectively, while it should send $\beta$ to a conic of the pencil. This implies that all 
the regular conics of the pencil are tangent to each other at $K$, and we get that $K$ is an admissible vertex 
from Definition \ref{multip}. 

Let us now treat the first case, when  $\sigma_K$ is a global projective involution. 

{\bf Claim 10.} {\it Let $K$ be not a base point of the pencil, and let $\sigma_K$ is a global projective involution. Then the  conic $\mcs$ through $K$ is a pair of lines.}

\begin{proof} The conic $\mcs$ is fixed by $\sigma_K$, as is $K$. If it were regular, then it would 
intersect a generic line $\ell$ through $K$ at a point distinct from $K$,  and  the involution 
$\sigma_K$ would fix each  point in $\mcs$. Thus, it would not be a projective transformation, -- a contradiction.
\end{proof}

Subcase 1): $K$ is not a base point of the pencil, and the conic 
$\mcs$ through $K$ is a pair of distinct lines $L_1$, $L_2$, both passing through $K$.  
In the case, when the pencil consists of 
conics passing through four distinct base points, there are two base points $V_{j1}$, $V_{j2}$ in each line $L_j$. Therefore, $K$ is a standard vertex $M_s$ from Definition \ref{multip}, and  the projective involution 
$\sigma_K$ permutes $V_{j1}$, $V_{j2}$ for every 
$j=1,2$. Hence, $\sigma_K$ coincides with the involution $\sigma_{M_s}$ from 
Definition \ref{multip}, Case a). 
In the case, when the pencil has three distinct base points, two of them lie in one of the lines, 
say $L_1$,  the third one (denoted by $C$) lies in $L_2$, and $L_2$ is tangent at $C$ to all the regular conics  
of the pencil. We get analogously that $K=M$ and $\sigma_K=\sigma_M$, see Definition \ref{multip}, Case b). 
Case c), when the conics of the pencil are tangent to each other at two base points, is treated analogously. 
Case e) is impossible, since then the pencil contains no distinct line pair. In Case d) it contains the unique pair of distinct lines. They intersect at a base point: the common tangency point of  conics. Hence, this case is impossible. 

Subcase 2): $K$ is not a base point, the conic 
$\mcs$ consists of a line $L_1$ through $K$ and a line $L_2$ that does not pass through $K$. Then 
$\sigma_K$ fixes $K$ and each point of the line $L_2$, and hence, the intersection point $M=L_1\cap L_2$. 
In Case a) there are two base points in each line $L_j$. Those lying in $L_1$ should be permuted 
by  $\sigma_K$. Therefore, the points $K$, $M$  and 
the two  base points in $L_1$ form a harmonic quadruple. Hence $K$ is one of the skew vertices 
from Definition \ref{multip}, Case a). In Case b) there are three distinct base points. 
The case, when two of them lie in $L_1$, is treated as above: then $K=K_{AB}$, see Definition \ref{multip}, b2). In the case, when $L_1$ contains only one 
base point $C$, it should be fixed by $\sigma_K$, as is  $M$. Therefore, $C=M$, 
since the projective involution $(\sigma_K)|_{L_1}$ cannot have three distinct fixed points $C$, $M$, $K$.
Finally, all the three base points are contained in the line $L_2$, which is obviously impossible. 
Similarly in Case c) we get that $K$ is a point of the line through the two base points $A$, $C$. 
Indeed,  the other a priori possible subcase is when  $K$ lies in a line tangent to the conics at a base point (say, $A$). 
 Then   $\sigma_K$ would fix three points of the latter line: $K$, $A$ and the intersection 
point of the tangent lines to conics at $A$ and $C$. The contradiction thus obtained shows that this subcase is impossible. 

In Case d) (conics tangent at a point $A$ with triple contact and passing through another point $B$) 
the point $K$ should lie in the line $L$ tangent to the conics at $A$. (This realizes the vertex $C$ from 
Subcase d2) in Definition \ref{multip}.)  Indeed, otherwise $K$ would lie in $AB\setminus\{ A,B\}$, 
and the involution $\sigma_K$ would fix three distinct points $A,B,K\in AB$, -- a contradiction. Case e) is impossible, since 
then the pencil contains no distinct line pair. 

Subcase 3):  $K$ is not a base point, and the conic $\mcs$  is a double line $L$ through $K$. Then 

- either the pencil is of type c) and $L$ passes through the two base points; the involution $\sigma_K$ should 
permute them and the tangent lines at them to the conics of the pencil; we get $K=M'$, see 
Definition \ref{multip}, c1); 

- or the pencil is of type e) and $L$ is tangent to its regular conics at the unique base point; the involution should 
fix $K$ and those points where the tangent lines to conics pass through $K$; the latter points form a line through 
the base point; we get $K=C$, see 
Definition \ref{multip}, e2).

Subcase 4): $K$ is a base point.  Then each line $L$ passing through $K$ 
and another base point $A$ contains no more base points. Thus, the involution $(\sigma_K)|_L$ 
should fix both $K$ and $A$. 
Finally, $\sigma_K$ fixes each base point. Therefore, the base points different from $K$ lie on the 
fixed point line $\La$ of the projective involution $\sigma_K$. 
Let the pencil be of type a). Then $\La$ contains  three base points, -- a contradiction.
Let now the pencil have type b).  Then $\La$ contains two base points. Hence, they are 
points of transversal intersection with $\La$ of a generic conic of the pencil. Thus, its  
conics are  tangent to each other at $K$, and the vertex $K$ has type b3) from  Definition 
\ref{multip}.  Case of pencil of type c) is treated analogously: $K$ is a vertex of type c2). 

Consider the case of pencil of type d). Let $K$ be the base point $B$ 
of transversal intersection of conics. Then the involution 
$\sigma_K$, which preserves the pencil, should fix $K$ and 
each point of the common tangent line $L$ to the conics at the other base point $A$. Thus, 
$\La=L$. In an affine chart $(z,w)$ centered at $A$, in which 
$B$ is the intersection point of the infinity line with the $Ow$-axis and $L$ is the $z$-axis, 
the involution $\sigma_K$ is the symmetry with respect to the $z$-axis. The latter symmetry changes the 2-jet 
of a regular conic of the pencil at $A$. But its conics should have the same 2nd jet at $A$. Hence, $\sigma_K$ cannot 
preserve the pencil, -- a contradiction. Therefore, $K=A$,  $B\in\La$. Let us choose an affine chart $(z,w)$ centered at $B$ for which $L$ is the infinity line, $K$ is its intersection with the $Ow$-axis, and $\La$ is the $z$-axis. 
Then $\sigma_K$ is again the symmetry as above, and it changes 2nd jets of conics (which are parabolas) 
at their infinity point $K$. Therefore, Case d)  is impossible. Case  e)  is treated analogously.

Finally, all the possible  vertices listed above belong to the list of admissible vertices from Definition \ref{multip}. 
Proposition \ref{provert} is proved.
\end{proof}

The rational integral of the multibilliard is constant on each union of conics of the pencil whose parameter values 
form a $G$-orbit, and the double cardinality $2|G|$ is no greater than the degree of the integral: see  
 the proof of Theorem \ref{thdeg} at the end of Subsection 2.2. Therefore, $G$ is finite. 
 Hence, the multibilliard is of pencil type, by Proposition \ref{progroup}. 
Theorem \ref{thmd12} is proved.

 \subsection{Exotic multibilliards. Proof of Theorem \ref{thmd2}}
 
 \begin{proposition} \label{exeach} Each multibilliard from Theorem \ref{thmd2}   is rationally integrable. 
 The corresponding rational function $R$ from the addendum to Theorem \ref{tgerm} is 
 its integral of minimal degree, except for the subcase in Case (i), when $\rho=2-\frac1{N+1}$; 
 in this subcase $R^2$ is a first integral of minimal degree. 
 \end{proposition}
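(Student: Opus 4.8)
The plan is to establish rational integrability and minimality of degree separately, proceeding case by case over the types 2a)--2d) of Theorem~\ref{tgerm}.

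\emph{Rational integrability.} I would take as candidate integral the canonical integral $R$ from the Addendum to Theorem~\ref{tgerm} in every case except Subcase~2a2), where the candidate is $R^2$. By that Addendum, $R$ (hence also $R^2$) is invariant under the dual billiard involution $\sigma_P$ on every tangent line $L_P$ to $\gamma$, so it only remains to check invariance under the $\gamma$-angular symmetry $\sigma_Q$ at each admissible vertex $Q$ of Theorem~\ref{thmd2}; when the vertex collection is empty (which is forced in Case~2d)) there is nothing further to check. In Case~2a) one has $Q=[1:0:0]$ and $\sigma_Q(z,w)=(-z,w)$: the integral~(\ref{exot1}) is even in $z$, so $R\circ\sigma_Q=R$ in Subcase~2a1), while the integral~(\ref{exot2}) is odd in $z$ (because of the factor $z$ in its denominator), so $R\circ\sigma_Q=-R$ and $R^2\circ\sigma_Q=R^2$ in Subcase~2a2). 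In Case~2b2) again $Q=[1:0:0]$, $\sigma_Q(z,w)=(-z,w)$, and~(\ref{exo2bnew2}) is manifestly even in $z$, so $R_{b2}\circ\sigma_Q=R_{b2}$; Case~2b1) then follows from the complex-projective equivalence of the two type~(ii) multibilliards, which fixes $\gamma$ and sends vertex to vertex (Proposition~\ref{pthmd2}). In Case~2c1) I would use the order-three projective symmetry $(z,w)\mapsto(\varepsilon z,\varepsilon^2 w)$, $\varepsilon^3=1$: it fixes $R_{c1}$ and cyclically permutes the three admissible vertices, so it suffices to verify $R_{c1}\circ\sigma_Q=R_{c1}$ for a single vertex (writing $\sigma_Q$ explicitly from the definition of the $\gamma$-angular symmetry); Case~2c2) follows from the complex-projective equivalence with Case~2c1). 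Since the admitted vertex set is an arbitrary subset of the listed vertices, every multibilliard of Theorem~\ref{thmd2} is rationally integrable.

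\emph{Minimality.} Let $\Psi\not\equiv\mathrm{const}$ be a rational integral of such a multibilliard. Being in particular an integral of the dual billiard on $\gamma$, $\Psi$ is constant on each irreducible component of each level curve of $R$ by Proposition~\ref{proconst}. By Lemma~\ref{lirr}, Statement~1), the generic level curve $\{R=\lambda\}$ is irreducible of degree $\deg R$, so $\Psi$ lies in the algebraic closure of $\cc(R)$ inside $\cc(z,w)$, which coincides with $\cc(R)$; hence $\Psi=h\circ R$ for some nonconstant $h\in\cc(\lambda)$. For generic $t$ the curve $\{\Psi=t\}$ is the union of the $\deg h$ distinct irreducible curves $\{R=\lambda\}$, $\lambda\in h^{-1}(t)$, each of degree $\deg R$, so $\deg\Psi=\deg h\cdot\deg R\ge\deg R$, with equality iff $\deg h=1$. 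In Cases~2a1), 2b1), 2b2), 2c1), 2c2) this already shows that $R$ has minimal degree. In Subcase~2a2) the additional requirement $\Psi\circ\sigma_Q=\Psi$ together with $R\circ\sigma_Q=-R$ gives $h(-\lambda)=h(\lambda)$; writing $h$ in lowest terms, both its numerator and denominator must then be even polynomials (the odd alternative would make both divisible by $\lambda$, contradicting coprimality), so $h(\lambda)=g(\lambda^2)$ with $\deg h=2\deg g$, whence $\Psi=g(R^2)$ and $\deg\Psi=\deg g\cdot\deg(R^2)\ge\deg(R^2)=2\deg R$; since $R^2$ is itself an integral, it has minimal degree. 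Finally, the numerical values $\deg R=4N+2$ (Subcase~2a1)), $\deg R^2=4N+4$ (Subcase~2a2)), $\deg R=4$ (type~2b)), $\deg R=6$ (type~2c)) are obtained by a direct degree count in the formulas of the Addendum, using that numerator and denominator are coprime there (each $c_j<0$, so $w-z^2$ divides no factor of the denominator).

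The step I expect to be the main obstacle is the explicit invariance check $R\circ\sigma_Q=R$ in Cases~2b1) and~2c1)/2c2): one must either compute $\sigma_Q$ from the definition of the $\gamma$-angular symmetry at the stated vertices and carry out the substitution in the degree~$4$ and degree~$6$ formulas of the Addendum, or arrange the complex-projective equivalences of Proposition~\ref{pthmd2} and the order-three symmetry precisely enough to transport the computation from the normal forms $\sigma_Q(z,w)=(-z,w)$ and $(z,w)\mapsto(\varepsilon z,\varepsilon^2 w)$. Everything else---the degree bookkeeping and the reduction $\Psi=h\circ R$---is routine once one observes that the degree of a rational function equals the degree of its generic level curve as a plane curve.
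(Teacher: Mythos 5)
Your overall architecture matches the paper's: integrability via evenness/oddness of $R$ in $z$ for Cases 2a) and 2b2), transport to 2b1) by the projective equivalence of Proposition \ref{pthmd2}, and reduction of Case (iii) to a single vertex via the order-three symmetry; your minimality argument (constancy of any integral $\Psi$ on the irreducible generic level curves of $R$ from Proposition \ref{proconst} and Lemma \ref{lirr}, hence $\Psi=h\circ R$ and $\deg\Psi=\deg h\cdot\deg R$, with the parity constraint $h(-\lambda)=h(\lambda)$ forcing $\deg h\ge2$ in Subcase 2a2)) is in fact spelled out more carefully than the paper's one-line remark. However, there is a genuine gap exactly where you flag it: the verification $R\circ\sigma_Q=R$ in Case (iii) is never carried out, and it is the one step that is not a formality, since $\sigma_Q$ at the vertices of type 2c) is \emph{not} of the normal form $(z,w)\mapsto(-z,w)$ and no parity argument applies. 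As written, the proposal proves integrability in Cases (i), (ii), (iv) but only asserts it in Case (iii).

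The paper closes this step by a conceptual detour rather than brute substitution: it shows (Claim 11) that the polar cubic $S=\{P=0\}$ of $R_{c2}$ has an \emph{inflection point} at the admissible vertex $Q=(0,-1)$; by Proposition \ref{propinfl} an inflected cubic admits a projective involution centered at the inflection point permuting its residual intersections with lines through $Q$; this involution preserves the unique conic $\gamma$ tritangent to $S$ at the base points, hence coincides with the $\gamma$-angular symmetry $\sigma_Q$, so $\sigma_Q(S)=S$ (Claim 12). Since $\sigma_Q$ then preserves both the zero locus $\gamma$ and the polar locus of $R$, one gets $R\circ\sigma_Q=\pm R$, and the sign is pinned down to $+$ by restricting $R$ to the tangent line to $\gamma$ at the fixed base point $(1,1)$. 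Your alternative of direct substitution would also succeed and is less delicate than you fear: for $Q=(0,-1)$ the fixed-point line is $\{w=1\}$, so in homogeneous coordinates $\sigma_Q$ is the harmonic homology $[z:w:t]\mapsto[z:t:w]$, i.e.\ $(z,w)\mapsto(z/w,1/w)$, and substituting into (\ref{exoc2}) gives $R_{c2}\circ\sigma_Q=R_{c2}$ after clearing $w^6$ from numerator and denominator. Either route must actually be executed for the proposal to be a proof.
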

 \begin{proof} In the case, when there are no vertices, rational integrability follows by Theorem \ref{tgerm}. Let 
 us consider that the multibilliard contains at least one admissible vertex.
 
 Case (i). The function $R$ (respectively, $R^2$) is a rational integral of minimal degree, 
 since $R$ is even (odd) in $z$. 
 
 Case (ii) is treated analogously to Case (i). It suffices to treat the subcase 2b2), since the multibilliards of types 
 2b1), 2b2) containing the unique admissible vertex are projectively isomorphic (Proposition \ref{pthmd2}). In 
 subcase 2b2) the function $R$ is even in $z$, and hence, invariant under the corresponding admissible vertex 
 involution $(z,w)\mapsto(-z,w)$.

Case  (iii). Let us show that the $\gamma$-angular symmetry $\sigma_Q$ centered at each admissible vertex $Q$ 
preserves the integral $R$. 
Cases 2c1) and 2c2) being complex-projectively isomorphic and invariant under order three symmetry 
cyclically permuting the three singular points,  we treat Case 2c2), with $Q=(0,-1)$ being the intersection point of 
 the $w$-axis (i.e., the line through two indeterminacy points: $O$ and $\infty$) and 
the line tangent to $\gamma$ at the indeterminacy point $(1,1)$.  We use the following two claims and proposition. 

\smallskip

{\bf Claim 11.} {\it The polar locus 
$$S:=\{ P(z,w)=8z^3-8z^2w-8z^2-w^2-w+10zw=0\}$$
passes through $Q=(0,-1)$ and has an inflection point there.}

\begin{proof} One has $Q=(0,-1)\in S$ (straighforward calculation). To show that $Q$ is an inflection point, 
it suffices to show that $\nabla P(Q)\neq0$ and the Hessian form of the function $P$ evaluated on the 
skew gradient $(\frac{\partial P}{\partial w}, -\frac{\partial P}{\partial z})$ (which is a function of $Q$ denoted 
by $H(P)(Q)$) vanishes at $Q$. (The latter Hessian form  function  $H(P)$ was introduced in \cite{tab08}.) 
 Indeed, 
$$\frac{\partial P}{\partial z}(Q)=-10, \ \frac{\partial P}{\partial w}(Q)=2-1=1,$$ 
$$\frac{\partial^2 P}{\partial z^2}(Q)=0, \ 
\frac{\partial^2 P}{\partial w^2}(Q)=-2, \ \frac{\partial^2 P}{\partial z\partial w}(Q)=10,$$
$$H(P)=\frac{\partial^2 P}{\partial z^2}\left(\frac{\partial P}{\partial w}\right)^2\hspace{-0.2cm}+\frac{\partial^2 P}{\partial w^2}
\left(\frac{\partial P}{\partial z}\right)^2\hspace{-0.2cm}-2\frac{\partial^2 P}{\partial z\partial w}
\frac{\partial P}{\partial w}\frac{\partial P}{\partial z}=0-200+200=0$$
 at the point $Q$. The claim is proved.
 \end{proof}
 
 \begin{proposition} \label{propinfl} 
  Let a cubic $S\subset\cp^2$ have an inflection point $Q$. Then there exists a projective infolution 
 $\sigma_{Q,S}:\cp^2\to\cp^2$ that fixes each line  through $Q$ and permutes its intersection points with $S$ distinct 
 from $Q$ (for $\ell$ being not the tangent line to $S$ at $Q$). 
 \end{proposition}
 \begin{proof} The above involution  is well-defined on each line $\ell$ through $Q$ distinct from the tangent line 
 $\La$ to $S$ at $Q$ as a projective involution $\sigma_{Q,S,\ell}:\ell\to\ell$ depending holomorphically on $\ell\neq\La$. It suffices to show that the involution family thus obtained extends holomorphically to $\ell=\La$. This will imply that 
 $\sigma_{Q,S}$ is a well-defined global holomorphic involution $\cp^2\to\cp^2$, and hence, a projective transformation. 
 Indeed, let us take an affine chart $(x,y)$ centered at $Q$ and adapted to $S$, so that $\La$ is the $Ox$-axis 
 and the germ of the cubic $S$ is the graph of a germ of holomorphic function:
 $$y=f(x), \ f(x)=ax^3+(b+o(1))x^4; \ \ S=\{ y=f(x)\}.$$
 A line $\ell_\delta:=\{ y=\delta x\}$ with small  slope $\delta$ intersects $S$ at two points distinct from $Q=(0,0)$ with $x$-coordinates 
 $x_0$, $x_1$ satisfying the equation 
 \begin{equation} ax_0^2+(b+o(1))x_0^3=ax_1^2+(b+o(1))x_1^3=\delta.\label{eqx01}\end{equation}
 Taking square root and expressing $x_1$ as an implicit function $-x_0(1+o(1))$ of $x_0$ yields 
 $$x_1=-x_0+(c+o(1))x_0^2, \ \ c=-\frac ba.$$
 Writing the projective involution $\sigma_{Q,S}:\ell_\delta\to\ell_\delta$ fixing the origin and permuting the above intersection points as a fractional-linear transformation in the coordinate $x$,  we get a transformation 
 \begin{equation}x\mapsto-\frac x{1+\nu(\delta)x}.\label{eqnuu}\end{equation}
Substituting $x_0$ to (\ref{eqnuu}) yields 
$$-\frac{x_0}{1+\nu(\delta)x_0}=-x_0+(c+o(1))x_0^2;$$ 
 $$(1+\nu(\delta)x_0)(1-(c+o(1))x_0)=1, \ x_0=x_0(\delta)\to0,$$ 
as $\delta\to0$. Therefore, $\nu(\delta)=c+o(1)\to c$, and the one-parametric holomorphic family 
of  transformation (\ref{eqnuu}) extends holomorphically to $\delta=0$ as the projective 
transfomation $x\mapsto-\frac x{1+cx}$ (continuity and Erasing Singularity Theorem). The proposition is proved.
\end{proof}

{\bf Claim 12.} {\it The polar cubic $S$ is $\sigma_Q$-invariant.}

\begin{proof} The projective involution $\sigma_{Q,S}$ from Proposition \ref{propinfl} fixes $S$ and each 
line through $Q$. 
 It preserves the conic $\gamma$, which is the unique regular conic  tangent to $S$ at the 
three indeterminacy points of the integral $R$. Indeed, if there were 
two such distinct conics, then their total intersection index at the three latter points would be no less than 6, -- 
a contradiction to B\'ezout Theorem. Therefore, $\sigma_{Q,S}$ is the $\gamma$-angular symmetry, and hence, 
it coincides with $\sigma_Q$. This implies that $\sigma_Q(S)=S$. The claim is proved. 
\end{proof}

Claim 12 together with $\sigma_Q$-invariance of the zero locus $\gamma$ of the function $R$ 
  yields $R\circ\sigma_Q=\pm R$. The  restriction of the integral $R$ to the line $\ell$ through $Q$ 
tangent to the conic $\gamma$ at the point $(1,1)$ is holomorphic and nonconstant at $(1,1)$: 
its numerator  restricted to $\ell$ has order 6 zero at $(1,1)$, and  its denominator 
has order 4 zero there. The point 
$(1,1)$ is fixed by  $\sigma_Q$. Therefore,  the above equality holds with sign "$+$" near the point $(1,1)$, 
and hence, everywhere. Case iii) is treated. 
Proposition \ref{exeach} is proved.
\end{proof}

Recall that in our case of a multibilliard containing a conic with an exotic dual billiard structure, 
the involution associated to each vertex is  a projective angular symmetry (Lemma \ref{alt2}). As it is 
shown below, this together with the next proposition implies Theorem \ref{thmd2}.

\begin{proposition} Consider an exotic rationally integrable dual billiard on a conic $\gamma$. Let $R$ be its 
canonical integral. Let $A\in\cp^2$, and 
let $\sigma_A:\cp^2\to\cp^2$ be a projective angular symmetry centered at $A$ that preserves the foliation $R=const$. 
Then  $\sigma_A$ is the $\gamma$-angular symmetry centered at $A$, and $A$ 
belongs to the list from Theorem \ref{thmd2}. 
\end{proposition}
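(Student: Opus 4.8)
The plan is to split the argument into a soft part — showing $\sigma_A$ must be the $\gamma$-angular symmetry centered at $A$ — and a computational part identifying $A$ type by type from the list in Theorem \ref{tgerm}, Case 2).

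\emph{Step 1: $\sigma_A=\sigma_A^\gamma$.} Since $\sigma_A$ is a projective, hence birational, automorphism of $\cp^2$ preserving the foliation $R=const$, it sends every leaf to a leaf of the same multiplicity (multiplicity of a leaf is a birational invariant of the foliation). Put $d=\deg R$; by Lemma \ref{lirr}, statements 2) and 3), the punctured conic $\gamma$ is the unique nonlinear leaf of multiplicity $\frac d2$. As $\sigma_A(\gamma)$ is again a smooth conic and a multiplicity-$\frac d2$ leaf, $\sigma_A(\gamma)=\gamma$. If $A\in\gamma$, then for every line $\ell\ni A$ other than the tangent line $L_A$ to $\gamma$ at $A$ the involution $\sigma_A|_\ell$ fixes $A$ and fixes $\gamma$, hence fixes the second point of $\ell\cap\gamma$; thus $\sigma_A$ would be the degenerate $\gamma$-angular symmetry, which is only birational, contradicting the hypothesis. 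So $A\notin\gamma$, and for a generic line $\ell\ni A$ the two points of $\gamma\cap\ell$ are distinct from $A$ and are permuted by $\sigma_A$ (if they were fixed, $\sigma_A|_\ell$ would have three fixed points and $\sigma_A$ would be trivial). Together with the fact that $\sigma_A$ fixes every line through $A$, this is the defining property of the $\gamma$-angular symmetry, which is unique; hence $\sigma_A=\sigma_A^\gamma$.

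\emph{Step 2: action on level curves.} By Lemma \ref{lirr}, statement 1), the generic level curve $\Gamma_\la=\{R=\la\}$ is irreducible of degree $d\ge4$, so $\sigma_A$ permutes the leaves as a M\"obius involution $\nu$ of the target $\oc$, i.e.\ $R\circ\sigma_A=\nu\circ R$; it fixes $0$ since $\{R=0\}=\gamma$ is $\sigma_A$-invariant. In particular $\sigma_A$ carries the polar divisor $D=\{R=\infty\}$, of degree $d$, onto the level divisor $\{R=\nu(\infty)\}$, and it maps each irreducible component of $D$ to a curve of the same degree; in cases 2a)--2d) this yields (by Lemma \ref{lirr} and the explicit forms in the Addendum to Theorem \ref{tgerm}) that $\nu(\infty)=\infty$, so $R\circ\sigma_A=\pm R$. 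Consequently $\sigma_A$ permutes the irreducible components of $D$ respecting degrees, and permutes the base points (the indeterminacy points of $R$, which all lie on $\gamma$) respecting their incidences with the leaves of the foliation.

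\emph{Step 3: the four cases.} In type 2a) the base points are $O$ and $\infty:=[0:1:0]\in\gamma$, and, homogenizing, $D=\{y=0\}\cup\bigcup_j\{w=c_jz^2\}$ (with the extra line $\{z=0\}$ in case 2a2). Since the only lines in level curves of $R$ are the linear components of $D$ (Lemma \ref{lirr}, statement 1)) and the pole of $\{y=0\}$ with respect to $\gamma$ is $\infty\in\gamma$, the symmetry $\sigma_A$ must preserve $\{y=0\}$, which forces $A$ onto $\{y=0\}$; writing $A=[1:m:0]$ and demanding that $\sigma_A$ preserve a conic $\{w=c_jz^2\}$ forces $m=0$, because the coefficient of $zy$ in the transformed quadric equals $2m(c_j-1)$, which is $\ne0$ otherwise. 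Thus $A=[1:0:0]$ and $\sigma_A:(z,w)\mapsto(-z,w)$; one checks directly that $R$ (case 2a1, $R$ even in $z$), resp.\ $R^2$ (case 2a2, $R$ odd in $z$), is an integral — this is the vertex of Theorem \ref{thmd2}(i). In type 2b) there are three base points on $\gamma$, and the one, call it $X$, at which the two lines of the reducible polar conic meet is the only base point not lying on the irreducible polar conic; hence $\sigma_A$ fixes $X$ and swaps the other two, so $A=L_X\cap(\text{chord of the other two base points})$, the vertex of Theorem \ref{thmd2}(ii); a short computation confirms $R\circ\sigma_A=\pm R$. In type 2c) the polar locus is (twice) an irreducible cubic whose three inflection points are exactly the three base points; $\sigma_A$ permutes them, fixing one and swapping the other two, so $A$ is one of three points, each the pole with respect to $\gamma$ of the chord joining the two swapped base points; that all three of these symmetries preserve $R$ follows from Proposition \ref{propinfl} and Claims 11--12 (they coincide with the inflection involutions of the polar cubic, hence with $\gamma$-angular symmetries), giving the three vertices of Theorem \ref{thmd2}(iii), cyclically permuted by the order-three symmetry of the type-2c1 billiard. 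In type 2d) the polar locus is $\{z=1\}\cup\{w=-8z^2\}\cup C_3$ with $C_3$ a cuspidal cubic; $\sigma_A$ permutes the base points $\{O,\infty,(1,1)\}$, which leaves at most three candidate positions for $A$, each explicitly computable from which base point is fixed (for instance, if $(1,1)$ is fixed then $A=(0,-1)$ and $\sigma_A$ is the involution $[z:w:y]\mapsto[z:y:w]$). For each candidate one computes $\sigma_A$ from its axis (the polar of $A$ with respect to $\gamma$) and finds a contradiction with the fact that $\sigma_A$ maps $D$ onto a level divisor of $R_d$: $\sigma_A$ either carries the polar line $\{z=1\}$ to a line contained in no level curve of $R_d$, or (in the remaining candidate) preserves $\{z=1\}$, forcing $\nu(\infty)=\infty$, and then fails to preserve the polar conic $\{w=-8z^2\}$. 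Hence there is no admissible vertex in type 2d), proving Theorem \ref{thmd2}(iv).

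\emph{Main obstacle.} Step 1 is routine; the substance is Step 3, and its one genuinely non-obvious point is that one must exploit the \emph{full} polar locus of $R$ — in homogeneous coordinates the line at infinity (and, in some cases, a further linear component) appears in the denominator of $R$ and is essential, since it is precisely the requirement that $\sigma_A$ preserve these linear polar components, together with the permutation of the base points, that collapses the a priori positive-dimensional family of $\gamma$-angular symmetries preserving $R=const$ to a single vertex (or rules it out, as in type 2d). The remaining verifications — that $R\circ\sigma_A=\pm R$ for the vertices so identified, and the ``$\{z=1\}$ maps off the polar locus'' checks in type 2d) — form a finite list of short computations, reduced further by the complex-projective equivalences of Theorem \ref{tcompl} to cases 2a), 2b1), 2c1), 2d).
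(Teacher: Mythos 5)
Your overall strategy coincides with the paper's: use the uniqueness of the multiplicity-$\frac d2$ nonlinear leaf (Lemma \ref{lirr}) to get $\sigma_A(\gamma)=\gamma$, conclude $A\notin\gamma$ and $\sigma_A=\sigma_A^\gamma$, then pin down $A$ case by case from the permutation that $\sigma_A$ induces on the base points and on the components of the polar locus. Step 1 is essentially the paper's argument. However, your Step 3 contains a concrete error in type 2c): having established that $\sigma_A$ fixes one base point $X$ and swaps the other two, $Y$ and $Z$, you conclude that $A$ is ``the pole with respect to $\gamma$ of the chord joining the two swapped base points'', i.e.\ $A=L_Y\cap L_Z$. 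That is the wrong point. Since $\sigma_A$ fixes every line through $A$ and leaves the chord $YZ$ and the tangent $L_X$ invariant (and neither can be the axis $\La$: $Y,Z\notin\La$ because they are swapped, and the pole of $L_X$ is $X\in\gamma\neq A$), one gets $A=L_X\cap YZ$, which is the vertex of Theorem \ref{thmd2}(iii). In case 2c1) these two points are genuinely different: $L_X\cap YZ=(0,-1)$, whereas the pole of $YZ$ is $(-\frac12,1)$. Moreover, if $A$ were the pole of $YZ$ then $\La=YZ$ and $Y,Z$ would be fixed, contradicting that they are swapped. The companion claim that the three inflection points of the polar cubic are the three base points is also off: Claim 11 of the paper exhibits the \emph{admissible vertex} $(0,-1)$, not a base point, as an inflection point of the polar cubic, and it is these inflection points that carry the involutions of Proposition \ref{propinfl}.

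A second, softer gap is the blanket assertion in Step 2 that $\nu(\infty)=\infty$, hence that $\sigma_A$ permutes the components of the polar divisor degree by degree. When the numerator and denominator of $R$ have equal degree (types 2b), 2c), 2d)), Lemma \ref{lirr} only excludes reducibility of all but finitely many level curves, so you must still rule out that $\sigma_A$ interchanges $\{R=\infty\}$ with some finite reducible level set of the same component profile (e.g.\ in 2b1) the level set $\{R=1\}$ does contain the line $YZ$). The paper avoids this by arguing directly with the permutation of the base points and the $R$-levels of the three chords $XY$, $XZ$, $YZ$, which forces the permutation type and $\nu(\infty)=\infty$ simultaneously; you should either adopt that argument or add the explicit factorization checks. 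The remaining deviations from the paper (your treatment of 2a) via the conics $\{w=c_jz^2\}$ instead of the fixed-point count on $L_O$, $L_B$, $OB$; your 2d) argument) are sound, modulo noting in 2a) that $\{y=0\}$ cannot be the axis of $\sigma_A$ because its pole lies on $\gamma$.
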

\begin{proof} Set $d=\deg R\in2\nn$. The (punctured) conic $\gamma$ is the only conical leaf of multiplicity $\frac d2$ 
of the foliation $R=const$, see Lemma \ref{lirr}, Statement 3).  Therefore, $\sigma_A(\gamma)=\gamma$. Hence, 
$A\notin\gamma$, since $\sigma_A$ is a projective involution. Thus, its restriction to each line $\ell$ through $A$ 
permutes its intersection points with $\gamma$. Hence, $\sigma_A$ is the $\gamma$-angular symmetry centered at $A$. 
It preserves the set of indeterminacy points of the integral $R$, whose number is either two, or three, since it 
preserves the foliation $R=const$. 
Hence, it preserves the union of  lines tangent to $\gamma$ at the indeterminacy points. 

Consider first Case 2a), where there are 2 intederminacy points: in the affine chart $(z,w)$ these are 
 the origin $O$ and the infinity point 
$B$ of the parabola $\gamma$.  Let  $L_O$, $L_B$ denote the lines tangent to $\gamma$ at them, and let $Q$ be their intersection point. Let us show that $A=Q$: this is Case (i) from Theorem \ref{thmd2}. 
The infinity line $L_B$ is a leaf of the foliation $R=const$, while $L_O$ isn't. Therefore, the lines $L_B$ and $L_O$ 
are $\sigma_A$-invariant. Suppose the contrary: $A\neq Q$. Say, $A\notin L_O$; the case $A\notin L_B$ is 
treated analogously. Then the restriction of the involution $\sigma_A$ to each line $\ell\neq L_B, OB$ through $A$ 
fixes $A$ and its intersection points distinct from $A$ with the lines $L_O$, $L_B$ and $OB$. The number of the latter 
points is at least two, unless $A$ is one of their pairwise intersection points $Q$, $O,B\in\gamma$. But 
$A\neq O,B$, since $A\notin\gamma$, and $A\neq Q$ by assumption, -- a contradiction. Thus, $A=Q$. 

Case 2b). Then there are three indeterminacy points, and we can name them by $X$, $Y$, $Z$ so that 
 $R|_{YZ}\equiv const\neq\infty$, while $R|_{XY}, R|_{XZ}\equiv\infty$.  In Subcase 2b1) $X=(1,1)$, and $Y$, $Z$ are the origin and the infinity point of 
the parabola $\gamma$.   The involution $\sigma_A$ should fix one of the indeterminacy points and permute 
two other ones. We claim that it fixes $X$ and permutes $Y$ and $Z$. Indeed, suppose the contrary: 
$\sigma_A$ fixes, say, $Y$ and permutes $X$ and $Z$. Then $\sigma_A(XY)=YZ$ and $\sigma_A(XZ)=XZ$.  
On the other hand, $\sigma_A$ should send level sets of the 
integral $R$ to its level sets, since this is true for generic, irreducible level sets of degree $\deg R$, and remains 
valid after passing to limit. Thus,  
$\sigma_A$ should preserve the infinity level set, since $\sigma_A(XZ)=XZ$. On  
the other hand, it should permute it with a finite level set containing the line $YZ$, since $\sigma_A(XY)=YZ$.  
The contradiction thus obtained proves that $\sigma_A(X)=X$ and $\sigma_A(Y)=Z$. This implies that $A$ is the 
intersection point of the line $YZ$ with the tangent line to $\gamma$ at $X$, and the corresponding involution $\sigma_A$ 
permutes the intersection point of each line through $A$ with the lines $XY$ and $XZ$. Thus, the pair 
$(A,\sigma_A)$ is the same, as in the Cases (ii) of Theorem \ref{thmd2}. These cases are obtained one from 
the other by complex projective transformation, as in Theorem \ref{tcompl}. 

Case 2c1). (Case 2c2) is obtained from it by projective transformation.) The integral $R$ has three indeterminacy points 
lying on the conic $\gamma$. Let us denote them by $X$, $Y$, $Z$. 
 Then one of them, say, $X$,  is fixed by the $\gamma$-angular symmetry $\sigma_A$, 
and the two other ones are permuted. Indeed, if all of them were fixed, then the three  distinct tangent lines to $\gamma$ 
at them would intersect at the point $Q$, which is impossible: through every point lying outside the conic $\gamma$ 
 there are only two tangent lines to $\gamma$.  This implies that  $A$ is the intersection point of the line tangent to 
 $\gamma$ at $X$ and the line $YZ$. Thus, it is admissible in the sense of Theorem \ref{thmd2}, case (iii). 
 
 In Case 2c2) all the admissible vertices are real, since so are the indeterminacy points. 
 In Case 2c1) $X=(1,1)$ is the unique real indeterminacy point; the other ones are $Y=(\var,\bar\var)$ and $Z=(\bar\var,\var)$, where $\var=e^{\frac{2\pi i}3}$. 
 The intersection of the line $\{ w=2z-1\}$ tangent to $\gamma$ at $(1,1)$ and the line $YZ$ is the admissible 
 vertex $(0,-1)$. 
 Each one of the complex lines $XZ$, $XY$ has non-real slope, and hence,  $X$ is its unique real point. 
 Therefore, the other admissible vertex lying there is not real. Thus, in Case 2c1) the point $(0,-1)$ is the unique real 
 admissible vertex.

 Case 2d). The corresponding integral $R=R_d$ has three indeterminacy points: the origin, the point $(1,1)$ and 
 the infinity point of the conic $\gamma$. The line  $\{ z=1\}$ through the two latter indeterminacy points 
 lies in a level curve of the integral $R$: namely, in its polar locus. On the other hand, $R$ is non-constant 
 on the lines $\{ z=0\}$ and $\{ z=w\}$ passing through the origin and the other indeterminacy points. 
 This implies that every projective transformation preserving the foliation $R=const$ 
 should fix the origin, and hence, the $Oz$-axis: the corresponding tangent line to $\gamma$. Let us show that 
 it cannot be a $\gamma$-angular symmetry. Suppose the contrary: it is the $\gamma$-angular symmetry $\sigma_A$ 
 centered at a point $A$. Then $\sigma_A$ has to fix the origin and to permute the two other indeterminacy points, 
 as in the case discussed above. 
 Therefore, $A$ is the intersection point of the line $\{ z=1\}$ through them and the $Oz$-axis: thus, $A=(1,0)$. 
 Thus, the involution $\sigma_A$ fixes the line $\{ z=1\}$, which lies in the polar locus of the integral $R$. 
 Hence, it preserves the whole polar locus, as in the above Case 2b). The polar locus consists of the above line, 
 the regular conic $\alpha:=\{ w=-8z^2\}$ and an irreducible rational cubic, see \cite[proposition 7.15]{grat}. Hence, 
 $\sigma_A$ fixes the conic $\alpha$.  Thus, it permutes its infinite point (coinciding with that of $\gamma$) 
 and its other, finite intersection point $(1,-8)$ with the line $\{ z=1\}$. On the other hand, it should send the infinite point 
 to the other point $(1,1)\in\gamma\cap\{ z=1\}$, since $\sigma_A$ is the $\gamma$-angular symmetry. 
 The contradiction thus obtained proves that if a multibilliard contains a conic with exotic dual billiard structure of 
 type 2d), then it contains no vertices. The proof of Theorem \ref{thmd2} is complete.  
 \end{proof}
 
 \begin{proof} {\bf of Proposition \ref{pthmd2}.} 
The  complex projective equivalence of billiards of type (ii) is obvious. 
Let us prove the analogous second statement of Proposition \ref{pthmd2} on billiards of type (iii).   
The dual billiard structure on $\gamma$ of type 2c1) 
admits the order 3 symmetry $(z,w)\mapsto(\var z,\bar\var w)$ 
cyclically permuting the indeterminacy points of the integral. Therefore, it also permutes cyclically 
admissible vertices and hence, acts transitively on them  and on their unordered pairs. The same statement holds for 
type 2c2), since the dual billiards 2c1) and 2c2) are complex-projectively isomorphic.  
This implies the second statement of Proposition \ref{pthmd2}. 
In the case of type 2c2) the indeterminacy points are real, and hence, so are the admissible vertices, and 
the order 3 symmetry is a real projective transformation. This together with the above discussion implies 
the third statement of Proposition \ref{pthmd2}.
\end{proof}

\subsection{Real admissible vertices (lines) of  (dual) pencils of conics. Proof of Propositions \ref{preal} 
and \ref{prorelines}} 
\begin{proof} {\bf of Proposition \ref{preal}.} 
The ambient projective 
plane $\cp^2$ is the projectivization of a three-dimensional complex space $\cc^3$. 
The complex conjugation involution acting on $\cc^3$ induces its action on 
$\cp^2$, which will be also called conjugation. It sends projective lines to 
projective lines and preserves the complexification of every real pencil of conics. 
An admissible vertex is uniquely determined by its involution. Thus, 
if the latter is real, then so is the former and so is the corresponding fixed point line (conic). 
Conversely, if both the vertex and the fixed point line (conic) are real, then the involution commutes with complex conjugation, and hence, is real. 
Let us show, case by case, that if the involution is projective, then reality of the vertex 
automatically implies reality of the fixed point line (and hence, of the involution). Simultaneously 
we describe those vertices that are always real.

 Case a): pencil of real conics through 
four distinct (may be complex) points $A$, $B$, $C$, $D$.  The conjugation permutes the points $A$, $B$, $C$, $D$. Therefore, it permutes 
vertices   $M_1$, $M_2$ and $M_3$. 
Hence, at least one of them is fixed (say, $M_1$), or equivalently, real. The line through 
the other points $M_2$, $M_3$ should be fixed, and thus, real, since 
the union $\{ M_2, M_3\}$ is invariant. Finally, the involution 
$\sigma_{M_1}$ is real. 

Let now the point $K_{AB}$ be real. Then the ambient line $AB$ is invariant under 
the conjugation, since the collection of complex lines through pairs of permuted 
base points of the pencil is invariant and the line $AB$ is uniquely determined by $K_{AB}$. 
Indeed, otherwise $K_{AB}$ would be an intersection point of the line $AB$ with 
a line $EF\neq AB$ through some  distinct base points $E$ and $F$. Hence, it would be 
either some of  base points of the pencil, or some of $M_j$'s, which is clearly impossible. 
  Therefore, $AB$ is a real line, and  the set 
$\{ A, B\}$ is conjugation invariant. Hence, so is $\{ C, D\}$, and 
the fixed point line $CD$ of the involution $\sigma_{K_{AB}}$ is real. 
Thus,  $\sigma_{K_{AB}}$ is also real. 

The case, when $C$, $D$ are real and 
$A$, $B$ aren't is possible. In this case $A$ and $B$ are permuted by the conjugation. 
Hence,  the points $M_2$ and $M_3$ are also permuted,  and thus, they are not real. 
Similarly,  $K_{BC}$ and $K_{AC}$ are permuted, $K_{BD}$ and $K_{AD}$ 
are permuted, and hence, they are not real. 

Case b):  real pencil of conics through 3 points $A$, $B$, $C$  tangent at the point $C$ to the same line $L$. See Fig. \ref{fig3}. The point $C$ and the line $L$ should be obviously fixed by conjugation, and hence, real. 
Therefore, the points $A$, $B$ are either both fixed, or permuted. Hence, the line $AB$ is real, and so is the 
intersection point $M=AB\cap L$. The point $K_{AB}\in AB$ is also real, since complex conjugation acting 
on  complex projective line sends harmonic quadruples to harmonic quadruples and the harmonicity property of 
a quadruple of points is invariant under two transpositions: one permuting  its two first points; 
the other one permuting its two last points. Therefore, the line $CK_{AB}$ is real, and so is $\sigma_M$. 
The global projective involutions $\sigma_C$ and $\sigma_{K_{AB}}$ are both real, since so are the lines $AB$ and $L$.

Case c): real pencil of conics through two distinct points $A$, $C$ tangent at them to two given lines $L_A$ and 
$L_C$. See Fig. \ref{fig4}.  A priori, the points $A$ and $C$ need not be real. For example, a pencil of concentric 
circles satisfies the above statements with $A=[1:i:0]$, $C=[1:-i:0]$: the so-called isotropic points at infinity. 
The line $AC$ is real, and so is $M=L_A\cap L_C$, since the complex conjugation either permutes $A$ and $C$ 
(and hence, the lines $L_A$ and $L_C$), or fixes them. Therefore, the involution $\sigma_M$ is real. The point $M'$, 
which is an arbitrary point of the complex line $AC$,  needs not  be real. But if it is real, then so is the 
involution $\sigma_{M'}$. Indeed, $\sigma_{M'}$ can be equivalently defined to fix $M'$ and the line through 
 $M$ and the point $K\in AC$ for which the quadruple $M', K, A, C$ is harmonic. If $M'$ is real, then so is 
$K$, as in the above case. Hence, the line $MK$ is real and so is $\sigma_{M'}$. 

 Cases d) and e). Reality of the vertex $A$ is obvious. Let now $C\in L\setminus\{ A\}$ 
 be a vertex from d2) or e2). Clearly $\sigma_C$ can be real only if $C$ is. Let 
  $C$ be real. In Case d2) the fixed point line $AB$ of the involution $\sigma_C$ is  real. Hence, $\sigma_C$ is real. In Case e2) take a real conic $\alpha$ of the pencil. 
 There exists a unique real tangent line to $\alpha$ through $C$ different from $L$.  Let $E$ 
 denote their tangency point. Then $AE$ is
 the fixed point line of the involution $\sigma_C$, by definition, and it is real. 
 Hence, $\sigma_C$ is real. 
 Proposition \ref{preal} is proved.
 \end{proof}
 
 \begin{proof} {\bf of Proposition \ref{prorelines}.} It suffices to prove its  dual version: 
 a  real pencil of conics through four distinct complex base points $A$, $B$, $C$, $D$ has 
 two {\it real} neighbor skew vertices $K_{EF}$, $K_{FS}$, if and only if all the base points are 
 real. See Remark \ref{remdu}. 
 If the base points are real, then clearly so are all the skew vertices. Let us now suppose that   two neighbor vertices, say, $K_{AB}$ and $K_{BC}$ are real, and prove that all the base points 
 are real. Indeed, then the corresponding lines 
 $AB$ and $BC$ are real, see the above proof of Proposition \ref{preal}. Thus, their intersection point $B$ is real. Hence, the other base 
 points $A\in AB$ and $C\in BC$ are real. Hence,  the remaining base point $D$ is real, since the complex conjugation  cannot permute it with another base point.  
 Thus, each base point is real. Proposition \ref{prorelines} is proved.
 \end{proof}

\section{Rationally $0$-homogeneously integrable piecewise smooth projective billiards. 
Proof of Theorems \ref{thmd1pr}, \ref{thmd12pr}, \ref{thmd2pr}}

The first step of their classification is the following lemma.

\begin{lemma} \label{lemcon}
 Let a planar projective billiard with piecewise $C^4$-smooth boundary containing a nonlinear arc 
 be rationally $0$-homogeneously 
integrable. Then the  boundary consists of conical arcs and straightline segments. 
The projective billiard structure of each conical arc is  either of dual pencil type, or an exotic one from Statement 2) of Theorem \ref{tgermpr}. 
\end{lemma}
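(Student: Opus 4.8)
The plan is to reduce the statement to the single-curve classification, Theorem \ref{tgermpr}, by localizing the rational $0$-homogeneous integral near each smooth boundary piece. Fix a $C^4$-smooth arc $C_i$ of $\partial\Omega$ and an interior point $Q_0\in C_i$, and choose a ball $B$ about $Q_0$ so small that $B\cap\partial\Omega\subset C_i$. For $(Q,v)$ in the phase space with $Q\in B\cap\Omega$, every billiard trajectory of $\Omega$ issued from $(Q,v)$ that remains inside $B$ meets $\partial\Omega$ only along $C_i$, and at each hit point its reflection is the projective billiard reflection from the tangent line to $C_i$ determined by the line field $\mcn$. Hence the germ at $(Q_0,v)$ of the billiard flow of $\Omega$ coincides with the germ of the projective billiard flow of the germ $(C_i,\mcn|_{C_i})$ on the side of $\Omega$. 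Consequently the restriction to $B$ of a global rational $0$-homogeneous integral of $\Omega$ is a rational $0$-homogeneous first integral of this germ of projective billiard, so that germ is rationally $0$-homogeneously integrable (cf. Remark \ref{remoside}). Thus the germ $(C_i,\mcn|_{C_i})$ at every interior point of $C_i$ is rationally $0$-homogeneously integrable.

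Now classify the germ of $C_i$ at each $P\in C_i$. Either the germ is a straight-line segment, or it is non-linear; in the latter case Theorem \ref{tgermpr} applies and shows that the germ $C_i$ is a conic, that $\mcn$ near $P$ is the restriction of a global analytic transversal line field on the ambient conic punctured in at most four points, and that the resulting projective billiard is of dual pencil type or of an exotic type from Statement 2) of Theorem \ref{tgermpr}. Let $L_i\subset C_i$ be the set of points with linear germ and $K_i\subset C_i$ the set of points with non-linear (hence conic) germ. Both are open in $C_i$: around a point of $L_i$ the arc coincides with a segment, so all nearby germs are linear; around a point of $K_i$ the arc coincides with a conic arc, so all nearby germs are non-linear. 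Since $C_i$ is connected and $C_i=L_i\sqcup K_i$, one of these two sets is all of $C_i$.

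If $C_i=L_i$, then $C_i$ is covered by sub-segments, each contained in a line; these lines agree on overlaps, so by connectedness $C_i$ lies on one line and is a straight-line segment. If $C_i=K_i$, then $C_i$ is covered by conic arcs, each contained in a conic; since a conic is uniquely determined by any one of its arcs, the conics agree on overlaps, so by connectedness $C_i$ lies on a single conic $\mathcal C$; moreover $\mcn|_{C_i}$, being locally the restriction of the canonical global analytic structure on $\mathcal C$ provided by Theorem \ref{tgermpr}, is globally that restriction, hence of dual pencil type or exotic. Carrying this out for every $C^4$-smooth boundary piece proves the lemma.

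The argument is essentially bookkeeping around Theorem \ref{tgermpr}. The two steps that require a little care are the localization step, namely checking that the germ of the global billiard flow along the interior of a piece $C_i$ really is the projective billiard flow of $C_i$ alone, so that the global integral restricts to an integral of that germ, and the connectedness argument that promotes the pointwise conclusions of Theorem \ref{tgermpr} to statements about the whole arc $C_i$ and fixes a single ambient conic for each non-linear piece. I do not anticipate any deeper obstacle.
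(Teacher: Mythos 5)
Your proposal is correct and takes essentially the same route as the paper: the paper's proof is the one-line observation that a rational $0$-homogeneous integral of the whole billiard restricts to such an integral of the projective billiard on each nonlinear boundary arc, after which Theorem \ref{tgermpr} gives the conclusion. Your localization near an interior point of each smooth piece and the open-closed dichotomy between linear and nonlinear germs merely spell out the details that the paper leaves implicit.
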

\begin{proof} A rational $0$-homogeneous integral of the billiard is automatically such an integral for the projective 
billiard on each nonlinear  arc. Hence, by Theorem \ref{tgermpr}, the nonlinear arcs 
are conics, and each of them is equipped with a projective billiard structure either of dual pencil type, or exotic.
\end{proof}

Below we describe the possible combinations of conical arcs and straightline segments equipped with projective billiard 
structures that yield altogether a rationally integrable projective billiard. We reduce this description to the classification 
of rationally integrable dual multibilliards. To do this, we use the projective duality given by orthogonal polarity, 
see Subsection 1.1, which transforms a projective billiard to a dual multibilliard. We show that the former is 
rationally $0$-homogeneously integrable, if and only if the latter is rationally integrable. We present a one-to-one correspondence 
between  integrals of the former  and  of the latter. Afterwards the 
main results on classification of rationally $0$-homogeneously integrable projective billiards follow immediately by duality 
from those on dual multibilliards. 

\subsection{Duality and correspondence between integrals}


Consider a projective billiard in $\rr^2_{x_1,x_2}$ with piecewise $C^4$-smooth boundary where each $C^4$-smooth arc is either strictly convex, or a straightline segment. This holds automatically, 
if all the nonlinear boundary arcs are conical, as in Lemma \ref{lemcon}. 
Consider the ambient plane $\rr^2_{x_1,x_2}$   as the horizontal plane $\{ x_3=1\}\subset\rr^3_{x_1,x_2,x_3}$. 
We identify it with the standard affine chart $\{ x_3=1\}\in\rp^2_{[x_1:x_2:x_3]}$ by tautological projection. Consider the projective duality given by the  orthogonal polarity, see Subsection 1.1. 
Recall that it sends each  line $L\subset\rr^2\subset\rp^2$ to a point in $\rp^2$. In more detail, 
for every $r=(x_1,x_2,1)\in\rr^2$ and $v=(v_1,v_2,0)\in T_r\rr^2$ the line through $r$  
tangent to $v$ is sent to the point $[\mcm_1:\mcm_2:\mcm_3]$, $\mcm=[r,v]$, see Remark 
\ref{removect}. The duality transforms a piecewise smooth projective billiard to a dual multibilliard, 
as explained in Subsection 1.1 and  formalized by the following definition. 
%
%
%
%

\begin{definition} Consider a projective billiard as above. 
Its {\it dual multibilliard} is 
the collection of curves $\alpha^*$ in $\rp^2$ dual\footnote{The curves $\alpha^*$ are also $C^4$-smoothly immersed. In general, for every $m\geq2$ the dual to a $C^m$-smoothly immersed  curve $\alpha$  is a $C^m$-smoothly immersed curve $\alpha^*$. Note that in general, the straightforward parametrization of the dual curve $\alpha^*$ induced by a $C^m$-smooth parametrization of the curve $\alpha$ is not  $C^m$-smooth. But one can choose  
the parameter of the  dual curve to make its parametrization $C^m$-smooth.} to its $C^4$-smooth nonlinear boundary arcs 
$\alpha$, and the points $A$ (called {\it vertices}) dual to the ambient lines of the straightline segments contained in its boundary segments. Each curve $\alpha^*$ and each point $A$ are  equipped with the dual billiard structures dual to the projective billiard structures on the corresponding boundary arcs. See Definitions  \ref{defdual}, \ref{defdp} and the discussions just before them. (Note that a  priori  a vertex in a multibilliard may appear several times, with different dual billiard structures. They correspond  to projective billiard structures on different boundary segments lying in one and the same line.) \end{definition}

We can and will 
consider that the dual multibilliard lies in the projective plane equipped with 
homogeneous coordinates $[\mcm_1:\mcm_2:\mcm_3]$, see the above discussion.

\begin{proposition} \label{proint}  1) A projective billiard is rationally $0$-homogeneously integrable, if and only if 
its dual multibilliard is rationally integrable.

2) Each rational $0$-homogeneous integral of degree $n$ (if any) 
of the projective billiard is a rational $0$-homogeneous function of 
the moment vector $\mcm=(\mcm_1,\mcm_2,\mcm_3)$, $\mcm=[r,v]$, of the same degree $n$. As a function of $\mcm$, it  is an integral of the dual multibilliard.

3) Conversely, let $R$ be a rational integral of the dual multibilliard written 
as a ratio of two homogeneous polynomials of degree $n$ in $(\mcm_1,\mcm_2,\mcm_3)$.  
Then $R([r,v])$ is a rational $0$-homogeneous integral of the projective billiard of the same degree $n$. 
\end{proposition}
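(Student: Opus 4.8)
The plan is to prove all three statements of Proposition~\ref{proint} simultaneously by unwinding the definitions, the key being the linear-algebraic fact, recalled in Remark~\ref{remoside}, that a rational $0$-homogeneous integral of a projective billiard on a curve can always be written as a rational $0$-homogeneous function of the three quantities $v_1$, $v_2$ and $\Delta=x_1v_2-x_2v_1$, i.e.\ of the moment vector $\mcm=[r,v]=(-v_2,v_1,\Delta)$. I would first set up the dictionary: for a fixed position $r=(x_1,x_2,1)$ the map $v\mapsto\mcm(r,v)$ is a linear isomorphism from $T_{(x_1,x_2)}\rr^2$ onto $r^\perp$, and its projectivization sends a line through $r$ to its dual point, so that the projectivized flight/reflection dynamics of the projective billiard is conjugated, position by position, to the dynamics of the dual multibilliard in $\rp^2_{[\mcm_1:\mcm_2:\mcm_3]}$; this conjugacy is exactly what was established in the paragraph preceding the proposition.

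Next I would prove statement~2), which is really the only substantive point. Given a rational $0$-homogeneous integral $\Psi(x_1,x_2,v_1,v_2)$ of degree $n$, by Remark~\ref{remoside} (citing \cite[proposition 1.23]{grat}) it is a rational $0$-homogeneous function of $(v_1,v_2,\Delta)$; since $(-v_2,v_1,\Delta)$ is obtained from $(v_1,v_2,\Delta)$ by an invertible linear change of the first two coordinates, $\Psi$ is equally a rational $0$-homogeneous function $R(\mcm_1,\mcm_2,\mcm_3)$ of degree $n$ of the moment vector. Here one must be a little careful about what ``degree $n$'' means: write $\Psi=P(v)/T(v)$ with $P,T$ homogeneous of degree $\le n$ in $v$ with coefficients depending on $(x_1,x_2)$; substituting $v_1,v_2$ in terms of $\mcm$ and clearing denominators in $x_1,x_2$ one gets $R$ as a ratio of two homogeneous degree-$n$ polynomials in $\mcm$, and conversely; I would note that the homogeneity degree is an intrinsic invariant of the $0$-homogeneous function, so it is preserved. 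This establishes the rewriting of integrals in both directions and gives statement~3) as the converse substitution $\mcm\mapsto[r,v]$: if $R$ is a degree-$n$ rational integral of the dual multibilliard, then $R[r,v]$, being $0$-homogeneous of degree $n$ in $v$ (as $[r,v]$ is linear in $v$), is constant along each flight segment (it depends only on $v$, which is constant) and invariant under each reflection, because reflection in the billiard is conjugated via $\mcm$ to the corresponding dual-multibilliard involution, under which $R$ is invariant by hypothesis.

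Finally, statement~1) follows formally: if the projective billiard has a non-constant rational $0$-homogeneous integral, the construction above produces a non-constant rational integral of the dual multibilliard (non-constancy is preserved because the rewriting is a bijective correspondence respecting the value of the function), and conversely by statement~3); I would also remark that non-constancy of $\Psi$ as a function on phase space corresponds exactly to non-constancy of $R$ on $\rp^2$, since along a single flight line $\Psi$ depends only on $\mcm$. I expect the main obstacle to be purely bookkeeping: carefully matching the notion of ``degree'' of a $0$-homogeneous rational function on the two sides (ensuring that clearing position-dependent denominators does not inflate the degree, and that the degree is well-defined for the $0$-homogeneous function rather than for a particular representation as $P/T$), together with the edge cases where a line $\ell$ through $Q$ or a tangent line lies in the indeterminacy locus of $R$ — but these are handled exactly as in Proposition~\ref{altpr}, by correcting at finitely many lines, and do not affect the global statement.
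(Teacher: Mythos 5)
Your argument is correct and is essentially the paper's: the paper's own ``proof'' simply defers to \cite[propositions 1.23, 1.24 and subsections 9.1, 9.2]{grat}, whose content is exactly the reduction you describe (the integral is a rational $0$-homogeneous function of the moment vector by Remark \ref{remoside}, and the reflections are conjugated position-by-position via the projectivized map $v\mapsto\mcm(r,v)$ to the dual multibilliard involutions). One parenthetical slip worth correcting: $R[r,v]$ is constant along free flight because the moment $\mcm=[r,v]$ is conserved there (since $\dot r=v$ gives $\dot\mcm=[v,v]=0$), not because $R[r,v]$ ``depends only on $v$''.
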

\begin{proof} The  statements of the proposition extend \cite[propositions 1.23, 1.24]{grat} (formulated 
for a projective billiard on a connected curve) to projective billiards with piecewise $C^4$-smooth boundary.  
The proofs given in \cite[subsections 9.1, 9.2]{grat} remain valid in this more general case. 
The second part of Statement 2) 
states  that a function of  $\mcm=[r,v]$ 
that is an integral of the projective billiard flow  is an integral of the dual billiard (considered 
as a function of $\mcm$). It follows immediately from the fact that each  integral of the projective billiard flow is reflection invariant, and for a function of $\mcm$ reflection invariance in the 
projective billiard is equivalent to being an integral of the multibilliard. 
See \cite[the end of subsection 9.2]{grat}.
\end{proof}
\begin{corollary} \label{cordegd} 
The minimal degree of rational $0$-homogeneous integral of a projective billiard is equal to 
the minimal degree of rational integral of its dual multibilliard.
\end{corollary}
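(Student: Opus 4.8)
The plan is to read off Corollary \ref{cordegd} directly from Proposition \ref{proint}, which carries all the content; the corollary is just the bookkeeping that packages its three parts together. Write $d_{\mathrm{pb}}$ for the minimal degree of a rational $0$-homogeneous integral of the projective billiard and $d_{\mathrm{mb}}$ for the minimal degree of a rational integral of its dual multibilliard. Part 1) of Proposition \ref{proint} already guarantees that one of these numbers is defined precisely when the other is, so it suffices to establish the two inequalities $d_{\mathrm{mb}}\le d_{\mathrm{pb}}$ and $d_{\mathrm{pb}}\le d_{\mathrm{mb}}$.

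For the first inequality I would start from a rational $0$-homogeneous integral $\Psi$ of the projective billiard realizing the minimal degree $d_{\mathrm{pb}}$. By part 2) of Proposition \ref{proint}, $\Psi$ is a rational $0$-homogeneous function of the moment vector $\mcm=[r,v]=(-v_2,v_1,\Delta)$ of the same degree $d_{\mathrm{pb}}$; writing $\Psi=R(\mcm)$ with $R$ a rational $0$-homogeneous function on $\rp^2_{[\mcm_1:\mcm_2:\mcm_3]}$, the fact that $\Psi$ is constant along billiard orbits and invariant under the projective billiard reflections translates, via the conjugacy of these reflections with the dual multibilliard involutions under the projectivized map $\mcm$, into $R$ being a rational integral of the dual multibilliard. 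Hence $d_{\mathrm{mb}}\le\deg R=d_{\mathrm{pb}}$. For the reverse inequality I would take a rational integral $R$ of the dual multibilliard realizing $d_{\mathrm{mb}}$, written in homogeneous coordinates $[\mcm_1:\mcm_2:\mcm_3]$ as a ratio of two coprime homogeneous polynomials of degree $d_{\mathrm{mb}}$, and apply part 3) of Proposition \ref{proint}: the function $R([r,v])$ is then a rational $0$-homogeneous integral of the projective billiard of the same degree $d_{\mathrm{mb}}$, so $d_{\mathrm{pb}}\le d_{\mathrm{mb}}$. Combining the two inequalities gives $d_{\mathrm{pb}}=d_{\mathrm{mb}}$.

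There is essentially no genuine obstacle beyond Proposition \ref{proint} itself; the only point worth making explicit — and the only place where a degree could a priori jump — is that passing back and forth along $v\mapsto\mcm(r,v)$ neither creates nor cancels polynomial factors. For fixed $r$ this map is a linear isomorphism of $T_{(x_1,x_2)}\rr^2$ onto the plane $r^{\perp}$ and its projectivization is the orthogonal polarity, so pulling a degree $n$ homogeneous polynomial in $\mcm$ back along it yields, for each $r$, a homogeneous polynomial of degree exactly $n$ in $v$, with coprimality of numerator and denominator preserved for generic $r$. This is precisely what parts 2) and 3) of Proposition \ref{proint} already record, so nothing further is needed.
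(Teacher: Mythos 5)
Your proposal is correct and coincides with the paper's (implicit) argument: the corollary is stated as an immediate consequence of Proposition \ref{proint}, with part 2) giving $d_{\mathrm{mb}}\le d_{\mathrm{pb}}$ and part 3) giving $d_{\mathrm{pb}}\le d_{\mathrm{mb}}$, exactly as you spell out. Nothing further is needed.
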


\subsection{Rationally $0$-homogeneously integrable billiards on surfaces of constant curvature. Proof of Proposition \ref{proeucl}} 

Polynomial integrability implies rational $0$-homogeneous integrability with 
rational integral of degree 2 or 4 (the minimal  degree of a nonlinear polynomial integral). 
Indeed, in this case there always exists  a nontrivial polynomial homogeneous in $v$ first integral $I(x,v)$ of degree $n\in\{2, 4\}$, see Subsection 1.1. Then the ratio $\frac{I(x,v)}{||v||^{n}}$ 
is a nonconstant rational $0$-homogeneous integral of the billiard flow. Let us prove the converse: rational $0$-homogeneous integrability implies polynomial integrability.
 
Let $\Sigma$ be a surface of constant curvature realized as a surface in the space $\rr^3$ equipped with appropriate quadratic form $<\mca x,x>$, $x=(x_1,x_2,x_3)$, $< , >$ is the 
Euclidean scalar product, $\mca$ is a real symmetric matrix:

Case, when $\Sigma=\rr^2$: $\mca=\diag(1,1,0)$, $\Sigma=\{ x_3=1\}$.

Case, when $\Sigma=S^2$: $\mca=Id$, $\Sigma=\{ x_1^2+x_2^2+x_3^2=1\}$.

Case of $\Sigma=\hh^2$: $\mca=\diag(1,1,-1)$, $\Sigma=\{ <\mca x,x>=-1\}\cap\{ x_3>0\}$. 

Consider an arbitrary rationally $0$-homogeneously integrable billiard with piecewise $C^2$-smooth boundary on $\Sigma$. We show that it admits a special rational $0$-homogeneous 
integral, whose denominator is a power of $<\mca v,v>$. Then we deduce that its numerator 
is a non-trivial polynomial integral. 

For every $r=(r_1,r_2,r_3)\in\Sigma$ and $v\in T_r\Sigma$ (treated as a vector in $\rr^3$) set  
\begin{equation}\mcm=(\mcm_1,\mcm_2,\mcm_3)=\mcm(r,v)=[r,v].\label{momcurv}\end{equation}
 Let $\Psi$ be a nonconstant rational $0$-homogeneous integral. Then it is a rational $0$-homogeneous function of $\mcm$. Indeed, consider the tautological projection 
$\pi:\Sigma\to\rp^2$. For simplicity we consider that the billiard is projected to the 
affine chart $\rr^2=\{ x_3=1\}$: the opposite case is treated analogously. Then its image is a projective billiard, and the function $\Psi$ is its rational $0$-homogeneous integral. 
Set $r'=\frac r{r_3}=(x_1,x_2,1)\in\{ x_3=1\}=\rr^2\subset\rp^2$, $v':=d\pi(v)\in T_{r'}\rr^2$, 
$\mcm'=[r', v']$. The  function $\Psi$ is a rational $0$-homogeneous function  $\Psi(\mcm')$ of 
 $\mcm'$, or equivalently, of $\mcm$, by Proposition \ref{proint}, 
$0$-homogeneity  and since the vectors $\mcm'$, $\mcm$ are proportional. 

Let us write $\Psi(\mcm)=\frac{F(\mcm)}{G(\mcm)}$ as an irreducible fraction. Then the complex zero  divisors of the polynomials $F$ and $G$ are 
essentially distinct in the sense that their intersection has complex codimension at least two 
in $\cp^2$. The divisor of one of them, say $F$, is not a multiple of the light conic divisor 
$$\ii:=\{ <\mca x,x>=0\}\subset\cp^2_{[x_1:x_2:x_3]}.$$  
Consider the rational $0$-homogeneous function 
 \begin{equation}R(\mcm):=\frac{F^{2}(\mcm)}{<\mca\mcm,\mcm>^{n}}, \ \ n:=\deg F.\label{rmcm}
 \end{equation}
 We claim that $R^2$ is an integral of the projective billiard. Or equivalently, 
 the same function considered as a 
 function of  $[\mcm_1:\mcm_2:\mcm_3]\in\cp^2$ is an integral of the multibilliard dual to the 
 projective billiard. Indeed, let us show that for every (complex) 
 line $L$ tangent to a (complexified) curve of the 
 multibilliard (or a line through its vertex) the corresponding involution leaves invariant the 
 restriction  $R^2|_L$. Indeed, it leaves invariant the intersection of the line $L$ with the zero 
 divisor of the polynomial $F$ by invariance of the restriction $\Psi|_L$ (since $\Psi$ is an 
 integral of the multibilliard). It also leaves invariant the intersection $L\cap\ii$, since the involution 
 in question is a $\ii$-angular symmetry restricted to $L$, being dual to reflection in a billiard on constant curvature surface, see \cite[subsection 2.1, corollary 2.3 and proposition 2.5]{gl2}. 
 Therefore, the restriction $R|_L$ and its image under the involution have the same divisors, 
 and hence, are proportional. The proportionality coefficient is equal to $\pm1$, since the 
 transformation in question is an involution acting linearly on the space of rational functions on 
 $L$. Hence, $R^2|_L$ is invariant under the involution. Thus, $R^2$ is a nonconstant 
  integral of the 
 multibilliard. Therefore, the function 
 $R^2(\mcm)$, $\mcm=[r,v]$, 
 is a nonconstant rational $0$-homogeneous integral of the flow of the projective billiard, and hence, of the flow of the initial billiard on $\Sigma$. 
 Its denominator $<\mca\mcm,\mcm>^{2n}$ is equal to $<\mca v,v>^{2n}$, by Bolotin's result stating that 
  for every fixed  $r\in\Sigma$ the restricted 
 moment map $v\mapsto[r,v]$ is an isometry of two-dimensional spaces $T_r\Sigma$ 
 and $r^\perp$ equipped with the quadratic form $<\mca x,x>$, see \cite[formula (15), p. 23]{bolotin2}, \cite[formula (3.12), p. 140]{kozlov}. Therefore, the integral $R^2(\mcm)$ of the 
 billiard flow is a ratio of a homogeneous polynomial in $v$ and a trivial polynomial 
 integral $<\mca v,v>^{2n}$. Hence, its numerator is a polynomial integral. It is 
 non-constant along the unit velocity hypersurface $\{<\mca v,v>=1\}$, since 
 $R^2$ is $0$-homogeneous and nonconstant. Therefore, the billiard in question is polynomially 
 integrable. 
 
 Now for the proof of Proposition \ref{proeucl} it remains to show that if a billiard on a constant 
 curvature surface is polynomially integrable, then the minimal degree of rational integral is 
 equal to the minimal degree $d$ of a nonlinear non-trivial polynomial integral $F(\mcm)$: $d\in\{2,4\}$. Indeed, the ratio 
 $$\Phi(\mcm):=\frac{F(\mcm)}{<\mca v,v>^{\frac d2}}=\frac{F(\mcm)}{<\mca \mcm,\mcm>^{\frac d2}}$$ 
 is a rational $0$-homogeneous integral of  degree $d$. 
  Suppose the contrary to the above minimal degree equality. 
 Then the minimal degree of a rational integral is less than $d$. Therefore, $d=4$ and 
 the minimal degree of a rational integral  is equal to 2, since  it is known to be 
  an even number. Let $R$ be a quadratic integral written as a function $R(\mcm)=R(\mcm')$. 
Its restriction to a curve dual to a nonlinear arc of the boundary of the projective billiard is 
constant. Hence, the foliation $R=const$ is a pencil $\mcp$ of conics whose generic conic is nonlinear. Let us show that the light conic $\mathbb I$ 
lies in $\mcp$. Indeed, the dual multibilliard to the projective billiard 
is of pencil type, and the corresponding pencil $\mcp$ is real, since $R$ is an integral and by Theorem \ref{tgerm}. 
 The foliations $\Phi=const$ and $R=const$ coincide, by Proposition \ref{proconst}. Hence, each level curve of the function $\Phi$, which has degree 4, 
  is a union of at most two conics 
 of the pencil $\mcp$. Its polar locus is the light conic $\mathbb I$ taken twice. 
 Therefore, either $\mathbb I\in\mcp$ (which holds, when $\mathbb I$ is irreducible, i.e., 
 $\Sigma\neq\rr^2$), or $\Sigma=\rr^2$, $\mathbb I$ is the pair of lines $\{ x_1=\pm i x_2\}$ 
 and each one of the latter lines taken twice is a conic of the pencil $\mcp$. 
 In the latter case  each conic of the pencil $\mcp$ is a double line. This is impossible, since 
 $\mcp$ should contain a nonlinear conic. Finally, $\mathbb I\in\mcp$. Then 
one can write $R=\frac{Q(\mcm)}{<\mca\mcm,\mcm>}$, 
replacing $R$ by its postcomposition with a M\"obius transformation. Then the polynomial 
$Q(\mcm)$ is a  non-trivial quadratic integral of the  billiard on $\Sigma$. This contradicts our assumption that the minimal degree of  polynomial integral is equal to 4. 
 Proposition \ref{proeucl} is proved.

\subsection{Case of dual pencil. Proof of  Theorems \ref{thmd1pr}, \ref{thmd12pr}, \ref{thdegpr}}
We use the following proposition.

\begin{proposition} \label{produal} Let $\mcp$ be a pencil of conics, $\mcp^*$ be its dual pencil. 

1) Let $\alpha\subset\rr^2\subset\rp^2$ be a conical arc whose ambient conic lies in $\mcp^*$, 
equipped with the  projective billiard structure defined by  $\mcp^*$. 
Then its dual is the dual conical arc $\alpha^*$ equipped with the 
 dual billiard structure of pencil type, defined by the pencil $\mcp$. The converse statement also holds.
 
 2) A planar projective billiard is of dual pencil type, defined by the dual pencil $\mcp^*$, if and only if its 
 dual multibilliard is of pencil type, defined by  $\mcp$. 
 \end{proposition}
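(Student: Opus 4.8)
The plan is to deduce everything from the functoriality of the orthogonal polarity duality recalled in Subsection 3.1: it is an involutive projective duality interchanging points and lines, preserving incidence and tangency, sending a conic $C$ to its dual conic $C^*$, and --- as explained above --- conjugating the projective billiard reflection at a boundary point $Q$ (an involution acting on the $\rp^1$ of lines through $Q$) to the dual billiard involution $\sigma_P$ acting on the tangent line $Q^*=L_P$ to the dual curve at $P=L_Q^*$. Since by definition the dual pencil $\mcp^*$ consists of the conics dual to the conics of $\mcp$, a conic belongs to $\mcp^*$ if and only if its dual belongs to $\mcp$.

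First I would prove Statement 1). Let $\alpha$ be a conical arc whose ambient conic lies in $\mcp^*$, equipped with the projective billiard structure defined by $\mcp^*$; by Example \ref{projdp} this means that for every $Q\in\alpha$ and every conic $\Gamma\in\mcp^*$ the two complex lines through $Q$ tangent to $\Gamma$ are permuted by the reflection at $Q$. Dualizing, the two tangent lines from $Q$ to $\Gamma$ become the two intersection points of the line $Q^*$ with the conic $\Gamma^*\in\mcp$, and the reflection at $Q$ becomes the involution $\sigma_P$ on $L_P=Q^*$, $P=L_Q^*\in\alpha^*$. Hence $\sigma_P$ permutes $L_P\cap\Gamma^*$ for every conic $\Gamma^*$ of the pencil $\mcp$, which is precisely the statement that the dual billiard structure on $\alpha^*$ is of pencil type for $\mcp$. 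The converse follows by applying the same argument to $\mcp^*$ and the involutive duality.

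Next I would treat Statement 2) by matching the linear pieces and the side conditions. A segment carrying a central projective billiard structure with center $O$ has, at each of its points $Q$, transversal line $\mcn(Q)=OQ$; dualizing, $A:=a^*$ is the dual vertex, every line $q=Q^*$ through $A$ carries the involution $\sigma_{A,q}$ fixing $A$ and the point $\mcn(Q)^*=O^*\cap q$, so $\sigma_A$ is the projective angular symmetry centered at $A$ with fixed point line $O^*$, i.e.\ a global dual billiard structure, cf.\ Definition \ref{extypes}. A segment whose transversal line field consists of lines tangent to a conic $\mcs$ of the pencil has $\mcn(Q)$ tangent to $\mcs$, so $\mcn(Q)^*\in\mcs^*$ and, lying on $Q^*$, it is the second intersection point of $q=Q^*$ with $\mcs^*$; since $a$ is then tangent to $\mcs$ we have $A=a^*\in\mcs^*$, and $\sigma_A$ is the degenerate $\mcs^*$-angular symmetry centered at $A$, i.e.\ a quasi-global structure, cf.\ Definition \ref{extypes3}. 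With this dictionary, Remark \ref{remdu} says that the list of lines admissible for $\mcp^*$ in Definition \ref{multipr} is carried bijectively, together with billiard structures, onto the list of vertices admissible for $\mcp$ in Definition \ref{multip}, and the side conditions 1)--5) defining a dual pencil type billiard become verbatim conditions 1)--5) of Definition \ref{multipb}. This yields the equivalence in Statement 2).

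The \textbf{main obstacle} is not conceptual but organizational: one must verify, case by case through the pencil types a)--e), that the concrete incidence construction of each admissible line (the lines $m_i$, $k_{ef}$, $c$, $a$, $b$, $\ldots$, defined by joins and meets of $a,b,c,d$ and the auxiliary points) dualizes to the correspondingly named admissible vertex ($M_i$, $K_{EL}$, $C$, $A$, $B$, $\ldots$), using that polarity interchanges ``line through two points'' with ``intersection of two lines'', preserves tangency, and sends harmonic quadruples to harmonic quadruples; and that the two parallel lists of exceptional skew-structure collections (Condition 3) in Definition \ref{multipb} and in the dual pencil type definition) correspond to each other. Each such check is a routine application of the duality dictionary; in the real case one additionally uses that orthogonal polarity commutes with complex conjugation, as in the proof of Proposition \ref{preal}.
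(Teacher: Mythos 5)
Your proposal is correct and follows essentially the same route as the paper, which disposes of Statement 1) as an immediate consequence of the definitions and of Statement 2) by observing that the standard (skew) admissible lines for $\mcp^*$ are dual to the standard (skew) admissible vertices for $\mcp$ (cf.\ Remark \ref{remdu}). You merely spell out the duality dictionary (central structures $\leftrightarrow$ projective angular symmetries, tangent-line fields $\leftrightarrow$ degenerate angular symmetries, and the case-by-case incidence checks) in more detail than the paper, which leaves these verifications implicit.
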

 \begin{proof} Statement 1) of the proposition follows from definition. The definitions of dual multibilliard of 
 pencil type and projective billiard of dual pencil type are dual to each other: the standard (skew) admissible lines 
 for the dual pencil $\mcp^*$ are dual to the standard (skew) admissible vertices for the pencil $\mcp$. See Remark \ref{remdu}. 
 This implies Statement 2). 
 \end{proof}

\begin{proof} {\bf of Theorem \ref{thmd1pr}.} Let a projective billiard with piecewise $C^4$-smooth boundary 
containing a nonlinear  arc be rationally $0$-homogeneously 
integrable. Then  its dual multibilliard is rationally 
integrable (Proposition \ref{proint}).  Hence, each its curve is a conic equipped with either   pencil type, or exotic dual billiard structure (Theorem \ref{thmd1}). Thus,  the nonlinear arcs of projective 
billiard boundary are conical,  and each of them is equipped with either a dual pencil type, 
or exotic projective billiard structure.  Let the projective billiard contain at least two arcs of two distinct conics. Then the multibilliard contains their dual conics, which are also distinct. Hence, they are equipped with the dual billiard structure defined by the pencil $\mcp$ 
containing them, each conic of the multibilliard lies in the same pencil $\mcp$ and is equipped with 
the dual billiard structure defined by $\mcp$ (Theorem \ref{thmd1}). Thus, all the conical arcs of the projective billiard 
boundary lie in the dual pencil $\mcp^*$ and are equipped with the projective billiard structures defined by $\mcp^*$, 
by Proposition \ref{produal}, Statement 1). 
Theorem \ref{thmd1pr} is proved.
\end{proof} 

\begin{proof} {\bf of Theorem \ref{thmd12pr}.} Let in a projective billiard all the nonlinear boundary arcs be 
conics lying in a dual pencil $\mcp^*$, 
 with  the projective billiard structures defined by $\mcp^*$. 
Then the curves of the dual multibilliard are conics lying in the pencil $\mcp$,
 equipped with the dual billiard structure defined by $\mcp$. 
The projective billiard is rationally $0$-homogeneously integrable, if and only if the dual multibilliard 
is rationally integrable, by Proposition \ref{proint}, Statement 1). The latter holds,  if and only if 
the multibilliard is of pencil type (Theorem \ref{thmd12}); or equivalently,  the projective billiard is of 
dual pencil type (Proposition \ref{produal}, Statement 2)). Theorem \ref{thmd12pr} is proved.
\end{proof}

Theorem \ref{thdegpr} follows immediately from Theorem \ref{thdeg} and Corollary \ref{cordegd} by duality, since 
(neighbor) skew admissible lines for a dual pencil $\mcp^*$ are dual to  (neighbor) skew admissible vertices for the 
pencil $\mcp$ and vice versa.

\subsection{Exotic projective billiards.   Proof of Theorem \ref{thmd2pr}}

Let a projective billiard have boundary that consists of conical arcs of one and the same conic equipped with an 
exotic dual billiard structure from Theorem \ref{tgermpr}, Case 2), and maybe some straightline segments. 
It is rationally $0$-homogeneously integrable, if and only if  the corresponding dual multibilliard  is rationally 
integrable, by Proposition \ref{proint}, Statement 1). In appropriate coordinates the dual multibilliard 
consists of one conic $\gamma=\{ w=z^2\}\subset\rr^2_{z,w}=\{ t=1\}\subset\rp^2_{[z:w:t]}$ equipped with the corresponding exotic dual billiard structure from Theorem \ref{tgerm} and maybe some vertices. 
It is rationally integrable, if and only if either it has no vertices, or each its vertex is admissible in the sense of 
Theorem \ref{thmd2}. This holds if and only if the ambient lines of the projective billiard 
boundary segments are dual to the admissible vertices, and their corresponding projective billiard 
structures are dual to the dual billiard structures at the vertices. The lines dual to the admissible vertices, equipped 
with the corresponding dual projective billiard structures, will be called 
admissible.  Let us find the admissible lines  
case by case. To do this, we use the following proposition.

\begin{proposition} \label{produf}
 Consider the above parabola $\gamma$ equipped with an exotic dual billiard structure from 
Theorem \ref{tgerm}, Case 2). 
Let $C\subset\rp^2_{[z:w:t]}$ denote the conic orthogonal-polar-dual to $\gamma$. 

1) The projectivization 
$[F]:\rp^2_{[z:w:t]}\to\rp^2_{[x_1:x_2:x_3]}$ of the linear map 
\begin{equation} F:(z,w,t)\mapsto (x_1,x_2,x_3):=(\frac z2, t, w)\label{mapf}\end{equation}
sends  $C$ to the parabola $\{ x_2x_3=x_1^2\}$, which will be now referred to, as $C$,
 $$C\cap\{ x_3=1\}=\{ x_2=x_1^2\},$$
 equipped with the corresponding projective billiard structure from Theorem \ref{tgermpr}, Case 2). 
 
 2) For every point $(z_0,z_0^2)\in\gamma$ the corresponding point of the dual curve $C$ has 
$[x_1:x_2:x_3]$-coordinates $[-z_0:z_0^2:1]$. 

3) The points in $C$ corresponding to  $O=(0,0),  (1,1), \infty\in\gamma$ are  respectively 
$[0:0:1]=(0,0)$, $[-1:1:1]=(-1,1)$, $[0:1:0]=\infty$ in the coordinates $[x_1:x_2:x_3]$ and in the coordinates 
$(x_1,x_2)$ in the affine chart $\rr^2_{x_1,x_2}=\{ x_3=1\}$. 
\end{proposition}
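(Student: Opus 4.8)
The plan is to split the statement into a short explicit-coordinate computation, which yields the shape of $C$ and the coordinates of the three distinguished points (most of item~1 and all of items~2 and~3), and an appeal to the already established duality between the exotic dual billiards of Theorem \ref{tgerm} and the exotic projective billiards of Theorem \ref{tgermpr} for the remaining, billiard-structure part of item~1.

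First I would homogenize $\gamma=\{w=z^2\}$ as $\{z^2-wt=0\}$ and read off its projective tangent line at $[z_0:z_0^2:1]$ from the gradient of $z^2-wt$: it is $\{2z_0z-w-z_0^2t=0\}$, whose dual point under orthogonal polarity is $[2z_0:-1:-z_0^2]$. As $z_0$ runs over $\oc$ — the value $z_0=\infty$ corresponding to the point $[0:1:0]\in\gamma$, whose tangent line is $\{t=0\}$ with dual point $[0:0:1]$ — these points all satisfy $z^2-4wt=0$ and sweep out that conic, so $C=\{z^2-4wt=0\}$. Substituting the inverse of $F$, namely $z=2x_1,\ w=x_3,\ t=x_2$, turns $z^2-4wt$ into $4(x_1^2-x_2x_3)$, whence $[F](C)=\{x_1^2=x_2x_3\}$, which in the chart $\{x_3=1\}$ is $\{x_2=x_1^2\}$; this is the conic part of item~1. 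Applying $F$ to $[2z_0:-1:-z_0^2]$ gives $(z_0,-z_0^2,-1)\sim[-z_0:z_0^2:1]$, which is item~2, and specializing to $z_0=0$, $z_0=1$, and (for the point at infinity) to the dual point $[0:0:1]$ of the tangent line $\{t=0\}$, which $F$ sends to $[0:1:0]$, yields the three coordinate triples of item~3.

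It then remains to identify the projective billiard structure that orthogonal polarity, followed by $[F]$, induces on $\{x_2=x_1^2\}$ out of the exotic dual billiard structure on $\gamma$. By the general duality between projective billiards and dual multibilliards recalled in Subsection~3.1 (Proposition \ref{proint}; see also \cite[propositions 1.23, 1.24]{grat}), orthogonal polarity carries a rationally integrable dual billiard structure to a rationally $0$-homogeneously integrable projective billiard structure, and by Proposition \ref{produal} it sends the exotic (non-pencil) case to the exotic (non-dual-pencil) case. Hence the induced structure on $\{x_2=x_1^2\}$ is one of those listed in Theorem \ref{tgermpr}, Case~2). To pin down the subcase I would transport the canonical integral $R(z,w)$ from the Addendum to Theorem \ref{tgerm} through orthogonal polarity and $[F]$ (together with the moment substitution $\mcm=(-v_2,v_1,\Delta)$) and check that it becomes, up to a M\"obius post-composition, the $0$-homogeneous rational integral $\Psi$ of the like-named subcase from the Addendum to Theorem \ref{tgermpr}; the factor $\frac12$ in the first coordinate of $F$ is precisely what makes the numerator $(w-z^2)^m$ of $R$ correspond to the numerator $(4v_1\Delta-v_2^2)^m$ of $\Psi$. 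This is the computation already performed in \cite{grat} to establish the duality between the exotic lists of the two theorems, and it is what dictated the normalizations adopted there.

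I expect the only delicate point to be this last bookkeeping — verifying that the specific linear map $F$, and in particular the factor $\frac12$, is exactly the one intertwining the two normalized pictures, so that each subcase 2a1), 2a2), 2b1), 2b2), 2c1), 2c2), 2d) on the dual side corresponds to the same-labelled subcase on the projective side. Everything else reduces to the elementary computation with conics and tangent lines described above.
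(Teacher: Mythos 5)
Your proposal is correct and follows essentially the same route as the paper: the paper's own proof simply cites \cite[claim 14, subsection 9.4]{grat} for Statements 1) and 2) and deduces 3) from 2), and the explicit tangent-line/polarity computation you carry out (yielding $C=\{z^2=4wt\}$, the dual point $[2z_0:-1:-z_0^2]$, and hence $[-z_0:z_0^2:1]$ after applying $F$) is exactly the content being cited there, as is the identification of the induced projective billiard structure with the like-named subcase of Theorem \ref{tgermpr}. Your version is merely more self-contained on the elementary coordinate part, and your flagged "delicate point" about the factor $\frac 12$ in $F$ is indeed the normalization issue settled in \cite{grat}.
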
 
\begin{proof}
Statements 1) and 2)  follow from \cite[claim 14, subsection 9.4]{grat} and the discussion after it. Statement 3) follows from Statement 2). \end{proof}

Case 2a). The only admissible vertex of the dual billiard on $\gamma$ is the intersection point $Q=[1:0:0]$ 
of the tangent lines to $\gamma$ at the origin and the infinity. It is equipped with the projective involution 
$[z:w:t]\mapsto[-z:w:t]$ fixing the points of the line 
$Ow$ through the origin and the infinity. The duality sends the above tangent lines to the origin and to the infinity 
respectively in the coordinates $(x_1,x_2)$. 
Thus,   the dual line $Q^*$ is the line through the origin and 
the infinity, equipped with the field of lines through the point $[1:0:0]$: the horizontal line field orthogonal to it. 
 
 Case 2b1) (Case 2b2) is treated analogously). The only admissible vertex $Q=(0,-1)$ is the intersection point of the 
 tangent line to $\gamma$ at the point $(1,1)$ and the line $Ow$ through the origin and the infinity. 
 The dual point to the above tangent line is $(-1,1)$, by Proposition \ref{produf}, Statement 3). The dual to the $Ow$-axis 
 is the infinity point $[1:0:0]$: the intersection point of the tangent lines at the origin and at the infinity. 
 Therefore, the  line $Q^*$ dual to $Q$ is the line $\{ x_2=1\}$ through the 
points $(-1,1)$ and $[1:0:0]$. Let us find the corresponding dual projective billiard structure on it. 
 The fixed point line of the involution $\sigma_Q$ is the line $L=\{ w=1\}$. Indeed, 
 $\sigma_Q$ fixes $\gamma$, and hence, the tangency points of the lines through $Q$ tangent to $\gamma$. 
 The latter tangency points are $(\pm1,1)$. Hence, the fixed point line is the line $L$ through them. The line $L$ 
 intersects $\gamma$ at the points $(\pm1,1)$. Their dual lines are tangent to $C$ at the points $(\pm1,1)$, by 
 Proposition \ref{produf}, Statement 3). 
 Therefore, the dual point $L^*$ is the intersection point $(0,-1)$ of the latter tangent lines. 
 Finally, the admissible line $Q^*=\{ x_2=1\}$  is equipped with the field of lines 
 through the point $(0,-1)$. 

Case 2c2). The dual billiard structure on the conic $\gamma$ 
has three base (indeterminacy) points: $(0,0)$, $(1,1)$, $\infty$. Each admissible vertex 
is the intersection point of a line tangent to $\gamma$ at one of them and the line through two other ones. The admissible 
vertices are $(1,0)$, $(0,-1)$ and $[1:1:0]$. Let us find their dual lines and the projective billiard structures on them. 
The point $Q=(1,0)$ is the intersection point of the $Oz$-axis (which is tangent to $\gamma$ at $(0,0)$) 
 and the line $\{ z=1\}$ (which is the line through the points $(1,1)$ and $\infty$). 
 The dual point to the $Oz$-axis is  the origin $(0,0)$. The dual point to the line $\{ z=1\}$ is 
 the point $[-1:2:0]$. Indeed, it is  the point of intersection of the lines tangent to $C$ at the 
points $(-1,1)$ and infinity,  by Proposition \ref{produf}, Statement 3). The latter intersection point 
is  $[-1:2:0]$, since the line tangent to $C$ at $(-1,1)$ has slope $-2$. Finally, the admissible line $Q^*$ dual to 
$Q$ is the line $\{ x_2=-2x_1\}$ through the origin and the point $[-1:2:0]$. Let us find its projective billiard structure. 
The lines through $Q$ tangent to $\gamma$ are the $Oz$-axis and the line $\{ w=4(z-1)\}$, with tangency points 
$(0,0)$ and $(2,4)$ respectively. Therefore, the fixed point line of the involution $\sigma_Q$ is the line $L=\{ w=2z\}$ 
through them. Its dual point $L^*$ is the intersection of the lines dual to  $(0,0), (2,4)\in\gamma$. 
The latter lines  are tangent to $C$ at the points $(0,0)$ and $(-2,4)$, by Proposition 
\ref{produf}, Statement 2), and they intersect at $(-1,0)$. Thus,  $L^*=(-1,0)$, and the admissible line 
$Q^*=\{ x_2=-2x_1\}$ is equipped with the field of lines through  $L^*=(-1,0)$. 

The line dual to the admissible vertex $(0,-1)$ is the line $\{ x_2=1\}$ equipped with the field of lines through the 
point $(0,-1)$, as in Case 2b1). 

The admissible vertex $[1:1:0]$ is the intersection point of the line tangent to $\gamma$ at infinity and the line 
through the points $(0,0)$ and $(1,1)$. Therefore, its dual line passes through infinity (i.e., is parallel to the $Ox_2$-axis) 
and  the intersection point of the tangent lines to $C$ at the corresponding points $(0,0)$ and $(-1,1)$. 
The latter intersection point is $(-\frac12,0)$. Hence, the line dual to $[1:1:0]$ is $\{ x_1=-\frac12\}$. It intersects 
the conic $C$ at two points: the infinity and the point $(-\frac12,\frac14)$. 
Its projective billiard structure is the field of lines through the intersection point of the tangent lines to $C$ at the 
two latter points, as in the above cases. Their intersection point is $[-1:1:0]$, since the slope of the 
 tangent line to $C$ at $(-\frac12,\frac14)$ is equal to $-1$. Finally, the admissible line $[1:1:0]^*=\{ x_1=-\frac12\}$ 
 is equipped with the line field parallel to the vector $(-1,1)$. 

Case 2c1). Then $(0,-1)$ is the unique admissible vertex for the dual billiard. The only admissible 
line is its dual line $\{ x_2=1\}$ equipped with the field of lines through the 
point $(0,-1)$, as in Cases 2c2) and 2b1). 

Case 2d). There are no admissible lines, since the dual billiard has no admissible 
vertices (Theorem \ref{thmd2}). Theorem \ref{thmd2pr} is proved.

\section{Integrals of dual pencil type billiards: examples of degrees 4 and 12. Proof of Theorems \ref{proformint}, \ref{tintpr} and Lemma \ref{lintpr}}
First we prove Theorems \ref{proformint}, \ref{tintpr} and Lemma \ref{lintpr}. Then we provide examples of dual pencil type 
projective billiards with integrals  of degrees 4 and 12. Afterwards we discuss their realization by 
the so-called semi-(pseudo-) Euclidean billiards, with nonlinear part of boundary 
 being equipped with normal line field for the standard (pseudo-) Euclidean form. 

\subsection{Multibilliards of pencil type. Proof of Theorem \ref{proformint}} 
For every admissible vertex $V$ from Definition \ref{multip}, Case a) ($M_j$ or $K_{EL}$) 
 equipped with the corresponding projective involution 
$\sigma_{V}$,  let $\wh V:\rr^3\to\rr^3$ denote 
the linear involution whose projectivization is $\sigma_{V}$. We normalize it to fix the points of the two-dimensional 
subspace projected to the fixed point line of $\sigma_V$ and to act as the central symmetry $\alpha\mapsto-\alpha$ 
on the one-dimensional subspace projected to $V$. Let $\wh V^*$ denote its conjugate, acting on the 
space $\rr^{3*}$ of linear functionals on $\rr^*$. The symmetric square $\sym^2(\rr^{3*})$ is identified with 
the space of homogeneous quadratic polynomials on $\rr^3$. The operators $\wh V^*$ lifted to $\sym^2(\rr^{3*})$ 
will be also denoted by $\wh V^*$. In the proof of Theorem \ref{proformint} we use the two 
following propositions.
\begin{proposition} Let a pencil have type 2a): conics through four different points $A$, $B$, $C$, $D$. 
One has 
\begin{equation} (\wh K_{EL}\wh K_{LF})^3=Id \ \  \text{ for every three distinct } E,L, F\in\{ A, B, C, D\}.
\label{cyclic3}\end{equation}
\end{proposition}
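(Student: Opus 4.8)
The plan is to deduce (\ref{cyclic3}) from the corresponding identity for the induced projective transformations, where it becomes an elementary statement about permutations of the four base points, and then to remove the scalar ambiguity by a determinant computation. First I would record how the skew-vertex involutions act on $\{A,B,C,D\}$: by the analysis of $\sigma_{M_j}$ and of the $\sigma_{K_{EL}}$ in the proof of Proposition~\ref{glopro} (Claim~1 there and Corollary~\ref{permut1}), the involution $\sigma_{K_{EL}}$ fixes pointwise the line $ST$ through the complementary pair $\{S,T\}=\{A,B,C,D\}\setminus\{E,L\}$ and interchanges $E$ and $L$, so on the four base points it induces the transposition $(E\,L)$, fixing $S$ and $T$. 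Writing $\{A,B,C,D\}=\{E,L,F,G\}$, the composite $\phi:=\sigma_{K_{EL}}\circ\sigma_{K_{LF}}$ then fixes $G$ (fixed by both factors) and sends $E\mapsto L\mapsto F\mapsto E$: indeed $\sigma_{K_{LF}}$ fixes $E$ and $G$ and swaps $L,F$, whereas $\sigma_{K_{EL}}$ fixes $F$ and $G$ and swaps $E,L$. Thus $\phi$ induces the $3$-cycle $(E\,L\,F)$ on the base points.

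Next I would invoke projective rigidity. A pencil of conics through four distinct base points forces these points into general position (no three collinear, as recalled in the proof of Proposition~\ref{glopro}), so a projective transformation of $\rp^2$ is determined by its values on $A,B,C,D$. Hence $\phi^{3}$, which fixes all four base points, is the identity of $\rp^2$, and therefore the real linear map $g:=\wh K_{EL}\wh K_{LF}:\rr^3\to\rr^3$ satisfies $g^{3}=c\cdot\operatorname{Id}$ for some $c\in\rr\setminus\{0\}$; here $c$ is real because $g$ is real.

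Finally I would pin down $c$ by a determinant count. By its normalization each $\wh K_{EL}$ fixes a plane pointwise and acts by $-1$ on a complementary line (complementary since the center $K_{EL}$ lies off its fixed-point line), so $\wh K_{EL}$ has eigenvalues $1,1,-1$ and $\det\wh K_{EL}=-1$, and likewise $\det\wh K_{LF}=-1$. Thus $\det g=1$, and $g^{3}=c\cdot\operatorname{Id}$ gives $c^{3}=(\det g)^{3}=1$; a real scalar with $c^{3}=1$ can only be $1$, so $g^{3}=\operatorname{Id}$, which is exactly (\ref{cyclic3}). As a sanity check one could instead compute the two $3\times3$ matrices explicitly after normalizing $A,B,C,D$ to $[1:0:0],[0:1:0],[0:0:1],[1:1:1]$, but the argument above is coordinate-free.

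I expect the only delicate point to be the combinatorial bookkeeping in the first step — correctly attaching the complementary pair $\{S,T\}$ to each skew vertex and verifying that the product of the two transpositions is the $3$-cycle $(E\,L\,F)$ fixing the fourth base point — together with the small but essential remark that, since $g$ is real, the cube root of unity $c$ must be $1$ (no nontrivial root intervenes). Everything after that is immediate from the rigidity of projective transformations on four points in general position and the eigenvalue structure $(1,1,-1)$ of the linear lifts $\wh K$.
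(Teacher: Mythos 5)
Your proof is correct and follows essentially the same route as the paper's: both identify the induced permutation of the base points as a $3$-cycle, conclude $\Pi^3=\operatorname{Id}$ on $\rp^2$ from rigidity of projective maps on four points in general position, and then kill the scalar ambiguity by noting that the lift is a product of determinant-$(-1)$ involutions, so the real cube root of unity must be $1$.
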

\begin{proof} Let $N\in\{ A, B, C, D\}$ be the point distinct from $E$, $L$, $F$. 
The involutions $\sigma_{K_{EL}}$, $\sigma_{K_{LF}}$ fix $N$; $\sigma_{K_{EL}}$ fixes $F$ and permutes $E$, $L$; 
$\sigma_{K_{LF}}$ fixes $E$ and permutes $L$, $F$. Hence, their product fixes $N$ and makes an order three cyclic 
permutation of the points $E$, $L$, $F$. Thus, $\Pi:=(\sigma_{K_{EL}}\circ\sigma_{K_{LF}})^3$ fixes all the four 
points $A,B, C,D\in\rp^2$, hence $\Pi=Id$. Thus, $(\wh K_{EL}\wh K_{LF})^3=Id$ up to constant factor. 
The latter constant factor should be equal to one, since the operator in question has unit determinant, being a product 
of six involutions $\wh K_{ST}$, each with determinant $-1$. This proves (\ref{cyclic3}).
\end{proof}

Recall that for every line $X\subset\rp^2$ by $\xi_{X}\in\rr^{3*}$ we denote a linear functional vanishing on the two-dimensional subspace in $\rr^3$ projected to $X$.
\begin{proposition} \label{pminusrel}
 1) The subspace  $W\subset \sym^2(\rr^{3*})$ generated by the products $\xi_{EL}(Y)\xi_{(EL)'}(Y)$ 
with $(EL)'$ being the line through the pair of points $\{ E', L'\}:=\{ A, B, C, D\} \setminus\{ E, L\}$, is two-dimensional 
and $\wh V^*$-invariant for every admissible vertex $V$. Each operator $\wh V^*$ corresponding to a standard 
admissible vertex  acts on $W$ as the identity up to constant factor. 

2) The above functionals $\xi_{EL}$ can be normalized so that 
\begin{equation} \wh K_{AB}^*(\xi_{AB}\xi_{CD})=-\xi_{AB}\xi_{CD}, \ 
\wh K_{AB}^*(\xi_{BC}\xi_{AD})=-\xi_{AC}\xi_{BD},\label{minusrel}\end{equation}
and so that analogous formulas hold for the other operators $\wh K_{EL}^*$. 
\end{proposition}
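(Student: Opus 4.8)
The plan is to identify $W$ with the two-dimensional linear space underlying the given pencil and then to read off both statements from the way the admissible vertex involutions permute the three singular members $\Gamma_1=AB\cup CD$, $\Gamma_2=BC\cup AD$, $\Gamma_3=AC\cup BD$ of the pencil, see (\ref{gammy}). Write $p_1=\xi_{AB}\xi_{CD}$, $p_2=\xi_{BC}\xi_{AD}$, $p_3=\xi_{AC}\xi_{BD}$, which are precisely the quadratic forms of $\Gamma_1,\Gamma_2,\Gamma_3$.

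For statement 1, I would first observe that each product $\xi_{EL}\xi_{(EL)'}$ vanishes on two lines joining pairs of base points, hence vanishes at all of $A,B,C,D$, hence lies in the pencil; and complementary pairs $\{E,L\}$, $\{E',L'\}$ yield the same product, so there are only the three products $p_1,p_2,p_3$. Since no three of $A,B,C,D$ are collinear, $\Gamma_1,\Gamma_2,\Gamma_3$ are pairwise distinct, so $p_1,p_2,p_3$ are pairwise non-proportional; being three nonzero vectors in the two-dimensional pencil, any two of them span it, whence $W$ equals the pencil and $\dim W=2$. For $\wh V^*$-invariance: every admissible vertex involution $\sigma_V$ permutes $\{A,B,C,D\}$ (a standard $\sigma_{M_j}$ transposes two disjoint pairs; a skew $\sigma_{K_{EL}}$ transposes $E,L$ and fixes the other two, by Claim 1 in the proof of Proposition \ref{glopro}), hence permutes the six joining lines, so $\wh V^*$ permutes the functionals $\xi_{EF}$ up to scalars and therefore permutes $p_1,p_2,p_3$ up to scalars, leaving $W$ invariant. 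For a standard vertex $M_j$, moreover $\sigma_{M_j}$ fixes every conic of the pencil (Proposition \ref{glopro}), so $\wh{M_j}^*$ scales each of $p_1,p_2,p_3$; a linear involution of the two-dimensional space $W$ having three pairwise-independent eigenvectors must be $\pm Id$, which gives the last claim of statement 1.

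For statement 2 the point is that a single normalization suffices, namely (\ref{normfunct}), $p_1+p_2+p_3=0$; this is achievable (by rescaling three of the $\xi_{EL}$) because $p_1,p_2,p_3$ span the two-dimensional $W$ with a relation all of whose coefficients are nonzero, which is statement 1 of Theorem \ref{proformint}. Take $K_{AB}$, whose fixed-point line is $CD$ and which lies on $AB$. The linear involution $\wh K_{AB}$ acts as the identity on $\pi^{-1}(CD)$ and as $-Id$ on $\pi^{-1}(K_{AB})$; transposing, $\wh K_{AB}^*$ has eigenvalue $-1$ on $\xi_{CD}$ (the functional vanishing on $\pi^{-1}(CD)$) and eigenvalue $+1$ on $\xi_{AB}$ (which kills $\pi^{-1}(K_{AB})$ since $K_{AB}\in AB$). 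Hence $\wh K_{AB}^*(p_1)=-p_1$ with no further normalization, the first relation in (\ref{minusrel}). Since $\sigma_{K_{AB}}$ sends line $BC$ to $AC$ and $AD$ to $BD$, we get $\wh K_{AB}^*(p_2)=c\,p_3$ and $\wh K_{AB}^*(p_3)=c'\,p_2$ for scalars $c,c'$; applying $\wh K_{AB}^*$ to $p_1+p_2+p_3=0$ and using $\wh K_{AB}^*(p_1)=-p_1=p_2+p_3$ yields $(1+c')p_2+(1+c)p_3=0$, so $c=c'=-1$ by independence of $p_2,p_3$, which is the second relation. The same computation applies verbatim to each of the six $K_{EL}$: it negates the product of its "own" singular conic $EL\cup ST$, transposes the two others up to scalars, and (\ref{normfunct}) together with $\dim W=2$ pins those scalars down to $-1$.

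The main obstacle is the eigenspace-and-sign bookkeeping: matching the $(\pm1)$-eigenspaces of the linear involution $\wh V$ on $\rr^3$ with those of the transpose $\wh V^*$ on $\rr^{3*}$ and hence on $\sym^2(\rr^{3*})$, and verifying that the chosen normalization of $\wh V$ (central symmetry on the line over the vertex, identity on the plane over the fixed-point line) really produces eigenvalue $-1$, not $+1$, on $\xi_{ST}$. Once the single relation (\ref{normfunct}) is in place, everything else is forced by linear algebra in the two-dimensional space $W$.
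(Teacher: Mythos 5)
Your proof is correct, and for Statement 2 it takes a genuinely different route from the paper. Statement 1 is handled essentially as in the paper (identify $W$ with the pencil via the three singular conics; use that the standard involutions fix every conic). For Statement 2, the paper also begins with the normalization-free sign/eigenvalue computation giving $\wh K_{AB}^*(\xi_{AB}\xi_{CD})=-\xi_{AB}\xi_{CD}$, but then it normalizes two of the swap relations by hand (for $\wh K_{AB}^*$ and $\wh K_{BC}^*$), invokes the order-three composition identity $(\wh K_{EL}\wh K_{LF})^3=Id$ from the preceding proposition to \emph{derive} the sum-zero relation (\ref{normfunct}) as the orbit sum of an order-three operator on $W$, and finally propagates the normalization to the remaining four operators through a chain of auxiliary identities such as $\wh K_{CD}^*=\wh K_{AB}^*$ on $W$. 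You invert this logic: you impose (\ref{normfunct}) first, by the elementary observation that three pairwise non-proportional vectors in a two-dimensional space satisfy a relation with all coefficients nonzero which can be rescaled to the all-ones relation, and then read off all six swap relations uniformly by applying each $\wh K_{EL}^*$ to $p_1+p_2+p_3=0$ and using independence of the two non-negated products. This is shorter, treats the six skew vertices symmetrically, and makes the order-three relation (\ref{cyclic3}) unnecessary for this proposition; the paper's route has the mild advantage of producing (\ref{normfunct}) as an output rather than an input, which is how it is packaged in the subsequent Claim and in Theorem \ref{proformint}. One caution: you justify achievability of (\ref{normfunct}) by citing Statement 1 of Theorem \ref{proformint}, which in the paper is \emph{deduced from} this proposition, so that citation would be circular; but since you immediately supply the self-contained linear-algebra justification, the circular reference is dispensable and the argument stands.
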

\begin{proof} The zero conics of the  polynomials $\xi_{EL}(Y)\xi_{(EL)'}(Y)$ are the singular 
conics $AB\cup CD$, $BC\cup AD$, $AC\cup BD$ of the pencil of conics through $A$, $B$, $C$, $D$. 
Hence the space $W$ spanned by these polynomials is two-dimensional. 
Its $\wh V^*$-invariance  follows from $\sigma_V$-invariance of the pencil. 
For every  $V\in\{ M_1, M_2, M_3\}$ the involution $\sigma_V$ fixes the three above conics, and 
hence, each conic of the pencil. Thus $\wh V^*|_W=Id$  up to constant factor.  

Let us prove the first formula in (\ref{minusrel}) for arbitrary normalization of the functionals $\xi_{AB}$ and 
$\xi_{CD}$. Every vector $v\in\rr^3\setminus\{0\}$ with $\pi(v)\notin AB\cup CD$ 
is sent by $\wh K_{AB}$ to the opposite side from the hyperplane $\pi^{-1}(CD)$ and to the same side from the 
hyperplane $\pi^{-1}(AB)$, by definition: $\wh K_{AB}$ fixes the points of the former hyperplane 
and acts as central symmetry 
on its complementary invariant subspace $\pi^{-1}(K_{AB})$, which lies in the latter hyperplane. 
Therefore, it keeps the sign of the functional $\xi_{AB}$ and changes the sign of $\xi_{CD}$. Hence, it multiplies 
their product by $-1$, being an involution. 

The involution $\sigma_{K_{AB}}$ permutes the conics $BC\cup AD$ and $AC\cup BD$. Therefore, the functionals 
$\xi_{BC}$, $\xi_{AD}$, $\xi_{AC}$, $\xi_{BD}$ can be normalized so that the corresponding 
products $\xi_{BC}\xi_{AD}$ and $\xi_{AC}\xi_{BD}$ be permuted 
by $\wh K_{AB}^*$ with change of sign. Formula (\ref{minusrel}) is proved. Let us normalize 
$\xi_{AB}$ and $\xi_{CD}$ by constant factors (this does not change formula (\ref{minusrel})) so that the analogue of 
the second formula in (\ref{minusrel}) holds for $\wh K_{BC}^*$:
\begin{equation} \wh K_{BC}^*(\xi_{AC}\xi_{BD})=-\xi_{AB}\xi_{CD}.\label{kbcac}\end{equation}
This together with the second formula in (\ref{minusrel}) and involutivity of the operators $\wh V^*$ imply that 
\begin{equation}\wh K_{BC}^*\wh K_{AB}^*(\xi_{AC}\xi_{BD})=-\wh K_{BC}^*(\xi_{BC}\xi_{AD})=\xi_{BC}\xi_{AD}.
\label{cycle3}\end{equation}
Replacing the right-hand side in (\ref{kbcac}) by $\wh K_{AB}^*(\xi_{AB}\xi_{CD})$, applying $\wh K_{BC}^*$ 
to both sides, denoting $H:=\wh K_{BC}^*\wh K_{AB}^*$, together with (\ref{cycle3}) yield 
\begin{equation} H(\xi_{AB}\xi_{CD})=\xi_{AC}\xi_{BD}, \  H(\xi_{AC}\xi_{BD})=\xi_{BC}\xi_{AD}.\label{hcycl}\end{equation} 
One also has 
\begin{equation} H(\xi_{BC}\xi_{AD})=\xi_{AB}\xi_{CD},\label{cycleclose}\end{equation}
 since $H^3=Id$, by (\ref{cyclic3}). Therefore, 
  \begin{equation} \xi_{AC}\xi_{BD}+\xi_{BC}\xi_{AD}+\xi_{AB}\xi_{CD}=0\label{sum0}\end{equation}
  since the terms in the latter sum form an orbit of order three linear operator acting on a 
   two-dimensional space $W$. 
   Let us now prove the analogues of formula (\ref{minusrel}) for the other 
 $K_{EL}$. To this end,  let us show that 
 \begin{equation} \wh K_{CD}^*=\wh K_{AB}^* \ \text{ on the space } \  W.\label{restreq}\end{equation}
 Indeed, the composition $\sigma_{K_{CD}}\circ\sigma_{K_{AB}}$ fixes the three singular conics 
 (and hence, each conic of the pencil), 
 by definition. Therefore, $\wh K_{CD}^*=\wh K_{AB}^*$ on $W$, up to constant factor. The latter constant factor 
 is equal to one, since the operators in question take equal value at $\xi_{AB}\xi_{CD}$, by the first formula 
 in (\ref{minusrel}) (which holds for $K_{AB}$ replaced by $K_{CD}$). This proves (\ref{restreq}). Formula 
 (\ref{restreq}) together with the other similar formulas, and already proved formula (\ref{minusrel}) for the operators 
 $\wh K_{AB}^*$, $\wh K_{BC}^*$ imply the 
 analogues of (\ref{minusrel}) for $\wh K_{CD}^*$, $\wh K_{DA}^*$. Let us prove its analogue for $\wh K_{AC}^*$. 
 One has 
 \begin{equation} \wh K_{BC}^*\wh K_{AC}^*(\xi_{AC}\xi_{BD})=-\wh K_{BC}^*(\xi_{AC}\xi_{BD})=\xi_{AB}\xi_{CD}, 
 \label{bcac}\end{equation}
 by formula (\ref{minusrel}) for $K_{BC}$. Therefore, $\xi_{AC}\xi_{BD}$, $\xi_{AB}\xi_{CD}$ together with a third 
 vector $\wh K_{BC}^*\wh K_{AC}^*(\xi_{AB}\xi_{CD})$ form an orbit of order three linear operator 
 $\wh K_{BC}^*\wh K_{AC}^*$ acting on $W$, see (\ref{cyclic3}). The sum of 
 vectors in the orbit should be equal to zero. This together with (\ref{sum0}) implies that 
 $$\wh K_{BC}^*\wh K_{AC}^*(\xi_{AB}\xi_{CD})=\xi_{BC}\xi_{AD}.$$
 Applying  $\wh K_{BC}^*$ to this equality yields $\wh K_{AC}^*(\xi_{AB}\xi_{CD})=-\xi_{BC}\xi_{AD}$. 
 Formula (\ref{minusrel}) for $K_{AC}$ is proved. For $K_{BD}$ if follows from its version for $K_{AC}$ 
 as above. Formula (\ref{minusrel}) is proved 
 for all $K_{EL}$. Proposition  \ref{pminusrel} is proved.
 \end{proof}

 {\bf Claim.} {\it Formula (\ref{minusrel}) holds, if and only if formula (\ref{normfunct}), i.e., 
 (\ref{sum0}) holds. 
 Relation (\ref{sum0}) determines the collection of products $\xi_{EL}\xi_{FN}$ uniquely up to common constant 
 factor.}

\begin{proof} Formula (\ref{minusrel}) implies  (\ref{sum0}), as was shown above. The converse follows from the uniqueness statement of the claim, which in its turn 
follows from two-dimensionality of the ambient space $W$. 
\end{proof}

\begin{proof} {\bf of Theorem \ref{proformint}.} Let the linear functionals $\xi_{EL}$ be normalized to satisfy 
(\ref{normfunct}), which is possible by Proposition \ref{pminusrel} and the above claim. Then they satisfy (\ref{minusrel}), 
by the claim. Projective transformations of $\rp^2$ act on rational functions on 
$\rp^2$ (which can be represented as rational $0$-homogeneous functions of $Y=(y_1,y_2,y_3)$).  
The ratio $\frac{\xi_{AB}\xi_{CD}}{\xi_{BC}\xi_{AD}}$ is sent by $\sigma_{K_{AB}}$ to 
$\frac{\xi_{AB}\xi_{CD}}{\xi_{AC}\xi_{BD}}$ etc., by (\ref{minusrel}). This  implies  invariance of the 
degree 12 rational function (\ref{integr12}) under all the involutions $\sigma_{K_{EL}}$. Its invariance under 
the involutions  corresponding to the standard  vertices follows from Proposition \ref{pminusrel}, Statement 1). Thus, the integral in question is invariant under the involutions of all the admissible vertices. 
It is invariant under the involution of tangent line to a conic of the multibilliard, since so are its factors, 
which are constant on the conics of the pencil. Theorem \ref{proformint} is proved.
\end{proof}

\subsection{Dual pencil  billiards. Proof of Theorem \ref{tintpr} and Lemma \ref{lintpr}}
\begin{proof} {\bf of Theorem \ref{tintpr}.} 
Consider a dual pencil type projective 
billiard given by a dual pencil of conics tangent to four distinct lines $a$, $b$, $c$, $d$. 
 Consider  the corresponding dual multibilliard in  $\rp^2_{[\mcm_1:\mcm_2:\mcm_3]}$ obtained by 
 orthogonal polarity duality. It is  of pencil type, 
 defined by the pencil of conics through the points $A$, $B$, $C$, $D$ dual to the latter lines. 
 Expression (\ref{intpr12}) considered as a rational $0$-homogeneous function of $\mcm$ is a well-defined function on 
 $\rp^2$. It is an integral  of the multibilliard, provided that relation (\ref{sum0pr}) holds 
 (Theorem \ref{proformint}). There indeed exist 
 $\chi_{em;fn}$ satisfying (\ref{sum0pr}),  by Theorem \ref{proformint} and since 
 each scalar product $<r(en),\mcm>$ is a linear functional vanishing on the two-dimensional 
 subspace $\pi^{-1}(EN)\subset\rr^3$ corresponding to the line $EN$: $r(en)\perp\pi^{-1}(EN)$,  
  by orthogonal polarity duality. Thus, substituting $\mcm=[r,v]$ to (\ref{intpr12}) yields an integral of the 
   projective billiard, by Proposition \ref{proint}, Statement 3). Theorem \ref{tintpr} is proved.
  \end{proof}
  
  \begin{proof} {\bf of Lemma \ref{lintpr}.} Let us find $\chi_{em;fn}$ from linear equations  implied by (\ref{sum0pr}). 
  Substituting $\mcm=(0,0,1)$ to (\ref{sum0pr}) yields 
  \begin{equation}\sum\chi_{em;fn}=0.\label{sum0l}\end{equation}
  For every intersection point $em$ set 
  $$x(em):=(x_1(em),x_2(em))\in\rr^2.$$
  Let now $\mcm\perp r(ab)$. Then there exists a $v=(v_1,v_2)=(v_1,v_2,0)$ such that 
  \begin{equation}\mcm=[r(ab),v]=(-v_2, v_1,\Delta_{ab}), \ \Delta_{ab}:=[x(ab),v]=x_1(ab)v_2-x_2(ab)v_1,\label{mab}
  \end{equation}
  since $v\mapsto[r,v]$ is a linear isomorphism 
  $\rr^2\to r^\perp$. Substituting (\ref{mab}) to (\ref{sum0pr}) yields 
  $$\chi_{bc;ad}[x(bc)-x(ab),v][x(ad)-x(ab),v]$$
  \begin{equation}+\chi_{ac;bd}[x(ac)-x(ab),v][x(bd)-x(ab),v]=0.\label{chi23}
  \end{equation}
  The vector differences $x(bc)-x(ab)$, $x(bd)-x(ab)$ in  (\ref{chi23}) are  proportional (being parallel 
  to the line $b$), 
  and so are the other vector differences (parallel to $a$). Therefore, equality (\ref{chi23}) is equivalent to the relation 
  \begin{equation}\chi_{bc;ad}\rho\tau+\chi_{ac; bd}st=0.\label{bcac}\end{equation}
  Combining (\ref{bcac}) with (\ref{sum0l}) and normalizing the $\chi_{em;fn}$ so that $\chi_{ac;bd}=1$ yields (\ref{chi-pr}). 
  Lemma \ref{lintpr} is proved.
  \end{proof}
  \begin{remark} The $\chi_{em;fn}$ satisfying (\ref{sum0pr}) can be also found by all the possible substitutions 
  $\mcm\perp r(em)$ for $em=ab, bc, cd, ac, bd, ad$. This yields a system of linear equations. It appears that 
  their matrix has rank two, so that there exists a unique common non-zero solution. Namely, all the 3x3-minors vanish. 
  This follows from two-dimensionality of the subspace $W$ generated by the three quadratic forms 
  $<r(em),\mcm><r(fn),\mcm>$, which in its turn follows from the fact that {\it the three singular conics $AB\cup CD$, 
  $AC\cup BD$, $AD\cup BC$ formed by the lines $EM$ dual to the points $em$  lie in the same pencil of conics 
  through the points $A$, $B$, $C$, $D$.} On the other hand, a direct calculation of 3x3-minors of the matrix of 
  linear equations and equating these minors to zero 
  yields relations on the (oriented) lengths $s$, $\tau$, $\rho$, $t$,  $p=|cd-bc|$, $u=|bc-ac|$, $q=|cd-ad|$, $h=|ad-bd|$, 
  see Fig. \ref{fig-a-int-form}. These relations are given by 
  the following  geometric theorem,  which can also be deduced from Sine Theorem. The author believes that this theorem  
  is well-known, but he did not found a reference to it.
  \end{remark}
 \begin{theorem} In a triangle $XZT$, see Fig. \ref{fig-proj-triang}, let us take arbitrary points $Y$, $I$ 
  on its sides $ZT$ and $XZ$ respectively. Let $V$ denote the intersection point 
  of the lines $XY$ and $TI$. Set 
  $$\rho:=|YV|, \ t:=|VX|, \ s:=|TV|, \ \tau:=|VI|,$$ 
 $$ u:=|TY|, \ p:=|YZ|, \ q:=|IZ|, \ h:=|XI|.$$
Then
$$ \frac{pq}{(p+u)(q+h)}=\frac{\rho\tau}{st}, \ \frac{tp}{(\rho+t)(p+u)}= \frac\tau{s+\tau}, 
\ \frac{sq}{(s+\tau)(q+h)}= \frac\rho{\rho+t}.$$
\end{theorem}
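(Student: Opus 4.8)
All three identities will be deduced from the two elementary relations
\begin{equation*}
h\,(p+u)\,\rho \;=\; q\,u\,t,
\qquad\qquad
s\,h\,p \;=\; \tau\,u\,(q+h),
\end{equation*}
which I shall call (I) and (II), together with the single algebraic observation that the quantity
\begin{equation*}
D \;:=\; q u + h(p+u) \;=\; u(q+h) + h p \;=\; (p+u)(q+h) - p q
\end{equation*}
admits all three of these equal expressions. So the plan is: first prove (I) and (II), then finish by pure algebra.

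To obtain (I) and (II) I would put $Z$ at the origin and compute $V$ in affine (barycentric) coordinates relative to the triangle $XZT$. Writing the position vectors $Y=\frac{p}{p+u}T$ and $I=\frac{q}{q+h}X$, the intersection of the lines $XY$ and $TI$ is found from the $2\times2$ linear system obtained by equating the two representations $V=(1-\alpha)X+\alpha Y$ and $V=(1-\beta)T+\beta I$; its solution is $\alpha=\frac{h(p+u)}{D}$ and $\beta=\frac{u(q+h)}{D}$. Hence $\frac{XV}{VY}=\frac{\alpha}{1-\alpha}=\frac{h(p+u)}{qu}$ and $\frac{TV}{VI}=\frac{\beta}{1-\beta}=\frac{u(q+h)}{hp}$, and substituting $XV=t$, $VY=\rho$, $TV=s$, $VI=\tau$ gives exactly (I) and (II). The same two relations are, as the Remark indicates, instances of Menelaus's theorem — apply it to triangle $XZY$ cut by the line $TVI$ to obtain (I), and to triangle $TIZ$ cut by the line $XVY$ to obtain (II) — and likewise follow from the Law of Sines applied to the small triangles meeting at $V$; I would present the barycentric computation, since it treats all positions of $Y$ and $I$ uniformly and needs no discussion of interior versus exterior intersection points.

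It then remains only to combine (I) and (II). From (I), $\rho+t=\frac{tD}{h(p+u)}$, and from (II), $s+\tau=\frac{sD}{u(q+h)}$ — the decisive point being that both numerators are the same $D$. Consequently $\frac{\rho\tau}{st}=\frac{qu}{h(p+u)}\cdot\frac{hp}{u(q+h)}=\frac{pq}{(p+u)(q+h)}$, which is the first identity; $\frac{tp}{(\rho+t)(p+u)}=\frac{ph}{D}=\frac{\tau}{s+\tau}$, the second; and $\frac{sq}{(s+\tau)(q+h)}=\frac{qu}{D}=\frac{\rho}{\rho+t}$, the third. There is no genuinely hard step here; the only care needed is in the bookkeeping — which sub-triangle and transversal (equivalently, which row of the linear system) yields (I) as opposed to (II) — and in noting that in the configuration at hand ($Y$ strictly between $Z$ and $T$, $I$ strictly between $X$ and $Z$, whence $V$ interior to the triangle) every one of $\rho,t,s,\tau,u,p,q,h$ is a genuine positive length, so that no sign conventions enter the statement as it is phrased.
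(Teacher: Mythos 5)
Your proof is correct. The barycentric computation with $Z$ at the origin does give $\alpha=h(p+u)/D$ and $\beta=u(q+h)/D$ with $D=qu+h(p+u)=u(q+h)+hp=(p+u)(q+h)-pq$; the resulting ratios $t/\rho=h(p+u)/(qu)$ and $s/\tau=u(q+h)/(hp)$ are exactly your relations (I) and (II) (and are indeed the Menelaus relations for triangle $XZY$ cut by the transversal $TVI$ and for triangle $TIZ$ cut by $XVY$); and the recombination through the common quantity $D$ yields all three identities without error. Be aware, however, that the paper does not actually prove this theorem: it is stated inside a remark, with the three identities presented as the vanishing of the $3\times3$ minors of the linear system determining the coefficients $\chi_{em;fn}$, that vanishing being attributed to the two--dimensionality of the space $W$ spanned by the three products of linear forms cutting out the singular conics of a pencil through four points; the author adds only that the identities ``can also be deduced from Sine Theorem'' and that he could not locate a reference. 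So your route is genuinely different and, unlike the paper's, self-contained and elementary. What the paper's viewpoint buys is a conceptual explanation of why three such identities hold at once -- they are three faces of the single linear dependence among the three degenerate conics of the pencil -- while your computation buys an unconditional verification requiring no projective machinery, and (if one replaces your positivity discussion by signed lengths throughout) it extends verbatim to the configurations with $Y$ or $I$ on the extended sides, which is the form actually needed in the paper's application to the oriented distances of Lemma \ref{lintpr}.
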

 \begin{SCfigure}
  \centering
 \caption{}\label{fig-proj-triang}
\epsfig{file=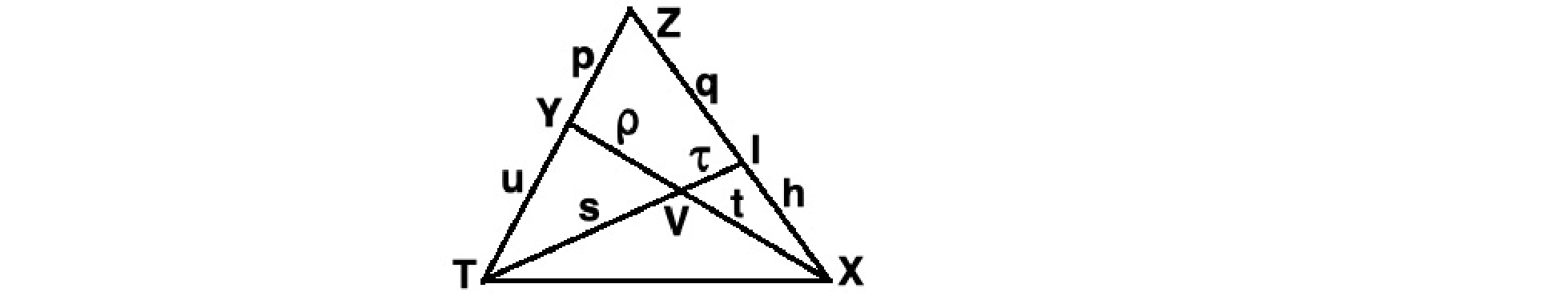, width=30em} 
\hspace{-2cm}
\end{SCfigure}

\subsection{Generic dual pencil billiards  with integrals of degrees 4 and 12 }
Let us construct explicit examples of dual pencil type projective billiards with minimal degree of 
integral being equal to 4 and 12, with non-degenerate dual pencil. 
Consider a dual pencil of conics tangent to four given distinct lines: $a$, $b$, $c$, $d$. 
Fix some its conic $\gamma$. We consider that it is a  closed curve in $\rr^2$. 
Let us equip it with the projective billiard structure defined by the pencil: 
the conics of the pencil are its complex caustics. Let us construct the corresponding admissible 
lines $m_1$, $m_2$, $m_3$ and 
$k_{ef}$, $e,f\in\{ a, b, c, d\}$, $e\neq f$, equipped with their central projective billiard 
structures. We consider that the intersection points $ab$, $bc$, $cd$, $da$ of the 
tangent lines  form a convex quadrilateral 
in which $\gamma$ is inscribed. Then the lines $k_{ac}$ and $k_{bd}$ both intersect the convex domain bounded by 
$\gamma$, see Figures \ref{figd4} and  \ref{fig12}. 

\begin{example} {\bf  of projective billiards with integral of degree 4.} The line $k_{ac}$ cuts the domain bounded by 
$\gamma$ into two pieces. Each of them is a projective billiard bounded by an arc of the curve $\gamma$ and a segment of the line $k_{ac}$, both equipped with the corresponding projective billiard structures.  
Both these projective billiards are  rationally integrable with minimal degree of integral equal to four 
(Theorems \ref{thmd12pr} and \ref{thdegpr}). 
See Fig. \ref{figd4}. 
However the tangency points (marked in bold) of the curve $\gamma$ with the lines $a$, $b$, $c$, $d$ 
are indeterminacy points of the projective billiard structure on $\gamma$. But in each piece we can split 
its boundary arc lying in $\gamma$  into open subarcs separated by the tangency points.  
The projective billiard structure is well-defined on the latter subarcs, and we can consider them  
as smaller smooth pieces of the boundary. A way to exclude the  indeterminacy points from the 
boundary is to cut by the line $m_2$ and to consider a smaller domain, bounded by segments of the lines $k_{ac}$, 
$m_2$ and an arc of the curve $\gamma$, now without indeterminacies  on smooth 
boundary arcs (except for the "corners").  
This yields a  curvilinear triangle equipped with a projective billiard structure, also admitting a rational integral 
of minimal degree 4. 
 \begin{figure}[ht]
  \begin{center}
   \epsfig{file=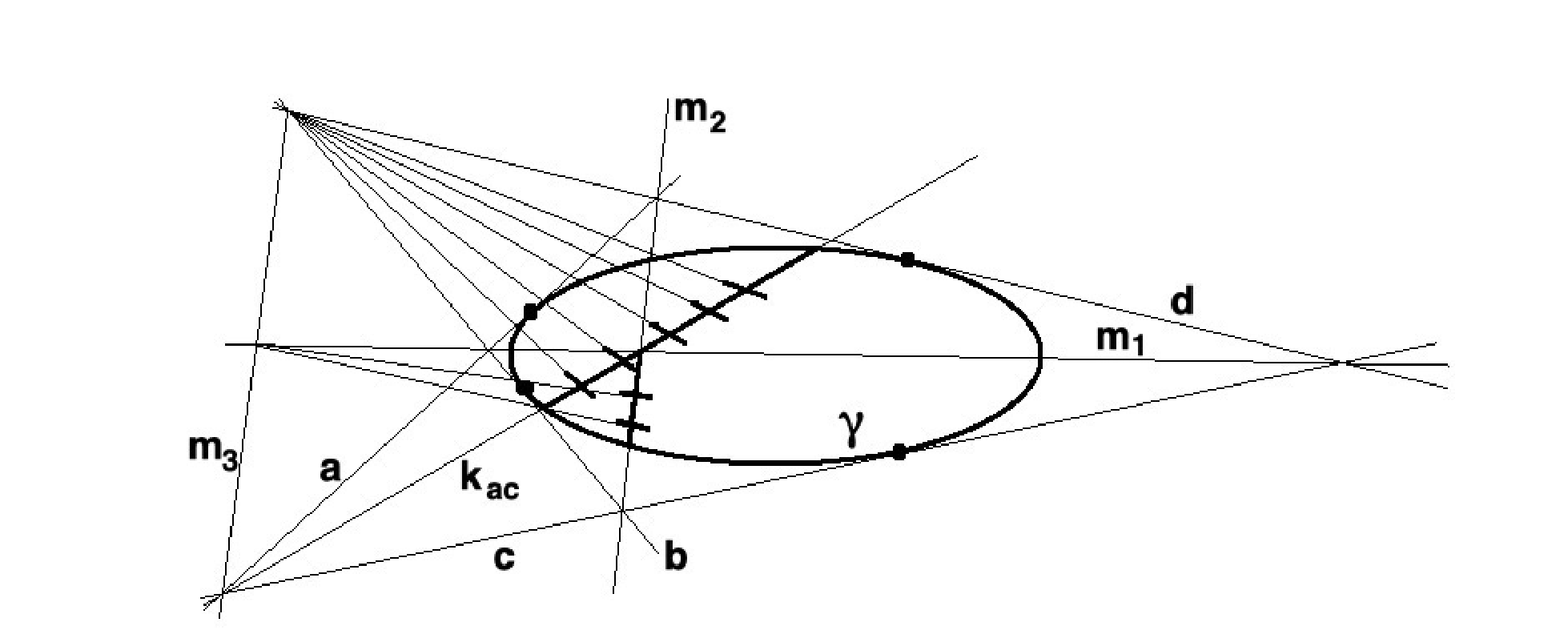, width=35em}
   \caption{Three projective billiards (with boundaries marked in bold) with rational integral of minimal degree 4. 
   The indeterminacies of the projective billiard structure on $\gamma$ are marked in bold.}
        \label{figd4}
  \end{center}
\end{figure}
\end{example}

\begin{example} {\bf  with integral of degree 12.} Consider the two curvilinear quadrilaterals with boundaries 
marked in bold at Fig. \ref{fig12}  as projective billiards. The first  one  is 
bounded by segments of the lines $k_{ad}$, $k_{bd}$, $m_2$ and an arc of the conic $\gamma$. The second one is 
bounded by segments of the lines $k_{bd}$, $k_{ab}$, $k_{ac}$ and an arc of the conic $\gamma$. (We need to note 
that the boundary arc in $\gamma$ in at least some of them contains (at least one) tangency point, which is an 
indeterminacy point of the projective billiard structure.) 
Both quadrilaterals considered as projective billiards are rationally integrable with minimal degree of integral 
being equal to 12, by Theorems \ref{thmd12pr} and \ref{thdegpr}. 
 \begin{figure}[ht]
  \begin{center}
   \epsfig{file=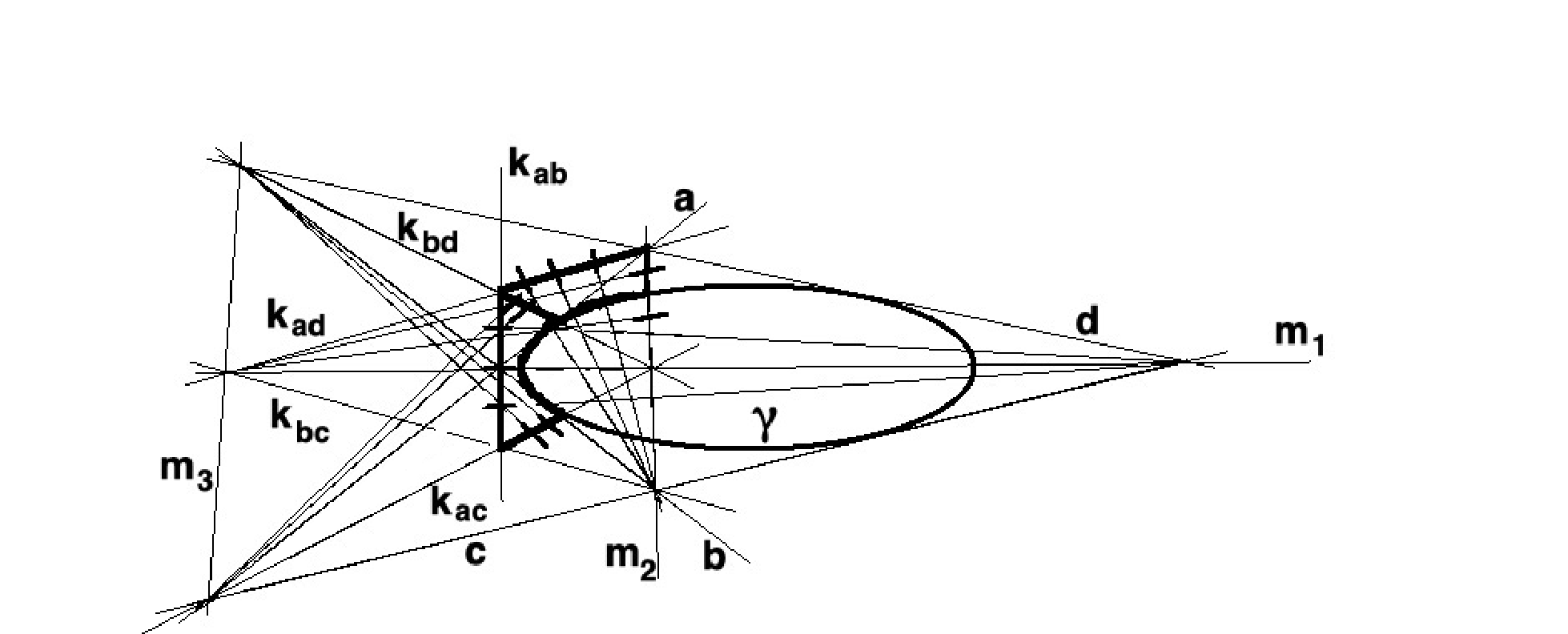, width=35em}
   \caption{Two projective billiards (with boundaries marked in bold) with  integral of minimal degree 12.}
        \label{fig12}
  \end{center}
\end{figure}
\end{example}

\subsection{Semi-(pseudo-) Euclidean billiards with integrals of different degrees}
\begin{definition} A projective billiard in $\rr^2_{x_1,x_2}$ with piecewise smooth boundary 
 is called {\it semi-Euclidean (semi-pseudo-Euclidean)}, if the nonlinear part of the boundary, i.e., its complement 
 to the union of straightline intervals contained there, is equipped 
 with normal line field for the  Euclidean metric $dx_1^2+dx_2^2$ (respectively, for the  
 pseudo-Euclidean metric $dx_1^2-dx_2^2$). 
 \end{definition}
 
 \begin{theorem} \label{thdegsemi} 
 A semi-Euclidean billiard is rationally $0$-homogeneously integrable, if and only if  
 the nonlinear part of its boundary is a finite union of confocal conical arcs and segments of some of the  
 {\bf admissible real lines} (listed below) for the corresponding confocal pencil of conics: 
 
 Case 1), pencil of confocal ellipses and hyperbolas: 
 
 - the two symmetry axes of the ellipses, equipped with normal line field;
 
 - the lines $L_1$, $L_2$ through the foci $F_1$, $F_2$,  orthogonal to the  line $F_1F_2$, 
 each $L_j$ is equipped with the field of lines through the other focus $F_{2-j}$. 
 
 \noindent The billiard has quadratic integral, if and only if its boundary contains no segments of lines $L_{1,2}$; 
 otherwise the minimal degree of integral is  four. 
 
 Case 2), pencil of confocal parabolas:
 
 - the common axis of the parabolas;
 
 - the line $L$ through the focus that is orthogonal to the axis. 

\noindent Both lines are equipped with the normal line field. 
The billiard has quadratic integral, if and only if its boundary contains no segments of 
the  line $L$;  otherwise the minimal degree of integral is  four.
\end{theorem}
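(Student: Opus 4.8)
The plan is to deduce the theorem from the general classification of rationally $0$-homogeneously integrable piecewise $C^4$-smooth projective billiards (Theorems \ref{thmd1pr}, \ref{thmd12pr}, \ref{thdegpr}, \ref{thmd2pr}) together with the classical fact that the confocal conics are caustics of the Euclidean billiard inside a conic. First I would analyse the nonlinear part of the boundary: by Theorem \ref{thmd1pr} each nonlinear $C^4$-smooth arc is conical, and the projective billiard structure on the ambient conic is either of dual pencil type or an exotic one from Theorem \ref{tgermpr}, Case 2). The key observation is that for a semi-Euclidean billiard this structure is the \emph{Euclidean normal} line field of the ambient conic, which has a regular conical caustic, namely any conic confocal to it. Hence by Example \ref{projdp} it is of dual pencil type, the dual pencil being dual to the confocal pencil of the ambient conic; in particular it cannot be exotic, since by Definition \ref{defexpr} an exotic structure is \emph{not} of dual pencil type. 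Consequently all nonlinear boundary arcs carry the Euclidean normal field of dual pencil type; if at least two of them lie in distinct conics, then by Theorem \ref{thmd1pr}, Statement 2), these conics are confocal and their dual pencils coincide (and in the single-conic case the same confocal dual pencil is attached). By Theorem \ref{thmd12pr}, such a billiard is rationally integrable if and only if it is of dual pencil type for this confocal dual pencil; in particular every straightline boundary segment must lie on a line admissible for that dual pencil.

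Next I would identify, case by case, the \emph{real} admissible lines of the confocal dual pencil. In Case 1) the confocal ellipses and hyperbolas are exactly the conics tangent to the four isotropic lines through the two foci $F_1$, $F_2$; these four lines are distinct, so the dual pencil has type a) in Definition \ref{multipr}. A direct computation then identifies the three standard admissible lines $m_1$, $m_2$, $m_3$ with the two symmetry axes and the line at infinity, each symmetry axis carrying the normal (central) line field; and among the six skew admissible lines $k_{ef}$ the only real ones are $k_{ab}$ and $k_{cd}$, which are exactly the lines $L_1$, $L_2$ through $F_1$, $F_2$ orthogonal to $F_1F_2$, each equipped with the field of lines through the other focus. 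The reality bookkeeping here is carried out via the dual analogue of Proposition \ref{preal} (see Remark \ref{remdu}). Since the line at infinity carries no boundary segments, the real admissible lines relevant to boundary segments are precisely those listed in Case 1). In Case 2) the confocal parabolas are tangent to the two isotropic lines through the common focus and to the line at infinity (all at the fixed axis point at infinity), so the dual pencil has type b); the same computation gives the standard line $m$ as the common axis with normal field, the skew line $k$ as the line through the focus orthogonal to the axis with normal field, and the remaining admissible line as the line at infinity, with no segments. This yields the lists in Case 2).

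For the degree statement I would invoke Theorem \ref{thdegpr}: the billiard has a quadratic integral precisely when its boundary contains no segment of a skew admissible line; if it contains such a segment the minimal degree equals $4$, unless the dual pencil has type a) and the boundary contains segments of two neighbouring skew lines $k_{ef}$, $k_{fs}$ sharing one index, in which case it equals $12$. The remaining point is that over $\mathbb R$ this degree-$12$ configuration does not occur in the confocal setting: in Case 1) the two real skew admissible lines $L_1=k_{ab}$ and $L_2=k_{cd}$ have disjoint index pairs (the pair of isotropic lines through $F_1$ and the pair through $F_2$), so they are "opposite" rather than neighbouring and give degree $4$ by Theorem \ref{thdegpr}, Case (iii); in Case 2) the dual pencil has type b), so the value $12$ does not arise at all. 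Combining this with the identification of admissible lines above yields exactly the stated trichotomy.

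The step I expect to be the main obstacle is precisely this dictionary: translating the abstract standard and skew admissible lines of the confocal dual pencil into the concrete Euclidean lines (symmetry axes, line at infinity, lines through the foci), together with the reality analysis showing that the only real skew admissible lines are the "opposite" pair $L_1$, $L_2$. This single computation both pins down the list of admissible lines and rules out the degree-$12$ value in the semi-Euclidean case; everything else reduces mechanically to the previously established results on dual multibilliards via orthogonal polarity.
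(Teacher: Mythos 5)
Your proposal is correct and follows essentially the same route as the paper's (very terse) proof: reduce to Theorems \ref{thmd1pr}, \ref{thmd12pr}, \ref{thdegpr} via the confocal dual pencil, identify the real admissible lines, and observe that the two real skew lines $L_1=k_{ab}$, $L_2=k_{cd}$ are an \emph{opposite} pair (corresponding to the foci $F_1=ab$, $F_2=cd$), so the degree-12 case of Theorem \ref{thdegpr} never occurs. You supply more detail than the paper does (in particular the preliminary argument that the Euclidean normal field, having confocal caustics, is of dual pencil type and hence not exotic), but the underlying argument is the same.
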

\begin{proof} The other admissible lines from Definition \ref{multipr} are not finite real lines. 
For example, in Case 1) the dual pencil of confocal conics consists of conics tangent to two given pairs of lines  
 through the two isotropic points $[1:\pm i:0]$ at infinity. In this case the only 
real skew admissible lines are the lines $L_1$ and $L_2$, and they are opposite as skew admissible lines: they 
correspond to two opposite intersection points of the above tangent lines, namely, the foci $F_1$ and $F_2$. 
Similarly in Case 2) the only real skew admissible line is $L$. 
  This together with Theorem \ref{thdegpr} proves Theorem \ref{thdegsemi}.
\end{proof}

\begin{example} Consider an ellipse and a line $L_1$ through its left focus $F_1$ that is orthogonal 
to the foci line. See Fig. \ref{fig-semi}, the left part. 
Consider the dashed domain bounded by the  intersection segment of the line $L_1$ with the 
ellipse interior and the left elliptic arc; the latter arc is equipped with normal line field, and the segment 
with the field of lines through the other focus $F_2$. This projective billiard admits a rational $0$-homogeneous 
integral of minimal degree four. As the second focus $F_2$ tends to infinity so that the ellipse tends to 
a parabola with the focus $F=F_1$, the above billiard converges to a usual billiard (with normal line field) 
bounded by a segment of the line $L$ through $F$ orthogonal to the parabola axis and by a parabola arc. 
See the right part of Fig. \ref{fig-semi}.
The latter  parabolic billiard is known to have a polynomial integral of minimal degree four 
(and hence, a rational $0$-homogeneous integral of the same degree). It was first discovered in \cite{gram}. 
\end{example}
\begin{figure}[ht]
  \begin{center}
  \hskip-3.5cm
   \epsfig{file=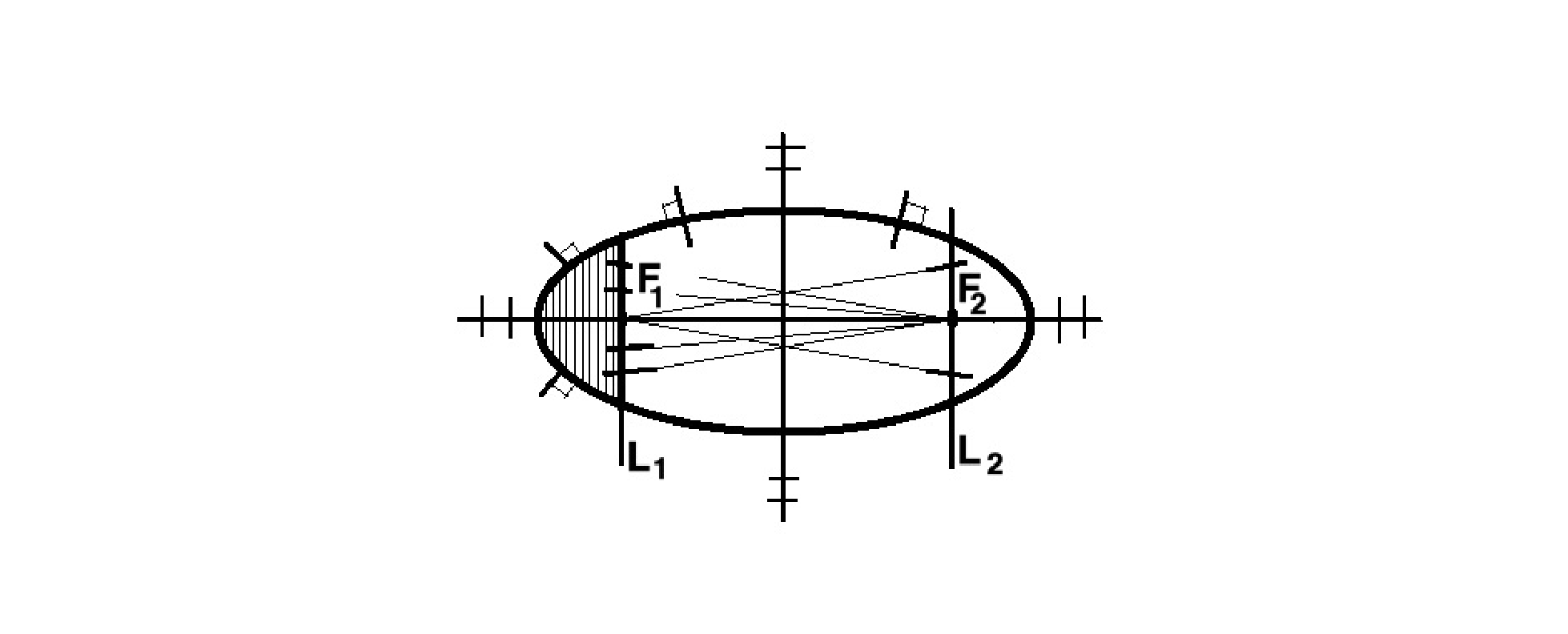, width=33em}
   \hspace{0cm}
    \epsfig{file=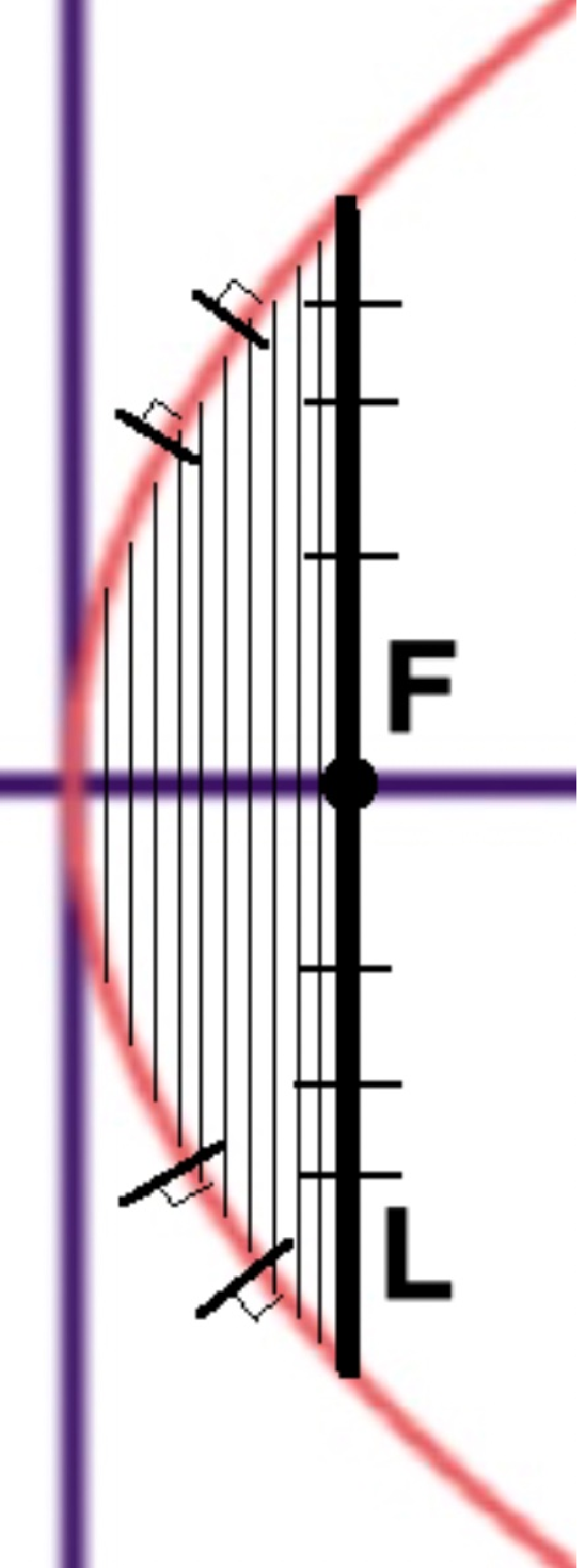, width=4em}
   \caption{Billiards (dashed) with degree 4 integrals. On the left: the semi-Euclidean billiard  
    bounded by a segment of the line $L_1$ and an elliptic arc. As the  ellipse degenerates to a horizontal 
    parabola, it tends to the Euclidean billiard on the right discovered in \cite{gram}, with degree 4 polynomial integral.} \label{fig-semi}
        \label{figsemi4}
  \end{center}
\end{figure}
\begin{proposition} The projective billiards with rational $0$-homogeneous integral of minimal degrees 4 and 12 
presented at Fig. \ref{figd4} and \ref{fig12} respectively are  
realized by  semi-pseudo-Euclidean billiards in $(\rr^2,dx_1^2-dx_2^2)$.
\end{proposition}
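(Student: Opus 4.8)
The plan is to show that, for a suitable choice of the four tangent lines defining the dual pencil, the billiards of the types drawn in Figures \ref{figd4} and \ref{fig12} acquire the $dx_1^2-dx_2^2$\,-normal line field on their curvilinear arc, hence become semi-pseudo-Euclidean, while keeping exactly the same configuration of admissible line segments; their rational integrability and minimal integral degrees ($4$ in the cases of Figure \ref{figd4}, $12$ in those of Figure \ref{fig12}) then follow from Theorems \ref{thmd12pr} and \ref{thdegpr} as before.

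First I would record the pseudo-Euclidean analogue of the classical confocal reflection property. For the form $dx_1^2-dx_2^2$ the two isotropic directions are $(1,\pm1)$, so the role of the Euclidean isotropic points $[1:\pm i:0]$ is now played by the \emph{real} points $J_1=[1:1:0]$, $J_2=[1:-1:0]$: two lines through a point $Q$ are pseudo-orthogonal exactly when they are harmonic with respect to $QJ_1$ and $QJ_2$. The complex-linear substitution $(x_1,x_2)\mapsto(x_1,ix_2)$ is an isometry between $dx_1^2-dx_2^2$ and $dx_1^2+dx_2^2$, and being projective-linear it carries normal line fields to normal line fields, confocal pencils to confocal pencils, and dual pencil type projective billiard structures to dual pencil type structures. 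Applying it to the (complexified) confocal reflection property of Euclidean billiards shows that the $dx_1^2-dx_2^2$\,-normal line field of a conic $\gamma$ is a projective billiard structure of dual pencil type whose complex caustics are the conics confocal to $\gamma$ for $dx_1^2-dx_2^2$; dually, its dual pencil $\mcp^*$ consists of the conics tangent to the four lines $a:=F_1J_1$, $b:=F_1J_2$, $c:=F_2J_1$, $d:=F_2J_2$, where $F_1$, $F_2$ are the (finite, real) foci of $\gamma$. Concretely, taking $\gamma$ to be one of the ellipses $\tfrac{x_1^2}{A-\lambda}+\tfrac{x_2^2}{\lambda-B}=1$ with $B<\lambda<A$ (the pseudo-confocal images of a Euclidean confocal family), the foci are $F_{1,2}=(\pm\sqrt{A-B},0)$ on the non-isotropic $x_1$-axis, the four lines $a,b,c,d$ are distinct and in general position, so $\mcp^*$ is a dual pencil of type a).

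Next I would locate, for this dual pencil, the admissible lines of Definition \ref{multipr} that appear in the two figures. Here $ab=F_1$, $cd=F_2$, $ac=J_1$, $bd=J_2$, while $ad$ and $bc$ are finite real points; hence $m_1$ is the finite real foci line $F_1F_2$, $m_2$ is a finite real line (indeed the Euclidean-perpendicular bisector of $F_1F_2$, so $m_1\cap m_2$ is finite), and $m_3=J_1J_2$ is the line at infinity. A direct check from the construction in Definition \ref{multipr} shows that for this pencil \emph{every} skew admissible line $k_{ef}$ is a finite real line: it joins a finite real point (one of $F_1,F_2$, $ad$, $bc$, $m_1\cap m_2$, according to the case) to a point at infinity. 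Thus all of $m_2$ and $k_{ac},k_{bd},k_{ad},k_{ab}$ — which are the only admissible lines occurring in Figures \ref{figd4} and \ref{fig12} — are genuine finite real lines, and one verifies, exactly as for the Euclidean examples accompanying Figure \ref{figd4}, that suitable segments of them cut out of the region bounded by the ellipse $\gamma$ precisely the curvilinear triangle, the two lens-shaped regions and the two curvilinear quadrilaterals of the figures; the only possible indeterminacies of the line field on $\gamma$ are the at most four isotropic tangency points, which are absorbed into the corner set as before.

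For the billiards thus produced the nonlinear part of the boundary is an arc of $\gamma$ carrying the line field defined by $\mcp^*$, which by the second step is precisely the $dx_1^2-dx_2^2$\,-normal line field, so these billiards are semi-pseudo-Euclidean; being dual pencil type with the same skew-line configuration as in the figures, they are rationally $0$-homogeneously integrable, of minimal degree $4$ in the cases of Figure \ref{figd4} (one skew admissible line, $k_{ac}$) and of minimal degree $12$ in those of Figure \ref{fig12} (a non-opposite pair of skew admissible lines, $k_{ad},k_{bd}$ and $k_{ab},k_{bd}$ respectively), by Theorems \ref{thmd12pr} and \ref{thdegpr}. The step that requires the most care is the third one: checking that, for the pseudo-confocal pencil rather than the Euclidean confocal pencil, \emph{all} the admissible lines appearing in the two figures are simultaneously real and finite — which works precisely because the two ``isotropic'' base points $J_1$, $J_2$ are now real rather than a complex-conjugate pair.
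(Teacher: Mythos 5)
Your proof is correct and takes essentially the same approach as the paper: the paper's entire argument is the single sentence ``Take the points $bd=b\cap d$ and $ac=a\cap c$ to be the isotropic points at infinity $[1:\pm1:0]$'', and your choice $a=F_1J_1$, $b=F_1J_2$, $c=F_2J_1$, $d=F_2J_2$ (so that $ac=J_1$, $bd=J_2$) is exactly that construction. The remaining paragraphs of your write-up just supply details (the pseudo-confocal caustic property, reality and finiteness of the relevant admissible lines) that the paper leaves implicit.
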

\begin{proof} Take the points $bd=b\cap d$ and $ac=a\cap c$ to be the 
isotropic points at infinity $[1:\pm1:0]$. 
\end{proof}

\section{Acklowledgments}

I wish to thank Sergei Tabachnikov for introducing me to projective billiards and helpful discussions. I wish to thank Sergei Bolotin, Valery Kozlov, Constantin Shramov, Dmitry Treschev, 
  Suzanna Zimmermann, Julie Deserti, Fr\'ed\'eric Mangolte   for helpful discussions.

 \end{document}